\documentclass[10pt,a4paper,american]{amsart}
\usepackage{babel}
\usepackage{amsthm}
\usepackage{amsbsy}
\usepackage{amstext}
\usepackage{amsfonts}
\usepackage{amsmath}
\usepackage{mathcomp}
\usepackage{mathtools}
\usepackage{dsfont}
\usepackage{latexsym}
\usepackage{amssymb}
\usepackage{esint}	
\usepackage[colorlinks,pdfpagelabels,pdfstartview = FitH,bookmarksopen = true,bookmarksnumbered = true,linkcolor = blue,plainpages = false,hypertexnames = false,citecolor = red,pagebackref=false]{hyperref}
\usepackage{cite}

\newtheorem{theo}{Theorem}[section]
\newtheorem{cor}[theo]{Corollary}
\newtheorem{prop}[theo]{Proposition}
\newtheorem{lem}[theo]{Lemma}

\theoremstyle{definition}
\newtheorem{defin}[theo]{Definition}
\newtheorem*{lem*}{Lemma}
\newtheorem{rem}[theo]{Remark}

\newtheorem*{cor*}{Corollary}
\newtheorem*{theo*}{Theorem}

\DeclareMathOperator*{\essliminf}{ess\,lim\,inf}
\DeclareMathOperator*{\esssup}{ess\,sup}
\DeclareMathOperator*{\essinf}{ess\,inf}

\newcommand{\N}{\ensuremath{\mathbb{N}}}
\newcommand{\R}{\ensuremath{\mathbb{R}}}

\newcommand{\spt}{\operatorname{spt}}

\newcommand{\mollifytime}[2]{[\![ #1 ]\!]_{#2}}

\makeatletter
\@namedef{subjclassname@2020}{%
  \textup{2020} Mathematics Subject Classification}
\makeatother

\def\Xint#1{\mathchoice
    {\XXint\displaystyle\textstyle{#1}}%
    {\XXint\textstyle\scriptstyle{#1}}%
    {\XXint\scriptstyle\scriptscriptstyle{#1}}%
    {\XXint\scriptscriptstyle\scriptscriptstyle{#1}}%
    \!\int}
\def\XXint#1#2#3{\setbox0=\hbox{$#1{#2#3}{\int}$}
    \vcenter{\hbox{$#2#3$}}\kern-0.5\wd0}
\def\bint{\Xint-}
\def\dashint{\Xint{\raise4pt\hbox to7pt{\hrulefill}}}

\def\XXiint#1#2#3{\setbox0=\hbox{$#1{#2#3}{\iint}$}
    \vcenter{\hbox{$#2#3$}}\kern-0.5\wd0}

\renewcommand{\epsilon}{\varepsilon}
\newcommand{\eps}{\varepsilon}
\renewcommand{\rho}{\varrho}
\newcommand{\ph}{\varphi}

\renewcommand{\epsilon}{\varepsilon}
\renewcommand{\rho}{\varrho}
\renewcommand{\d}{\:\! \mathrm{d}}

\DeclareMathOperator{\loc}{loc}

\numberwithin{equation}{section}
\allowdisplaybreaks

\begin{document}
\renewcommand{\refname}{References} 
\renewcommand{\abstractname}{Abstract} 
\title[Supercaloric functions for the PME]{Supercaloric functions for the porous medium equation in the fast diffusion case}

\author[K. Moring]{Kristian Moring}
\address{Kristian Moring\\
	Fakult\"at f\"ur Mathematik, Universit\"at Duisburg-Essen\\
	Thea-Leymann-Str. 9, 45127 Essen, Germany}
\email{kristian.moring@uni-due.de}

\author[C. Scheven]{Christoph Scheven}
\address{Christoph Scheven\\
	Fakult\"at f\"ur Mathematik, Universit\"at Duisburg-Essen\\
	Thea-Leymann-Str. 9, 45127 Essen, Germany}
\email{christoph.scheven@uni-due.de}

\subjclass[2020]{35B51, 35D30, 35K55, 35K67}
\keywords{porous medium equation, supercaloric function, weak supersolution, comparison principle, fast diffusion}

\begin{abstract}
We study a generalized class of supersolutions, so-called super\-caloric
functions to the porous medium equation in the fast diffusion
case. Supercaloric functions are defined as lower semicontinuous
functions obeying a parabolic comparison principle. We prove that
bounded supercaloric functions are weak supersolutions. In the
supercritical range, we show that unbounded supercaloric functions can
be divided into two mutually exclusive classes dictated by the
Barenblatt solution and the infinite point-source solution, and give
several characterizations for these classes. Furthermore, we study the
pointwise behavior of supercaloric functions and obtain connections between supercaloric functions and weak supersolutions.
\end{abstract}
\makeatother

\maketitle

\section{Introduction}

In this paper we study supersolutions to the porous medium equation (PME for short), which can be written as 
\begin{equation}\label{evo_eqn}
\partial_t u -\Delta( u^m)=0, 
\end{equation}  
for $0<m<\infty$ and nonnegative $u$. We are concerned with the fast diffusion range $0<m<1$, and in particular, in some of the main results in the supercritical fast diffusion range $\frac{n-2}{n} < m < 1$. Furthermore, we suppose that the spatial dimension satisfies $n \geq 2$. For the standard theory of the porous medium equation we refer to the monographs~\cite{Vazquez, Vazquez2, DK}.

The theory of supercaloric functions for the parabolic $p$-Laplace
equation in the supercritical case is well developed. In the slow
diffusion case, Sobolev space properties of locally bounded
supercaloric functions were proven in~\cite{KinnunenLindqvist2006},
and the classification theory of unbounded supercaloric functions is summarized in~\cite{KuLiPa}. In~\cite{KoKuPa}, the study of bounded supercaloric
functions was extended to the supercritical fast diffusion range, and
for the classification theory in this case for unbounded supercaloric functions we refer to~\cite{GKM_supercal}.

For the porous medium equation the analogous theory in the slow
diffusion case is well established. Sobolev space properties of
supercaloric functions were studied in~\cite{KinnunenLindqvist_crelle},
and for the classification theory in the unbounded case we refer
to~\cite{KLLP-supercal}. The theory in the fast diffusion range is currently open, which we address in this paper. To our knowledge, many questions in the critical and subcritical cases are still open for both equations, which are left to subjects of future research.

The structure of the porous medium equation poses some well-known
challenges. For example, solutions are not closed under addition or
multiplication by constants. In our case, the former poses a serious
difficulty in obtaining an appropriate Caccioppoli inequality and
comparison principles, for example. A critical feature that occurs is
that one can not approximate nonnegative solutions with strictly
positive ones by adding constants, and in this way avoid the set
$\{u=0\}$ where the equation becomes singular. In order to overcome
this difficulty, we are able to show that in each connected component
of the domain every supercaloric function is either strictly positive
or vanishes identically on any given time-slice, see Lemma~\ref{lem:alternatives}. The proof of this
property relies on an expansion of positivity result for weak solutions (see~\cite{DGV}), which holds in the whole fast diffusion range $0<m<1$. Furthermore, this allows us to express the set where a supercaloric function is strictly positive as a countable union of time intervals in every connected component of the domain. The described phenomenon is strongly tied to the nature of fast diffusion, and it does not occur as such in the slow diffusion case.

In Section~\ref{s.connection} we show that the class of locally
bounded supercaloric functions is included in the class of weak
supersolutions; a result which was shown for the parabolic $p$-Laplace
equation in~\cite{KoKuPa,GKM_supercal,KinnunenLindqvist2006} and for
the porous medium equation in the slow diffusion case in~\cite{KinnunenLindqvist_crelle}. 
The proof is roughly divided into two parts. First, the result is
shown for strictly positive supercaloric functions in
Lemma~\ref{l.bdd-positive-supercal-is-supersol}, whose proof relies on
a suitable obstacle problem stated in Theorem~\ref{t.obstacle-supercal}, which is based on the results in~\cite{BLS,Schaetzler2,Cho_Scheven,MS,MSb}. In the second step, this result is generalized to hold for nonnegative supercaloric functions (Theorem~\ref{t.bdd_super}). The geometry of positivity sets of supercaloric functions established in Section~\ref{sec:positivity} plays an important role in the second part of the proof.

In the supercritical case, we show that supercaloric functions can be
divided into two mutually exclusive classes, which we call the Barenblatt
class and the complementary class. The former is modeled by the
Barenblatt solution~\eqref{e.barenblatt2}, while the latter is modeled by so-called infinite
point-source solution~\eqref{e.IPSS}, see~\cite{CV}. Functions in the Barenblatt
class have some regularity properties, e.g. in terms of integrability
(Theorem~\ref{t.barenblatt}), while functions in the complementary
class are not guaranteed to have any
(Theorem~\ref{t.complementary}). As was noticed already in the case of
the parabolic $p$-Laplace equation
(\hspace{1sp}\cite{KuLiPa,GKM_supercal}), prominent singularities of
functions in the complementary class are qualitatively different in
the fast diffusion case than in the slow diffusion case
(\hspace{1sp}\cite{KLLP-supercal}). Roughly speaking, variables in
space and time change their roles in this respect. For Sobolev space
properties in the Barenblatt class we use a Moser type iteration,
which is based on the combination of Sobolev inequality and a suitable Caccioppoli inequality. On the other hand, proofs in the complementary class are based on Harnack type inequalities stated in Section~\ref{sec:positivity}.



In the final section we study the pointwise behavior of supercaloric
functions. It is well known that every weak supersolution is lower
semicontinuous after possible redefinition in a set of measure zero,
see~\cite{Naian,AL}. More precisely, pointwise values can be recovered
almost everywhere by the $\essliminf$ of the function, where only
instances of time in the past are relevant. For the parabolic
$p$-Laplace equation it was shown in~\cite{KinnunenLindqvist2006}, and
for the porous medium equation in the slow diffusion case
in~\cite{KinnunenLindqvist_crelle} that supercaloric functions enjoy
the same property at every point in their domain (for the elliptic
case, see also~\cite{HKM}). In Section~\ref{sec:pointwise} we show
that the same property holds for supercaloric functions to the porous
medium equation in the fast diffusion case. We conclude the paper by
summarizing the connections between supercaloric functions and weak
supersolutions in Corollary~\ref{c.connections}.

\medskip
 
\noindent
{\bf Acknowledgments.} K.~Moring has been supported by the Magnus Ehrnrooth Foundation.

\section{Weak supersolutions} \label{sec:prelim}

Let $\Omega \subset \R^n$ be an open set. For $T>0$ we denote
by $\Omega_T := \Omega \times (0,T)$ a space-time cylinder in
$\R^{n+1}$. The parabolic boundary of $\Omega_T$ is defined as
$\partial_p \Omega_T := \left( \Omega \times \{0\} \right) \cup \left(
  \partial \Omega \times [0,T) \right)$. We call $\Omega_T$ a
$C^{k,\alpha}$-cylinder if $\Omega\subset\R^n$ is a bounded
$C^{k,\alpha}$-domain for $k\in\N$ and $\alpha > 0$.

\subsection{Notion of weak solutions}
We begin by defining the concept of weak (super- and sub)solutions.
\begin{defin} \label{d.weak_sol}
A measurable function $u: \Omega_T \to [0,\infty]$ satisfying
$$
u^m \in L_{\loc}^2(0,T;H^{1}_{\loc}(\Omega)) \cap L^\frac{1}{m}_{\loc}(\Omega_T)
$$
is called a weak solution to the PME~\eqref{evo_eqn} if and only if $u$ satisfies the integral equality
\begin{align}
\iint_{\Omega_T} \left(-u \partial_t \varphi + \nabla u^m \cdot \nabla \varphi \right)\d x\d t  =0
\end{align}
for every $\varphi \in C^\infty_0(\Omega_T)$. Further, we say that $u$
is a weak supersolution if the integral above is nonnegative for all
nonnegative test functions $\varphi \in C^{\infty}_0(\Omega_T)$. If
the integral is nonpositive for such test functions, we call $u$ a
weak subsolution.

Finally, we say that $u:\Omega_T\to[0,\infty]$ is a global weak
solution to the PME~\eqref{evo_eqn} if it is a weak solution with the
property
\begin{equation*}
  u^m\in L^2(0,T;H^{1}(\Omega)) \cap L^{\frac{1}{m}}(\Omega_T).
\end{equation*}

\end{defin}

Then we recall a comparison principle for weak super(sub)solutions, see~\cite{Bjorns_boundary,Vazquez,DK}.

\begin{lem} \label{l.comparison-weaksupersub}
Let $0 < m < 1$ and $\Omega_T$ be a $C^{2,\alpha}$-cylinder with $\alpha>0$. Suppose that $u$ is a weak supersolution and $v$ a weak subsolution to~\eqref{evo_eqn} in $\Omega_T$, such that $u^m,v^m \in L^2(0,T;H^1(\Omega)) \cap L^\frac{2}{m}(\Omega_T)$. If, in addition
$$
(v^m-u^m)_+ (\cdot,t) \in H^{1}_0(\Omega), \, \text{ for a.e. } t\in (0,T),
$$
and 
$$
\lim_{h\to 0} \frac{1}{h} \int_{0}^{h} \int_\Omega (v-u)_+ \, \d x \d t = 0
$$
holds true, then $0\leq v\leq u$ a.e. in $\Omega_T$.
\end{lem}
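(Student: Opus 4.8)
The plan is to establish the bound $v\le u$ (the other bound $v\ge 0$ being automatic, since a weak subsolution takes values in $[0,\infty]$) by means of an energy inequality extracted from the weak formulations of $u$ and $v$. Since $r\mapsto r^m$ is strictly increasing on $[0,\infty]$, the sets $\{v>u\}$ and $\{v^m>u^m\}$ coincide, so it suffices to show that $w:=(v^m-u^m)_+$ vanishes a.e.\ in $\Omega_T$. The assumption $w(\cdot,t)\in H^1_0(\Omega)$ for a.e.\ $t$ is exactly what makes $w(\cdot,t)$ an admissible spatial test function, and the integrability $u^m,v^m\in L^{2/m}(\Omega_T)$, equivalently $u,v\in L^2(\Omega_T)$, ensures that all integrals below are finite and that the usual density approximation by functions in $C^\infty_0(\Omega_T)$ is legitimate (the $C^{2,\alpha}$-regularity of $\Omega_T$ serving to make these approximations and the ensuing integration by parts admissible, as in the cited references).

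Concretely, I would fix $0<\tau<T$ (a common Lebesgue point of the relevant time-integrands), pick a Lipschitz temporal cut-off $\zeta=\zeta_{\tau,\sigma}$ with $\zeta\equiv 1$ on $[0,\tau]$, $\zeta$ affine on $[\tau,\tau+\sigma]$ and $\zeta\equiv 0$ on $[\tau+\sigma,T]$, and insert into both weak inequalities the time-mollified test function $\ph_h:=\mollifytime{\zeta\,w}{h}$ (if needed multiplied by further temporal cut-offs so that $\ph_h$ is genuinely an admissible test function, tracking the terms they produce near $t=0$ and $t=T$). This $\ph_h$ is nonnegative and lies in $L^2(0,T;H^1_0(\Omega))$ with $\partial_t\ph_h\in L^2(\Omega_T)$. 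Subtracting the weak supersolution inequality for $u$ from the weak subsolution inequality for $v$ then yields
\begin{equation*}
\iint_{\Omega_T}(v-u)\,\partial_t\ph_h\,\d x\d t\ \ge\ \iint_{\Omega_T}\nabla(v^m-u^m)\cdot\nabla\ph_h\,\d x\d t .
\end{equation*}

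For the diffusion term, mollification commutes with $\nabla$ and converges strongly in $L^2$, while $\nabla(v^m-u^m)\cdot\nabla w=|\nabla w|^2$ a.e.; hence the right-hand side tends to $\iint_{\Omega_T}\zeta\,|\nabla w|^2\,\d x\d t\ge 0$ as $h\to 0$. This term is favourable and will in fact be forced to vanish, which together with $w(\cdot,t)\in H^1_0(\Omega)$ and Poincaré's inequality gives $w\equiv 0$. The heart of the matter is the parabolic term $\iint_{\Omega_T}(v-u)\,\partial_t\ph_h\,\d x\d t$. Two difficulties meet here: weak super/subsolutions carry no a priori time regularity, and — in contrast with the heat equation or the parabolic $p$-Laplacian, where testing with $(v-u)_+$ renders both the time term and the diffusion term manifestly signed — for the porous medium equation $(v-u)\,\partial_t(v^m-u^m)_+$ is not an exact time derivative. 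The remedy is to keep the mollification and exploit the monotonicity of $r\mapsto r^{1/m}$ via a one-sided estimate for the exponential mollification (schematically, $\int_0^T g(f)\,\partial_t\mollifytime{f}{h}\,\d t\ge G\big(\mollifytime{f}{h}(T)\big)-G\big(\mollifytime{f}{h}(0)\big)$ for nondecreasing $g$ with primitive $G$, together with its consequences for profiles built from $u^m$ and $v^m$), so that the parabolic term is bounded below by a boundary term at time $\tau$ carrying a nonnegative primitive, minus the contribution supported near $t=0$. Letting $h\to 0$, then $\sigma\to 0$ and the remaining cut-off parameters to their limits in the right order, one arrives for a.e.\ $\tau\in(0,T)$ at an inequality of the form
\begin{equation*}
\int_\Omega\Phi\big(u(\cdot,\tau),v(\cdot,\tau)\big)\,\d x+\iint_{\Omega\times(0,\tau)}|\nabla(v^m-u^m)_+|^2\,\d x\d t\ \le\ C\,\limsup_{h\to0}\frac1h\int_0^h\!\!\int_\Omega(v-u)_+\,\d x\d t ,
\end{equation*}
where $\Phi\ge 0$ and $\Phi(s_1,s_2)=0$ exactly when $s_2\le s_1$ (a Bregman-type primitive of $r\mapsto r^{1/m}$). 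By the assumed behaviour of $(v-u)_+$ at the initial time the right-hand side equals $0$; hence both terms on the left vanish, so in particular $(v^m-u^m)_+(\cdot,\tau)=0$ for a.e.\ $\tau$, that is $v\le u$ a.e.\ in $\Omega_T$.

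The step I expect to be the main obstacle is precisely this parabolic term: making the mollification argument rigorous in the absence of time regularity, identifying the correct nonnegative primitive $\Phi$ despite the failure of a naive chain rule, and ordering the limits in the temporal cut-offs so that the weak initial-trace hypothesis $\lim_{h\to0}\tfrac1h\int_0^h\int_\Omega(v-u)_+\,\d x\d t=0$ can absorb the contribution at $t=0$. By comparison, the diffusion term and the passage from vanishing Dirichlet energy to $w\equiv 0$ are routine once the $H^1_0$-membership of $w(\cdot,t)$ is in hand.
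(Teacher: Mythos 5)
A preliminary remark: the paper does not prove Lemma~\ref{l.comparison-weaksupersub} at all; it is recalled from the literature (\hspace{1sp}\cite{Bjorns_boundary,Vazquez,DK}). So your argument has to stand on its own, and as written it does not: the decisive step is exactly the one you yourself defer. Your skeleton --- test with a time-mollified, temporally cut-off version of $w=(v^m-u^m)_+$, sign the diffusion term using $\nabla(v^m-u^m)\cdot\nabla w=|\nabla w|^2$, and absorb the contribution at $t=0$ through the hypothesis on $(v-u)_+$ --- is the natural one, and the diffusion term, the boundary term produced by the temporal cut-off, and the passage from the final energy inequality to $v\le u$ are all unproblematic. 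But the parabolic term is not merely ``the main obstacle''; the mechanism you propose for it cannot work as described, so the proof is missing its core.

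Concretely, the pairing $(v-u)\,\partial_t(v^m-u^m)_+$ involves two independent functions, and on $\{v>u\}$ the differential form $(v-u)\,\d(v^m-u^m)$ in the variables $(u,v)$ is not closed (closedness would force $u^{m-1}=v^{m-1}$), so there is no Bregman-type primitive $\Phi(u,v)$ with $\partial_t\Phi=(v-u)\,\partial_t(v^m-u^m)_+$. Primitives in one variable do exist, e.g.\ $G(U,V)=\int_U^V\big(s^{1/m}-U^{1/m}\big)\,\d s$ with $U=u^m$, $V=v^m$, but they leave a defect term proportional to $\partial_t u^m$, which has neither a sign nor a meaning for a general weak supersolution. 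Likewise, the one-sided estimate for the exponential mollification that you invoke requires the factor multiplying $\partial_t\mollifytime{f}{h}$ to be a nondecreasing function of the \emph{same} quantity $f$, since it rests on $(f-\mollifytime{f}{h})\big(g(f)-g(\mollifytime{f}{h})\big)\ge0$; here the time derivative falls on (a mollification of) $v-u$ while your test function is a monotone function of $v^m-u^m$, and for $m\neq1$ the latter is not a function of the former, so the sign argument breaks and mollification does not commute with the nonlinearity to repair it. This two-function time term is precisely why comparison of arbitrary weak super-/subsolutions of the PME is a nontrivial theorem; the proofs in the literature circumvent it with additional devices --- a bounded truncation such as $\min\{1,(v^m-u^m)_+/\eps\}$ (or $\frac{(v^m-u^m)_+}{(v^m-u^m)_++\eps}$, as in the paper's own Lemma~\ref{l.supersol_truncation}) as test function with a careful limit $\eps\to0$, a duality/doubling-in-time argument, or a dedicated integration-by-parts lemma for such pairings --- none of which appears in your sketch. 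Until that step is actually carried out, the displayed inequality involving $\Phi$ and the conclusion $0\le v\le u$ remain unsubstantiated.
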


The following maximum principle also holds, see~\cite{MSb}.

\begin{lem} \label{l.maximum-principle}
  Let $m > 0$. Let $u$ be a weak subsolution with the property
  $u^m \in L^2(0,T;H^1(\Omega)) \cap L^\frac{1}{m}(\Omega_T)$ and $k \in \R_{\geq0}$. If $(u^m - k^m)_+(\cdot,t) \in H^1_0(\Omega)$ for a.e. $t \in (0,T)$ and 
$$
\lim_{h\to 0} \frac{1}{h} \int_0^h \int_\Omega (u-k)_+ \, \d x \d t = 0,
$$
then 
$$
u \leq k \quad \text{ a.e. in } \Omega_T.
$$
\end{lem}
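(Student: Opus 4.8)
The plan is to run a Caccioppoli-type energy argument. The constant function $\equiv k$ is itself a weak solution of~\eqref{evo_eqn}, so $u\le k$ is morally a comparison statement; however Lemma~\ref{l.comparison-weaksupersub} does not apply here (it needs a $C^{2,\alpha}$-cylinder and the integrability $u^m\in L^{2/m}$), so I would argue directly by testing the subsolution inequality with the truncation $(u^m-k^m)_+$. Since for the porous medium equation $\partial_t u\,(u^m-k^m)_+$ is not the time derivative of a function of $u^m$, the key object is the auxiliary function
\[
  \b_k(s):=\int_k^s(\sigma^m-k^m)_+\,\d\sigma ,\qquad s\ge 0 ,
\]
which is nonnegative, convex, of class $C^1$, vanishes precisely on $\{s\le k\}$, has $\b_k'(s)=(s^m-k^m)_+$, satisfies $\b_k(s)\le(s-k)_+(s^m-k^m)_+$, and obeys the formal identity $\partial_t u\,(u^m-k^m)_+=\partial_t[\b_k(u)]$. (When $0<m<1$, the integrability of $\b_k(u)$ and the admissibility of the case $k=0$ are most cleanly handled by first truncating $(\sigma^m-k^m)_+$ at a level $\lambda$, running the argument with the corresponding bounded primitive, and noting that the conclusion is already obtained for every fixed $\lambda>0$.)

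Concretely, I fix $\tau\in(0,T)$, pick a Lipschitz cutoff $\zeta=\zeta_\varepsilon$ in time with $\zeta\equiv1$ on $[0,\tau-\varepsilon]$, $\zeta\equiv0$ on $[\tau,T]$ and affine in between, and test a time-mollified form of the subsolution inequality with $\varphi=\zeta\,(\mollifytime{u^m}{h}-k^m)_+$. This is an admissible nonnegative test function vanishing for $t$ near $T$: by hypothesis $(u^m-k^m)_+(\cdot,t)\in H^1_0(\Omega)$ for a.e.\ $t$, hence $0\le(\mollifytime{u^m}{h}-k^m)_+\le\mollifytime{(u^m-k^m)_+}{h}\in H^1_0(\Omega)$ forces $(\mollifytime{u^m}{h}-k^m)_+(\cdot,t)\in H^1_0(\Omega)$, and $u^m\in L^2(0,T;H^1(\Omega))$ supplies the spatial regularity. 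Since $\zeta$ depends only on $t$, the spatial term equals $\iint\zeta\,|\nabla(\mollifytime{u^m}{h}-k^m)_+|^2\,\d x\d t\ge0$, and as $\mollifytime{u^m}{h}\to u^m$ in $L^2(0,T;H^1(\Omega))$ it converges as $h\to0$ to $\iint\zeta\,|\nabla(u^m-k^m)_+|^2\,\d x\d t$.

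For the parabolic term I would invoke the time-mollification lemma for the PME (the mechanism behind~\cite{MSb}, see also~\cite{BLS}): using the convexity of $\b_k$ and Jensen's inequality for the averaging mollification one obtains, in the limit $h\to0$, a lower bound of the form
\[
  \liminf_{h\to0}\iint_{\Omega_T}\partial_t\mollifytime{u}{h}\,\zeta\,(\mollifytime{u^m}{h}-k^m)_+\,\d x\d t
  \;\ge\;-\iint_{\Omega_T}\b_k(u)\,\zeta'\,\d x\d t\;-\;I_0 ,
\]
where the initial contribution $I_0$ is controlled — using $\b_k(s)\le(s-k)_+(s^m-k^m)_+$ and $(s^m-k^m)_+\le mk^{m-1}(s-k)_+$ for $0<m\le1$ — by a constant multiple of $\lim_{h\to0}\frac1h\int_0^h\int_\Omega(u-k)_+\,\d x\d t=0$. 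Feeding this into the (mollified) subsolution inequality, using the sign of the spatial term, and letting $\varepsilon\to0$ — so that $-\iint\b_k(u)\zeta'=\frac1\varepsilon\int_{\tau-\varepsilon}^{\tau}\int_\Omega\b_k(u)\to\int_\Omega\b_k(u)(\cdot,\tau)$ for a.e.\ $\tau$, while $\iint\zeta\,|\nabla(u^m-k^m)_+|^2\nearrow\iint_{\Omega\times(0,\tau)}|\nabla(u^m-k^m)_+|^2$ — yields for a.e.\ $\tau\in(0,T)$
\[
  \int_\Omega\b_k(u)(\cdot,\tau)\,\d x+\iint_{\Omega\times(0,\tau)}|\nabla(u^m-k^m)_+|^2\,\d x\d t\;\le\;0 .
\]
Both terms being nonnegative, $\b_k(u)(\cdot,\tau)=0$ a.e.\ in $\Omega$; since $\b_k$ vanishes only on $\{s\le k\}$, this gives $u(\cdot,\tau)\le k$ a.e., and as a.e.\ $\tau\in(0,T)$ is admissible, $u\le k$ a.e.\ in $\Omega_T$.

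The main obstacle is precisely the parabolic term: because porous medium solutions are not closed under addition, $u\,\partial_t(u^m-k^m)_+$ cannot be integrated by parts in time at the available regularity, and after mollifying one must reconcile the two incompatible mollifications $\mollifytime{u}{h}$ and $\mollifytime{u^m}{h}$ and, crucially, extract an initial term that is exactly of the shape controlled by the hypothesis $\lim_{h\to0}\frac1h\int_0^h\int_\Omega(u-k)_+\,\d x\d t=0$. This is what the mollification machinery of~\cite{MSb} delivers, via the convexity of $\b_k$ and Jensen's inequality; the remaining ingredients — admissibility of the test function, nonnegativity of the gradient term, and the concluding dichotomy for $\b_k$ — are routine.
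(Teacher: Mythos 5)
The paper itself offers no proof of this lemma, only the citation to \cite{MSb}, so the comparison can only be with the standard argument that such proofs follow; your overall architecture (test with the truncation $(u^m-k^m)_+$, convert the time term into the convex primitive $\b_k$, use the $H^1_0$-hypothesis to make the spatial term a nonnegative square, and let the initial-time hypothesis kill the boundary contribution) is indeed that standard route. However, the two steps in which all the difficulty is concentrated do not close as you have written them. For the parabolic term you test with $\zeta\,(\mollifytime{u^m}{h}-k^m)_+$ and assert that convexity of $\b_k$ plus Jensen's inequality gives $\liminf_{h\to0}\iint\partial_t\mollifytime{u}{h}\,\zeta\,(\mollifytime{u^m}{h}-k^m)_+\ge-\iint\b_k(u)\,\zeta'-I_0$. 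But writing $\partial_t\mollifytime{u}{h}=\tfrac1h\bigl(u-\mollifytime{u}{h}\bigr)$ and comparing $(\mollifytime{u^m}{h}-k^m)_+$ with $\bigl((\mollifytime{u}{h})^m-k^m\bigr)_+$ (which is what produces $\partial_t\bigl[\b_k(\mollifytime{u}{h})\bigr]$), the remainder is $\tfrac1h\bigl(u-\mollifytime{u}{h}\bigr)\bigl[(\mollifytime{u^m}{h}-k^m)_+-((\mollifytime{u}{h})^m-k^m)_+\bigr]$: Jensen gives the bracket a sign, but $u-\mollifytime{u}{h}$ has none, so nothing can be discarded and the factor $1/h$ is not compensated; every other natural splitting has the same defect. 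The standard cure — used twice in this very paper, in the proofs of Lemma~\ref{l.supersol_truncation} and Lemma~\ref{l.bdd_caccioppoli_positivityset} — is to keep the nonlinearity applied to $u$ itself, i.e.\ to test with $\zeta\,\min\{(u^m-k^m)_+,\lambda\}$, and to swap $u\mapsto\mollifytime{u}{h}$ inside the nonlinearity via the matched-monotonicity inequality $\bigl(g(u)-g(\mollifytime{u}{h})\bigr)\bigl(u-\mollifytime{u}{h}\bigr)\ge0$ for the nondecreasing map $g(s)=\min\{(s^m-k^m)_+,\lambda\}$; only then does $\partial_t\mollifytime{u}{h}\,g(\mollifytime{u}{h})=\partial_t\bigl[\b_{k,\lambda}(\mollifytime{u}{h})\bigr]$ appear with an error term of the favorable sign, while the spatial term becomes $\iint\zeta\,\chi_{\{0<u^m-k^m<\lambda\}}|\nabla u^m|^2\ge0$.

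The second gap is the control of the initial contribution. As literally written, your chain $\b_k(s)\le(s-k)_+(s^m-k^m)_+\le m\,k^{m-1}(s-k)_+^2$ produces a quantity quadratic in $(u-k)_+$, whereas the hypothesis only controls the $L^1$-average of $(u-k)_+$ near $t=0$; with no $L^2$ or $L^\infty$ information at the initial time this does not give $I_0\to0$, and the bound moreover excludes $k=0$ and the range $m>1$, both allowed in the lemma. The $\lambda$-truncation that you mention only parenthetically is therefore not a convenience but the essential device: for $\b_{k,\lambda}(s)=\int_k^s\min\{(\sigma^m-k^m)_+,\lambda\}\,\d\sigma$ one has $\b_{k,\lambda}(s)\le\lambda\,(s-k)_+$, so the initial term (an average of $\b_{k,\lambda}(u)$ over $\Omega\times(0,2\theta)$, extracted by an additional cutoff vanishing on $[0,\theta]$) is at most a multiple of $\lambda\,\tfrac1{2\theta}\int_0^{2\theta}\int_\Omega(u-k)_+\,\d x\,\d t\to0$, uniformly in $m>0$ and for $k\ge0$; since $\b_{k,\lambda}$ still vanishes exactly on $\{s\le k\}$, the conclusion follows for each fixed $\lambda>0$. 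With these two repairs — matched monotonicity instead of the Jensen argument, and the truncated primitive carrying the initial-term estimate — your proof does yield the lemma.
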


Even though we cannot add constants to solutions, we can show the following result for weak solutions with perturbed boundary values. For the proof in the case $m > 1$, see~\cite[Lemma 3.2]{KLL}.

\begin{lem} \label{l.u_eps-u-estimate}
Suppose that $0<m<1$ and $\Omega\Subset\R^n$. Let $g$ be a nonnegative function satisfying $g^m \in
L^2(0,T;H^1(\Omega))$, $g \in C([0,T]; L^{m+1}(\Omega)) \cap L^\infty
(\Omega_T)$.  Denote $g_\eps = (g^m +
\eps^m)^\frac{1}{m}$, for $\eps\in(0,1]$. Let $u$ and $u_\eps$ be global weak solutions in $\Omega_T$ \textup{(}in class $C([0,T];L^{m+1}(\Omega))$\textup{)}, taking boundary values $g$ and $g_\eps$, respectively, in the Sobolev sense on the lateral boundary, and $u (x,0) = g(x,0)$ and $u_\eps(x,0) = g_\eps(x,0)$ for a.e. $x \in \Omega$. Then, there exists $c = c(m,\|g\|_\infty,|\Omega|,T) > 0$ such that
$$
\iint_{\Omega_T} (u_\eps - u)(u_\eps^m - u^m) \, \d x \d t \leq c \delta (\eps), 
$$
in which $\delta (\eps) := \max \left\{\eps^m, \int_\Omega (g_\eps(x,0) - g(x,0))\, \d x \right\} \xrightarrow{\eps \to 0} 0$. 
\end{lem}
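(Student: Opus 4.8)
The plan is to derive the estimate by choosing, in the difference of the weak formulations for $u_\eps$ and $u$, the test function obtained by integrating $u_\eps^m-u^m-\eps^m$ backwards in time. The point is that $u_\eps^m-u^m$ itself is not admissible — it equals the nonzero constant $\eps^m$ on the lateral boundary — but the boundary-value hypotheses give
\[
f:=u_\eps^m-u^m-\eps^m=(u_\eps^m-g_\eps^m)-(u^m-g^m)\in L^2(0,T;H^1_0(\Omega)),
\]
and since $g_\eps^m=g^m+\eps^m$, the function $f$ has vanishing trace at $t=0$. I would then set $\Phi(\cdot,t):=\int_t^T f(\cdot,s)\,\d s$, so that $\Phi\in C([0,T];H^1_0(\Omega))$, $\Phi(\cdot,T)=0$, $\partial_t\Phi=-f\in L^2(\Omega_T)$, and $\nabla\Phi(\cdot,t)=H(\cdot,t):=\int_t^T\nabla(u_\eps^m-u^m)(\cdot,s)\,\d s$ with $\partial_tH=-\nabla(u_\eps^m-u^m)$ and $H(\cdot,T)=0$. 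Using the $C([0,T];L^{m+1}(\Omega))$–regularity of the global weak solutions, the weak formulation may be upgraded so that it holds for test functions vanishing only at $t=T$, with $\varphi(\cdot,0)$ contributing a term $\int_\Omega u(\cdot,0)\varphi(\cdot,0)\,\d x$ on the right; subtracting the resulting equations for $u_\eps$ and $u$ and inserting $\varphi=\Phi$ gives
\[
\iint_{\Omega_T}\!\bigl(-(u_\eps-u)\,\partial_t\Phi+\nabla(u_\eps^m-u^m)\cdot\nabla\Phi\bigr)\,\d x\d t=\int_\Omega\bigl(g_\eps(x,0)-g(x,0)\bigr)\Phi(x,0)\,\d x.
\]

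Substituting $\partial_t\Phi=-f$, the first term equals $\iint_{\Omega_T}(u_\eps-u)f$; substituting $\nabla(u_\eps^m-u^m)=-\partial_tH$, the second term equals $-\iint_{\Omega_T}\partial_tH\cdot H=-\tfrac12\iint_{\Omega_T}\partial_t|H|^2=\tfrac12\int_\Omega|H(x,0)|^2\,\d x\ge0$, using $H(\cdot,T)=0$. Dropping this nonnegative quantity and recalling the definition of $f$, I arrive at
\[
\iint_{\Omega_T}(u_\eps-u)(u_\eps^m-u^m)\,\d x\d t\le\eps^m\!\iint_{\Omega_T}(u_\eps-u)\,\d x\d t+\int_\Omega\bigl(g_\eps(x,0)-g(x,0)\bigr)\Phi(x,0)\,\d x.
\]
To bound the right-hand side I would use the maximum principle (Lemma~\ref{l.maximum-principle}) for the subsolutions $u$ and $u_\eps$ with levels $k=\|g\|_\infty$ and $k=\|g_\eps\|_\infty$ — its hypotheses hold since $0\le(u^m-k^m)_+\le(u^m-g^m)_+\in L^2(0,T;H^1_0(\Omega))$ and $g(\cdot,0),g_\eps(\cdot,0)\le k$ — to obtain $0\le u\le\|g\|_\infty$ and $0\le u_\eps\le\|g_\eps\|_\infty=(\|g\|_\infty^m+\eps^m)^{1/m}\le(\|g\|_\infty^m+1)^{1/m}$ a.e.\ in $\Omega_T$, for $\eps\in(0,1]$. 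Consequently $\iint_{\Omega_T}(u_\eps-u)\le\|g_\eps\|_\infty|\Omega|\,T$ and $|f|\le\|g\|_\infty^m+1$ a.e., hence $|\Phi(x,0)|\le T(\|g\|_\infty^m+1)$; since $g_\eps(\cdot,0)-g(\cdot,0)\ge0$ with $\int_\Omega(g_\eps(x,0)-g(x,0))\,\d x\le\delta(\eps)$ by definition, and $\eps^m\le\delta(\eps)$, both terms are bounded by $c(m,\|g\|_\infty,|\Omega|,T)\,\delta(\eps)$, which is the claim.

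The step I expect to require most care is technical rather than conceptual: $\Phi$ is only Lipschitz in time with values in $L^2(\Omega)$, hence not literally an admissible test function, so it must be approximated — for instance by the time mollification $\mollifytime{\Phi}{h}$ — and one has to verify $\mollifytime{\Phi}{h}\to\Phi$ in $L^2(0,T;H^1_0(\Omega))$, $\partial_t\mollifytime{\Phi}{h}\to-f$ in $L^2(\Omega_T)$ and $\mollifytime{\Phi}{h}(\cdot,0)\to\Phi(\cdot,0)$ in $L^2(\Omega)$, and then pass to the limit in the identity above; this is routine since every term there is a continuous bilinear expression in $L^2$–type quantities, with $u_\eps,u$ bounded and $\nabla u_\eps^m,\nabla u^m\in L^2(\Omega_T)$. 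The computation of the gradient term via $\partial_tH\cdot H=\tfrac12\partial_t|H|^2$ is carried out at the same mollified level, and the passage from the $C_0^\infty(\Omega_T)$ weak formulation to the one with an initial-time term is the standard consequence of the continuity of $u$ and $u_\eps$ in time into $L^{m+1}(\Omega)$.
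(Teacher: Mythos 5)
Your argument is correct and essentially identical to the paper's proof: you use the same Oleinik-type test function $\int_t^T(u_\eps^m-u^m-\eps^m)\,\d s$ (your $\Phi$, the paper's $\eta$), the same observation that the resulting gradient term equals $\tfrac12\int_\Omega\bigl|\int_0^T\nabla(u_\eps^m-u^m)\,\d t\bigr|^2\,\d x\ge0$ and may be discarded, and the same maximum-principle bounds $u\le\|g\|_\infty$, $u_\eps\le(\|g\|_\infty^m+1)^{1/m}$ to control the remaining terms by $c\,\delta(\eps)$; your extra care about admissibility of $\Phi$ via time mollification is a detail the paper leaves implicit. The only point you do not address is the verification that $\delta(\eps)\to0$ as $\eps\to0$, which is part of the statement and which the paper settles by a short dominated-convergence argument for $\int_\Omega(g_\eps(x,0)-g(x,0))\,\d x$.
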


\begin{proof}
We use the Oleinik type test function 
\[
\eta (x,t) := 
\begin{cases}
\int_t^T (u_\eps^m -u^m  - \eps^m) \, \d s, & \mbox{for }0<t<T,\\
0, & \mbox{for }t \geq T,
\end{cases}
\]
in the weak formulation. Observe that this function vanishes on the lateral boundary in Sobolev sense, and 
$$
\partial_t \eta = - (u_\eps^m - u^m) + \eps^m,\quad \nabla \eta =
\int_t^T \nabla (u_\eps^m - u^m) \, \d s 
\quad\mbox{on $\Omega_T$. }
$$
By subtracting the weak formulations with the given test function, we obtain
\begin{align*}
\iint_{\Omega_T} (u_\eps - u)(u_\eps^m - u^m - &\eps^m) + \nabla (u_\eps^m - u^m)\cdot \int_t^T \nabla (u_\eps^m - u^m)\, \d s\,  \d x \d t \\
&= \int_\Omega (g_\eps(x,0) - g(x,0)) \int_0^T (u_\eps^m - u^m - \eps^m) \, \d s \, \d x \\
&= \int_\Omega (g_\eps(x,0) - g(x,0)) \int_0^T (u_\eps^m - u^m ) \, \d s \, \d x \\
&\phantom{+} - \eps^m T \int_\Omega (g_\eps(x,0) - g(x,0)) \, \d x .
\end{align*}
The divergence part on the left-hand side equals
$$
\frac{1}{2} \int_\Omega \left( \int_0^T (\nabla u_\eps^m - \nabla u^m ) \, \d t \right)^2 \, \d x \geq 0,
$$
such that we can estimate it away and obtain the equality above as inequality $\leq$ without that term. Similarly, since $g_\eps \geq g$, the very last term is negative and we can omit that as well. Now by denoting $M:= \|g\|_\infty$, in total we have
\begin{align*}
  &\iint_{\Omega_T} (u_\eps - u)(u_\eps^m - u^m)\, \d x \d t\\
  &\qquad\leq \eps^m \iint_{\Omega_T} (u_\eps - u)\, \d x \d t 
 + \int_\Omega (g_\eps(x,0) - g(x,0)) \int_0^T (u_\eps^m - u^m ) \, \d s \, \d x \\
&\qquad\leq \eps^m C(m,M) |\Omega_T| + C(m,M)T \int_\Omega (g_\eps(x,0) - g(x,0))\, \d x,
\end{align*}
since the maximum principle, Lemma~\ref{l.maximum-principle}, implies $u \leq M$ and $u_\eps \leq (M^m + 1)^\frac{1}{m}$ a.e. in $\Omega_T$. Now we have that
$$
g_\eps(x,0) - g(x,0) = \left( g^m(x,0) + \eps^m\right)^\frac{1}{m} - g(x,0) \xrightarrow{\eps \to 0} 0
$$
pointwise a.e.~in $\Omega$. Also, $0 \leq g_\eps(x,0) - g(x,0) \leq
(2^\frac{1-m}{m}-1) g(x,0) + 2^\frac{1-m}{m} \in L^1(\Omega)$, such
that the dominated convergence theorem implies 
$$
\lim_{\eps \to 0} \int_\Omega (g_\eps(x,0) - g(x,0))\, \d x = 0.
$$
By choosing $\delta (\eps) = \max \left\{\eps^m, \int_\Omega (g_\eps(x,0) - g(x,0))\, \d x \right\}$, the claim follows.
\end{proof}

\subsection{Continuous weak solutions}

As an auxiliary tool, we will also use a local notion of continuous very weak solution, see~\cite{Abdulla1,Abdulla2}.

\begin{defin} \label{d.very-weak-cont}
We say that $u \in C(\overline{\Omega_T})$ is a continuous very weak
solution with boundary values $g \in C(\overline{\partial_p
  \Omega_T})$, if $u = g$ on $\overline{\partial_p \Omega_T}$ and for
every $0<t_1<t_2 \leq T$ and smooth $Q \Subset \Omega$
\begin{align*} 
\iint_{Q_{t_1,t_2}} &-(u \partial_t \eta + u^m \Delta \eta) \, \d x\d t + \int_{t_1}^{t_2} \int_{\partial Q} u^m \partial_\nu \eta \, \d \sigma \d t  \nonumber \\
&= \int_{Q} u(x,t_1) \eta(x,t_1) \, \d x - \int_{Q} u(x,t_2) \eta (x,t_2) \, \d x
\end{align*}
holds true for all $\eta \in C^{2,1}(\overline{Q_{t_1,t_2}})$ vanishing on $\partial Q \times (t_1,t_2]$, where is $\nu$ is the outward-directed normal vector to $Q$ at points on $\partial Q$.
\end{defin}

We recall existence, comparison and stability results for the notion defined above from~\cite{Abdulla1,Abdulla2}.

\begin{theo} \label{t.existence-abdulla}
Let $0<m<1$ and $\Omega_T$ be a $C^{1,\alpha}$-cylinder with $\alpha>0$. Then, for any $g \in C(\overline{\partial_p \Omega_T})$ there exists a unique locally H\"older continuous very weak solution $u\in C(\overline{\Omega_T})$ in the sense of Definition~\ref{d.very-weak-cont} such that $u = g$ on $\overline{\partial_p \Omega_T}$. Furthermore, if $u_1$ and $u_2$ are weak solutions with boundary values $g_1$ and $g_2$, respectively, satisfying $g_1 \leq g_2$, then $u_1 \leq u_2$.
\end{theo}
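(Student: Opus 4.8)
This theorem assembles well-posedness facts for the singular equation~\eqref{evo_eqn} that are established in~\cite{Abdulla1,Abdulla2}; we only indicate the mechanism. For \emph{existence}, the plan is to regularize~\eqref{evo_eqn} into uniformly parabolic problems: replace $s\mapsto s^m$ by a family $\Phi_\eps\in C^\infty([0,\infty))$ with $\eps\le\Phi_\eps'\le\tfrac1\eps$ and $\Phi_\eps\to s^m$ locally uniformly, and mollify $g$ to smooth nonnegative data $g_\eps$ on $\overline{\partial_p\Omega_T}$. Since $\Omega_T$ is a $C^{1,\alpha}$-cylinder, the problems $\partial_t u_\eps=\Delta\Phi_\eps(u_\eps)$ in $\Omega_T$, $u_\eps=g_\eps$ on $\partial_p\Omega_T$, admit classical solutions $u_\eps$ by standard quasilinear theory. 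Two $\eps$-uniform a priori bounds permit the passage to the limit: first, the $L^\infty$-bound $\min_{\overline{\partial_p\Omega_T}}g_\eps\le u_\eps\le\max_{\overline{\partial_p\Omega_T}}g_\eps$ from the classical maximum principle, since constant functions solve the regularized equation; second, uniform interior and lateral H\"older estimates, i.e.\ the DiBenedetto intrinsic-scaling regularity theory for the porous medium / fast diffusion equation up to a $C^{1,\alpha}$ boundary with continuous boundary data, whose constants depend only on $\|u_\eps\|_\infty$ and the geometry. By the Arzel\`a--Ascoli theorem a subsequence converges locally uniformly on $\overline{\Omega_T}$ to some $u\in C(\overline{\Omega_T})$ with $u=g$ on $\overline{\partial_p\Omega_T}$; as the identity of Definition~\ref{d.very-weak-cont} is linear in the fixed test function $\eta$, one passes to the limit in it and finds that $u$ is a continuous very weak solution, locally H\"older continuous by the uniform estimates.

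For \emph{comparison} (uniqueness being the case $g_1=g_2$), let $u_1,u_2$ be continuous very weak solutions with $g_1\le g_2$ on $\overline{\partial_p\Omega_T}$. The plan is a Holmgren-type duality argument. One first upgrades the identity of Definition~\ref{d.very-weak-cont}, by exhausting $\Omega$ with smooth subdomains, to hold with $Q$ replaced by $\Omega$ and $\eta$ only required to vanish on $\partial\Omega\times(t_1,t_2]$; this step uses the $C^{1,\alpha}$-regularity of $\partial\Omega$ and the continuity of $u_1,u_2$ up to the lateral boundary. Subtracting the two identities on $\Omega_{t_1,t_2}$ and writing $u_1^m-u_2^m=a\,(u_1-u_2)$, where $a:=(u_1^m-u_2^m)/(u_1-u_2)$ on $\{u_1\ne u_2\}$ and $a:=0$ otherwise, one tests, for a fixed nonnegative $\chi\in C_0^\infty(\Omega\times(t_1,t_2))$, with $\eta=\eta_\eps$ solving the backward dual problem $\partial_t\eta+a_\eps\Delta\eta=-\chi$ in $\Omega_{t_1,t_2}$, $\eta=0$ on $\partial\Omega\times(t_1,t_2]$, $\eta(\cdot,t_2)=0$, where $a_\eps\in C^\infty$ with $c_0\le a_\eps\le\tfrac1\eps$ approximates $a$ and $c_0>0$, depending only on $m$ and $\max(\|u_1\|_\infty,\|u_2\|_\infty)$, is a lower bound for $a$ on $\{u_1\ne u_2\}$ furnished by the concavity of $s\mapsto s^m$ together with the $L^\infty$-bound. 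The dual problem is then uniformly parabolic with $\eps$-independent ellipticity constant $c_0$, and classical theory supplies a smooth $\eta_\eps\ge0$ with the $\eps$-uniform bounds $\|\eta_\eps\|_\infty\le T\|\chi\|_\infty$, $\|\nabla\eta_\eps(\cdot,t_1)\|_{L^2(\Omega)}+\|a_\eps^{1/2}\Delta\eta_\eps\|_{L^2(\Omega_{t_1,t_2})}\le C$, and control of $\partial_\nu\eta_\eps$ on $\partial\Omega\times(t_1,t_2)$. Using $\partial_t\eta_\eps=-\chi-a_\eps\Delta\eta_\eps$, the subtracted identity becomes
\begin{align*}
\iint_{\Omega_{t_1,t_2}}(u_1-u_2)\,\chi\,\d x\d t
&= \int_{\Omega}(u_1-u_2)(\cdot,t_1)\,\eta_\eps(\cdot,t_1)\,\d x \\
&\quad - \int_{t_1}^{t_2}\int_{\partial\Omega}(g_1^m-g_2^m)\,\partial_\nu\eta_\eps\,\d\sigma\d t \\
&\quad - \iint_{\Omega_{t_1,t_2}}(u_1-u_2)(a_\eps-a)\,\Delta\eta_\eps\,\d x\d t .
\end{align*}
On the right, the lateral term is nonpositive because $g_1^m-g_2^m\le0$ while $\partial_\nu\eta_\eps\le0$ (Hopf); the error term tends to $0$ as $\eps\to0$ (see below); and, letting $t_1\downarrow0$, the initial term is dominated by $T\|\chi\|_\infty\int_\Omega(u_1-u_2)_+(\cdot,t_1)\,\d x$, which vanishes since $(u_1-u_2)_+(\cdot,t_1)\to(g_1(\cdot,0)-g_2(\cdot,0))_+=0$ uniformly on $\overline\Omega$. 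As every $\chi\in C_0^\infty(\Omega_T)$ has support bounded away from $\{t=0\}$, one obtains $\iint_{\Omega_T}(u_1-u_2)\,\chi\,\d x\d t\le0$ for all such $\chi\ge0$, hence $u_1\le u_2$ a.e.\ in $\Omega_T$ and, by continuity, everywhere.

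The \emph{main obstacle} is the vanishing, as $\eps\to0$, of the error term $\iint_{\Omega_{t_1,t_2}}(u_1-u_2)(a_\eps-a)\Delta\eta_\eps\,\d x\d t$. The multiplier $a$ is nonnegative and — crucially for the $\eps$-uniformity of the dual energy estimates — bounded below by $c_0>0$ wherever $u_1\ne u_2$, but it blows up where $u_1$ and $u_2$ both approach $0$, exactly the set where~\eqref{evo_eqn} is singular; there the truncation $a_\eps\le\tfrac1\eps$ genuinely differs from $a$. Estimating $\bigl|\iint(u_1-u_2)(a_\eps-a)\Delta\eta_\eps\bigr|$ by $\|(u_1-u_2)(a_\eps-a)a_\eps^{-1/2}\|_{L^2}\,\|a_\eps^{1/2}\Delta\eta_\eps\|_{L^2}$ reduces matters to controlling the blow-up rate of $a$ near $\{u_1=u_2=0\}$ in this weighted sense. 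That estimate, together with the reduction of the weak identity from $Q\Subset\Omega$ to $\Omega$ and the control of $\partial_\nu\eta_\eps$ on the lateral boundary — both relying on the $C^{1,\alpha}$-regularity of $\partial\Omega$ and continuity up to $\overline{\partial_p\Omega_T}$ — is the technical content carried out in~\cite{Abdulla1,Abdulla2}.
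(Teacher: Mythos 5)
The paper offers no proof of this theorem: it is recalled directly from \cite{Abdulla1,Abdulla2} (see the sentence preceding its statement), so there is no in-paper argument to compare yours against. Your outline — regularization of the nonlinearity and mollification of the data, $\eps$-uniform $L^\infty$ and continuity estimates, Arzel\`a--Ascoli and passage to the limit in the very weak identity for existence; a Holmgren-type duality argument with the quotient $a=(u_1^m-u_2^m)/(u_1-u_2)$, which for $0<m<1$ is indeed bounded below by $mM^{m-1}$ on $\{u_1\neq u_2\}$, for comparison and uniqueness — is a fair account of the strategy of those references, and you correctly isolate where the real work lies (equicontinuity up to $\overline{\Omega_T}$, the vanishing of the $(a_\eps-a)$ error term near the singular set $\{u_1=u_2=0\}$, and the upgrade of the test identity from $Q\Subset\Omega$ to $\Omega$), deferring exactly those steps to the same citations, which is also what the paper does. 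Two caveats worth noting: with boundary data that are merely continuous you cannot assert lateral H\"older estimates with constants depending only on $\|u_\eps\|_\infty$ and the geometry — what the references provide (and all the theorem needs) is a boundary modulus of continuity via barriers, yielding equicontinuity on $\overline{\Omega_T}$ and \emph{local} H\"older continuity in the interior; and since $\eta_\eps$ depends on $t_1$, the order of limits in your comparison argument ($\eps\to0$ at fixed $t_1$, then $t_1\downarrow0$) should be made explicit. Neither point changes the verdict that your sketch is consistent with the cited proofs rather than a self-contained alternative.
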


\begin{theo}[{\hspace{1sp}\cite[Corollary 2.3]{Abdulla2}}] \label{t.stability_existence}
Let $0 < m < 1$ and let $\Omega_T$ be a $C^{1,\alpha}$-cylinder
with $\alpha>0$. Also, let $h_j \in C(\overline{\partial_p \Omega_T})$ be
nonnegative, and let $u_j \in C(\overline{\Omega_T})$ be the corresponding
very weak solution given by Theorem~\ref{t.existence-abdulla}, for
$j\in\N_0$. If we have $\sup_{\overline{\partial_p \Omega}_T} \left| h_j - h_0 \right| \to 0$ as $j \to \infty$, then $\lim_{j\to \infty} u_j = u_0$ in $\overline{\Omega_T}$, and the convergence is locally uniform in $\Omega \times (0,T]$ as $j \to \infty$.
\end{theo}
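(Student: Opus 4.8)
The plan is to trap each $u_j$ between two very weak solutions whose boundary data are explicit small perturbations of $h_0$, to control the gap between these barriers quantitatively by means of Lemma~\ref{l.u_eps-u-estimate}, and finally to upgrade the resulting integral bound to the two stated modes of convergence using the interior H\"older regularity from Theorem~\ref{t.existence-abdulla}. Since the PME does not permit additive perturbations of solutions, the perturbation I would use is the multiplicative‑type one $v\mapsto(v^m+\eps^m)^{1/m}$ that already appears in Lemma~\ref{l.u_eps-u-estimate}. Throughout I rely on the standard fact that, for continuous bounded boundary data on a smooth cylinder, the very weak solution of Theorem~\ref{t.existence-abdulla} is also the global weak solution in the class $C([0,T];L^{m+1}(\Omega))$ taking the same boundary values, which is what makes Lemma~\ref{l.u_eps-u-estimate} applicable to the solutions at hand. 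I also use that constants are very weak solutions, so comparison with the constant solution $M$ (a common bound of all the $h_j$, available since $h_j\to h_0$ uniformly on the compact set $\overline{\partial_p\Omega_T}$) gives $u_j\le M$ in $\Omega_T$.

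Set $\eps_j:=\sup_{\overline{\partial_p\Omega_T}}|h_j-h_0|\to 0$ and fix $J$ with $\eps_j\le 1$ for $j\ge J$. For such $j$ I would introduce $(u_0)_{\eps_j}$ and $(u_j)_{\eps_j}$, the global weak solutions with boundary data $(h_0^m+\eps_j^m)^{1/m}$ and $(h_j^m+\eps_j^m)^{1/m}$ on $\overline{\partial_p\Omega_T}$. From $(a+b)^m\le a^m+b^m$ for $a,b\ge0$, $0<m<1$, one gets $(u_0)_{\eps_j}\ge u_0+\eps_j$ and $(u_j)_{\eps_j}\ge u_j+\eps_j$, so on $\overline{\partial_p\Omega_T}$ one has $(u_0)_{\eps_j}\ge h_0+\eps_j\ge h_j$ and $(u_j)_{\eps_j}\ge h_j+\eps_j\ge h_0$. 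By the comparison assertion of Theorem~\ref{t.existence-abdulla} this forces $u_j\le(u_0)_{\eps_j}$ and $u_0\le(u_j)_{\eps_j}$ in $\Omega_T$, hence
\[
 |u_j-u_0|\le\big((u_0)_{\eps_j}-u_0\big)+\big((u_j)_{\eps_j}-u_j\big)\qquad\text{a.e.\ in }\Omega_T.
\]
The slightly asymmetric device here is essential: $u_j$ is dominated from above by a perturbation of $u_0$ and from below by a perturbation of $u_j$ itself, which is what keeps both comparisons in the reach of Lemma~\ref{l.u_eps-u-estimate}.

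To estimate the two terms I would apply Lemma~\ref{l.u_eps-u-estimate} once with $g=u_0$ and once with $g=u_j$ (both are nonnegative, bounded by $M$, and lie in the required class). This gives $\iint_{\Omega_T}\big((u_0)_{\eps_j}-u_0\big)\big((u_0)_{\eps_j}^m-u_0^m\big)\,\d x\d t\le c\,\delta(\eps_j)$ and $\iint_{\Omega_T}\big((u_j)_{\eps_j}-u_j\big)\big((u_j)_{\eps_j}^m-u_j^m\big)\,\d x\d t\le c_j\,\delta_j(\eps_j)$, where $c$ and $c_j$ are both $\le c(m,M,|\Omega|,T)$, hence bounded uniformly in $j$, and $\delta(\eps_j),\delta_j(\eps_j)\to0$ because $0\le(a^m+\eps^m)^{1/m}-a\le c(m,M)\eps^m$ for $0\le a\le M$ and $\eps\le1$. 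Since all functions involved are $\le M':=(M^m+1)^{1/m}$ (again by comparison with constant solutions) and $(a-b)(a^m-b^m)\ge m(M')^{m-1}(a-b)^2$ on $[0,M']$, these two bounds imply
\[
 \big\|(u_0)_{\eps_j}-u_0\big\|_{L^2(\Omega_T)}^2+\big\|(u_j)_{\eps_j}-u_j\big\|_{L^2(\Omega_T)}^2\le c(m,M,|\Omega|,T)\,\eps_j^m,
\]
which tends to $0$ as $j\to\infty$; therefore $u_j\to u_0$ in $L^2(\Omega_T)$.

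Finally I would pass to the two asserted convergences. On $\overline{\partial_p\Omega_T}$ it is immediate, since $u_j=h_j\to h_0=u_0$. On $\Omega\times(0,T]$, Theorem~\ref{t.existence-abdulla} gives each $u_j$ a local H\"older modulus of continuity depending only on $m$, $n$, $\sup u_j\le M'$, and the distance to $\partial_p\Omega_T$; hence on every compact $K\Subset\Omega\times(0,T]$ the family $\{u_j\}_{j\ge J}$ is equicontinuous and uniformly bounded, and an equicontinuous, uniformly bounded sequence that converges in $L^2(\Omega_T)$ must converge uniformly on $K$ to its $L^2$‑limit (apply the Arzel\`a--Ascoli theorem to an arbitrary subsequence; its sub‑subsequential uniform limit is forced to be $u_0$). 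This is the locally uniform convergence in $\Omega\times(0,T]$, and together with the boundary statement it gives pointwise convergence on all of $\overline{\Omega_T}$. The step I expect to be most delicate is not any single estimate but the structural obstruction that additive perturbations are unavailable, which is precisely what forces the $v\mapsto(v^m+\eps^m)^{1/m}$ perturbation and the asymmetric sandwiching above; the only genuinely external ingredient is then the uniform interior H\"older estimate needed to convert $L^2$‑convergence into locally uniform convergence.
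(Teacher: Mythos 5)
The paper does not prove this statement at all: it is quoted verbatim from \cite[Corollary 2.3]{Abdulla2}, so there is no internal argument to compare yours with, and your attempt has to stand on its own. As written it does not, because of one genuine gap: the ``standard fact'' you invoke to make Lemma~\ref{l.u_eps-u-estimate} applicable. That lemma requires $g^m\in L^2(0,T;H^1(\Omega))$ and global weak (finite-energy) solutions $u,u_\eps$ attaining the lateral data in the Sobolev sense; you apply it with $g=u_0$ and $g=u_j$, i.e.\ you need $u_0^m,u_j^m\in L^2(0,T;H^1(\Omega))$ and Sobolev attainment of the boundary values. For boundary data that are merely continuous on $\overline{\partial_p\Omega_T}$ this is not available from Theorem~\ref{t.existence-abdulla}, which only produces continuous \emph{very weak} solutions, and it is not true in general: already in the stationary/linear picture, a solution with continuous boundary data of no Sobolev regularity need not have square-integrable gradient up to the boundary, and Theorem~\ref{t.existence_continuous_weaksol} deliberately assumes $g^m\in L^2(0,T;H^1(\Omega))$ and $\partial_tg^m\in L^{\frac{m+1}{m}}$. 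Note that the paper itself, when it uses Lemma~\ref{l.u_eps-u-estimate} (proof of Lemma~\ref{l.supersubcal-cylinder-comparison}), is careful to apply it only to solutions with \emph{Lipschitz} boundary data produced by Theorem~\ref{t.existence_continuous_weaksol}. Your architecture can be repaired in exactly that spirit: approximate $h_0^m$ from below by a Lipschitz function $\bar h$ with $\sup|\bar h-h_0^m|\le\eps_j^m$, observe that for large $j$ both $u_j$ and $u_0$ are squeezed (via the comparison of Theorem~\ref{t.existence-abdulla}) between the two energy solutions with Lipschitz data $\bar h^{1/m}$ and $(\bar h+c\,\eps_j^m)^{1/m}$, and apply Lemma~\ref{l.u_eps-u-estimate} to that Lipschitz pair; this yields the same $L^2$ smallness of $u_j-u_0$ without ever using $u_0$ or $u_j$ as boundary data.

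Two smaller points. First, your intermediate claim $(u_0)_{\eps_j}\ge u_0+\eps_j$ cannot be asserted in the interior (solutions of the PME are not closed under addition of constants, and $(v^m+\eps^m)^{1/m}$ is not in general a supersolution when $v$ solves the equation); fortunately only the inequality between the \emph{boundary data} is needed for your sandwich, so this is a wording issue rather than a flaw. Second, in the final step the qualitative local H\"older continuity stated in Theorem~\ref{t.existence-abdulla} does not by itself give a \emph{uniform} modulus for the family $\{u_j\}$; you need to say that the $u_j$ are locally bounded local weak solutions with a common bound $M'$ and invoke the interior H\"older estimate of \cite[Chapter~6, Theorem~18.1]{DGV} (as the paper does in Lemma~\ref{l.supersubcal-cylinder-comparison} and Proposition~\ref{p.harnack-convergence}) to get equicontinuity on compact subsets of $\Omega\times(0,T]$; with that in place, your Arzel\`a--Ascoli argument and the trivial convergence on $\overline{\partial_p\Omega_T}$ do give the two asserted modes of convergence.
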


Then we are at the stage of stating a useful result concerning existence and comparison of continuous weak solutions.

\begin{theo} \label{t.existence_continuous_weaksol}
Let $0<m<1$ and $\Omega_T$ be a $C^{1,\alpha}$-cylinder
with $\alpha >0$. Suppose that the function $g \in C(\overline{\Omega_T})$ satisfies $g^m \in L^2(0,T;H^1(\Omega))$ and $\partial_t g^m \in L^\frac{m+1}{m}(\Omega_T)$. Then, there exists a unique global weak solution $u$ to~\eqref{evo_eqn} such that $u \in C(\overline{\Omega_T})$, $u$ is locally H\"older continuous and $u = g$ on $\partial_p \Omega_T$. Moreover, if $g'$ satisfies conditions above, $g\leq g'$ on $\partial_p \Omega_T$ and $h' \in C(\overline {\Omega_T})$ is a global weak solution with boundary values $g'$ on $\partial_p \Omega_T$, then $h \leq h'$ in $\Omega_T$.
\end{theo}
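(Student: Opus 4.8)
The plan is to realize $u$ as the locally Hölder continuous very weak solution furnished by Theorem~\ref{t.existence-abdulla} and then to promote it to a global weak solution in the sense of Definition~\ref{d.weak_sol} by a regularization of the boundary datum that keeps the solution away from the degeneracy set $\{u=0\}$. Since $g\in C(\overline{\Omega_T})$ restricts to a member of $C(\overline{\partial_p\Omega_T})$, Theorem~\ref{t.existence-abdulla} already yields a locally Hölder continuous $u\in C(\overline{\Omega_T})$ with $u=g$ on $\overline{\partial_p\Omega_T}$, so continuity, local Hölder continuity and the boundary condition are settled. It remains to verify that this $u$ is a global weak solution and to prove uniqueness and the comparison assertion.

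For the regularity, set $g_\eps:=(g^m+\eps^m)^{1/m}$ for $\eps\in(0,1]$, so that $g_\eps^m=g^m+\eps^m$ still satisfies $g_\eps^m\in L^2(0,T;H^1(\Omega))$ and $\partial_t g_\eps^m=\partial_t g^m\in L^{(m+1)/m}(\Omega_T)$, while $\eps\le g_\eps\le M:=(\|g\|_\infty^m+1)^{1/m}$ and $g_\eps\to g$ uniformly on $\overline{\Omega_T}$ as $\eps\to0$ (using that $g$ is bounded). Let $u_\eps\in C(\overline{\Omega_T})$ be the very weak solution with datum $g_\eps$ from Theorem~\ref{t.existence-abdulla}. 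Comparison with the constant solutions $\eps$ and $\|g_\eps\|_\infty$ gives $\eps\le u_\eps\le M$ on $\Omega_T$; on this range the equation for $v:=u_\eps^m$ is non-degenerate, so standard parabolic theory provides a continuous weak solution with $(u_\eps^m-g_\eps^m)(\cdot,t)\in H^1_0(\Omega)$ for a.e.\ $t$, which by the uniqueness in Theorem~\ref{t.existence-abdulla} must coincide with $u_\eps$. Testing the (Steklov-averaged) weak formulation of $u_\eps$ with $u_\eps^m-g_\eps^m$, which vanishes on the lateral boundary and at $t=0$, and using Young's inequality on the diffusion term together with $\partial_t u_\eps\cdot u_\eps^m=\tfrac1{m+1}\partial_t(u_\eps^{m+1})$ and an integration by parts in time in $\iint\partial_t u_\eps\cdot g_\eps^m$, one obtains
\begin{equation*}
\iint_{\Omega_T}|\nabla u_\eps^m|^2\d x\d t\le C\bigl(m,\|g\|_\infty,|\Omega_T|,\|g^m\|_{L^2(0,T;H^1)},\|\partial_t g^m\|_{L^{(m+1)/m}}\bigr),
\end{equation*}
uniformly in $\eps$.

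Next I let $\eps\to0$. By Theorem~\ref{t.stability_existence} we have $u_\eps\to u$ locally uniformly in $\Omega\times(0,T]$, and since $0\le u_\eps\le M$ this also gives $u_\eps\to u$ in $L^p(\Omega_T)$ for every $p<\infty$, whence $u_\eps^m\to u^m$ in $L^2(\Omega_T)$ (using $|a^m-b^m|\le|a-b|^m$ and the uniform bound). Combined with the $\eps$-uniform energy bound, weak compactness yields $u_\eps^m\rightharpoonup u^m$ in $L^2(0,T;H^1(\Omega))$; hence $u^m\in L^2(0,T;H^1(\Omega))$ and, $u$ being bounded, also $u^m\in L^{1/m}(\Omega_T)$. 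Passing to the limit in $\iint_{\Omega_T}(-u_\eps\partial_t\varphi+\nabla u_\eps^m\cdot\nabla\varphi)\d x\d t=0$ for $\varphi\in C^\infty_0(\Omega_T)$ shows that $u$ is a global weak solution. For uniqueness and comparison, one observes that a continuous global weak solution with continuous boundary values on $\partial_p\Omega_T$ is automatically a continuous very weak solution in the sense of Definition~\ref{d.very-weak-cont}: integrating by parts in space produces the boundary integral over $\partial Q$, and localizing in time with cut-off functions produces the two endpoint terms, using the continuity of $t\mapsto\int_Q u(\cdot,t)\eta(\cdot,t)\d x$. The uniqueness in Theorem~\ref{t.existence-abdulla} then gives uniqueness of the continuous weak solution, and the comparison in Theorem~\ref{t.existence-abdulla}, applied to the very weak solutions associated with $g\le g'$, gives $u\le h'$, which is the asserted comparison (the function called $h$ in the statement being this $u$).

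I expect the main obstacle to be precisely the passage between the two solution concepts: showing that the continuous very weak solution actually carries the Sobolev regularity $u^m\in L^2(0,T;H^1(\Omega))$ — which is why the shift $g\mapsto g_\eps$ away from $\{u=0\}$, together with the $\eps$-uniform energy estimate, is indispensable — and, conversely, showing that a continuous global weak solution satisfies the very weak formulation of Definition~\ref{d.very-weak-cont}, where the time term has to be handled with care since $\partial_t u$ is available only in a weak sense.
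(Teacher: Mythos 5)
Your proposal is correct in its overall architecture, but it reaches the conclusion by a genuinely different route than the paper. The paper works in the opposite direction: it first invokes the existence result of \cite{S-existence} for the \emph{degenerate} problem with the original datum $g$ (this is precisely where the hypotheses $g^m\in L^2(0,T;H^1(\Omega))$ and $\partial_t g^m\in L^{\frac{m+1}{m}}(\Omega_T)$ are used), obtaining directly a global weak solution with $u^m\in L^2(0,T;H^1(\Omega))$; boundedness then follows from the maximum principle (Lemma~\ref{l.maximum-principle}), interior H\"older continuity from \cite{DGV}, continuity up to the parabolic boundary from \cite{MSb}, and uniqueness from \cite{Vazquez}; only at the end is this weak solution identified with Abdulla's very weak solution, so that the comparison principle of Theorem~\ref{t.existence-abdulla} can be imported --- the same final mechanism you use for uniqueness and comparison. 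You instead start from Abdulla's solution and bootstrap the missing Sobolev regularity by the shift $g\mapsto g_\eps$, a uniform energy estimate, and the stability Theorem~\ref{t.stability_existence}; this buys independence from the degenerate existence and boundary-continuity literature and is a legitimate alternative, and your limit passage ($L^p$-convergence from local uniform convergence plus the uniform bound, weak $L^2$-compactness of $\nabla u_\eps^m$) is sound. The one place that needs to be made precise is the sentence ``standard parabolic theory provides a continuous weak solution with $(u_\eps^m-g_\eps^m)(\cdot,t)\in H^1_0(\Omega)$ \dots which by the uniqueness in Theorem~\ref{t.existence-abdulla} must coincide with $u_\eps$'': to invoke that uniqueness you need the nondegenerate approximation to attain $g_\eps$ \emph{continuously on $\overline{\partial_p\Omega_T}$} as well as in the variational sense, i.e.\ existence plus boundary continuity for the regularized Dirichlet problem. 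For a uniformly parabolic quasilinear equation on a $C^{1,\alpha}$-cylinder this is classical (barriers for the lateral boundary, compatibility at $t=0$), but it is exactly the package the paper outsources to \cite{S-existence} and \cite{MSb}, so it deserves an explicit reference or argument rather than a ``standard theory'' label; likewise the admissibility of the test function $u_\eps^m-g_\eps^m$ should be justified via Steklov/exponential time-mollification, as you indicate. With those two points filled in, your proof is complete and its uniqueness/comparison part coincides with the paper's (reduction of continuous global weak solutions to very weak solutions and appeal to Theorem~\ref{t.existence-abdulla}), the paper additionally quoting \cite{Vazquez} for uniqueness.
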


\begin{proof}

By~\cite[Theorem 1.2]{S-existence} there exists a global weak solution
$u$ to~\eqref{evo_eqn} such that $u \in L^\infty(0,T;L^{m+1}(\Omega))$
and $u^m \in L^2(0,T;H^1(\Omega))$, and $u$ attains the lateral boundary values
in the sense $u^m - g^m \in L^2(0,T;H^1_0(\Omega))$ and the initial
values $g_o = g(x,0)$ in $L^{m+1}$-sense. 
Observe that since $g \in L^\infty(\Omega_T)$, also $u \in
L^\infty (\Omega_T)$ by the maximum principle, Lemma~\ref{l.maximum-principle}. Now~\cite[Theorem 18.1, Chapter 6]{DGV} implies that $u$ is locally H\"older continuous and~\cite{MSb} that $u \in C(\overline{\Omega_T})$. Furthermore, the solution is unique by~\cite[Theorem 5.3]{Vazquez}. It is a straightforward consequence that $u$ is a very weak solution
according to Definition~\ref{d.very-weak-cont} with boundary values $g$. 

By Theorem~\ref{t.existence-abdulla} there exists a unique locally H\"older continuous very weak solution
$\tilde u \in C(\overline{\Omega_T})$ according to
Definition~\ref{d.very-weak-cont} such that $\tilde u = g$ on
$\overline{\partial_p \Omega_T}$. By uniqueness $u$ and $\tilde u$
coincide. The comparison principle holds by Theorem~\ref{t.existence-abdulla}.

\end{proof}

\subsection{Some properties of weak supersolutions}

Next we state a Caccioppoli inequality for bounded weak supersolutions, see~\cite[Lemma 2.15]{KinnunenLindqvist_crelle}.

\begin{lem} \label{l.bounded_caccioppoli}
Let $m>0$. Suppose that $u \leq M$ is a weak supersolution in $\Omega_T$. Then, there exists a numerical constant $C > 0$ such that
$$
\int_{t_1}^{t_2}\int_{\Omega} \xi^2 \left| \nabla u^m  \right|^2 \, \d x \d t \leq  C M^{2m} T \int_\Omega \left| \nabla \xi \right|^2 \, \d x + C M^{m+1} \int_\Omega \xi^2 \, \d x
$$
for every $\xi = \xi(x) \in C_0^\infty (\Omega)$ with $\xi \geq 0$, and any $t_1,t_2$ satisfying $0<t_1<t_2<T$.
\end{lem}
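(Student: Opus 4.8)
The plan is to test the weak supersolution inequality with a test function of the form $\varphi = \xi^2 (M^m - u^m)$, possibly after a time-mollification to make the argument rigorous, and to exploit boundedness $u \le M$ to control the time-derivative term. First I would observe that $\varphi$ is nonnegative (since $u \le M$ implies $u^m \le M^m$) and, after multiplying by a Lipschitz cut-off in time $\tau = \tau_\delta(t)$ supported in $(t_1,t_2)$ and approximating $\tau$ by smooth functions, it is an admissible nonnegative test function. The key structural point is to handle $-u\,\partial_t\varphi$; writing $\partial_t\varphi = -\xi^2 \partial_t u^m$, the parabolic term becomes $\iint \xi^2 u\,\partial_t u^m\,\tau\,\d x\,\d t$ (plus the term coming from $\partial_t\tau$), and one rewrites $u\,\partial_t u^m$ as the time-derivative of a primitive. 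Concretely, with $b(u) := \int_0^u s\,(s^m)'\,\d s = \tfrac{m}{m+1}u^{m+1}$ we have $u\,\partial_t u^m = \partial_t b(u)$, so an integration by parts in time moves the derivative onto $\tau$, producing a term bounded by $\tfrac{m}{m+1}\sup_t \int_\Omega \xi^2 u^{m+1}\,\d x \le C M^{m+1}\int_\Omega \xi^2\,\d x$ after letting $\delta \to 0$ so that $\tau \to \mathbf{1}_{(t_1,t_2)}$.

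For the elliptic term, testing gives $\iint \nabla u^m \cdot \nabla(\xi^2(M^m-u^m))\,\tau\,\d x\,\d t = \iint \xi^2 |\nabla u^m|^2\,\tau\,\d x\,\d t \cdot(-1) + \iint 2\xi(M^m - u^m)\nabla\xi\cdot\nabla u^m\,\tau\,\d x\,\d t$, wait — more carefully, $\nabla\varphi = 2\xi(M^m-u^m)\nabla\xi - \xi^2\nabla u^m$, so $\nabla u^m\cdot\nabla\varphi = 2\xi(M^m-u^m)\nabla\xi\cdot\nabla u^m - \xi^2|\nabla u^m|^2$. The supersolution inequality then yields, after rearranging signs, a bound of the form
$$
\iint \xi^2 |\nabla u^m|^2\,\tau\,\d x\,\d t \le 2\iint \xi(M^m-u^m)|\nabla\xi||\nabla u^m|\,\tau\,\d x\,\d t + (\text{parabolic terms}).
$$
The cross term is absorbed via Young's inequality, $2\xi(M^m-u^m)|\nabla\xi||\nabla u^m| \le \tfrac12 \xi^2|\nabla u^m|^2 + 2(M^m-u^m)^2|\nabla\xi|^2 \le \tfrac12\xi^2|\nabla u^m|^2 + 2M^{2m}|\nabla\xi|^2$, so after moving $\tfrac12\iint\xi^2|\nabla u^m|^2$ to the left and using $\tau \le 1$ supported in a set of length $\le T$, the gradient term on the right contributes $\le C M^{2m} T \int_\Omega |\nabla\xi|^2\,\d x$.

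The main technical obstacle is the justification of the time-derivative manipulation: weak supersolutions need not have $\partial_t u^m$ in any function space, so one cannot directly integrate by parts in time. The standard remedy is to replace $u^m$ (or $u$) by its Steklov or mollified-in-time average — using the notation $\mollifytime{\cdot}{h}$ the paper has set up — test with $\mollifytime{u^m}{h}$-based functions, carry out the identities at the mollified level where $\partial_t$ is legitimate, and pass $h \to 0$; the boundedness $u \le M$ is exactly what makes the mollified quantities converge in the right spaces and keeps all the constants depending only on $M$, $T$, and $\|\nabla\xi\|_{L^2}$, $\|\xi\|_{L^2}$. One also needs a density argument to pass from smooth $\tau$ to $\mathbf 1_{(t_1,t_2)}$; since $t_1,t_2$ are arbitrary in $(0,T)$ and the right-hand side is independent of them, the estimate holds for all such $t_1 < t_2$. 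I expect no surprises beyond this mollification bookkeeping, which is by now routine for the PME.
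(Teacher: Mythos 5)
Your plan is correct: testing with $\xi^2(M^m-u^m)$ times a time cut-off, rewriting $u\,\partial_t u^m$ via the primitive $\tfrac{m}{m+1}u^{m+1}$ at the mollified level, and absorbing the cross term by Young's inequality is exactly the argument behind this estimate, which the paper itself only cites (Kinnunen--Lindqvist) but reproduces almost verbatim in its proof of Lemma~\ref{l.bdd_caccioppoli_positivityset} using the mollification $\mollifytime{u}{h}$ and the monotonicity trick $\partial_t\mollifytime{u}{h}\,(M^m-u^m)\le\partial_t\mollifytime{u}{h}\,(M^m-\mollifytime{u}{h}^m)$. So the proposal is essentially the same approach as the paper's, with only routine bookkeeping (the boundary-in-time terms at $t_1$, $t_2$, both controlled by $M^{m+1}\int_\Omega\xi^2\,\d x$ or discarded by sign) left to write out.
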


In the following, for $v \in L^1_{\loc}(\Omega_T)$, $h >0$ and $\tau_1 > 0$, we use the mollification in time defined as
\begin{equation} \label{e.time-mollif}
\mollifytime{u}{h}(x,t) = \tfrac{1}{h} \int_{\tau_1}^t e^\frac{s-t}{h} u(x,s) \, \d s
\end{equation}
for any $t \in (\tau_1,T)$. For the standard properties of this mollification, see e.g.~\cite[Lemma 2.2]{KinnunenLindqvist2006}.

The proof of the next lemma follows the lines of~\cite[Lemma A.1]{BDL}, see also~\cite[Lemma 2.7]{lukkari-fde-measuredata}.

\begin{lem} \label{l.supersol_truncation}
Let $m>0$. If $u$ is a weak supersolution in $\Omega_T$, then $\min \{u,k\}$ is a weak supersolution in $\Omega_T$ for every $k \geq 0$.
\end{lem}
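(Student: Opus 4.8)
The plan is to establish the defining integral inequality for $v:=\min\{u,k\}$ by a time--mollification argument, passing first to the limit $h\to0$ and then in a regularization parameter $\delta\to0$. We may assume $k>0$, since $v\equiv0$ is a weak supersolution. First, $v$ is admissible: as $\sigma\mapsto\sigma^m$ is increasing, $v^m=\min\{u^m,k^m\}$ is a truncation from above of the Sobolev function $u^m$, hence $v^m\in L^2_{\loc}(0,T;H^1_{\loc}(\Omega))$ with $\nabla v^m=\chi_{\{u<k\}}\nabla u^m$ (so $\nabla v^m=0$ a.e.\ on $\{u=k\}$), and $v^m\in L^{1/m}_{\loc}(\Omega_T)$ because $0\le v\le k$. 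It thus remains to show $\iint_{\Omega_T}(-v\,\partial_t\varphi+\nabla v^m\cdot\nabla\varphi)\,\d x\d t\ge0$ for every nonnegative $\varphi\in C_0^\infty(\Omega_T)$.

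Fix such a $\varphi$, choose $\tau_1\in(0,T)$ with $\spt\varphi\subset\Omega\times(\tau_1,T)$, and use the mollification~\eqref{e.time-mollif} with this $\tau_1$. Since $u$ is a weak supersolution, the standard properties of $\mollifytime{\cdot}{h}$ (cf.~\cite[Lemma~2.2]{KinnunenLindqvist2006}) give, for every nonnegative bounded $\eta$ compactly supported in $\Omega\times(\tau_1,T)$ with $\eta(\cdot,t)\in H^1(\Omega)$, the mollified inequality
\[
\iint_{\Omega_T}\bigl(\partial_t\mollifytime{u}{h}\,\eta+\nabla\mollifytime{u^m}{h}\cdot\nabla\eta\bigr)\,\d x\d t\ \ge\ -\sigma(h),\qquad\sigma(h)\to0 .
\]
For $\delta\in(0,1)$ let $H_\delta\in C^\infty(\R)$ be nonincreasing with $H_\delta\equiv1$ on $(-\infty,k^m]$, $H_\delta\equiv0$ on $[k^m+\delta,\infty)$ and $-\tfrac2\delta\le H_\delta'\le0$. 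Taking $\eta:=\varphi\,H_\delta(\mollifytime{u^m}{h})$ and discarding the nonpositive term $\iint\varphi\,H_\delta'(\mollifytime{u^m}{h})\,|\nabla\mollifytime{u^m}{h}|^2$ produced by $\nabla\eta$, we arrive at
\[
\iint_{\Omega_T}\partial_t\mollifytime{u}{h}\,\varphi\,H_\delta(\mollifytime{u^m}{h})\,\d x\d t+\iint_{\Omega_T}H_\delta(\mollifytime{u^m}{h})\,\nabla\mollifytime{u^m}{h}\cdot\nabla\varphi\,\d x\d t\ \ge\ -\sigma(h).
\]

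Now let $h\to0$. The diffusion term converges to $\iint H_\delta(u^m)\,\nabla u^m\cdot\nabla\varphi$, by dominated convergence and $\nabla\mollifytime{u^m}{h}\to\nabla u^m$ in $L^2_{\loc}$. For the parabolic term I would use $\partial_t\mollifytime{u}{h}=\tfrac1h(u-\mollifytime{u}{h})$, the chain rule applied to $H_\delta(\mollifytime{u^m}{h})$ and to $\Phi_\delta(\mollifytime{u^m}{h})$ with $\Phi_\delta(w):=\int_0^w\sigma^{1/m}H_\delta'(\sigma)\,\d\sigma$, and an integration by parts in time; since a short computation of derivatives gives $sH_\delta(s^m)-\Phi_\delta(s^m)=G_\delta(s):=\int_0^s H_\delta(\sigma^m)\,\d\sigma$ for all $s\ge0$, this rewrites the parabolic term as
\[
-\iint_{\Omega_T}\bigl[\,\mollifytime{u}{h}\,H_\delta(\mollifytime{u^m}{h})-\Phi_\delta(\mollifytime{u^m}{h})\,\bigr]\partial_t\varphi\,\d x\d t\ +\ \iint_{\Omega_T}\bigl(\mollifytime{u}{h}-(\mollifytime{u^m}{h})^{1/m}\bigr)H_\delta'(\mollifytime{u^m}{h})\,\partial_t\mollifytime{u^m}{h}\,\varphi\,\d x\d t .
\]
As $h\to0$ the first integral tends to $-\iint G_\delta(u)\,\partial_t\varphi$; granted the second vanishes, letting finally $\delta\to0$—where $G_\delta(u)\to\min\{u,k\}=v$ and $H_\delta(u^m)\nabla u^m\to\chi_{\{u<k\}}\nabla u^m=\nabla v^m$, both dominated—yields the asserted inequality.

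The main obstacle is precisely the second integral above, the ``mismatch'' remainder, which reflects that the two mollifications $\mollifytime{u}{h}$ and $\mollifytime{u^m}{h}$ are \emph{not} related by the algebraic map $w\mapsto w^{1/m}$. Jensen's inequality shows $\mollifytime{u}{h}-(\mollifytime{u^m}{h})^{1/m}$ has a definite sign and tends to $0$ a.e.\ and in $L^1_{\loc}$, but $\partial_t\mollifytime{u^m}{h}=\tfrac1h(u^m-\mollifytime{u^m}{h})$ is a difference quotient with no uniform bound, so the vanishing of the remainder must be extracted by a genuine cancellation argument rather than an absolute estimate; this is the delicate step, carried out along the lines of~\cite[Lemma~A.1]{BDL} (see also~\cite[Lemma~2.7]{lukkari-fde-measuredata}). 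A variant that avoids the remainder uses the cutoff as a function of $\mollifytime{u}{h}$ at level $k$, which turns the parabolic term into a perfect time derivative, at the price of a diffusion cross term $\iint\varphi\,\zeta_\delta'(\mollifytime{u}{h})\,\nabla\mollifytime{u^m}{h}\cdot\nabla\mollifytime{u}{h}$ that must be treated by localizing to $\{\mollifytime{u}{h}\le k+\delta\}$, where the relevant quantities are bounded and one can exploit $\nabla u^m\cdot\nabla u\ge0$.
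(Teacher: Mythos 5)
Your setup (admissibility of $\min\{u,k\}^m$, mollified formulation, smooth cutoff near level $k$, limits $h\to0$ then $\delta\to0$) is the right skeleton, but the decisive step is missing: the parabolic term. With your choice $\eta=\varphi\,H_\delta(\mollifytime{u^m}{h})$ you are forced to confront the mismatch remainder
\begin{equation*}
\iint_{\Omega_T}\bigl(\mollifytime{u}{h}-(\mollifytime{u^m}{h})^{1/m}\bigr)\,H_\delta'(\mollifytime{u^m}{h})\,\partial_t\mollifytime{u^m}{h}\,\varphi\,\d x\d t ,
\end{equation*}
and, as you yourself concede, neither factor combination has a pointwise sign and $\partial_t\mollifytime{u^m}{h}=\tfrac1h(u^m-\mollifytime{u^m}{h})$ carries an uncontrolled factor $1/h$; saying the vanishing ``must be extracted by a genuine cancellation argument \ldots along the lines of~\cite[Lemma A.1]{BDL}'' is an appeal to authority, not a proof. (There is also a sign slip: integrating by parts produces this remainder with a minus sign.) Your fallback variant, with the cutoff depending on $\mollifytime{u}{h}$, is likewise not viable as stated: $\nabla\mollifytime{u}{h}$ need not exist, since only $u^m$ is assumed to lie in $L^2_{\loc}(0,T;H^1_{\loc}(\Omega))$ and $u$ may be unbounded, and even formally the cross term involves a product of two \emph{different} time averages, so the pointwise inequality $\nabla u^m\cdot\nabla u\ge0$ does not transfer to $\nabla\mollifytime{u^m}{h}\cdot\nabla\mollifytime{u}{h}$.

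The paper closes exactly this gap by a purely qualitative monotonicity trick that avoids any cancellation estimate. One tests with $\varphi=\eta\,\theta_\sigma(u)$, where $\theta_\sigma(s)=\frac{(s^m-k^m)_-}{(s^m-k^m)_-+\sigma}$ is a \emph{nonincreasing} bounded function of the \emph{unmollified} $u$. In the diffusion term the extra piece has the sign of $\nabla u^m\cdot\nabla(u^m-k^m)_-\le0$ and is discarded; in the parabolic term one adds and subtracts $\theta_\sigma(\mollifytime{u}{h})$: the error $\partial_t\mollifytime{u}{h}\,\bigl(\theta_\sigma(u)-\theta_\sigma(\mollifytime{u}{h})\bigr)$ is nonpositive pointwise because $\partial_t\mollifytime{u}{h}=\tfrac1h\bigl(u-\mollifytime{u}{h}\bigr)$ and $\theta_\sigma$ is nonincreasing, while the remaining piece $\partial_t\mollifytime{u}{h}\,\theta_\sigma(\mollifytime{u}{h})$ is an exact time derivative of $k-\int_{\mollifytime{u}{h}}^k\theta_\sigma(s)\,\d s$ and can be integrated by parts onto $\eta$ before letting $h\to0$ and $\sigma\to0$. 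Until you either reproduce this sign argument (note it requires the cutoff to be a function of $u$, not of $\mollifytime{u^m}{h}$) or actually carry out the cancellation you invoke, your proof is incomplete at its central point.
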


\begin{proof}
Let us start with a mollified weak formulation
$$
\iint_{\Omega_T} \partial_t \mollifytime{u}{h} \varphi + \mollifytime{\nabla u^m}{h} \cdot \nabla \varphi \, \d x \d t \geq 0
$$
for $\varphi\in C^\infty_0(\Omega_T,\R_{\ge0})$, 
and use a test function $\varphi = \eta \frac{(u^m - k^m)_-}{(u^m-k^m)_- + \sigma}$ with $\sigma > 0$ and $\eta \in C_0^\infty(\Omega_T,\R_{\geq 0})$.
For the divergence part we have
\begin{align*}
\lim_{h\to 0}&\iint_{\Omega_T}  \mollifytime{\nabla u^m}{h} \cdot \nabla \varphi \, \d x \d t \\
&= \iint_{\Omega_T}  \nabla u^m \cdot \left(\nabla \eta \frac{(u^m - k^m)_-}{(u^m-k^m)_- + \sigma} + \sigma \eta \frac{ \nabla (u^m - k^m)_-}{\left[(u^m-k^m)_- + \sigma\right]^2} \right) \, \d x \d t \\
&\leq \iint_{\Omega_T}  \nabla u^m \cdot \nabla \eta \frac{(u^m - k^m)_-}{(u^m-k^m)_- + \sigma} \, \d x \d t \\
&\longrightarrow \iint_{\Omega_T} \nabla (\min\{u,k\}^m) \cdot \nabla \eta \, \d x \d t 
\end{align*}
as $\sigma \to 0$ by the dominated convergence theorem. For the parabolic part we obtain
\begin{align*}
\iint_{\Omega_T} \partial_t \mollifytime{u}{h} \varphi\, \d x \d t &= \iint_{\Omega_T} \eta \partial_t \mollifytime{u}{h} \frac{(\mollifytime{u}{h}^m - k^m)_-}{(\mollifytime{u}{h}^m-k^m)_- + \sigma} \, \d x \d t \\
&\phantom{+} + \iint_{\Omega_T} \eta \partial_t \mollifytime{u}{h} \left(\frac{(u^m - k^m)_-}{(u^m-k^m)_- + \sigma} - \frac{(\mollifytime{u}{h}^m - k^m)_-}{(\mollifytime{u}{h}^m-k^m)_- + \sigma} \right) \, \d x \d t \\
&\leq \iint_{\Omega_T} \eta \partial_t \mollifytime{u}{h} \frac{(\mollifytime{u}{h}^m - k^m)_-}{(\mollifytime{u}{h}^m-k^m)_- + \sigma}\, \d x \d t,
\end{align*}
since the map $s \mapsto \frac{(s^m-k^m)_-}{(s^m - k^m)_- + \sigma}$
is decreasing and $\partial_t \mollifytime{u}{h}=\frac1h(u-\mollifytime{u}{h})$. Now we can estimate further
\begin{align*}
\iint_{\Omega_T} &\eta \partial_t \mollifytime{u}{h} \frac{(\mollifytime{u}{h}^m - k^m)_-}{(\mollifytime{u}{h}^m-k^m)_- + \sigma}\, \d x \d t \\
&= \iint_{\Omega_T} \eta \partial_t \left[k - \int_{\mollifytime{u}{h}}^k \frac{(s^m - k^m)_-}{(s^m-k^m)_- + \sigma} \, \d s \right]\, \d x \d t \\
&= - \iint_{\Omega_T} \partial_t \eta \left[k - \int_{\mollifytime{u}{h}}^k \frac{(s^m - k^m)_-}{(s^m-k^m)_- + \sigma} \, \d s \right]\, \d x \d t \\
&\xrightarrow[]{h\to 0}- \iint_{\Omega_T} \partial_t \eta \left[k - \int_{u}^k \frac{(s^m - k^m)_-}{(s^m-k^m)_- + \sigma} \, \d s \right]\, \d x \d t \\
&\xrightarrow{\sigma \to 0} - \iint_{\Omega_T} \partial_t \eta [k - (u-k)_-] \, \d x \d t.
\end{align*}
Since $k - (u-k)_- = \min \{u,k\}$, in total we have
$$
\iint_{\Omega_T} - \min\{u,k\} \partial_t \eta + \nabla (\min\{u,k\}^m) \cdot \nabla \eta \, \d x \d t \geq 0,
$$
which completes the proof.
\end{proof}

A result in~\cite{Naian} states that every weak supersolution has a lower semicontinuous representative.

\begin{theo} \label{t.super_lsc}
Let $m > 0$ and $u$ be a weak supersolution according to
Definition~\ref{d.weak_sol}. Then, there exists a lower semicontinuous
function $u_*$ such that $u_*(x,t) = u(x,t)$ for a.e. $(x,t) \in
\Omega_T$. Moreover, 
$$
u_*(x,t) = \essliminf_{\substack{(y,s) \to (x,t) \\ s<t}} u(y,s),
$$
for every $(x,t) \in \Omega_T$.
\end{theo}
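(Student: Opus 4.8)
Since this is essentially the content of~\cite{Naian}, one option is simply to cite it; to indicate how such a statement is proved, the plan is to follow the by-now standard scheme: reduce to bounded supersolutions, identify the $\essliminf$-representative almost everywhere by a De Giorgi argument, and then deduce lower semicontinuity by comparison with continuous solutions.

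\emph{Step 1 (reduction to the bounded case).} By Lemma~\ref{l.supersol_truncation} each truncation $u_k:=\min\{u,k\}$ is a bounded weak supersolution and $u_k\uparrow u$ a.e.\ in $\Omega_T$. Assuming the theorem for each $u_k$, with representative $u_{k,*}$ given by the $\essliminf$ formula, I would set $u_*:=\sup_k u_{k,*}$. Then $u_*$ is lower semicontinuous as an increasing supremum of lower semicontinuous functions, $u_*=\sup_k u_k=u$ a.e., and $u_*$ again satisfies the formula of Theorem~\ref{t.super_lsc}: the inequality $u_*\le\essliminf_{(y,s)\to(x,t),\,s<t}u(y,s)$ is immediate from $u_k\le u$, while for the reverse one notes that any level strictly below $u_*(x,t)$ is also strictly below $k$ for $k$ large, so on the corresponding sets $\min\{u,k\}=u$ and the $\essliminf$'s of $u_k$ and $u$ agree there. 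Hence it suffices to treat a bounded $0\le u\le M$.

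\emph{Step 2 (a.e.\ identification; the main obstacle).} Let $u_*$ denote the right-hand side of the formula in Theorem~\ref{t.super_lsc}. At almost every $(x_0,t_0)$ — namely at every Lebesgue point of $u$ with respect to backward parabolic cylinders, which form a Vitali basis — the bound $u_*(x_0,t_0)\le u(x_0,t_0)$ is free, since $\essinf$ over a backward cylinder never exceeds the average, which tends to $u(x_0,t_0)$. The delicate inequality is $u_*\ge u$ a.e., and \textbf{this is where the real work lies}. I would argue at such a Lebesgue point with $u(x_0,t_0)>0$ (the case $u(x_0,t_0)=0$ being trivial since $0\le u_*\le u$): for any level $0<\lambda<u(x_0,t_0)$ the relative measure of $\{u<\lambda\}$ in a backward cylinder $B_r(x_0)\times(t_0-r^2,t_0)$ (intrinsically scaled by $\lambda$ if necessary) tends to $0$ as $r\to0$, and a quantitative De Giorgi measure-to-pointwise estimate for weak supersolutions of the PME — in the spirit of the regularity theory in~\cite{DGV} — then forces $\essinf u\ge\lambda-\sigma$ on a smaller backward cylinder once $r$ is small, hence $u_*(x_0,t_0)\ge\lambda-\sigma$. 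Letting $\sigma\to0$ and $\lambda\uparrow u(x_0,t_0)$ gives $u_*=u$ a.e.\ in $\Omega_T$. The hard part is precisely the quantitative intrinsic De Giorgi machinery together with the care required near the singular set $\{u=0\}$; the saving grace is that this difficulty only arises at points with $u(x_0,t_0)>0$, where one works at a fixed positive level.

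\emph{Step 3 (lower semicontinuity).} Fix $(x_0,t_0)\in\Omega_T$ and $\eps>0$, and assume $u_*(x_0,t_0)>0$. I would first pick $r>0$ with $\essinf u>\ell:=u_*(x_0,t_0)-\eps$ on $B_r(x_0)\times(t_0-r^2,t_0)\Subset\Omega_T$, then a ball $B_\rho(x_0)$ with $0<\rho<r$, a small $\delta\in(0,r^2)$ with $B_\rho(x_0)\times(t_0-\delta,t_0+\delta)\Subset\Omega_T$, and the bottom level $t_0-\delta$ among the (a.e.) times for which $u(\cdot,t_0-\delta)\ge\ell$ a.e.\ on $B_\rho(x_0)$. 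Using Theorem~\ref{t.existence_continuous_weaksol} I would solve the PME in $B_\rho(x_0)\times(t_0-\delta,t_0+\delta)$ with continuous, time-independent boundary data $\psi$ satisfying $0\le\psi\le\ell$, $\psi\equiv\ell$ on $B_{\rho/2}(x_0)$, $\psi\equiv0$ near $\partial B_\rho(x_0)$ (taken flat enough at $\{\psi=0\}$ that $\psi^m\in H^1$), obtaining a continuous weak solution $h$. Since $\psi\le\ell\le u$ near the bottom (in the averaged sense of Lemma~\ref{l.comparison-weaksupersub}, using continuity of $h$) and $\psi=0\le u$ on the lateral boundary, the comparison principle (Lemma~\ref{l.comparison-weaksupersub}, applicable since $u$ is bounded on the subcylinder) gives $h\le u$ a.e.\ there, while the continuity of $h$ up to the bottom — with a uniform-in-$\delta$ modulus, by the stability and regularity theory (Theorem~\ref{t.stability_existence}) — yields $h(x_0,t_0)\to\psi(x_0)=\ell$ as $\delta\to0$; so I would fix $\delta$ with $h(x_0,t_0)>\ell-\eps$. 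Since $u\ge h$ a.e.\ in this neighborhood of $(x_0,t_0)$ and $h$ is continuous, for $(x,t)$ there the essential lower limit of $u$ from the past dominates that of $h$, i.e.\ $u_*(x,t)\ge h(x,t)$, whence
\[
  \liminf_{(x,t)\to(x_0,t_0)}u_*(x,t)\ \ge\ \lim_{(x,t)\to(x_0,t_0)}h(x,t)\ =\ h(x_0,t_0)\ >\ u_*(x_0,t_0)-2\eps .
\]
As $\eps>0$ is arbitrary, $u_*$ is lower semicontinuous; for unbounded $u$ this follows from Step 1.
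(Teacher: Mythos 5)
The paper does not prove Theorem~\ref{t.super_lsc} at all -- it imports it from \cite{Naian} (see also \cite{AL}) -- so your primary suggestion, namely citing that reference, is exactly the paper's approach. Your sketch of the underlying argument (truncation to the bounded case, an intrinsic De Giorgi measure-to-pointwise estimate at Lebesgue points with respect to backward cylinders, and comparison with a continuous solution started slightly below $t_0$ to propagate the lower bound forward in time) is a faithful outline of the proof in those references, with the one genuinely hard ingredient -- the quantitative De Giorgi lemma for PME supersolutions -- invoked rather than proved, which is consistent with the paper's own treatment.
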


\section{Notion of supercaloric functions}

Up next we define (quasi-)super- and subcaloric functions.

\begin{defin} \label{d.supercal}
  Let $U\subset\R^{n+1}$ be an open set.
  A function $u \colon U \to [0,\infty]$ is called a supercaloric function, if
\begin{itemize}
\item[(i)] $u$ is lower semicontinuous,
\item[(ii)] $u$ is finite in a dense subset,
\item[(iii)] $u$ satisfies the comparison principle in every subcylinder $Q_{t_1,t_2}=Q \times (t_1,t_2) \Subset U$: if $h\in C(\overline{Q}_{t_1,t_2})$ is a weak solution in $Q_{t_1,t_2}$ and if $h \leq u$ on the parabolic boundary of $Q_{t_1,t_2}$, then $ h\leq u$ in $Q_{t_1,t_2}$.
\end{itemize}
We call $u$ a quasi-supercaloric function if (i) and (ii) hold, and (iii) is replaced by
\begin{itemize}
\item[(iii')] $u$ satisfies the comparison principle in every $C^{2,\alpha}$-subcylinder $Q_{t_1,t_2}=Q \times (t_1,t_2) \Subset U$: if $h\in C(\overline{Q}_{t_1,t_2})$ is a weak solution in $Q_{t_1,t_2}$ and if $h \leq u$ on the parabolic boundary of $Q_{t_1,t_2}$, then $ h\leq u$ in $Q_{t_1,t_2}$.
\end{itemize}

A function $u: \Omega_T \to [0,\infty)$ is called subcaloric function
if the conditions (i), (ii) and (iii) above hold with (i) replaced by
upper semicontinuity, and inequalities in (iii) by $\geq$. The
function $u$ is called quasi-subcaloric if (iii') holds instead of (iii) with $\geq$.
\end{defin}

%
%

The notion of quasi-supercaloric functions is only used as an
auxiliary construct for the following proofs. In fact, it turns out
that the classes of supercaloric and quasi-supercaloric functions coincide, see
Proposition~\ref{p.supercaloric-general-comparison}.
However, the proof requires a more detailed analysis of
quasi-supercaloric functions and is therefore postponed to the end of
this section.

Our next goal is to prove that every lower semicontinuous weak
supersolution is a supercaloric function.  Observe that a weak
supersolution is lower semicontinuous after a possible redefinition in
a set of measure zero by Theorem~\ref{t.super_lsc}. However, since the comparison principle from
Lemma~\ref{l.comparison-weaksupersub} is limited to
$C^{2,\alpha}$-cylinders, as a first step we only obtain the following
preliminary result.

\begin{lem} \label{l.weasuper-is-quasi-supercal}
Let $0<m<1$. If $u$ is a weak supersolution in
$\Omega_T$, then $u_*$ is a quasi-supercaloric function in $\Omega_T$.
\end{lem}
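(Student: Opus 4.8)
The plan is to verify the three defining properties of a quasi-supercaloric function for $u_*$, where the only non-trivial one is the comparison principle (iii') on $C^{2,\alpha}$-subcylinders. Properties (i) and (ii) are immediate: $u_*$ is lower semicontinuous by Theorem~\ref{t.super_lsc}, and since $u_* = u$ a.e.\ and $u^m \in L^2_{\loc}(0,T;H^1_{\loc}(\Omega)) \cap L^{1/m}_{\loc}(\Omega_T)$ is in particular locally integrable, $u_*$ (hence $u$) is finite a.e., so certainly finite on a dense set.

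For (iii'), fix a $C^{2,\alpha}$-subcylinder $Q_{t_1,t_2} = Q \times (t_1,t_2) \Subset \Omega_T$ and a weak solution $h \in C(\overline{Q_{t_1,t_2}})$ with $h \le u_*$ on $\partial_p Q_{t_1,t_2}$. I would like to apply the weak comparison principle, Lemma~\ref{l.comparison-weaksupersub}, with $v = h$ as subsolution and $u$ as supersolution on $Q_{t_1,t_2}$. The integrability hypotheses of that lemma hold after a slight shrinking: on a slightly smaller cylinder $Q' \times (t_1',t_2') \Subset Q_{t_1,t_2}$, $u$ is a bounded-from-below weak supersolution and, crucially, by first truncating $u$ at a large level $k$ (which is again a weak supersolution by Lemma~\ref{l.supersol_truncation}), one gets the required $L^2(0,T;H^1)$ and integrability bounds from the Caccioppoli inequality, Lemma~\ref{l.bounded_caccioppoli}; meanwhile $h$, being continuous up to the closure of the original cylinder, is bounded and hence trivially in the required classes on $Q' \times (t_1',t_2')$. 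The two boundary conditions in Lemma~\ref{l.comparison-weaksupersub} — that $(h^m - u^m)_+(\cdot,t) \in H^1_0(Q')$ for a.e.\ $t$, and that the initial-time average of $(h-u)_+$ tends to $0$ — must then be produced from the pointwise bound $h \le u_*$ on $\partial_p Q_{t_1,t_2}$.

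The main obstacle, as usual in this circle of ideas, is upgrading the pointwise boundary inequality $h \le u_*$ on $\partial_p Q_{t_1,t_2}$ to the Sobolev-sense boundary inequality and the initial-trace condition needed by Lemma~\ref{l.comparison-weaksupersub}. The standard device is to exploit lower semicontinuity of $u_*$: since $u_*$ is lsc and $h$ is continuous, $u_* - h > -\delta$ on a neighborhood of $\partial_p Q_{t_1,t_2}$ in $\overline{Q_{t_1,t_2}}$ for any $\delta>0$; choosing subcylinders $Q^{(j)}_{s_j,\tau_j}$ that exhaust $Q_{t_1,t_2}$ from the interior and applying the comparison on each of them — where now on the parabolic boundary of $Q^{(j)}$ one genuinely has $h \le u_* + \delta_j$ with $\delta_j \to 0$ — and where the Sobolev and initial conditions on the lateral/bottom boundary of the slightly-interior cylinder are automatic because both functions are regular enough there. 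One applies the comparison to $h - \delta_j$ (or compares $h$ with $u$ on $Q^{(j)}$) to get $h - \delta_j \le u$ a.e.\ on $Q^{(j)}$, then lets $j \to \infty$ and $\delta_j \to 0$ to conclude $h \le u = u_*$ a.e., and finally $h \le u_*$ everywhere in $Q_{t_1,t_2}$ by lower semicontinuity of $u_*$ and continuity of $h$. A technical point to handle carefully is that $u$ itself need not have a well-defined initial trace matching $u_*(\cdot,s_j)$; this is precisely why one works on interior subcylinders where the relevant time-slice $\{t = s_j\}$ sits strictly inside $\Omega_T$, so that the initial-average condition $\lim_{\rho\to0}\frac1\rho\int_{s_j}^{s_j+\rho}\int_{Q^{(j)}}(h - u)_+\,\d x\d t = 0$ follows from $h(\cdot,s_j) \le u_*(\cdot,s_j)$ together with the $\essliminf$-representation of $u_*$ from Theorem~\ref{t.super_lsc} and dominated convergence. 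Once these boundary matters are arranged, the rest is the routine exhaustion-and-limit argument.
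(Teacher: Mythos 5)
Your reduction to Lemma~\ref{l.comparison-weaksupersub} after truncating $u$ is the right starting point (the paper does the same, comparing against $\tilde u=\min\{u_*,\max_{\overline{Q}_{t_1,t_2}}h\}$, which is a bounded supersolution by Lemma~\ref{l.supersol_truncation}), but the mechanism you propose for producing the two boundary hypotheses of that lemma does not work for the porous medium equation. The decisive step in your argument is to "apply the comparison to $h-\delta_j$"; however, for $m\neq1$ a solution minus a constant is \emph{not} a weak subsolution -- the PME is not invariant under additive perturbations, which is exactly the structural difficulty stressed in the introduction and the reason why Lemma~\ref{l.u_eps-u-estimate} perturbs at the level of $u^m$ (via $g_\eps=(g^m+\eps^m)^{1/m}$) and only through \emph{solutions of perturbed boundary value problems}, never by shifting a given solution. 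Relatedly, the claim that on an interior subcylinder the conditions $(h^m-u^m)_+(\cdot,t)\in H^1_0(Q')$ and the initial-average condition are "automatic because both functions are regular enough" is not correct: these conditions encode an \emph{ordering} of $h$ and $u$ near the lateral and bottom boundary of the subcylinder, not regularity. Near the lateral boundary lower semicontinuity only gives $h\le u_*+\delta_j$, so $(h^m-u^m)_+$ is small but need not vanish, hence need not belong to $H^1_0$; and on an interior bottom slice $\{t=s_j\}$ with $s_j>t_1$ you do not know $h(\cdot,s_j)\le u_*(\cdot,s_j)$ at all -- that inequality in the interior is precisely what the lemma is supposed to prove. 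So the proof does not close.

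The paper's proof resolves exactly this point by a different device: it never compares $h$ itself with $u$ via Lemma~\ref{l.comparison-weaksupersub}. Instead it approximates the boundary datum from below in the $u^m$-scale, choosing Lipschitz functions $\bar h_j$ with $0\le \bar h_j\le h^m$ on $\partial_p Q_{t_1,t_2}$ and $\sup_{\partial_p Q_{t_1,t_2}}|\bar h_j^{1/m}-h|\to0$, and solves the corresponding Cauchy--Dirichlet problems by Theorem~\ref{t.existence_continuous_weaksol}, obtaining continuous global weak solutions $\hat h_j$ that attain these data both continuously and in the Sobolev sense, $\hat h_j^m-\bar h_j\in L^2(t_1,t_2;H^1_0(Q))$. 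For the pair $(\hat h_j,\tilde u)$ the hypotheses of Lemma~\ref{l.comparison-weaksupersub} can actually be verified, because $\bar h_j\le h^m\le \tilde u^m$ on the parabolic boundary together with the Sobolev attainment of $\bar h_j$ by $\hat h_j^m$ and the semicontinuity of $\tilde u$ yield the $H^1_0$- and initial-trace conditions; this gives $\hat h_j\le \tilde u\le u_*$ everywhere, and the boundary stability result, Theorem~\ref{t.stability_existence}, then lets $j\to\infty$ to conclude $h\le u_*$ in $Q_{t_1,t_2}$. If you wish to keep your exhaustion-by-subcylinders picture, the additive shifts $h-\delta_j$ must be replaced by this kind of perturbation of the boundary data (solving new problems with data $\bar h_j^{1/m}$, or $(\bar h_j+\eps_j^m)^{1/m}$ as in Lemma~\ref{l.supersubcal-cylinder-comparison}) together with a stability argument; as written, the key step of your proposal is invalid.
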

\begin{rem}
  At the end of this  section we will improve this result and show
  that lower semicontinuous weak
  supersolutions are supercaloric functions, see
  Lemma~\ref{l.weasuper-is-supercal}.
\end{rem}

\begin{proof}
We only need to show the comparison principle (iii') from the
definition of quasi-supercaloric functions. Let
$Q_{t_1,t_2}\Subset\Omega_T$ be a $C^{2,\alpha}$-cylinder, and $h \in
C(\overline{Q_{t_1,t_2}})$ a weak solution, which implies $h^m \in
L^2_{\loc}(t_1,t_2;H^1_{\loc}(Q))$. We are not able to use the comparison
principle between weak subsolutions and supersolutions,
Lemma~\ref{l.comparison-weaksupersub}, directly, since we would need
$h^m \in L^2(t_1,t_2;H^1(Q))$. Thus we proceed as follows.

Denote $\tilde u = \min \{u_*,\max_{\overline{Q}_{t_1,t_2}} h\}$, which
is a lower semicontinuous weak supersolution by
Lemma~\ref{l.supersol_truncation}. We let $\bar h_j:\overline{Q_{t_1,t_2}}\to\R_{\ge0}$ be Lipschitz
functions for $j = 1,2,...$, such that for
$h_j := \bar h_j \big|_{\partial_p Q_{t_1,t_2}}$ we have $0 \leq h_j \leq h^m$ on $\partial_p Q_{t_1,t_2}$ and 
\begin{equation} \label{e.bvalues-uniform-conv}
\sup_{\partial_p Q_{t_1,t_2}} | h_j^\frac{1}{m} - h |  \xrightarrow{j\to \infty} 0.
\end{equation}
By Theorem~\ref{t.existence_continuous_weaksol} there exists a unique weak solution $\hat
h_j \in C(\overline{Q_{t_1,t_2}})$ in $Q_{t_1,t_2}$ taking the
boundary values $h_j^\frac{1}{m}$ continuously, and $\hat h_j^m -
\bar h_j \in L^2(t_1,t_2;H^1_0(Q))$. By
Lemma~\ref{l.comparison-weaksupersub}, we have that $\hat h_j(x,t) \leq \tilde u(x,t) \leq u_*(x,t)$ for a.e. $(x,t) \in Q_{t_1,t_2}$. Since $u=u_*$ a.e. by Theorem~\ref{t.super_lsc}, it follows that $(u_*)_* = u_*$ everywhere. Together with continuity of $\hat h_j$ it follows that $\hat h_j(x,t) \leq u_*(x,t)$ for every $(x,t)\in Q_{t_1,t_2}$. 

Furthermore since~\eqref{e.bvalues-uniform-conv} holds,
Theorem~\ref{t.stability_existence} implies that also in the limit
$j\to \infty$, $h(x,t)\leq u_*(x,t)$ holds for every $(x,t) \in
Q_{t_1,t_2}$. Thus $u_*$ is a
quasi-supercaloric function.

\end{proof}

In the next lemma we show that the comparison principle for super(sub)caloric functions holds in general space-time cylinders. The proof follows the lines of~\cite[Theorem 3.6]{Bjorns_boundary} (see also~\cite[Theorem 3.3]{KLL}), in which the result was proved in case $m \geq 1$. Observe that the result is proved for quasi-super(sub)caloric functions, which implies that the result also holds for super(sub)caloric functions. 

\begin{lem} \label{l.supersubcal-cylinder-comparison}
Suppose that $0<m<1$. Let $Q_{t_1,t_2} \Subset \R^{n+1}$ be a cylinder. Suppose that $u$ is a \textup{(}quasi\mbox{-\textup{)}}supercaloric and $v$ is a \textup{(}quasi\mbox{-\textup{)}}subcaloric function in $Q_{t_1,t_2}$. If
$$
\infty \neq \limsup_{Q_{t_1,t_2} \ni (y,s) \to (x,t)} v(y,s) \leq \liminf_{Q_{t_1,t_2} \ni (y,s) \to (x,t)} u(y,s)
$$
for every $(x,t) \in \partial_p Q_{t_1,t_2}$, then $v \leq u$ in $Q_{t_1,t_2}$.
\end{lem}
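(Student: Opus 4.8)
The plan is to reduce the comparison principle on a general cylinder $Q_{t_1,t_2}$ to the comparison principle available in the definition of (quasi-)super- and subcaloric functions, which involves only continuous weak solutions attaining their own boundary values. The main issue is that a general cylinder need not have the $C^{2,\alpha}$ (or even $C^{1,\alpha}$) regularity needed to invoke the existence and stability results from Section~\ref{sec:prelim}, and that one must control the behaviour of $u$ and $v$ up to the full parabolic boundary, not just a.e. First I would fix a small $\varepsilon>0$ and replace $v$ by $v-\varepsilon$ (or work with $v$ directly if one prefers), and exhaust $Q$ from inside by smooth subdomains $Q^{(k)}\Subset Q$ with $Q^{(k)}\nearrow Q$, together with a shrinking of the time interval to $(t_1+\tfrac1k,t_2)$; this produces $C^{1,\alpha}$-cylinders $Q^{(k)}_{t_1+1/k,t_2}\Subset Q_{t_1,t_2}$ to which Theorem~\ref{t.existence-abdulla} and Theorem~\ref{t.stability_existence} apply.

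Next I would construct, on each such subcylinder, a continuous weak solution $h_k$ taking continuous boundary data $g_k$ on $\partial_p Q^{(k)}_{t_1+1/k,t_2}$ that lie just above the restriction of $v$ to that parabolic boundary. Concretely, by lower semicontinuity of $u$ and the boundary hypothesis, for any $\delta>0$ the open set $\{u>v\text{-ish}\}$ can be used, via a partition-of-unity / Tietze-type argument, to pick continuous $g_k$ with $v\le g_k$ on $\partial_p Q^{(k)}_{t_1+1/k,t_2}$ and $g_k\le \liminf u + \delta$ there; one should also arrange $g_k\le \max$ of $v$ on the original parabolic boundary plus $\delta$ so the $g_k$ stay uniformly bounded (here the finiteness of $\limsup v$ on $\partial_p Q_{t_1,t_2}$ is essential). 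Applying the comparison in condition~(iii')/(iii): since $v$ is subcaloric and $v\le g_k=h_k$ on the parabolic boundary of the $C^{1,\alpha}$-subcylinder, we get $v\le h_k$ there; since $u$ is supercaloric and $h_k=g_k\le u$ (up to $\delta$) on that parabolic boundary, we get $h_k\le u+\delta$. Hence $v\le u+\delta$ on $Q^{(k)}_{t_1+1/k,t_2}$.

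Finally I would let $k\to\infty$ and $\delta\to 0$: the subcylinders exhaust $Q_{t_1,t_2}$, so $v\le u$ on all of $Q_{t_1,t_2}$, and since $\varepsilon$ was arbitrary (if one took $v-\varepsilon$) this finishes the argument. An alternative, cleaner route — and the one I would actually prefer — is to avoid exhaustion and instead appeal directly to the analogue of~\cite[Theorem 3.6]{Bjorns_boundary} or~\cite[Theorem 3.3]{KLL}: one covers $\overline{Q}_{t_1,t_2}$ by finitely many small $C^{1,\alpha}$-subcylinders, solves the continuous Dirichlet problem (Theorem~\ref{t.existence-abdulla}) on each with suitably chosen continuous data sandwiched between $v+\delta$-bounds and $u-\delta$-bounds using semicontinuity, and pastes the resulting local comparisons together along overlaps. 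The hard part, in either formulation, is the boundary bookkeeping: producing continuous boundary data $g_k$ on the parabolic boundaries of the approximating cylinders that simultaneously dominate $v$, are dominated by $u$ up to a controlled error, remain uniformly bounded, and converge appropriately — this is exactly where lower/upper semicontinuity of $u,v$ and the hypothesis $\limsup v \ne \infty$ on $\partial_p Q_{t_1,t_2}$ are used, and it is the step that requires care rather than the application of the comparison principle itself.
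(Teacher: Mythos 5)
Your reduction to smooth subcylinders and the use of semicontinuity to choose continuous boundary data is in the spirit of the paper's argument, but the central step of your plan fails for the porous medium equation. You construct a single solution $h_k$ with data $g_k$ satisfying $v\le g_k\le u+\delta$ on the parabolic boundary of the approximating cylinder and then claim ``since $u$ is supercaloric and $h_k=g_k\le u$ (up to $\delta$) on that parabolic boundary, we get $h_k\le u+\delta$.'' The comparison principle in Definition~\ref{d.supercal} only applies when $h_k\le u$ holds \emph{exactly} on the parabolic boundary, and the inequality $h_k\le u+\delta$ cannot be propagated into the cylinder because $u+\delta$ is not supercaloric and $h_k-\delta$ is not a weak solution: the PME is not invariant under adding constants (this is exactly the structural difficulty emphasized in the introduction, and it is also why your preliminary replacement of $v$ by $v-\varepsilon$ does not produce a subcaloric function). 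Note also that on the parabolic boundary of an \emph{interior} subcylinder you cannot arrange $v\le g_k\le u$, since $v\le u$ there is precisely what is to be proved; the boundary hypothesis only gives $v^m\le u^m+\text{small}$ near $\partial_p Q_{t_1,t_2}$.

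The paper circumvents this by a genuinely two-sided construction that your proposal has no substitute for: it picks Lipschitz data $\bar h_j\nearrow u^m$ with $v^m\le \bar h_j+\eps_j^m\le u^m+\eps_j^m$ outside nested $C^{2,\alpha}$-subcylinders, and solves \emph{two} Dirichlet problems, one with data $\bar h_j^{1/m}$ (below $u$, so supercaloricity gives $h_j\le u$) and one with data $(\bar h_j+\eps_j^m)^{1/m}$ (above $v$, so subcaloricity gives $v\le \hat h_j$). The analytic heart is then to show that the two families merge in the limit, i.e. $\iint|\hat h_j^m-h_j^m|^{\frac{m+1}{m}}\,\d x\,\d t\to0$, which is obtained from the quantitative stability estimate of Lemma~\ref{l.u_eps-u-estimate} (the Oleinik test-function argument for solutions whose boundary data differ by $\eps^m$ in the $m$-th power) combined with \cite[Corollary 3.11]{BLS}, plus Hölder equicontinuity and Arzel\`a--Ascoli to pass to locally uniform limits $h=\hat h$. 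Without this stability input, ``boundary data within $\delta$ of $u$'' gives no interior control, so the gap in your argument is not bookkeeping but a missing idea. Two further, more minor, points: condition (iii') only allows comparison on $C^{2,\alpha}$-subcylinders (not $C^{1,\alpha}$), and the comparison functions must be weak solutions in the sense of Definition~\ref{d.weak_sol}, which is why the paper uses Lipschitz data and Theorem~\ref{t.existence_continuous_weaksol} rather than merely continuous data and the very weak solutions of Theorem~\ref{t.existence-abdulla}.
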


\begin{proof}
Fix $\delta > 0$ and denote $\tau_2 := t_2 - \delta$, $\tilde \tau_2 := t_2 - \frac{\delta}{2}$ and $\hat \tau_2 := t_2 - \frac{\delta}{4}$. If $u$ is unbounded, we may consider $\tilde u = \min \{u,
\sup_{Q_{t_1,\hat \tau_2}} v\}$ instead of $u$ in the proof, which is a
bounded quasi-supercaloric function in $Q_{t_1,\tilde \tau_2}$ as a truncation of a quasi-supercaloric
function. Then in the end, by proving $v\leq \tilde u$ in
$Q_{t_1, \tau_2}$ this implies $v  \leq u$ in $Q_{t_1, \tau_2}$ since $\tilde
u \leq u$ in $Q_{t_1, \tau_2}$. Therefore, from now on we assume that $u$ is bounded. Furthermore, observe that $v$ is locally bounded in $Q_{t_1,t_2}$ by definition, and the assumption implies that $v$ is bounded in $Q_{t_1,\hat \tau_2}$.

We extend $u$ up to the parabolic boundary by setting 
$$
u(x,t) := \liminf_{Q_{t_1,t_2} \ni (y,s) \to (x,t)} u(y,s)\quad \text{ for every } (x,t) \in \overline{\partial_p Q_{t_1,\tilde \tau_2}}.
$$
The function $v$ is extended analogously via $\limsup$. By standard arguments it follows that $u$ ($v$) is lower(upper) semicontinuous in $\overline{Q_{t_1,\tilde \tau_2}}$.

For $\eps_j = 1/j$, take nested $C^{2,\alpha}$-cylinders
$Q^j_{s_j,\tilde \tau_2}  \Subset Q \times (t_1,\tilde \tau_2]$
with 
$$
\bigcup_{j=1}^{\infty} Q^j = Q,\quad s_j \xrightarrow{j\to \infty} t_1
$$
and
$$
v^m \leq u^m + \tfrac12\eps_j^m \quad \text{ in }  \overline{Q_{t_1,\tilde \tau_2}} \setminus \left( Q^j \times (s_j, \tilde \tau_2] \right).
$$
We can find a non-decreasing sequence of functions $\bar h_j \in C^{0,1}(\overline{Q_{t_1,\tau_2}} ,\R_{\ge0})$ such that $\bar h_j \xrightarrow{j\to \infty} u^m$ pointwise in $\overline{Q_{t_1,\tau_2}}$ satisfying
$$
v^m \leq \bar h_j + \eps_j^m \leq u^m + \eps_j^m \quad \text{ in } \overline{Q_{t_1,\tau_2}} \setminus \left( Q^j \times (s_j,\tau_2] \right).
$$
Observe that by construction $\|\bar h_j\|_{L^\infty(Q_{t_1,\tau_2})} \leq \|u^m\|_{L^\infty(Q_{t_1,\tau_2})} < \infty$ for every $j \in \N$.

In view of Theorem~\ref{t.existence_continuous_weaksol}, we can find continuous global weak solutions $h_j$ and $\hat h_j$ in $Q^j_{s_j,\tau_2}$
that take the boundary values $\bar h_j^\frac{1}{m}$ and $(\bar h_j +
\eps_j^m)^\frac{1}{m}$ continuously and in the Sobolev/trace sense on
$\partial_p Q^j_{s_j,\tau_2}$. Since $v$ is quasi-sub- and $u$
quasi-supercaloric, and $Q^j_{s_j,\tau_2}\Subset Q_{t_1,t_2}$ are
$C^{2,\alpha}$-cylinders, we have that 
$$
u \geq h_j\quad \text{ and } \quad v \leq \hat h_j \quad \text{ in } Q^j_{s_j,\tau_2}.
$$
By extending $h_j$ by $\bar h_j^\frac{1}{m}$ and $\hat h_j$ by $(\bar h_j + \eps_j^m)^\frac{1}{m}$ to $Q_{t_1,\tau_2} \setminus Q^j_{s_j,\tau_2}$, the inequalities above hold also in this set. Furthermore, we clearly have 
$$
h_j \leq \hat h_j  \quad \text{ in } Q_{t_1,\tau_2} \setminus Q^j_{s_j,\tau_2},
$$
and
$$
h_j \leq \hat h_j \quad \text{ in } Q^j_{s_j,\tau_2}
$$
by the comparison principle for weak solutions, see
Lemma~\ref{l.comparison-weaksupersub}. Furthermore, sequences of
functions $h_j$ and $\hat h_j$ are uniformly bounded in $Q_{t_1,
  \tau_2}$ since $\bar h_j$ is by the maximum principle from Lemma~\ref{l.maximum-principle}.

By the estimate for the local H\"older continuity~\cite[Theorem 18.1 ,
Chapter 6]{DGV} we have that the families $h_j$ and $\hat h_j$ are locally
equicontinuous, which by Arzel\`a-Ascoli and a diagonal argument shows that there exist subsequences $h_j$ and $\hat h_j$ that converge locally uniformly in $Q_{t_1,\tau_2}$ to continuous functions $h$ and $\hat h$, which satisfy $h \leq \hat h$, and by earlier inequalities also 
\begin{equation} \label{e.uh-vhath}
u \geq h \quad \text{ and } \quad v \leq \hat h  \quad \text{ in } Q_{t_1,\tau_2}.
\end{equation}
Let us restrict to a subsequence for which the aforementioned convergences hold. By using~\cite[Corollary 3.11]{BLS} and Lemma~\ref{l.u_eps-u-estimate}, we have
\begin{align*}
&\iint_{Q_{t_1,\tau_2}} |\hat h_j^m - h_j^m|^\frac{m+1}{m} \, \d x \d t  \\
&\leq \iint_{Q^j_{s_j,\tau_2}} (\hat h_j - h_j) (\hat h_j^m - h_j^m) \, \d x \d t \\
&\phantom{+}+ \iint_{Q_{t_1,\tau_2}\setminus Q^j_{s_j,\tau_2}} |\hat h_j^m - h_j^m|^\frac{m+1}{m} \, \d x \d t\\
&\leq c(m,\|\bar h_j\|_\infty,|Q|,t_2-t_1) \max \left\{\eps_j^m, \int_{Q^j} \left( \bar h_j(x,s_j) + \eps_j^m \right)^\frac{1}{m}- \bar h_j(x,s_j)^\frac{1}{m} \, \d x \right\} \\
&\phantom{+} + \eps_j^{m+1} |Q_{t_1,\tau_2}\setminus Q^j_{s_j,\tau_2}| \\
&\leq c(m,\|u\|_\infty,|Q|,t_2-t_1) \max \left\{\eps_j^m, \left( \|u\|^m_\infty + \eps_j^m \right)^\frac{1}{m}- \|u\|_\infty \right\} \\
&\xrightarrow{j \to \infty} 0,
\end{align*}
where we used the facts $\|\bar h_j\|_\infty \leq \|u\|_\infty^m < \infty$ and $s \mapsto (s+\eps_j^m)^\frac{1}{m} - s^\frac{1}{m}$ is a non-decreasing mapping.

Since the functions $h_j, \hat h_j$ are uniformly bounded in $Q_{t_1, \tau_2}$, $h_j \to h$
and $\hat h_j \to \hat h$ pointwise in $Q_{t_1,\tau_2}$, the estimate above together with the dominated convergence theorem implies
$$
\iint_{Q_{t_1,\tau_2}} |\hat h^m - h^m|^\frac{m+1}{m} \, \d x \d t \leq 0.
$$
Thus $\hat h = h$ a.e. in $Q \times (t_1,\tau_2)$. By continuity of $\hat h$ and $ h$ this holds at every point, which together with~\eqref{e.uh-vhath} concludes the result in $Q \times (t_1,\tau_2) = Q \times (t_1,t_2-\delta)$. Since $\delta > 0$ was arbitrary, the result holds in $Q \times (t_1,t_2)$.

\end{proof}

For the proof of the following two lemmas in the case $m \geq 1$, see~\cite[Proposition 3.8 and Theorem 3.5]{Bjorns_boundary}.

\begin{prop} \label{p.supercaloric-general-comparison}
Let $0<m<1$. If $u$ is a quasi-supercaloric function, then $u$ is a supercaloric function.
\end{prop}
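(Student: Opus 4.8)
The statement to prove is Proposition~\ref{p.supercaloric-general-comparison}: every quasi-supercaloric function is in fact supercaloric. The difference between the two notions is only in the class of test-cylinders allowed in the comparison principle: quasi-supercaloricity requires the comparison inequality $h\le u$ on $\partial_p Q_{t_1,t_2}$ to propagate into $Q_{t_1,t_2}$ only for $C^{2,\alpha}$-subcylinders, whereas supercaloricity demands it for \emph{all} subcylinders $Q_{t_1,t_2}=Q\times(t_1,t_2)\Subset U$ with $h\in C(\overline{Q_{t_1,t_2}})$ a weak solution in $Q_{t_1,t_2}$. So the task is to upgrade the test-cylinder class from smooth cylinders to arbitrary ones.

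The natural approach is an exhaustion argument. Let $u$ be quasi-supercaloric, let $Q_{t_1,t_2}=Q\times(t_1,t_2)\Subset U$ be an arbitrary subcylinder, and let $h\in C(\overline{Q_{t_1,t_2}})$ be a weak solution with $h\le u$ on $\partial_p Q_{t_1,t_2}$. The key point is that $h$, being continuous on the compact set $\overline{Q_{t_1,t_2}}$, is a (quasi-)subcaloric function there — more precisely its restriction to any subdomain is subcaloric, since a continuous weak solution trivially satisfies the comparison principle from the subsolution side. We then want to apply Lemma~\ref{l.supersubcal-cylinder-comparison} directly with $v=h$: that lemma already gives the comparison $v\le u$ in $Q_{t_1,t_2}$ for \emph{arbitrary} cylinders, provided $u$ is quasi-supercaloric and $v$ is quasi-subcaloric with $\limsup_{(y,s)\to(x,t)} v(y,s)\le \liminf_{(y,s)\to(x,t)} u(y,s)$ for every $(x,t)\in\partial_p Q_{t_1,t_2}$. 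Here $v=h$ is continuous up to the parabolic boundary, so $\limsup_{(y,s)\to(x,t)} h(y,s)=h(x,t)$, which is finite; and by lower semicontinuity of $u$ together with the hypothesis $h\le u$ on $\partial_p Q_{t_1,t_2}$ we get $h(x,t)\le u(x,t)\le\liminf_{(y,s)\to(x,t)}u(y,s)$ for every $(x,t)\in\partial_p Q_{t_1,t_2}$. Thus the hypotheses of Lemma~\ref{l.supersubcal-cylinder-comparison} are met and it yields $h\le u$ in $Q_{t_1,t_2}$, which is precisely condition~(iii).

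The one thing to check carefully — and the step I expect to be the mild obstacle — is that a continuous weak solution $h$ on $\overline{Q_{t_1,t_2}}$ qualifies as a quasi-subcaloric function on the open cylinder $Q_{t_1,t_2}$ in the sense of Definition~\ref{d.supercal}: it must be upper semicontinuous (clear, being continuous), finite on a dense set (clear), and must satisfy the subsolution comparison principle~(iii') on every $C^{2,\alpha}$-subcylinder $Q'_{s_1,s_2}\Subset Q_{t_1,t_2}$, i.e.\ if $g\in C(\overline{Q'_{s_1,s_2}})$ is a weak solution with $g\ge h$ on $\partial_p Q'_{s_1,s_2}$, then $g\ge h$ in $Q'_{s_1,s_2}$. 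This is a consequence of the comparison principle for continuous weak solutions in Theorem~\ref{t.existence_continuous_weaksol} (or of Lemma~\ref{l.comparison-weaksupersub} after noting $h^m,g^m$ are bounded in $H^1$ on the smooth subcylinder): two continuous weak solutions ordered on the parabolic boundary of a smooth cylinder remain ordered inside. Once this verification is in place, the application of Lemma~\ref{l.supersubcal-cylinder-comparison} is immediate and the proof is complete. (Strictly speaking one may instead apply Lemma~\ref{l.supersubcal-cylinder-comparison} on a slightly smaller cylinder $Q\times(t_1,t_2-\delta)$ where $h$ is still defined and continuous, and let $\delta\to0$, exactly as in the proof of that lemma — but since $h$ is given continuous up to the full parabolic boundary no shrinking is needed.)
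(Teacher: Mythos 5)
Your proposal is correct and follows essentially the same route as the paper: observe that the continuous weak solution $h$ is (quasi-)subcaloric and then apply the general-cylinder comparison result, Lemma~\ref{l.supersubcal-cylinder-comparison}, with $v=h$ to obtain $h\le u$ in $Q_{t_1,t_2}$. The only point where you are slightly looser than the paper is the verification that $h$ is quasi-subcaloric: a direct appeal to Lemma~\ref{l.comparison-weaksupersub} is not quite immediate because the pointwise ordering on $\partial_p Q'_{s_1,s_2}$ does not by itself give the Sobolev-sense condition $(h^m-g^m)_+(\cdot,t)\in H^1_0(Q')$, which is why the paper instead argues ``analogously to Lemma~\ref{l.weasuper-is-quasi-supercal}'' via Lipschitz approximation of the boundary data and the stability result — precisely the step you flagged as the mild obstacle, and it goes through with that argument.
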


\begin{proof}
  Let $Q_{t_1,t_2} \Subset \Omega_T$ and
  $h \in C(\overline{Q_{t_1,t_2}})$ be a weak solution in
  $Q_{t_1,t_2}$ such that $h \leq u$ on $\partial_p
  Q_{t_1,t_2}$. Since $h$ is continuous in $\overline{Q_{t_1,t_2}}$,
  it is also bounded in $\overline{Q_{t_1,t_2}}$. By an analogous
  proof as in Lemma~\ref{l.weasuper-is-quasi-supercal}, 
  $h$ is a
  quasi-subcaloric function. Since $u$ 
  is a
  quasi-supercaloric function, we may use
  Lemma~\ref{l.supersubcal-cylinder-comparison}
  to conclude that $h \leq u$ in $Q_{t_1,t_2}$, which implies the claim.
\end{proof}

Combining the preceding proposition with
Lemma~\ref{l.weasuper-is-quasi-supercal}, we obtain  the desired
improvement of Lemma~\ref{l.weasuper-is-quasi-supercal}.  

\begin{lem} \label{l.weasuper-is-supercal}
Let $0<m<1$ and $u$ be a weak supersolution in $\Omega_T$. Then, $u_*$ is a supercaloric function in $\Omega_T$.
\end{lem}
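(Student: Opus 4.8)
The plan is to chain the two preceding results, so the proof is essentially a one-line deduction. First I would invoke Lemma~\ref{l.weasuper-is-quasi-supercal}: since $u$ is a weak supersolution in $\Omega_T$, its lower semicontinuous representative $u_*$ (which exists and is well defined by Theorem~\ref{t.super_lsc}) is a quasi-supercaloric function in $\Omega_T$. At this point the only discrepancy between the conclusion we have and the conclusion we want is that the comparison principle available for $u_*$ is condition (iii'), i.e.\ the comparison against continuous weak solutions is a priori known only on $C^{2,\alpha}$-subcylinders, whereas the definition of a supercaloric function requires the comparison principle (iii) on \emph{every} subcylinder $Q_{t_1,t_2}\Subset\Omega_T$.

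The second and final step is to remove this restriction by applying Proposition~\ref{p.supercaloric-general-comparison}, which asserts precisely that every quasi-supercaloric function is supercaloric. Applying it to $u_*$ yields that $u_*$ is a supercaloric function in $\Omega_T$, which is the claim.

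I do not expect any genuine obstacle in the present statement itself; all the substantive work has already been done upstream. The heavy lifting is concealed in Lemma~\ref{l.supersubcal-cylinder-comparison} (the comparison principle between quasi-super- and quasi-subcaloric functions on general cylinders), whose proof exhausts a general cylinder by nested $C^{2,\alpha}$-cylinders, builds continuous weak solutions with slightly perturbed boundary data via Theorem~\ref{t.existence_continuous_weaksol}, and uses the quantitative stability estimate of Lemma~\ref{l.u_eps-u-estimate} to force the two approximating solutions to coincide in the limit. Proposition~\ref{p.supercaloric-general-comparison} then follows by noting that a continuous weak solution $h$ on $\overline{Q_{t_1,t_2}}$ is bounded there and, by the argument of Lemma~\ref{l.weasuper-is-quasi-supercal}, quasi-subcaloric, so that Lemma~\ref{l.supersubcal-cylinder-comparison} applies to the pair $(u,h)$. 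Consequently Lemma~\ref{l.weasuper-is-supercal} is an immediate corollary of Lemma~\ref{l.weasuper-is-quasi-supercal} and Proposition~\ref{p.supercaloric-general-comparison}, and the proof consists in simply citing these two results in sequence.
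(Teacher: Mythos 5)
Your proposal is correct and coincides with the paper's own argument: the lemma is stated there precisely as the combination of Lemma~\ref{l.weasuper-is-quasi-supercal} (a weak supersolution has a quasi-supercaloric regularization $u_*$) with Proposition~\ref{p.supercaloric-general-comparison} (every quasi-supercaloric function is supercaloric). Your remarks on where the real work lies (Lemma~\ref{l.supersubcal-cylinder-comparison}) also match the paper's structure.
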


In the next lemma we show that supercaloric functions can be extended by zero in the past.

\begin{lem} \label{l.zero-past-extension}
Let $0<m<1$ and $v : \Omega_T \to [0,\infty]$ be a supercaloric function in $\Omega_T$. Then
\[ u =
\begin{cases}
v\quad &\text {in } \Omega \times (0,T), \\
0\quad &\text {in } \Omega \times (-\infty,0],
\end{cases}
\]
is a supercaloric function in $\Omega \times (-\infty,T)$.
\end{lem}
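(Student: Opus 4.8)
The plan is to verify the three defining properties of a supercaloric function from Definition~\ref{d.supercal} for the extended function $u$ on $\tilde\Omega := \Omega \times (-\infty,T)$. Properties (i) and (ii) are immediate: lower semicontinuity of $u$ needs only to be checked at points $(x_0,0)$ with $x_0 \in \Omega$, where it follows since $u \geq 0 = u(x_0,0)$ everywhere and $v$ is lower semicontinuous from above; finiteness on a dense set is inherited from $v$ together with the fact that $u \equiv 0$ on $\Omega \times (-\infty,0]$. So the real content is the comparison principle (iii).

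The main step is therefore the following. Let $Q_{t_1,t_2} = Q \times (t_1,t_2) \Subset \Omega \times (-\infty,T)$ and let $h \in C(\overline{Q_{t_1,t_2}})$ be a weak solution with $h \leq u$ on $\partial_p Q_{t_1,t_2}$; I must show $h \leq u$ in $Q_{t_1,t_2}$. If $t_1 \geq 0$ then $Q_{t_1,t_2} \Subset \Omega_T$ and the conclusion is just the comparison principle for $v$ (using $u = v$ there), so I may assume $t_1 < 0$. I would split into two sub-cases according to the sign of $t_2$. If $t_2 \leq 0$, then on $\partial_p Q_{t_1,t_2}$ we have $h \leq u = 0$, hence $h \leq 0 \leq u$ in all of $Q_{t_1,t_2}$ by the maximum principle for weak solutions, Lemma~\ref{l.maximum-principle} (here $h$ is a weak solution, in particular a weak subsolution, continuous up to the closure, with $h = 0$ on the parabolic boundary; one applies the lemma with $k = 0$ after noting $h \leq 0$ on the lateral boundary and that the initial-data limit condition holds by continuity). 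The genuinely interesting case is $t_1 < 0 < t_2$.

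For that case I would use Lemma~\ref{l.supersubcal-cylinder-comparison} — or rather its method — comparing the subcaloric function $h$ (extended to $\overline{Q_{t_1,t_2}}$ by continuity) against the supercaloric function $u$ on the cylinder $Q_{t_1,t_2} \Subset \tilde\Omega$. The hypothesis of that lemma requires, for every $(x,t) \in \partial_p Q_{t_1,t_2}$, that $\limsup h \leq \liminf u$ at $(x,t)$ with the left side finite. For boundary points with $t \leq 0$ this is clear since $u = 0$ there and $h \leq 0$ on $\partial_p Q_{t_1,t_2}$; for lateral boundary points with $t > 0$ it follows from $h \leq u$ on $\partial_p Q_{t_1,t_2}$ together with lower semicontinuity of $u = v$ and continuity of $h$. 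But to invoke Lemma~\ref{l.supersubcal-cylinder-comparison} I first need to know that $h$ is a quasi-subcaloric function and $u$ a (quasi-)supercaloric function \emph{on $Q_{t_1,t_2}$}: the former holds by the argument used in the proof of Lemma~\ref{l.weasuper-is-quasi-supercal} (a continuous weak solution is quasi-subcaloric), so the crux reduces to showing $u$ is (quasi-)supercaloric on any subcylinder crossing $t=0$, i.e. that $u$ satisfies (iii') on $C^{2,\alpha}$-subcylinders of $\tilde\Omega$.

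To establish that, fix a $C^{2,\alpha}$-cylinder $Q_{s_1,s_2} \Subset \tilde\Omega$ with $s_1 < 0 < s_2$ and a weak solution $g \in C(\overline{Q_{s_1,s_2}})$ with $g \leq u$ on $\partial_p Q_{s_1,s_2}$. The idea is a ``time-shift/cutoff'' approximation: for small $\sigma \in (0,s_2)$ consider the sub-cylinder $Q_{\sigma, s_2}$, on which $u = v$ and on whose parabolic boundary $g \leq u = v$ (the lateral part is inherited; the bottom $Q \times \{\sigma\}$ needs $g(\cdot,\sigma) \leq v(\cdot,\sigma)$, which I will get below); then the comparison principle for the supercaloric function $v$ gives $g \leq v = u$ in $Q_{\sigma,s_2}$, and letting $\sigma \downarrow 0$ gives $g \leq u$ on $Q \times (0,s_2)$. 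On $Q \times (s_1,0]$ one uses $g \leq u = 0$ on the parabolic boundary of $Q_{s_1,0}$ and the maximum principle (Lemma~\ref{l.maximum-principle}) to conclude $g \leq 0 = u$ there, and in particular $g(\cdot,\sigma) \to g(\cdot,0) \leq 0 \leq v(\cdot,\sigma)$ as $\sigma \downarrow 0$ — wait, more carefully: to run the argument on $Q_{\sigma,s_2}$ one needs $g(\cdot,\sigma) \leq v(\cdot,\sigma)$, and for $\sigma > 0$ this is \emph{not} boundary data a priori. The clean fix is instead to compare on $Q_{s_1, s_2}$ directly using Lemma~\ref{l.supersubcal-cylinder-comparison} applied to $g$ (quasi-subcaloric) and $u$, but this is circular since that is what we want. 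So the honest route is: first prove by the maximum principle that $g \leq 0 = u$ on $Q \times (s_1,0]$ (boundary data $g \leq u = 0$ on $\partial_p Q_{s_1,0}$ works since that parabolic boundary lies in the region where $u=0$), hence $g(\cdot,0) \leq 0$; then on $Q_{0,s_2}$ the parabolic boundary consists of the bottom $Q \times \{0\}$ where $g \leq 0 \leq v(\cdot, 0^+)$ in the appropriate $\essliminf$/limit sense, and the lateral part where $g \leq v$; applying the comparison principle for the supercaloric function $v$ on the cylinder $Q_{0,s_2}$ (or on $Q_{\sigma,s_2}$ with $\sigma \downarrow 0$, using $g(\cdot,\sigma) \to g(\cdot,0) \leq 0 \leq v(\cdot,\sigma)$ for $\sigma$ small by lower semicontinuity of $v$ and continuity of $g$) yields $g \leq v = u$ in $Q \times (0,s_2)$. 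Combining the two regions gives $g \leq u$ in $Q_{s_1,s_2}$, so $u$ is quasi-supercaloric on $\tilde\Omega$; then Proposition~\ref{p.supercaloric-general-comparison} upgrades this to supercaloric, which is exactly (iii) and completes the proof.

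I expect the main obstacle to be the bookkeeping at the interface $t=0$ — precisely, transferring the information ``$g \leq 0$ just below $t=0$'' into a usable lower bound comparison against $v$ on a cylinder with bottom face approaching $\{t=0^+\}$, since $v$ need not be continuous at $t = 0^+$. This is handled by the $\sigma \downarrow 0$ limiting argument together with lower semicontinuity of $v$ (so that $\liminf_{\sigma\downarrow 0} v(\cdot,\sigma) \geq 0 \geq \limsup g(\cdot,\sigma)$ in the sense required by the comparison principle), rather than by any new estimate. Everything else — properties (i), (ii), and the $t_2 \leq 0$ and $t_1 \geq 0$ cases of (iii) — is routine.
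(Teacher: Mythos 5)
Your proposal is correct and, once the detours are stripped away, it is essentially the paper's proof: verify (iii') on $C^{2,\alpha}$-subcylinders crossing $t=0$ by first showing that the comparison solution vanishes on $Q\times(s_1,0]$ (using its nonnegativity and zero boundary data below time $0$), then comparing it with $v$ on $Q\times(0,s_2)$ through the $\limsup$/$\liminf$ boundary condition of Lemma~\ref{l.supersubcal-cylinder-comparison} (bottom face: $g(\cdot,0)=0\le \liminf v$ since $v\ge0$; lateral face: $g\le v$ plus lower semicontinuity), and finally upgrading (iii') to (iii) via Proposition~\ref{p.supercaloric-general-comparison}. Two minor caveats: the vanishing below $t=0$ is more safely justified by uniqueness/comparison of continuous very weak solutions on the smooth cylinder (Theorem~\ref{t.existence-abdulla}) than by Lemma~\ref{l.maximum-principle}, whose global energy and $H^1_0$-boundary hypotheses a merely locally defined weak solution continuous up to the closure is not known to satisfy; and your parenthetical fixed-$\sigma$ variant does not work as stated, because $g(\cdot,\sigma)\le v(\cdot,\sigma)$ is not guaranteed for any $\sigma>0$ (e.g.\ $v(\cdot,\sigma)$ may be small or vanish where $g(\cdot,\sigma)$ is merely small but positive) --- only the interface comparison at $t=0$ via Lemma~\ref{l.supersubcal-cylinder-comparison}, which is your primary route and the paper's, goes through.
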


\begin{proof}
Clearly $u$ satisfies items (i) and (ii) in Definition~\ref{d.supercal} since $v$ does, and $v \geq 0$. By showing (iii'), the claim holds by Proposition~\ref{p.supercaloric-general-comparison}.

Fix a $C^{2,\alpha}$-cylinder $Q_{t_1,t_2} \Subset \Omega \times (-\infty,T)$, and let $h \in C(\overline{Q_{t_1,t_2}})$ be a weak solution in $Q_{t_1,t_2}$ such that $h \leq u$ on $\partial_p Q_{t_1,t_2}$. Furthermore, suppose that $Q_{t_1,t_2} \cap (\Omega \times \{0\} ) \neq \varnothing$ since otherwise the comparison (in (iii') of Definition~\ref{d.supercal}) clearly holds.

By definition of $u$ we have that $h \leq v = 0$ on $\partial_p [Q \times (t_1,0)]$, i.e. $h = 0$ on $\partial_p [Q \times (t_1,0)]$. This implies that $h =0 $ in $Q \times (t_1,0)$. Since $h$ is continuous, this implies that $h = 0$ in $Q \times (t_1,0]$. Now by using also continuity of $h$ we have that 
$$
\limsup_{Q_{0,t_2} \ni (y,s) \to (x,t)} h(y,s) = h(x,t) \leq \liminf_{Q_{0,t_2} \ni (y,s) \to (x,t)} v(y,s)
$$
for all $(x,t) \in \partial_p Q_{0,t_2}$. Since $v$ is supercaloric and $h$ is subcaloric in $Q_{0,t_2}$, it follows that $h \leq v$ in $Q_{0,t_2}$ by Lemma~\ref{l.supersubcal-cylinder-comparison} completing the proof.
\end{proof}

Then we recall a parabolic comparison principle for super(sub)caloric functions in noncylindrical bounded sets from~\cite[Theorem 5.1]{Bjorns_boundary}.

\begin{lem} \label{l.comparison_t<T}
Let $m> 0$ and $U \subset \R^{n+1}$ be a bounded open set. Suppose
that $u$ is a supercaloric and $v$ is a subcaloric function in $U$. Let $T \in \R$ and assume that
$$
 \limsup_{U \ni (y,s)\to (x,t)} v(y,s) < \liminf_{U \ni (y,s) \to (x,t)} u(y,s)
$$
for all $(x,t) \in \{  (x,t) \in \partial U : t < T  \}$. Then $v \leq u$ in $\{  (x,t)\in U : t < T  \}$.
\end{lem}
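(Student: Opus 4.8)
The plan is to reduce the noncylindrical comparison principle stated in Lemma~\ref{l.comparison_t<T} to the cylindrical one from Lemma~\ref{l.supersubcal-cylinder-comparison}, following the exhaustion strategy of~\cite[Theorem 5.1]{Bjorns_boundary}. First I would fix the set $U_{<T} := \{(x,t)\in U : t<T\}$ and argue by contradiction: if the conclusion fails, the upper semicontinuous function $w := v - u$ (well-defined wherever $u<\infty$; near points where $u=\infty$ there is nothing to prove) attains a positive supremum somewhere in $U_{<T}$. As a preliminary reduction I would truncate $u$ from above by a large constant exceeding $\sup_{U_{<T}} v$ on the relevant region — this is legitimate because a truncation $\min\{u,M\}$ of a supercaloric function is supercaloric (truncations of weak solutions used in the definition still lie below $u$), and it makes $u$ bounded without affecting the inequality we want. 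Also, since $v$ is locally bounded by definition and the boundary assumption gives a strict gap, $v$ is bounded on the part of $U_{<T}$ where $u$ is finite.

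Next I would exhaust $U_{<T}$ from inside by a countable union of open cylinders. Concretely, for each point $(x_0,t_0)\in U_{<T}$ one can find a small open box $Q\times(t_1,t_2)\Subset U$ with $t_2<T$ containing $(x_0,t_0)$; by Lindelöf, countably many such cylinders $\{C_k\}_{k\in\N}$ cover $U_{<T}$. On each individual cylinder $C_k = Q^k\times(t_1^k,t_2^k)$ the boundary hypothesis of Lemma~\ref{l.supersubcal-cylinder-comparison} must be checked: for $(x,t)\in\partial_p C_k$ either $(x,t)\in U$, where lower/upper semicontinuity of $u$ (after the harmless extension of $u$ by its $\liminf$ and $v$ by its $\limsup$ up to $\partial U$, exactly as in the proof of Lemma~\ref{l.supersubcal-cylinder-comparison}) gives $\limsup v \le v(x,t) \le u(x,t) \le \liminf u$ provided we already know $v\le u$ there — and this is where the argument becomes inductive — or $(x,t)\in\partial U$ with $t<T$, where the strict inequality in the hypothesis of Lemma~\ref{l.comparison_t<T} directly yields $\limsup_{(y,s)\to(x,t)} v(y,s) \le \liminf_{(y,s)\to(x,t)} u(y,s)$, and also the finiteness $\limsup v \ne \infty$ required by Lemma~\ref{l.supersubcal-cylinder-comparison}, since $v$ is bounded there.

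The main obstacle, and the heart of Björn–Björn's argument, is handling the circular dependence: to apply Lemma~\ref{l.supersubcal-cylinder-comparison} on a cylinder $C_k$ one needs $v\le u$ on $\partial_p C_k \cap U$, but that is part of what we are trying to prove. The resolution is to order things by time. Let $\tau^* := \sup\{\tau \le T : v\le u \text{ in } U\cap(\R^n\times(-\infty,\tau))\}$; the boundary condition near $\partial U$ handles the initial time-slices, so $\tau^* > \inf\{t : (x,t)\in U\}$. If $\tau^* < T$, pick a point $(x_0,\tau^*)\in \overline U$ (or slightly below $\tau^*$) and a cylinder $C = Q\times(t_1,t_2)\Subset U$ around it with $t_1$ below $\tau^*$ and $t_2 < T$. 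On the portion $\partial_p C$ one has $v\le u$: on the bottom $Q\times\{t_1\}$ and on the lateral part below height $\tau^*$ by definition of $\tau^*$ and semicontinuity, and on the lateral part above $\tau^*$ only if it meets $\partial U$ — if $C$ is chosen small enough that its lateral boundary up to height just past $\tau^*$ stays below where the inductive hypothesis applies, one gets the needed inequality. Then Lemma~\ref{l.supersubcal-cylinder-comparison} forces $v\le u$ in all of $C$, in particular slightly above $\tau^*$, contradicting maximality of $\tau^*$. Hence $\tau^* = T$ and $v\le u$ throughout $U_{<T}$.

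I would be careful about two technical points that Björn–Björn treat in detail: first, the extension of $u$ and $v$ to $\partial U$ by $\liminf$ and $\limsup$ and the verification that these extensions are semicontinuous on the relevant closed sets (this is ``standard arguments'' as in Lemma~\ref{l.supersubcal-cylinder-comparison}, using that $U$ is bounded so the extended functions are genuinely semicontinuous on compacta); and second, that the cylinders in the exhaustion can be taken with $C^{2,\alpha}$ (or even smooth) spatial cross-sections so that Lemma~\ref{l.supersubcal-cylinder-comparison} — stated for general cylinders $Q_{t_1,t_2}\Subset\R^{n+1}$ — applies without fuss. Since Lemma~\ref{l.supersubcal-cylinder-comparison} is already stated for arbitrary cylinders (not just $C^{2,\alpha}$ ones), this last point causes no trouble and the whole argument is a clean time-slicing exhaustion.
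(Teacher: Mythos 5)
The paper never proves this lemma at all --- it is quoted verbatim from \cite{Bjorns_boundary} (Theorem 5.1) --- so your proposal has to stand on its own, and its inductive step is exactly where it breaks. Having set $\tau^* := \sup\{\tau \le T : v\le u \text{ in } U\cap\{t<\tau\}\}$ and assuming $\tau^*<T$, you pick a cylinder $C=Q\times(t_1,t_2)\Subset U$ with $t_1<\tau^*<t_2$ and want to apply Lemma~\ref{l.supersubcal-cylinder-comparison} on $C$. For that you need $\limsup v\le\liminf u$ at \emph{every} point of $\partial_p C$, in particular on the lateral boundary $\partial Q\times[\tau^*,t_2)$. Those points lie in the interior of $U$ (a cylinder compactly contained in $U$ never meets $\partial U$), so the boundary hypothesis of the lemma gives nothing there, and they lie at times $\ge\tau^*$, so the induction hypothesis gives nothing either. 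Your proposed fix --- choose $C$ ``small enough that its lateral boundary up to height just past $\tau^*$ stays below where the inductive hypothesis applies'' --- is self-contradictory: any cylinder that reaches above $\tau^*$ has lateral boundary points above $\tau^*$, no matter how small it is. Hence the application of the cylindrical comparison on $C$ is unjustified, the conclusion $\tau^*=T$ does not follow, and the circular dependence you yourself flag as ``the heart'' of the argument is in fact never resolved.

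Symptomatically, you never genuinely use the strict inequality in the hypothesis: you only downgrade it to ``$\le$ plus finiteness'', which is all the cylindrical lemma needs, whereas the strictness is precisely the tool for breaking the circularity. Combined with the semicontinuity of the extended functions and the compactness of $\partial U\cap\{t\le\sigma\}$ for a fixed $\sigma<T$ (one must work with $\sigma<T$ anyway: since the hypothesis says nothing at boundary points with $t\ge T$, the quantity $\sup_{U_{<T}}v$ used for your truncation may be infinite), the strict gap yields finitely many space--time neighbourhoods $V_i$ and constants $a_i<b_i$ with $v<a_i<b_i<u$ \emph{pointwise} in $V_i\cap U$. This produces an open strip $W\subset U$ along the relevant part of $\partial U$ on which $v<u$ is already known, so that the problematic set $(U\cap\{t\le\sigma\})\setminus W$ is compactly contained in $U$, and one can then arrange the comparison cylinders so that their lateral boundaries lie inside $W$ or below the time level already handled. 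Some mechanism of this kind --- converting the strict boundary inequality into a pointwise inequality on a full interior neighbourhood of the boundary --- is what the cited argument relies on and what is missing from your proposal; without it the reduction to Lemma~\ref{l.supersubcal-cylinder-comparison} cannot be closed. (The preliminary reductions you make --- truncating $u$ by a constant, which is legitimate since $\min\{u,k\}$ is again supercaloric, and extending $u,v$ semicontinuously to $\partial U$ --- are fine, but they are not where the difficulty lies.)
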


The following result shows that the class of supercaloric functions is closed under increasing limits, provided that the limit function is finite in a dense set~\cite[Proposition 4.6]{Bjorns_boundary}. 

\begin{lem} \label{l.superc_increasing_lim}
Let $m>0$ and $u_k$ be a nondecreasing sequence of supercaloric functions in $\Omega_T$. If $u := \lim_{k\to \infty} u_k$ is finite in a dense subset of $\Omega_T$, then $u$ is a supercaloric function in $\Omega_T$.
\end{lem}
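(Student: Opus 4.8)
The plan is to verify the three defining properties of a supercaloric function for the pointwise increasing limit $u = \lim_{k\to\infty} u_k$. Property (ii) holds by hypothesis, since $u$ is assumed finite on a dense set. Property (i), lower semicontinuity, follows from a standard fact: an increasing limit of lower semicontinuous functions is lower semicontinuous. Indeed, for $\lambda \in \R$ the superlevel set $\{u > \lambda\} = \bigcup_k \{u_k > \lambda\}$ is a union of open sets, hence open; alternatively one checks $\liminf_{(y,s)\to(x,t)} u(y,s) \ge \liminf_{(y,s)\to(x,t)} u_k(y,s) \ge u_k(x,t)$ for every $k$, and letting $k\to\infty$ gives lower semicontinuity at $(x,t)$.

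The substance of the proof is property (iii), the comparison principle. Fix a subcylinder $Q_{t_1,t_2} = Q\times(t_1,t_2)\Subset\Omega_T$ and a weak solution $h\in C(\overline{Q_{t_1,t_2}})$ with $h\le u$ on $\partial_p Q_{t_1,t_2}$. I want to compare $h$ against each $u_k$ using Lemma~\ref{l.supersubcal-cylinder-comparison}, then pass to the limit. The issue is that $h\le u_k$ need not hold on the parabolic boundary, only $h\le u$ does. To fix this I would slightly shrink $h$: for $\eps>0$ consider a weak solution $h_\eps\in C(\overline{Q_{t_1,t_2}})$ taking boundary values that are strictly below those of $h$ on $\partial_p Q_{t_1,t_2}$ (e.g.\ obtained from Theorem~\ref{t.existence_continuous_weaksol} with boundary data whose $m$-th power is $(h^m - \eps)_+$, or a comparable construction), arranged so that $h_\eps\to h$ locally uniformly in $Q_{t_1,t_2}$ as $\eps\to 0$ via Theorem~\ref{t.stability_existence}. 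On $\partial_p Q_{t_1,t_2}$ one then has, at each boundary point $(x,t)$,
\[
\limsup_{(y,s)\to(x,t)} h_\eps(y,s) < h(x,t) \le u(x,t) \le \liminf_{(y,s)\to(x,t)} u(y,s),
\]
and since $u = \sup_k u_k$ with the $u_k$ lower semicontinuous and increasing, a compactness argument on the compact set $\partial_p Q_{t_1,t_2}$ (Dini-type: the lower semicontinuous functions $u_k$ increase to the lower semicontinuous $u$, and we only need a strict inequality against the subcaloric $h_\eps$) produces an index $k_\eps$ with
\[
\limsup_{(y,s)\to(x,t)} h_\eps(y,s) \le \liminf_{(y,s)\to(x,t)} u_{k_\eps}(y,s)
\]
for all $(x,t)\in\partial_p Q_{t_1,t_2}$. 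Here $h_\eps$ is quasi-subcaloric (continuous weak solutions are quasi-subcaloric, as noted in the proof of Proposition~\ref{p.supercaloric-general-comparison}) and $u_{k_\eps}$ is supercaloric, so Lemma~\ref{l.supersubcal-cylinder-comparison} gives $h_\eps\le u_{k_\eps}\le u$ in $Q_{t_1,t_2}$. Letting $\eps\to 0$ and using $h_\eps\to h$ locally uniformly yields $h\le u$ in $Q_{t_1,t_2}$, which is (iii).

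The main obstacle I anticipate is making the boundary comparison rigorous: passing from the pointwise relation $h \le u = \sup_k u_k$ on $\partial_p Q_{t_1,t_2}$ to a single index $k_\eps$ for which $h_\eps$ is dominated by $u_{k_\eps}$ on the whole parabolic boundary. This is exactly where the strict slack created by replacing $h$ with $h_\eps$ is needed, together with lower semicontinuity of each $u_k$ and $u$ and compactness of $\partial_p Q_{t_1,t_2}$; one covers the boundary by neighborhoods on which $u_{k}$ eventually exceeds the (upper semicontinuous) boundary envelope of $h_\eps$, extracts a finite subcover, and takes $k_\eps$ to be the maximum of the finitely many indices. Everything else — lower semicontinuity of the limit, the stability $h_\eps\to h$, and the final limit passage — is routine given the cited results.
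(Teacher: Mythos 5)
First, a point of comparison: the paper does not prove this lemma at all --- it is quoted from \cite{Bjorns_boundary} (Proposition 4.6) --- so there is no in-paper argument to measure your proposal against; it has to stand on its own. Your overall strategy (lower semicontinuity of a monotone limit, creating slack by solving the equation with the smaller boundary data $((h^m-\eps)_+)^{1/m}$, extracting one index $k_\eps$ by a covering argument from lower semicontinuity and monotonicity, comparing via Lemma~\ref{l.supersubcal-cylinder-comparison}, and removing the slack by stability) is viable and close to how such statements are proved in the literature. However, two steps do not work as written. First, Definition~\ref{d.supercal}(iii) concerns \emph{arbitrary} subcylinders $Q\times(t_1,t_2)\Subset\Omega_T$, whereas your construction of $h_\eps$ invokes Theorem~\ref{t.existence_continuous_weaksol} and Theorem~\ref{t.stability_existence}, which require $C^{1,\alpha}$-cylinders; moreover Theorem~\ref{t.existence_continuous_weaksol} also demands $g^m\in L^2(t_1,t_2;H^1(Q))$ and $\partial_t g^m\in L^{\frac{m+1}{m}}$, which the data $((h^m-\eps)_+)^{1/m}$ need not satisfy, since $h$ is only a continuous weak solution with $h^m\in L^2_{\loc}(t_1,t_2;H^1_{\loc}(Q))$. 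The fix inside the paper's framework is to verify only (iii$'$) on $C^{2,\alpha}$-subcylinders (i.e.\ show that $u$ is quasi-supercaloric) and then upgrade with Proposition~\ref{p.supercaloric-general-comparison}, and to build $h_\eps$ as in Lemma~\ref{l.weasuper-is-quasi-supercal}: approximate $(h^m-\eps)_+$ uniformly from below by Lipschitz functions, solve with those data, and use Theorem~\ref{t.stability_existence} together with the uniqueness in Theorem~\ref{t.existence-abdulla} (which identifies $h$ as the very weak solution with its own boundary data) to get locally uniform convergence to $h$. Note also that these tools are stated for $0<m<1$, while the lemma claims $m>0$; for $m\ge1$ you would have to quote the corresponding results of \cite{Bjorns_boundary,KLL}.

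Second, the displayed strict inequality $\limsup_{(y,s)\to(x,t)}h_\eps(y,s)<h(x,t)$ is false at boundary points where $h(x,t)=0$, and a Dini-type covering of the \emph{whole} parabolic boundary cannot be run on a non-strict inequality: monotonicity and lower semicontinuity alone do not produce a single index $k_\eps$ dominating a continuous function that is merely $\le u$ with equality allowed (this is exactly why a multiplicative shrinking such as $\theta h$ would not obviously work). What rescues your argument is the specific truncated slack you chose: the boundary data of $h_\eps$ vanish identically on the relatively open portion $\{h^m<\eps\}\cap\partial_pQ_{t_1,t_2}$ of the boundary, where $u_k\ge0$ dominates trivially, so the covering argument needs to be --- and can be --- carried out only over the compact set $\{h^m\ge\eps\}\cap\partial_pQ_{t_1,t_2}$, on which $u\ge h>h_\eps$ holds strictly. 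Make this splitting explicit; with it, the choice of $k_\eps$, the application of Lemma~\ref{l.supersubcal-cylinder-comparison} (note $h_\eps$ is bounded, so the $\limsup$ is finite), and the limit $\eps\to0$ go through, and the rest of your proposal (properties (i) and (ii)) is correct as stated.
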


\section{Positivity sets of supercaloric functions} \label{sec:positivity}

First we recall the following result on
expansion of positivity for weak solutions.

\begin{theo}[{\hspace{1sp}\cite[Chapter 4, Prop. 7.2]{DGV}}] \label{t.expansionofpositivity}
Let $0<m<1$. Assume that $u$ is a locally bounded, nonnegative weak solution
to \eqref{evo_eqn}
in class $C_{\loc}(0,T; L^{m+1}_{\loc}(\Omega))$. Suppose that for some $(x_o,t_o)\in \Omega_T$ and $r > 0$
$$
\left| \{ u(\cdot,t_o) \geq M \} \cap B(x_o,r) \right| \geq \alpha |B(x_o,r)|
$$
holds true for some $M > 0$ and $\alpha \in (0,1)$. Then there exist constants $\eps, \delta, \eta \in (0,1)$ depending only on $n$, $m$ and $\alpha$	such that 
$$
u(\cdot,t) \geq \eta M\quad \text{in } B(x_o,2r)
$$ 
for all
$$
t \in \left[ t_o + (1 - \eps) \delta M^{1-m} r^2 ,  t_o + \delta M^{1-m} r^2  \right],
$$
provided that $B (x_o, 16 r) \times (t_o, t_o + \delta M^{1-m} r^2) \Subset \Omega_T$.

\end{theo}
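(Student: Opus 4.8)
The plan is to prove the expansion of positivity by the De Giorgi--Nash--Moser machinery adapted to the intrinsic geometry of the fast diffusion equation, following the scheme of~\cite{DGV}. After a translation we may take $(x_o,t_o)=(0,0)$, and since replacing $u(x,t)$ by $\tfrac1M u(rx,M^{1-m}r^2 t)$ maps weak solutions to weak solutions of the same equation, we may normalize $M=r=1$. The hypothesis then reads $|\{u(\cdot,0)\ge1\}\cap B(0,1)|\ge\alpha\,|B(0,1)|$, and we must produce $\eps,\delta,\eta\in(0,1)$, depending only on $n,m,\alpha$, with $u(\cdot,t)\ge\eta$ on $B(0,2)$ for $t\in[(1-\eps)\delta,\delta]$. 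The hypothesis $B(0,16)\times(0,\delta)\Subset\Omega_T$ ensures that all intermediate balls and cutoff functions used below remain inside the domain.

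The first ingredient is a \textbf{De Giorgi type lemma}: there is $\nu_0=\nu_0(n,m)\in(0,1)$ such that if on the cylinder $Q:=B(0,4)\times(\tfrac\delta2,\delta)$ one has $|\{u<\sigma\}\cap Q|\le\nu_0|Q|$ for some level $\sigma\in(0,1)$, then $u\ge\tfrac\sigma2$ on $B(0,2)\times(\tfrac34\delta,\delta)$. This follows from the Caccioppoli inequality for the truncation $(u-\sigma)_-\1_{\{u<\sigma\}}$ --- where, in order to keep the constants independent of the singular set $\{u=0\}$, one works with $\max\{u,\sigma\}$ in place of $u$ --- combined with the parabolic Sobolev embedding, iterated over a shrinking sequence of cylinders in the standard way. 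The exponent restriction $0<m<1$ enters only through the Sobolev exponent and poses no obstruction.

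The second, and more delicate, ingredient is the \textbf{propagation of the measure of the positivity set forward in time}. From the hypothesis at $t=0$, a logarithmic energy estimate --- with test functions of the form $\log\big(\tfrac{H}{H-(u-k)_+}\big)$ for a small level $k>0$, again after replacing $u$ by $\max\{u,k\}$, and a cutoff supported in $B(0,16)$ --- shows that the positivity set cannot collapse: for suitable $\eta_1=\eta_1(n,m,\alpha)$ and small $\delta=\delta(n,m,\alpha)$ one has
\[
\big|\{u(\cdot,t)\ge\eta_1\}\cap B(0,4)\big|\ge\tfrac\alpha2\,|B(0,1)|\qquad\text{for every }t\in(0,\delta].
\]
A further measure-theoretic argument --- slicing in time and using that once $\eta_2\ll\eta_1$ the bad set $\{u(\cdot,t)<\eta_2\}\cap B(0,4)$ has small measure at a.e.\ time slice --- upgrades this slicewise bound to the full-cylinder statement $|\{u<\eta_2\}\cap Q|\le\nu_0|Q|$ with $Q$ as in the De Giorgi lemma, provided $\eta_2=\eta_2(n,m,\alpha)$ is chosen small enough. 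This is the step where the singular structure bites hardest: a purely measure-theoretic lower bound must be carried forward in time uniformly in how small $u$ is, and the logarithmic estimate has to be set up so that its constant does not degenerate. Everything else is routine.

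Combining the two ingredients, we apply the De Giorgi lemma with level $\sigma=\eta_2$ and obtain $u\ge\tfrac{\eta_2}2=:\eta$ on $B(0,2)\times(\tfrac34\delta,\delta)$, in particular on $B(0,2)\times[(1-\eps)\delta,\delta]$ with $\eps:=\tfrac14$. Undoing the normalization yields $u(\cdot,t)\ge\eta M$ on $B(x_o,2r)$ for every $t\in[t_o+(1-\eps)\delta M^{1-m}r^2,\,t_o+\delta M^{1-m}r^2]$, with $\eps,\delta,\eta$ depending only on $n,m,\alpha$, as claimed. For the complete proof we refer to~\cite[Chapter~4]{DGV}.
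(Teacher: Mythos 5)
You should first note that the paper does not prove this theorem at all: it is imported verbatim from \cite[Chapter 4, Prop.~7.2]{DGV}, so there is no proof in the paper to compare against, and your closing reference to \cite{DGV} is in fact exactly what the authors do. Your outline (intrinsic rescaling $u\mapsto\tfrac1M u(x_o+rx,\,t_o+M^{1-m}r^2t)$, a De Giorgi-type lemma, propagation of the measure of the positivity set forward in time via logarithmic estimates) is the standard strategy of the cited source, and the rescaling computation is correct.

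However, read as a self-contained argument your sketch has two genuine gaps, both in the middle step that you yourself identify as the delicate one. First, the De Giorgi lemma cannot have a critical mass $\nu_0=\nu_0(n,m)$ that is uniform in the level $\sigma$ for the \emph{fixed} cylinder $B(0,4)\times(\tfrac\delta2,\delta)$: in the singular range the energy/Sobolev iteration produces a critical mass depending on the intrinsic ratio between the time-length $\delta$ and $\sigma^{1-m}$, and since you later take $\sigma=\eta_2(n,m,\alpha)$ and $\delta=\delta(n,m,\alpha)$, you are choosing $\eta_2$ below a threshold $\nu_0$ that itself depends on $\eta_2$ --- a circularity that \cite{DGV} resolve by their specific change of variables and intrinsic geometry in the singular case, and which your sketch does not address. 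Second, the assertion that ``once $\eta_2\ll\eta_1$ the bad set $\{u(\cdot,t)<\eta_2\}\cap B(0,4)$ has small measure'' is not a proof: shrinking the level does not by itself shrink the measure of the sublevel set. The actual mechanism is De Giorgi's isoperimetric (discrete Poincar\'e) lemma applied at the geometric sequence of levels $\eta_1 2^{-j}$, combined with the Caccioppoli estimate and the slicewise bound $|\{u(\cdot,t)\ge\eta_1\}\cap B(0,4)|\ge\tfrac\alpha2|B(0,1)|$, summed over $j$ until the measure drops below the critical mass; this quantitative step is the heart of the expansion-of-positivity proof and is missing from your outline. Since the paper simply cites \cite{DGV}, deferring these steps to that reference is acceptable here, but you should not regard the sketch as a complete argument.
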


We use the expansion of positivity for the following characterization of
the positivity set of supercaloric functions in the 
fast diffusion case. 

\begin{lem}\label{lem:alternatives}
  Let $0<m<1$ 
  and assume that $u$ is a non-negative
  supercaloric function in $\Omega_T$,  where $\Omega\subset\R^n$ is open and
  connected. Then, for any time $t\in(0,T)$ either
  $u$ is positive on the whole time slice $\Omega\times\{t\}$ or $u$
  vanishes on the whole time slice.
\end{lem}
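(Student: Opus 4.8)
\emph{Overall plan and reduction.} I would fix $t\in(0,T)$ and prove the contrapositive of the dichotomy: if $u(x_1,t)>0$ for some $x_1\in\Omega$, then $u(x_0,t)>0$ for every $x_0\in\Omega$. The strategy is to propagate positivity from $x_1$ to an arbitrary $x_0$ along a path, repeatedly invoking the expansion of positivity, Theorem~\ref{t.expansionofpositivity}. First, since $u$ is lower semicontinuous, $\{u>M\}$ is open for every $M$, so from $u(x_1,t)>0$ one obtains constants $M,R,\tau>0$ with $u\ge M$ on the box $B(x_1,R)\times(t-\tau,t+\tau)\Subset\Omega_T$; the crucial feature is that this interval of times \emph{straddles} $t$. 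Next, fixing $x_0\in\Omega$ and using that $\Omega$, being open and connected, is path-connected, I would join $x_1$ to $x_0$ by a path $\gamma$, so that $d:=\operatorname{dist}(\gamma,\partial\Omega)>0$.

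\emph{One propagation step.} Since Theorem~\ref{t.expansionofpositivity} concerns weak solutions and not $u$ directly, the first task at each step is to slip a continuous weak solution below $u$ as a barrier. Suppose one already knows $u\ge m_0>0$ on $B(y,\tfrac34\rho)\times(\alpha,\beta)$ with $\overline{B(y,9\rho)}\subset\Omega$ and $(\alpha,\beta)\subset(0,T)$. For each start time $s_*\in(\alpha,\beta)$ I would solve the PME in a suitable short cylinder over $B(y,9\rho)$ starting at time $s_*$ (Theorem~\ref{t.existence_continuous_weaksol}), with lateral datum $0$ and initial datum $m_0\psi$, where $\psi$ is a cutoff equal to $1$ on $B(y,\rho/2)$, supported in $B(y,\tfrac34\rho)$, and chosen so that $\psi^m$ is smooth; call the resulting continuous weak solution $h_{s_*}$. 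Since $u\ge0$ everywhere and $u\ge m_0\ge m_0\psi$ on $B(y,\tfrac34\rho)\times\{s_*\}$, the comparison principle for sub-/supercaloric functions, Lemma~\ref{l.supersubcal-cylinder-comparison} (a continuous weak solution is quasi-subcaloric), gives $h_{s_*}\le u$ on that cylinder. Applying Theorem~\ref{t.expansionofpositivity} to $h_{s_*}$ on $B(y,\rho/2)$, where its initial datum equals $m_0$ so that the density hypothesis is satisfied, then yields $u\ge h_{s_*}\ge\eta m_0$ on $B(y,\rho)$ for all times in $[s_*+(1-\eps)T_0,\;s_*+T_0]$, with $T_0=\delta_* m_0^{1-m}(\rho/2)^2$ and $\eps,\delta_*,\eta\in(0,1)$ depending only on $n,m$. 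Sweeping $s_*$ over $(\alpha,\beta)$ — the above time intervals slide continuously — I would conclude $u\ge\eta m_0$ on $B(y,\rho)\times\bigl(\alpha+(1-\eps)T_0,\;\beta+T_0\bigr)$: again a genuine time interval, whose left endpoint has moved to the right by only $(1-\eps)T_0$ and whose right endpoint has only increased.

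\emph{Chaining and the time budget.} Fix a small radius $\rho<\min\{R,d/100\}$ and cover $\gamma$ by balls $B(z_1,\rho),\dots,B(z_N,\rho)$ with $z_1=x_1$, $z_N=x_0$, centres $z_i\in\gamma$ and $|z_{i+1}-z_i|<\rho/4$, so $B(z_{i+1},\tfrac34\rho)\subset B(z_i,\rho)$ and $\overline{B(z_i,9\rho)}\subset\Omega$. Starting from $u\ge M$ on $B(z_1,\rho)\times(t-\tau,t+\tau)$ and applying the propagation step $N-1$ times, with running level $m_0=\eta^{i-1}M$ at stage $i$, one reaches $u\ge\eta^{N-1}M>0$ on $B(z_N,\rho)\times(\alpha_N,\beta_N)$ where $\beta_N>t$ (the right endpoint only grows) and $\alpha_N=t-\tau+\sum_{i=1}^{N-1}(1-\eps)T_i$ with $T_i=\delta_*(\eta^{i-1}M)^{1-m}(\rho/2)^2$. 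Because $1-m>0$, one has $T_i\le\delta_*M^{1-m}(\rho/2)^2\,\eta^{(i-1)(1-m)}$, hence $\sum_{i\ge1}T_i\le \delta_*M^{1-m}(\rho/2)^2/(1-\eta^{1-m})$, a bound that tends to $0$ as $\rho\to0$ \emph{and is independent of $N$}. Choosing $\rho$ small enough that this sum is $<\tau$ (and that all the auxiliary cylinders lie in $\Omega_T$) keeps $\alpha_N<t<\beta_N$, so $(x_0,t)$ lies in the positivity set; thus $u(x_0,t)>0$, which is what was to be shown.

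\emph{Main obstacle.} The delicate point is precisely the time bookkeeping of the last step: each sideways move toward $x_0$ costs time, and one must prevent the window of guaranteed positivity from drifting off the slice $\{t\}$. This is secured by the combination of (i) the genuine interval of positive times supplied by lower semicontinuity, (ii) the sweep over start times, which keeps the positivity window an interval whose left end creeps right only by $(1-\eps)T_0$ per step, and (iii) the fact that with a \emph{fixed} radius $\rho$ the total rightward drift $\sum_i T_i$ is a convergent geometric series bounded uniformly in the number of steps $N$ — the convergence being exactly the place where the positivity of the exponent $1-m$ characteristic of fast diffusion is used.
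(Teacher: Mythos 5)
Your argument is correct, but it follows a genuinely different route than the paper. The paper proceeds in two stages: it first proves the slice dichotomy for \emph{continuous, bounded weak solutions}, where Theorem~\ref{t.expansionofpositivity} applies directly to $u$; there the time-fitting problem is solved not by summing drifts but by shrinking the level $M$ so that the waiting time $\delta M^{1-m}r^2$ fits inside the backward cylinder supplied by continuity, so each application of the expansion of positivity lands \emph{exactly} on the slice $\{t\}$ and the chaining along a curve involves no time bookkeeping at all. In a second stage it transfers the dichotomy to supercaloric functions by contradiction: at a point $x_o\in\partial\Omega_+\cap\Omega$ of the (open, by lower semicontinuity) positivity set it builds a single auxiliary Cauchy--Dirichlet solution $v\le u$ with positive lateral datum near a nearby positivity point, applies the first stage to $v$, and uses the comparison principle in Definition~\ref{d.supercal} to force $u(x_o,t)>0$. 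You instead work directly with the supercaloric function, inserting a barrier solution below $u$ at \emph{every} link of the chain (comparison via Definition~\ref{d.supercal}(iii) or Lemma~\ref{l.supersubcal-cylinder-comparison}), sweeping the start time, and controlling the cumulative forward drift by the geometric series $\sum_i\delta_*(\eta^{i-1}M)^{1-m}(\rho/2)^2$, which converges precisely because $1-m>0$ — the same fast-diffusion fact the paper exploits through $\delta M^{1-m}r^2\to0$ as $M\to0$, just used additively rather than by level-tuning. What each buys: the paper's level-tuning eliminates drift and needs only one auxiliary solution, at the price of the two-stage structure and the contradiction argument; your version is a single self-contained propagation scheme but needs a fresh barrier and careful bookkeeping at each step. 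Two minor points you should patch: Theorem~\ref{t.expansionofpositivity} is stated for $(x_o,t_o)$ in the \emph{interior} of the barrier's cylinder, so apply it at a time slightly after $s_*$ (continuity of $h_{s_*}$ up to the initial boundary keeps $h_{s_*}\ge m_0/2$ on $B(y,\rho/2)$ there, only changing constants), and fix the density parameter $\alpha$ once and for all (say $\alpha=\tfrac12$) so that $\eps,\delta_*,\eta$ are uniform along the chain — both are routine and do not affect the argument.
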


\begin{proof}
  As a first step, we prove the claim for a continuous, non-negative,
  bounded weak solution to~\eqref{evo_eqn}. Let us fix a time
  $t\in(0,T)$.
  We claim that $u_o:=u(x_o,t)>0$ for some $x_o\in\Omega$
  implies 
  \begin{equation}\label{positivity-ball}
    u(\cdot,t)>0 \qquad\mbox{in $B(x_o,r)$},
  \end{equation}
  for any $r>0$ with $B(x_o, 16r)\Subset\Omega$. First, we note that
  the continuity of $u$ implies
  \begin{equation}\label{pos-small-cylinder}
    u\ge \tfrac12 u_o
    \mbox{\qquad in }B(x_o,\rho)\times[t-\rho^2,t]\subset\Omega_T
  \end{equation}
  for some $\rho>0$. If $r\le\rho$, this already implies
  claim~\eqref{positivity-ball}. Otherwise, we apply
  Theorem~\ref{t.expansionofpositivity} with the parameter
  $\alpha:=(\frac\rho r)^n\in(0,1)$. Let 
  $\delta=\delta(n,m,\alpha)\in(0,1)$ be the number determined by this
  theorem. We choose $M\in (0,\tfrac12u_o]$ so small that
  \begin{equation*}
    \delta M^{1-m}r^2\le\rho^2
  \end{equation*}
  and let $t_o:= t-\delta M^{1-m}r^2\in[t-\rho^2,t]$. Because
  of~\eqref{pos-small-cylinder} and $M\le\tfrac12u_o$, we have
  \begin{equation*}
    \left| \{ u(\cdot,t_o) \geq M \} \cap B(x_o,r) \right|
    \geq |B(x_o,\rho)|
    = \alpha |B(x_o,r)|.
  \end{equation*}
  Therefore, Theorem~\ref{t.expansionofpositivity} implies
  \begin{equation*}
    u(\cdot,t)\ge\eta M\qquad\mbox{in }B(x_o,2r)
  \end{equation*}
  for some $\eta>0$, which implies claim~\eqref{positivity-ball}.
  Next, we observe that this yields the implication 
  \begin{equation}\label{implication}
    u(x_o,t)>0\mbox{\ in some point $x_o\in\Omega$}\implies
    u(x_1,t)>0\mbox{\ in any point $x_1\in\Omega$.}
  \end{equation}
  For the derivation of this claim, we recall that $\Omega$ is
  connected and consider a curve
  $\Gamma\subset\Omega$ that connects
  $x_o$ and $x_1$. Then we cover $\Gamma$ by finitely many balls
  $B(x_i,r)$, $i=1,\ldots,L$, with $x_{i+1}\in B(x_i,r)$ for any
  $i=0,\ldots,L-1$. Since $\Gamma$ is compactly contained in
  $\Omega$, we can choose the radius $r>0$
  small enough to ensure $B(x_i,16r)\Subset\Omega$ for each
  $i=1,\ldots,L$.
  Repeated applications of the positivity
  result~\eqref{positivity-ball}
  imply that $u$ is positive on each of
  the balls $B(x_i, r)\times\{t\}$, and in particular $u(x_1,t)>0$.
  
   This proves claim~\eqref{implication}. The contraposition of
   this implication ensures that $u(x_1,t)=0$ for some $x_1\in\Omega$ implies
   $u(x_o,t)=0$ in any point $x_o\in\Omega$. We conclude that either
   $u$ is positive or zero on the whole time slice $\Omega\times\{t\}$.
   This proves the 
   claim for a continuous, bounded weak solution.

   Now, we consider a supercaloric function $u:\Omega_T\to[0,\infty]$.
   Let us assume for contradiction that there is a time $t\in(0,T)$
   for which $\Omega_+:=\{x\in\Omega\colon u(x,t)>0\}$ satisfies
   $\varnothing\neq\Omega_+\subsetneq\Omega$. By lower semicontinuity of $u$,
   the set $\Omega_+$ is open. Because $\Omega$ is connected,
   its subset $\Omega_+$ can not be relatively closed. Therefore,
   there exists a point $x_o\in\partial\Omega_+\cap\Omega$, in which
   we have $u(x_o,t)=0$. We choose a neighborhood
   $B(x_o,r)\subset\Omega$. Because of $x_o\in\partial\Omega_+$, there
   exists a point $x_+\in B(x_o,r)$ with $u(x_+,t)>0$. If $u(x_+,t)< \infty$, let $a := u(x_+,t)$. If $u(x_+,t)= \infty$, let $a \in \R_{>0}$. By lower
   semicontinuity of $u$, there exists a $\delta>0$ such that
   \begin{equation*}
     u>\tfrac12a\mbox{\qquad on $B(x_+,2\delta)\times[t-\delta,t+\delta]\Subset\Omega_T$.}
   \end{equation*}
   We choose a function $\eta\in C^\infty_0(B(x_+,2\delta),[0,1])$ with
   $\eta\equiv1$ in $B(x_+,\delta)
   $ and abbreviate
   $\rho:=|x_o-x_+|$. Then we consider the weak
   solution to the Cauchy-Dirichlet problem
   \begin{equation*}
     \left\{
       \begin{array}{cl}
         \partial_tv-\Delta v^m=0&\mbox{in }B(x_o,\rho)\times(t-\delta,t+\delta),\\
         v=\tfrac12a\eta^\frac{1}{m}&\mbox{on }\partial_p[B(x_o,\rho)\times(t-\delta,t+\delta)].
       \end{array}
     \right.  
   \end{equation*}
   Theorem~\ref{t.existence_continuous_weaksol} implies that $v$ is non-negative,
bounded and continuous up to the boundary. 
Therefore, the first part of the proof implies that for every time
$s\in(t-\delta,t+\delta)$, the function $v$ is either positive on the
whole time slice $B(x_o,\rho)\times\{s\}$ or it vanishes on the whole
time slice. However, since $(x_+,s)\in
\partial_p[B(x_o,\rho)\times(t-\delta,t+\delta)]$ for every
$s\in(t-\delta,t+\delta)$, and in this point we have
$v(x_+,s)=\tfrac12a>0$, we can exclude the second alternative. This
proves $v>0$ on the whole domain
$B(x_o,\rho)\times(t-\delta,t+\delta)$.

Moreover, by construction we have $u\ge v$ on
$\partial_p[B(x_o,\rho)\times(t-\delta,t+\delta)]$. Therefore, by
definition of the supercaloric function $u$ we have $u\ge v$ on
$B(x_o,\rho)\times(t-\delta,t+\delta)$, and in particular
\begin{equation*}
  u(x_o,t)\ge v(x_o,t)>0.
\end{equation*}
Since $u(x_o,t)=0$ by construction, this yields the desired
contradiction. Therefore,  we have established the claim also
in the case of a supercaloric function.  
\end{proof}

\begin{cor} \label{c.positivity-intervals}
  Let $0<m<1$ 
  and assume that $u$ is a non-negative
  supercaloric function in $\Omega_T$,  where $\Omega\subset\R^n$ is open and
  connected. Then, the set 
    \begin{equation}\label{def:Lambda+}
    \Lambda_+:=\big\{t\in(0,T)\colon u\mbox{\ is positive on }\Omega\times\{t\}\big\}
  \end{equation}
  can be written as a countable union $\Lambda_+ = \bigcup_i \Lambda_i$, where $\Lambda_i$ is an open subinterval of $(0,T)$ for every $i$.
\end{cor}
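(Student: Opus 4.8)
The plan is to deduce the statement from the dichotomy established in Lemma~\ref{lem:alternatives} together with the lower semicontinuity of $u$. Recall that every open subset of $\R$ is a countable union of pairwise disjoint open intervals; since $\Lambda_+\subset(0,T)$ and $(0,T)$ is open in $\R$, it therefore suffices to prove that $\Lambda_+$ is an open subset of $(0,T)$.

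First I would fix $t_o\in\Lambda_+$ and pick an arbitrary point $x_o\in\Omega$; by definition of $\Lambda_+$ we have $u(x_o,t_o)>0$. Choose a constant $c$ with $0<c<u(x_o,t_o)$, which is possible also when $u(x_o,t_o)=\infty$. Since $u$ is lower semicontinuous, the superlevel set $\{u>c\}$ is open, so there exist $r>0$ and $\delta>0$ with $B(x_o,r)\times(t_o-\delta,t_o+\delta)\subset\{u>c\}\cap\Omega_T$. In particular $(t_o-\delta,t_o+\delta)\subset(0,T)$ and $u(x_o,t)>c>0$ for every $t\in(t_o-\delta,t_o+\delta)$.

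Next, for each such $t$ the function $u$ is positive at the point $x_o$ of the time slice $\Omega\times\{t\}$, so Lemma~\ref{lem:alternatives}, applied on the connected open set $\Omega$, rules out the alternative that $u$ vanishes identically on $\Omega\times\{t\}$ and forces $u>0$ on all of $\Omega\times\{t\}$, i.e. $t\in\Lambda_+$. Hence $(t_o-\delta,t_o+\delta)\subset\Lambda_+$, which shows that $\Lambda_+$ is open in $(0,T)$. Writing this open set as the countable disjoint union of its connected components, each of which is an open subinterval of $(0,T)$, then yields the desired representation $\Lambda_+=\bigcup_i\Lambda_i$.

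I do not expect any serious obstacle here: the substantive content is entirely contained in Lemma~\ref{lem:alternatives}, and the only minor point to watch is that the argument must accommodate the possibility $u(x_o,t_o)=+\infty$, which is handled simply by selecting any finite positive threshold $c$.
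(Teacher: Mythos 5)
Your argument is correct and follows essentially the same route as the paper: openness of $\Lambda_+$ is deduced from Lemma~\ref{lem:alternatives} combined with the lower semicontinuity of $u$ (your paragraph just spells out the detail the paper leaves implicit), and then $\Lambda_+$ is decomposed into its at most countably many connected components, which are open subintervals of $(0,T)$. No gaps.
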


\begin{proof}
In view of Lemma \ref{lem:alternatives} and since $u$ is lower semicontinuous, the set 
  \begin{equation*}
    \Lambda_+:=\big\{t\in(0,T)\colon u\mbox{\ is positive on }\Omega\times\{t\}\big\}
  \end{equation*}
  is an open subset of $(0,T)$. We decompose $\Lambda_+$ in its
  connected components $\Lambda_i=(t_{i,1},t_{i,2})$, $i\in I$ ,
  i.e. $\Lambda_+=\bigcup_{i\in I}\Lambda_i$, with disjoint open intervals
  $\Lambda_i$. Since $\Lambda_+$ is an open subset of the real line, there can be at most countably many connected components, i.e. we
  can choose the index set either as $I=\N$ or of the form
  $I=\{1,\ldots,L\}$.

\end{proof}

We state Harnack type estimates for weak solutions that will be used later on. In the following, we denote $\lambda := n(m-1) + 2$.

\begin{lem}[{\hspace{1sp}\cite[Chapter 6, Thm. 17.1]{DGV}}] \label{l.l1linfty_harnack}
Let $\frac{n-2}{n}<m<1$. Suppose that $u$ is a nonnegative weak solution in class $C_{\loc}(0,T; L^{m+1}_{\loc}(\Omega))$. Then there exists $\gamma = \gamma(n,m)$ such that 
$$
\sup_{B(y,r)\times [s,t]} u \leq  \frac{\gamma}{(t-s)^\frac{n}{\lambda}} \left(  \inf_{2s - t < \tau < t} \int_{B(y,2r)} u(x,\tau)\, \d x     \right)^\frac{2}{\lambda} + \gamma \left( \frac{t-s}{r^2}  \right)^\frac{1}{1-m}
$$
for all cylinders $B(y,2r) \times \left[ s- (t-s), s + (t-s)  \right] \Subset \Omega_T$.
\end{lem}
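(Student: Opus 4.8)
The plan is to deduce this from the intrinsic Harnack theory for singular parabolic equations; the estimate is exactly \cite[Chapter 6, Theorem 17.1]{DGV}, so I only sketch the structure of the argument. After a parabolic rescaling centred at $(y,s)$ one may assume $y=0$, $s=0$, $t=1$ and $B(0,2r)\times[-1,1]\Subset\Omega_T$, so that it suffices to bound $\sup_{B(0,r)\times[0,1]}u$ by $\gamma\big(\inf_{-1<\tau<1}\int_{B(0,2r)}u(x,\tau)\,\d x\big)^{2/\lambda}+\gamma\,r^{-2/(1-m)}$, with $\lambda=n(m-1)+2$, which is strictly positive precisely because $m>\tfrac{n-2}{n}$. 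The argument rests on two essentially independent ingredients: a local sup bound obtained by De Giorgi/Moser iteration, and a time-propagation estimate for the spatial $\L^1$-mass.

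For the first ingredient I would use that a nonnegative weak solution is in particular a nonnegative weak subsolution, test the equation with powers of suitable truncations of $u$ (or of $u^m$) multiplied by spatial cut-offs to obtain Caccioppoli inequalities, and couple these with the parabolic embedding $\L^\infty(\L^2)\cap\L^2(H^1)\hookrightarrow\L^{2(n+2)/n}$, iterating over a shrinking family of subcylinders. This yields a bound of the form
\[
\sup_{B(0,\frac r2)\times[\frac14,1]}u\ \le\ \gamma\Big(\iint_{B(0,r)\times[0,1]}u^{p}\,\d x\d t\Big)^{\kappa}+\gamma\,r^{-2/(1-m)},
\]
with exponents $p,\kappa$ dictated by the intrinsic scaling; the iteration closes exactly because $\lambda>0$, and the data-independent term, which is intrinsic to the fast-diffusion scaling, is after rescaling the tail $\gamma\big(\tfrac{t-s}{r^2}\big)^{1/(1-m)}$.

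For the second ingredient I would test the equation with $\zeta\in C_0^\infty(B(0,2r))$, $\zeta\equiv1$ on $B(0,r)$, and integrate in space to obtain the identity $\tfrac{\d}{\d t}\int\zeta u(\cdot,t)\,\d x=\int u^m\Delta\zeta\,\d x$, then estimate the right-hand side by $\gamma\,r^{-2}\big(\int_{B(0,2r)}u(\cdot,t)\,\d x\big)^{m}|B(0,2r)|^{1-m}$ via H\"older. Integrating the resulting differential inequality for $t\mapsto\big(\int_{B(0,2r)}u(\cdot,t)\big)^{1-m}$ shows that the spatial masses at different times in $[-1,1]$ are comparable up to an additive term of size $r^{n-2/(1-m)}$. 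Inserting this into the first bound, interpolating $\int u^p\le(\sup u)^{p-1}\int u$ and absorbing the sup term, one replaces the spacetime integral of $u^p$ by a power of a single-slice $\L^1$-integral; taking the infimum over $\tau$, enlarging the ball to $B(0,2r)$ and collecting the powers of the time length recovers the stated exponents $n/\lambda$ and $2/\lambda$.

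The main obstacle is the first ingredient: since the equation governs $u^m$ rather than $u$, the Caccioppoli inequalities are inhomogeneous, so the Moser/De Giorgi iteration must be run in the intrinsic geometry with careful control of the interplay between the parabolic scale $\rho^2$ and the faster scale $\rho^{2/(1-m)}$, so that the final powers combine into $n/\lambda$, $2/\lambda$ and the tail $\big(\tfrac{t-s}{r^2}\big)^{1/(1-m)}$. Since all of this is carried out in full detail in \cite[Chapter 6]{DGV}, I would refer there for the complete argument.
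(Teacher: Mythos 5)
The paper does not prove this lemma at all: it is quoted verbatim from \cite[Chapter~6, Thm.~17.1]{DGV}, exactly as you do, and your two-ingredient outline (a Moser/De Giorgi-type local sup bound in the intrinsic geometry combined with propagation of the spatial $\L^1$-mass, as in the $\L^1$-Harnack estimate of \cite[Prop.~B.1.1]{DGV}) matches the structure of the argument carried out in that reference. So your proposal is consistent with the paper's treatment, which simply defers to \cite{DGV}.
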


\begin{lem}[{\hspace{1sp}\cite[Prop. B.1.1]{DGV}}] \label{l.l1harnack}
Let $0<m<1$. Suppose that $u$ is a continuous nonnegative weak solution in $\Omega_T$.
Then there exists $\gamma = \gamma(n,m)\ge1$ such that 
$$
\sup_{s<\tau<t} \int_{B(y,r)} u(x,\tau)\, \d x \leq \gamma \inf_{s<\tau<t} \int_{B(y,2r)} u(x,\tau)\, \d x + \gamma \left( \frac{t-s}{r^\lambda} \right)^\frac{1}{1-m}
$$
for all cylinders $B(y,2r) \times [s,t] \Subset \Omega_T$. 
\end{lem}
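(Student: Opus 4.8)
\emph{Plan.} This is the classical $L^1$-Harnack estimate, and I would prove it by the standard cutoff / time-derivative argument combined with Hölder's inequality in space — where the singular range $0<m<1$ is essential — followed by a routine iteration over concentric balls.

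Since $u$ is continuous and $\overline{B(y,2r)\times[s,t]}$ is a compact subset of $\Omega_T$, the function $u$ is bounded there, so
\[
\Phi(\rho):=\sup_{s<\tau<t}\int_{B(y,\rho)}u(x,\tau)\,\d x<\infty\qquad\text{for every }\rho\in[r,2r],
\]
and we may assume $u\not\equiv0$. Fix radii $r\le\rho_1<\rho_2\le2r$ and choose $\zeta\in C_0^\infty(B(y,\rho_2))$ with $0\le\zeta\le1$, $\zeta\equiv1$ on $B(y,\rho_1)$ and $|D^2\zeta|\le c(n)(\rho_2-\rho_1)^{-2}$. Testing the weak formulation with $\zeta(x)\psi(t)$ for suitable time cutoffs $\psi$, integrating by parts in $x$ to move the gradient onto $\zeta$, and using the time mollification~\eqref{e.time-mollif} (see~\cite[Lemma 2.2]{KinnunenLindqvist2006}) to handle $\partial_tu$, one checks that $\tau\mapsto\int_\Omega u(x,\tau)\zeta(x)\,\d x$ is Lipschitz on $(s,t)$ with derivative $\int_\Omega u^m(x,\tau)\Delta\zeta(x)\,\d x$ a.e.; hence for all $\tau_1,\tau_2\in(s,t)$,
\[
\Big|\int_\Omega u(x,\tau_1)\zeta\,\d x-\int_\Omega u(x,\tau_2)\zeta\,\d x\Big|\le\int_s^t\!\!\int_{B(y,\rho_2)}u^m\,|\Delta\zeta|\,\d x\,\d\sigma.
\]

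Since $0<m<1$, Hölder's inequality with exponent $1/m$ yields, for each time $\sigma$,
\[
\int_{B(y,\rho_2)}u^m\,|\Delta\zeta|\,\d x\le\frac{c(n)}{(\rho_2-\rho_1)^2}\,|B(y,\rho_2)|^{1-m}\Big(\int_{B(y,\rho_2)}u(x,\sigma)\,\d x\Big)^m\le\frac{c(n)\,r^{n(1-m)}}{(\rho_2-\rho_1)^2}\,\Phi(\rho_2)^m.
\]
Plugging this in, using that $\zeta\equiv1$ on $B(y,\rho_1)$ and $\zeta$ is supported in $B(y,\rho_2)\subset B(y,2r)$ with $0\le\zeta\le1$, and taking the supremum over $\tau_1$ and the infimum over $\tau_2$, we arrive at
\[
\Phi(\rho_1)\le I+c(n)\,\frac{r^{n(1-m)}(t-s)}{(\rho_2-\rho_1)^2}\,\Phi(\rho_2)^m,\qquad I:=\inf_{s<\tau<t}\int_{B(y,2r)}u(x,\tau)\,\d x.
\]
Young's inequality in the form $ab^m\le\tfrac12b+c(m)\,a^{1/(1-m)}$ converts the last term into $\tfrac12\Phi(\rho_2)+c(n,m)\,(r^{n(1-m)}(t-s))^{1/(1-m)}(\rho_2-\rho_1)^{-2/(1-m)}$, valid for all $r\le\rho_1<\rho_2\le2r$. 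Since $\Phi$ is bounded on $[r,2r]$, a standard iteration lemma absorbs the $\tfrac12\Phi(\rho_2)$ term and gives $\Phi(r)\le c(n,m)\big(I+(r^{n(1-m)}(t-s))^{1/(1-m)}r^{-2/(1-m)}\big)$. Finally, from $\lambda=n(m-1)+2$ one has $\tfrac{n(1-m)-2}{1-m}=-\tfrac{\lambda}{1-m}$, so the last term equals $c(n,m)\big((t-s)/r^\lambda\big)^{1/(1-m)}$, and choosing $\gamma=\gamma(n,m)\ge1$ large enough yields the claim.

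The only genuinely delicate point is the ``almost monotonicity'' identity: weak solutions have no pointwise time derivative, so it must be justified through the mollified weak formulation, passing to the limit $h\to0$ and verifying that the limiting function of $\tau$ is Lipschitz with the claimed derivative (its boundedness relies on the local boundedness of $u$). The remaining steps — the Hölder bound, which crucially uses $m<1$ to dominate $\|u^m(\cdot,\sigma)\|_{L^1(B(y,\rho_2))}$ by $\|u(\cdot,\sigma)\|_{L^1(B(y,\rho_2))}^m$, and the Young/iteration argument — are routine.
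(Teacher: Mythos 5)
The paper does not prove this lemma at all: it is imported verbatim from DiBenedetto--Gianazza--Vespri \cite[Prop.~B.1.1]{DGV}, so there is no internal proof to compare against. Your argument is a correct, self-contained proof along the classical Herrero--Pierre/DGV lines, and every step checks out: testing with $\zeta(x)\psi(t)$ and integrating by parts twice in space so that only $u^m\Delta\zeta$ (not $\nabla u^m$) appears is legitimate because $u^m(\cdot,\tau)\in H^1_{\loc}$ for a.e.\ $\tau$ and $\zeta$ is smooth and compactly supported; the resulting distributional time derivative of $\tau\mapsto\int u\zeta\,\d x$ is bounded on $(s,t)$ by continuity (hence local boundedness) of $u$, so the map is Lipschitz and the two-time estimate follows. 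The H\"older step genuinely needs $m<1$, the Young inequality $ab^m\le\tfrac12 b+c(m)a^{1/(1-m)}$ and the standard absorption/iteration lemma on $[r,2r]$ (valid since $\Phi$ is finite there) are applied correctly, and the exponent bookkeeping $n-\tfrac{2}{1-m}=-\tfrac{\lambda}{1-m}$ with $\lambda=n(m-1)+2$ is right, yielding a constant depending only on $n$ and $m$. The only thing I would make explicit is that the passage from the weak formulation to the a.e.\ derivative identity uses the spatial integration by parts for a.e.\ time slice together with Fubini, and that continuity of $u$ ensures $\tau\mapsto\int u\zeta\,\d x$ is itself continuous so the Lipschitz bound holds for all, not just almost all, $\tau_1,\tau_2\in(s,t)$; with that said, the proof is complete and matches the standard proof of the cited result.
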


Up next we prove a weak Harnack inequality for supercaloric functions. The proof follows the approach in~\cite[Proposition 3.1]{GLL}.

\begin{lem} \label{l.wharnack_supercal}
Let $\frac{n-2}{n}< m < 1$ and $u$ be a supercaloric function in
$\Omega_T$. Then, there exist constants $c_1,c_2, \alpha \in (0,1)$
depending only on $n$ and $m$, such that the following holds. Assume
that for some  $s\in (0,T)$, we have 
$$
\theta := c_2 \left( \bint_{B(x_o,2r)} u(x,s)\, \d x \right)^{1-m}
>0,
$$
and $B(x_o,64r) \times \left( s, s+ \theta r^2  \right) \Subset \Omega_T$.
Then the estimate 
$$
\inf_{B(x_o,2r)} u(\cdot,t) \geq c_1 \bint_{B(x_o,2r)} u(x,s) \, \d x
$$
holds for any $t\in \left[ s + \alpha \theta r^2 , s + \theta r^2 \right]$.
\end{lem}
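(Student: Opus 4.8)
The plan is to reduce the estimate to a comparison with a suitable continuous weak solution $v\le u$ and then to combine the two $L^{1}$-Harnack inequalities of this section with the expansion of positivity, Theorem~\ref{t.expansionofpositivity}.

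\emph{Reduction.} Put $A:=\bint_{B(x_o,2r)}u(x,s)\,\d x$; the assumed containment forces $\theta<\infty$, hence $A<\infty$ and $u(\cdot,s)\in L^1(B(x_o,2r))$. Fix $\eps\in(0,\tfrac14)$. Since $u$, and therefore $u(\cdot,s)^m$, is lower semicontinuous, there is a nonnegative $\phi\in C^{0,1}_0(B(x_o,2r))$ with $\phi\le u(\cdot,s)^m$ and $\bint_{B(x_o,2r)}\phi^{1/m}\,\d x\ge(1-\eps)A$; set $\psi:=\phi^{1/m}$, a continuous function supported in $B(x_o,2r)$ with $\psi\le u(\cdot,s)$ and $\psi^m=\phi\in H^1$. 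By compact containment choose $R'\in(64r,65r)$ with $Q:=B(x_o,R')\times(s,s+\theta r^2)\Subset\Omega_T$, and let $v$ be the continuous, bounded, nonnegative weak solution in $Q$ with lateral boundary value $0$ and initial value $\psi$, furnished by Theorem~\ref{t.existence_continuous_weaksol} applied to the datum $g(x,t):=\psi(x)$ (for which $g\in C(\overline Q)$, $g^m=\phi\in L^2(s,s+\theta r^2;H^1(B(x_o,R')))$ and $\partial_tg^m=0$). Since $\psi\le u(\cdot,s)$ on $\overline{B(x_o,R')}$ and $0\le u$ on the lateral boundary, the comparison principle of Definition~\ref{d.supercal} gives $v\le u$ in $Q$; hence it suffices to produce a universal $c_1'$ with $\inf_{B(x_o,2r)}v(\cdot,t)\ge c_1'A$ on the asserted interval.

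\emph{$L^1$ estimates and a density bound.} Applying Lemma~\ref{l.l1harnack} to $v$ on cylinders $B(x_o,4r)\times[s+\nu,s+\theta r^2-\nu]$ and letting $\nu\downarrow0$, the continuity of $v$ and $\bint_{B(x_o,2r)}v(\cdot,s)\,\d x\ge(1-\eps)A$ give $(1-\eps)A\,|B(x_o,2r)|\le\gamma\inf_{s<\tau<s+\theta r^2}\int_{B(x_o,4r)}v(\cdot,\tau)\,\d x+\gamma\big(\theta r^2(2r)^{-\lambda}\big)^{1/(1-m)}$. As $\lambda=n(m-1)+2$, the error term equals $c(n,m)\,c_2^{1/(1-m)}A\,|B(x_o,2r)|$, so for $c_2=c_2(n,m)$ small enough it is absorbed and
\[
\bint_{B(x_o,4r)}v(\cdot,\tau)\,\d x\ge c_3A\qquad\text{for all }\tau\in(s,s+\theta r^2).
\]
The same lemma with the balls $B(x_o,8r)\subset B(x_o,16r)$ and the identity for $\lambda$ gives the companion bound $\int_{B(x_o,8r)}v(\cdot,\tau)\,\d x\le C_3A\,|B(x_o,2r)|$ for all such $\tau$. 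Feeding this into Lemma~\ref{l.l1linfty_harnack} on the ball $B(x_o,4r)$ over an interval $[\sigma_1,\sigma_2]\subset(s,s+\theta r^2)$ with $2\sigma_1-\sigma_2>s$ (say $\sigma_1=s+\tfrac{17}{32}\theta r^2$, $\sigma_2=s+\tfrac{31}{32}\theta r^2$) and once more using $\lambda=n(m-1)+2$ yields $\sup_{B(x_o,4r)\times[\sigma_1,\sigma_2]}v\le C_4A$ with $C_4=C_4(n,m)$. Chebyshev's inequality then gives, for every $\tau\in[\sigma_1,\sigma_2]$,
\[
\big|\{v(\cdot,\tau)\ge \tfrac12 c_3A\}\cap B(x_o,4r)\big|\ge\tfrac{c_3}{2C_4}\,|B(x_o,4r)|=:\alpha\,|B(x_o,4r)|,\qquad\alpha=\alpha(n,m)\in(0,1).
\]

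\emph{Expansion of positivity and covering in time.} Let $\delta_*,\eta_*,\eps_*\in(0,1)$ be the constants from Theorem~\ref{t.expansionofpositivity} for the density $\alpha$, set $M:=\tfrac12c_3A$ and $\tau_*:=\delta_*M^{1-m}(4r)^2$, and fix $c_2$ once and for all so that $\tau_*\in[\tfrac1{32}\theta r^2,\tfrac1{16}\theta r^2]$ (both $M^{1-m}(4r)^2$ and $\theta r^2$ are fixed multiples of $A^{1-m}r^2$, so this is a condition on $c_2$ alone). For each $t\in[s+\tfrac34\theta r^2,s+\theta r^2)$ apply Theorem~\ref{t.expansionofpositivity} to $v$ with the ball $B(x_o,4r)$ and $t_o:=t-\tau_*$: one checks $t_o\in[\sigma_1,\sigma_2]$, so the density hypothesis holds at $t_o$; the containment $B(x_o,64r)\times(t_o,t_o+\tau_*)\Subset Q$ holds because $R'>64r$ and $s<t_o<t_o+\tau_*=t<s+\theta r^2$; and $t\in[t_o+(1-\eps_*)\tau_*,t_o+\tau_*]$, so the theorem gives $v(\cdot,t)\ge\eta_*M$ throughout $B(x_o,8r)\supset B(x_o,2r)$. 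Letting $t\uparrow s+\theta r^2$ and using the continuity of $v$ up to $\overline Q$ extends this to $t=s+\theta r^2$ as well; combined with the reduction step (which costs only the factor $1-\eps$) this proves the lemma with $\alpha:=\tfrac34$, $c_1:=\tfrac12(1-\eps)c_3\eta_*$ and the above $c_2$. The main obstacle is the bookkeeping of these universal constants: fixing $c_2$ small enlarges $C_4$, hence shrinks $\alpha$, which in turn alters $\delta_*,\eta_*$ through Theorem~\ref{t.expansionofpositivity} and thereby the requirement $\tau_*\in[\tfrac1{32}\theta r^2,\tfrac1{16}\theta r^2]$, and one must verify that this circle of conditions can be closed (controlled by how the expansion-of-positivity constants degenerate as $\alpha\to0$); a secondary point is to organize the sliding choice $t_o=t-\tau_*$ so that the closed interval $[s+\tfrac34\theta r^2,s+\theta r^2]$ is exhausted while the expansion-of-positivity cylinders stay compactly inside $Q$, which is the reason for working over the slightly larger ball $B(x_o,R')$ with $R'>64r$.
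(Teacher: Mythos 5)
Your strategy is in essence the one the paper uses: compare $u$ from below with a continuous weak solution built from a minorant of the initial slice on $B(x_o,2r)$ with zero lateral data, then combine Lemma~\ref{l.l1harnack}, Lemma~\ref{l.l1linfty_harnack}, a Chebyshev-type density estimate and Theorem~\ref{t.expansionofpositivity}, sliding the reference time to cover the asserted interval; your shortcut of taking a single bounded compactly supported $\psi\le u(\cdot,s)$ capturing $(1-\eps)$ of the mass (instead of the paper's truncations $u_k$ and the double approximation $\psi_{k,i}$, followed by monotone convergence) is legitimate. The genuine gap is the order in which you fix constants. You prescribe $\theta=c_2A^{1-m}$ first and run the sup-estimate over a window of length comparable to $\theta r^2$, so your $C_4$ behaves like $c_2^{-n/\lambda}$, hence the density level $\alpha=c_3/(2C_4)$ depends on $c_2$; the constants $\delta_*,\eta_*,\eps_*$ of Theorem~\ref{t.expansionofpositivity} then depend on $c_2$ as well, and your covering-in-time argument needs the fixed-point condition $16\,\delta_*\bigl(\alpha(c_2)\bigr)\bigl(\tfrac12 c_3\bigr)^{1-m}\in\bigl[\tfrac{c_2}{32},\tfrac{c_2}{16}\bigr]$. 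Theorem~\ref{t.expansionofpositivity} is purely qualitative in $\alpha$: it gives no continuity, monotonicity or rate for $\delta_*(\alpha)$ as $\alpha\to0$, so this circle cannot be closed from the statements available, and you acknowledge this yourself. As written the proof is therefore incomplete exactly at the step that produces $c_1,c_2,\alpha$ depending only on $n,m$.

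The paper avoids the circularity by letting the constants flow in one direction: the $L^1$-Harnack absorption and the density estimate are carried out on a time window $\tilde\delta$ defined intrinsically through the Harnack constant $\gamma$ and the initial mass alone (there, $\tilde\delta=(|B_1|/\gamma)^{1-m}\tilde\theta r^2$ with $\tilde\theta$ built from $\bint_{B_{2r}}\psi$), with no reference to $c_2$; consequently the density $\alpha$, and with it $\delta_*,\eta_*,\eps_*$, depend only on $n$ and $m$, and $c_2$ is \emph{defined} at the very end as the sum of the two window constants (the $\tilde\delta$-constant plus $\sigma c_o^{1-m}$ coming from the expansion of positivity), while sliding the reference time over $(\tilde\delta/2,\tilde\delta)$ produces exactly the interval $[s+\alpha\theta r^2,\,s+\theta r^2]$. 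Reordering your argument in this way removes both the a priori smallness requirement on $c_2$ (the choice of $\tilde\delta$ does the absorption) and the fixed-point condition, and your construction of $v$ can be kept as is. A secondary, easily fixable point: the comparison principle of Definition~\ref{d.supercal} yields $v\le u$ only in the open cylinder, so your continuity argument at the right endpoint bounds $v(\cdot,s+\theta r^2)$ but not $u(\cdot,s+\theta r^2)$; to include $t=s+\theta r^2$ work in $B(x_o,R')\times(s,s+\theta r^2+\sigma)$ for a small $\sigma>0$, which is available because the closed cylinder $\overline{B(x_o,64r)}\times[s,s+\theta r^2]$ is a compact subset of $\Omega_T$.
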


\begin{proof}
  
Let us assume $(x_0,s) = (0,0)$ and $Q_S := B_{64r} \times (0,S)\Subset\Omega_T$, for some $S <T$.
Let $u$ be a supercaloric function in $\Omega_T$, and $u_k:=\min\{u,k\}$ its
truncation of level $k=1,2,...$. We want to solve a Dirichlet problem
in $Q_S$ with $u_k (x,0)\chi_{B(0,2r)}$ on the initial boundary and zero
on the lateral boundary. However, in order to guarantee existence of a (unique and
continuous) solution, we solve a regularized problem
instead.
To this end, we rely on the lower
semicontinuity of $u_k^m (x,0)\chi_{B(0,2r)}$ to approximate it
pointwise from below by Lipschitz functions $\psi_{k,i}^m$, such that $0
\leq \psi_{k,i} \leq \psi_{k,i+1}\leq u_k (x,0)\chi_{B(0,2r)}$ in
$\Omega \times \{0\}$ with $\psi_{k,i}(x) \to u_k (x,0)\chi_{B(0,2r)}$
pointwise in $\Omega$ as $i \to \infty$. That is, we consider the problem
$$
\begin{cases}
\partial_t h_{k,i} - \Delta ( h_{k,i}^m ) = 0 \quad &\text{in } Q_S, \\
h_{k,i}(x,t) = 0 \quad &\text{on } \partial B_{64r} \times (0,S),\\
h_{k,i}(x,0) = \psi_{k,i}(x) \quad &\text{on } \overline{B}(0,64r) \times \{0\}.
\end{cases}
$$
By Theorem~\ref{t.existence_continuous_weaksol} a unique global weak solution $h_{k,i} \in
C(\overline{Q}_S)$ exists such that $h_{k,i} = 0$ on the lateral
boundary and $h_{k,i} = \psi_{k,i}$ on the initial boundary. 
Since $0\leq
\psi_{k,i} \leq u_k \leq u$ on the parabolic boundary, from the
comparison principle in the definition of supercaloric functions it
follows that $0 \leq h_{k,i} \leq u_k \leq u$ in $Q_S$. From
Theorem~\ref{t.existence_continuous_weaksol} it also follows that $0 \leq h_{k,i} \leq
h_{k,i+1}$ in $Q_S$ for every $i$, so that $h_{k,i}$ forms a
nondecreasing sequence with respect to $i\in\N$.
We set
$$
\tilde \theta = \left( \bint_{B_{8r}} h_{k,i}(x,0)\, \d x \right)^{1-m} = 4^{-n(1-m)} \left( \bint_{B_{2r}} \psi_{k,i}(x) \, \d x \right)^{1-m}
$$
and 
$$
\tilde \delta = \left( \frac{|B_1|}{\gamma} \right)^{1-m} \tilde \theta r^2,
$$
where $\gamma$ is the constant from Lemma~\ref{l.l1harnack}. By Lemma~\ref{l.l1linfty_harnack} we have 
$$
\sup_{B_{4r}\times (\frac{\tilde \delta}{2},\tilde \delta)} h_{k,i} \leq \gamma_1 \bint_{B_{2r}} \psi_{k,i}(x) \, \d x,
$$
for $\gamma_1=\gamma_1(n,m)>0$. From Lemma~\ref{l.l1harnack} it follows that 
$$
\inf_{0<\tau<\tilde \delta} \int_{B_{4r}} h_{k,i}(x,\tau) \, \d x \geq \frac{1}{2 \gamma} \int_{B_{2r}} h_{k,i}(x,0) \, \d x.
$$
By using the previous two estimates, we obtain
\begin{align*}
&\frac{1}{2^{n+1}\gamma} \bint_{B_{2r}} h_{k,i}(x,0) \, \d x \\
&\quad\leq \bint_{B_{4r}} h_{k,i}(x,\tau)  \, \d x \\
&\quad= \frac{1}{|B_{4r}|} \int_{\{ h_{k,i}(\cdot,\tau) > c_o \} \cap B_{4r}} h_{k,i}(x,\tau) \, \d x  +  \frac{1}{|B_{4r}|} \int_{\{ h_{k,i}(\cdot,\tau) \leq c_o \} \cap B_{4r}} h_{k,i}(x,\tau)	 \, \d x  \\
&\quad\leq  \frac{ | \{ h_{k,i}(\cdot,\tau) > c_o \} \cap B_{4r} | }{| B_{4r} |} \gamma_1 \bint_{B_{2r}} \psi_{k,i}(x)  \, \d x  +  c_o,
\end{align*}
for any $\tau \in (\frac{\tilde \delta}{2},\tilde \delta)$ and an
arbitrary constant $c_o>0$. By choosing
$$
c_o = \frac{1}{2^{n+2}\gamma} \bint_{B_{2r}} \psi_{k,i}(x) \, \d x,
$$
the estimate above gives
$$
 | \{ h_{k,i}(\cdot,\tau) > c_o \} \cap B_{4r} | \geq \frac{1}{2^{n+2}\gamma \gamma_1} |B_{4r}|
$$
for any $\tau \in (\frac{\tilde \delta}{2},\tilde \delta)$.
At this point we can apply the expansion of positivity, Theorem~\ref{t.expansionofpositivity}. This gives that there exist constants $\eps,\sigma,\eta \in (0,1)$ depending only on $n$ and $m$ such that 
$$
h_{k,i}(\cdot,t) \geq \frac{\eta}{2^{n+2}\gamma} \bint_{B_{2r}} \psi_{k,i}(x) \, \d x \quad \text{ in } B_{8r}
$$
for all $t \in \left[\tau + (1-\eps)\sigma c_o^{1-m}r^2, \tau + \sigma
  c_o^{1-m} r^2 \right]$. Observe that this holds for any $\tau \in
(\frac{\tilde \delta}{2},\tilde \delta)$. Now if we choose the constant $c > 0$ such that
$$
c:= \left( \frac{|B_1|}{4^n \gamma} \right)^{1-m} + \sigma \left( \frac{1}{2^{n+2}\gamma} \right)^{1-m}
$$
and
$$
\theta_{k,i} := c \left( \bint_{B_{2r}} \psi_{k,i}(x)  \, \d x \right)^{1-m},\quad \delta_{k,i} := \theta_{k,i} r^2,
$$
we have that 
\begin{equation}
\inf_{B_{8r} \times (\alpha \delta_{k,i}, \delta_{k,i})} h_{k,i} \geq
\frac{\eta}{2^{n+2}\gamma} \bint_{B_{2r}} \psi_{k,i}(x) \, \d
x,\label{eq:harnack-h}
\end{equation}
where $\alpha\in (0,1)$ depends only on $n$ and $m$.
Moreover, if we first let $i\to\infty$ and then
  $k\to\infty$, by monotone convergence we have
  \begin{equation*}
    \delta_{k,i}\to\delta,
    \qquad\mbox{where\quad}\delta:= c \left( \bint_{B_{2r}} u(x,0)  \, \d x \right)^{1-m}r^2.
  \end{equation*}
  The left-hand side of \eqref{eq:harnack-h} can be estimated from above by using comparison as 
$$
\inf_{B_{8r} \times (\alpha \delta_{k,i}, \delta_{k,i})} h_{k,i} \leq \inf_{B_{8r} \times (\alpha \delta_{k,i}, \delta_{k,i})} u_{k} \leq \inf_{B_{8r} \times (\alpha \delta_{k,i}, \delta_{k,i})} u.
$$
By passing to the limit in \eqref{eq:harnack-h}, first in $i \to \infty$ and then in $k\to \infty$, we obtain
$$
\inf_{B_{8r} \times (\alpha \delta, \delta)} u \geq \frac{\eta}{2^{n+2}\gamma} \bint_{B_{2r}} u(x,0) \, \d x
$$
by using the monotone convergence theorem on the right-hand side. From here the claim follows.

\end{proof}

If $\Omega$ is connected, as a consequence of Lemma~\ref{lem:alternatives} the positivity set
of a supercaloric function in $\Omega_T$ has the form 
$\Omega\times\Lambda_+$, where the set $\Lambda_+\subset(0,T)$ is a
countable union of open time intervals. The next lemma guarantees that the
supercaloric function vanishes at the endpoint $t_o$ of each of these time
intervals, provided $t_o< T$.

\begin{lem} \label{l.endpoint_vanish}
  Let $0< m < 1$, and suppose that $u:\Omega_T\to[0,\infty]$ is a
  supercaloric function in $\Omega_T$ such that for some
  $t_o\in(0,T)$, we have $u(x,t_o) = 0$ for all $x \in \Omega$. Then,
$$
\lim_{t\uparrow t_o} \int_{K} u(x,t) \, \d x = 0
$$
for every $K \Subset \Omega$.
\end{lem}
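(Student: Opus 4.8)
The plan is to reduce the claim to a single small ball and then argue by contradiction, using the $L^1$-Harnack inequality for continuous weak solutions (Lemma~\ref{l.l1harnack}) applied to a weak solution lying below $u$. Since $\mathrm{dist}(K,\partial\Omega)>0$, I would first fix $r>0$ with $\overline{B(x,4r)}\subset\Omega$ for every $x\in K$ and cover $K$ by finitely many balls $B(x_i,r)$; it then suffices to show $\int_{B(x_0,r)}u(\cdot,t)\,\d x\to0$ as $t\uparrow t_o$ for each centre $x_0$ with $\overline{B(x_0,4r)}\subset\Omega$. Suppose this fails: then there are $\delta>0$ and a sequence $t_j\uparrow t_o$, necessarily with $t_j<t_o$ because $u(\cdot,t_o)\equiv0$, such that $\int_{B(x_0,r)}u(\cdot,t_j)\,\d x\ge\delta$.

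Next I would fix one index $j$ with $t_j$ close to $t_o$ and a level $T''\in(t_o,T)$ with $T''-t_o$ small, and set $Q:=B(x_0,4r)\times(t_j,T'')\Subset\Omega_T$. Using the lower semicontinuity of $u(\cdot,t_j)$, choose a Lipschitz function $\phi$ on $\overline{B(x_0,4r)}$ with $\spt\phi\Subset B(x_0,4r)$, $0\le\phi\le u(\cdot,t_j)^m$ on $B(x_0,4r)$, and $\int_{B(x_0,r)}\phi^{1/m}\,\d x\ge\tfrac\delta2$ (possible because such integrals increase to $\int_{B(x_0,r)}u(\cdot,t_j)\,\d x\ge\delta$ along a suitable increasing sequence of such $\phi$). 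Taking $\phi$ Lipschitz with compact support guarantees that $g:=\phi^{1/m}$, viewed as a time-independent datum, satisfies the hypotheses of Theorem~\ref{t.existence_continuous_weaksol} on $Q$, which provides a continuous weak solution $v$ in $Q$ with $v=g$ on $\partial_pQ$. Since $g\le u(\cdot,t_j)$ on the bottom of $\partial_pQ$ by construction and $g=0\le u$ on the lateral boundary, the comparison principle from the definition of supercaloric functions yields $v\le u$ in $Q$.

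The decisive point is that $t_o$ lies in the interior of the time interval $(t_j,T'')$, so every slice point $(x,t_o)$ with $x\in B(x_0,4r)$ belongs to $Q$; hence $v(\cdot,t_o)\le u(\cdot,t_o)\equiv0$ \emph{pointwise}, and in particular $\int_{B(x_0,2r)}v(\cdot,t_o)\,\d x=0$. On the other hand $v$ is continuous up to $\overline Q$ with $v(\cdot,t_j)=g$, so $\int_{B(x_0,r)}v(\cdot,\tau)\,\d x\to\int_{B(x_0,r)}\phi^{1/m}\,\d x\ge\tfrac\delta2$ as $\tau\downarrow t_j$, giving $\int_{B(x_0,r)}v(\cdot,\tau_*)\,\d x\ge\tfrac\delta4$ for some $\tau_*$ just above $t_j$. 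Applying Lemma~\ref{l.l1harnack} to $v$ on the cylinder $B(x_0,2r)\times[s,T']\Subset Q$ with $t_j<s<\tau_*$ and $t_o<T'<T''$, and noting that the error term $\gamma\big(\tfrac{T'-s}{r^\lambda}\big)^{1/(1-m)}$ can be made $<\tfrac\delta8$ by choosing $T''-t_j$ small, I would obtain $\inf_{s<\tau<T'}\int_{B(x_0,2r)}v(\cdot,\tau)\,\d x\ge\tfrac{\delta}{8\gamma}>0$; evaluating at $\tau=t_o\in(s,T')$ contradicts $\int_{B(x_0,2r)}v(\cdot,t_o)\,\d x=0$, and this contradiction proves the lemma after summing over the finite cover of $K$.

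The essential difficulty is conceptual rather than computational: lower semicontinuity of $u$ together with $u(\cdot,t_o)\equiv0$ does not by itself forbid $u$ from blowing up as $t\uparrow t_o$, so the equation must genuinely be used, and the trick is to place $t_o$ strictly inside the comparison cylinder $Q$ so that the inequality $v\le u$ is available pointwise on $\{t=t_o\}$ (forcing $v(\cdot,t_o)\equiv0$) while the $L^1$-Harnack inequality transports the non-negligible mass of $v$ from just above $t_j$ forward to time $t_o$. The remaining points are routine: the compact-support Lipschitz choice of $g^m=\phi$ secures admissibility for Theorem~\ref{t.existence_continuous_weaksol}, and since the error term in Lemma~\ref{l.l1harnack} is controlled by shrinking the time span $T''-t_j$ with $r$ fixed, the sign of $\lambda=n(m-1)+2$ is irrelevant, so the argument covers the whole range $0<m<1$.
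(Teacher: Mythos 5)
Your proof is correct and rests on essentially the same mechanism as the paper's: an auxiliary continuous weak solution below $u$ (built via Lipschitz approximation of a time slice of $u$ as initial datum, zero lateral datum, Theorem~\ref{t.existence_continuous_weaksol}, and the comparison principle in the definition of supercaloric functions), combined with the $L^1$-Harnack inequality of Lemma~\ref{l.l1harnack} and the fact that the comparison solution vanishes on the slice $t=t_o$ because $u(\cdot,t_o)\equiv 0$. The only difference is packaging: the paper runs the argument directly (with truncations $\min\{u,k\}$ and monotone convergence) and obtains the quantitative decay $\int_{B(y,r)}u(x,s)\,\d x\le\gamma\big(\tfrac{t_o-s}{r^\lambda}\big)^{\frac{1}{1-m}}$ for $s<t_o$, whereas you argue by contradiction along a sequence, which avoids truncation but yields no rate.
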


\begin{proof}
Since an arbitrary compact set $K\Subset\Omega_T$ can be covered by
finitely many balls $B_{r}$ with $B_{4r}\Subset \Omega$, it
suffices to prove the claim for the case $K=B_{r}$ with $B_{4r}\Subset \Omega$.

Let $B(y,4r) \times [s,t_o] \Subset \Omega_T$. Consider the
regularized Dirichlet problem as in the proof of
Lemma~\ref{l.wharnack_supercal} in $\mathcal Q = B(y,4r) \times (s,t_o+\delta)
\Subset \Omega_T$. By using Lemma~\ref{l.l1harnack} together with the
comparison principle $h_{k,i} \leq u_k \leq u$ in $\mathcal Q$ it follows that
\begin{align*}
\int_{B(y,r)} \psi_{k,i}(x) \, \d  x &\leq \gamma \inf_{s<\tau<t_o} \int_{B(y,2r)} h_{k,i}(x,\tau) \, \d x + \gamma \left( \frac{t_o-s}{r^\lambda} \right)^\frac{1}{1-m} \\
&= \gamma \left( \frac{t_o-s}{r^\lambda} \right)^\frac{1}{1-m},
\end{align*}
since $u(\cdot,t_o) \equiv 0$. By using the monotone convergence theorem we can pass to the limit $i \to \infty$ and $k \to \infty$ to obtain
$$
\int_{B(y,r)} u(x,s) \, \d x \leq \gamma \left( \frac{t_o-s}{r^\lambda} \right)^\frac{1}{1-m}.
$$
Since $s < t_o$ was arbitrary, provided that $B(y,4r) \times [s,t_o] \Subset \Omega_T$ holds, we may pass to the limit $s\to t_o$ in the estimate above, from which the claim follows.

\end{proof}

We prove a variant of Lemma~\ref{l.bounded_caccioppoli} when
the supersolution vanishes at the final instant of time. The result will be important in the following section.

\begin{lem} \label{l.bdd_caccioppoli_positivityset}
Let $0<m<1$. Let $u: \Omega_T \to [0,\infty]$ be a supercaloric function in $\Omega_T$ such that $u$ is a weak supersolution in $\Omega \times (t_1,t_2)$ for some interval $(t_1,t_2)\Subset(0,T)$. Furthermore, suppose that $u(x,t_2) = 0$ for every $x \in \Omega$. Then,
$$
\int_{t_1}^{t_2} \int_\Omega \eta^2 |\nabla u^m|^2 \, \d x \d t \leq 4 M^{2m} (t_2-t_1) \int_\Omega |\nabla \eta|^2 \, \d x
$$
for any nonnegative $\eta \in C_0^\infty(\Omega)$ and $M = \|u\|_{L^\infty(\spt(\eta) \times (t_1,t_2))}$.
If $u$ does not vanish at $t_2$, then we have
$$
\int_{t_1}^{t_2} \int_\Omega \eta^2 |\nabla u^m|^2 \, \d x \d t \leq 4 M^{2m} (t_2-t_1) \int_\Omega |\nabla \eta|^2 \, \d x + 2 M^{m+1} \int_\Omega \eta^2 \, \d x. 
$$
\end{lem}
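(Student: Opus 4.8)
We may assume $M<\infty$, since otherwise there is nothing to prove. The plan is to test the weak supersolution formulation of $u$ on $\Omega\times(t_1,t_2)$ with $\varphi=\eta^2(M^m-u^m)\,\zeta(t)$, where $\zeta$ is a Lipschitz temporal cut-off with compact support in $(t_1,t_2)$. Since $u\le M$ a.e.\ on $\spt(\eta)\times(t_1,t_2)$ we have $M^m-u^m\ge0$ there, so $\varphi\ge0$ is admissible. The reason for this particular choice is that the parabolic term generates the energy density $\Psi(u):=M^m u-\tfrac1{m+1}u^{m+1}$, which satisfies $0\le\Psi(u)\le M^m u\le M^{m+1}$ on $\spt(\eta)\times(t_1,t_2)$; and — crucially, because of the direction of the supersolution inequality — the contribution of $\Psi(u)$ near the \emph{initial} time enters with a sign that permits us to discard it, so that only its contribution near the \emph{terminal} time $t_2$ remains. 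This asymmetry is exactly what lets the zeroth order term vanish when $u$ vanishes at $t_2$.

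Concretely, I would fix $t_1<a-\sigma<a<b<b+\sigma<t_2$ and take $\zeta$ piecewise linear with $\zeta\equiv1$ on $[a,b]$ and $\zeta\equiv0$ off $(a-\sigma,b+\sigma)$, carrying out the computation in the time-mollified formulation via \eqref{e.time-mollif} (with base point $\tau_1\in(t_1,a-\sigma)$), along the lines of the proof of Lemma~\ref{l.supersol_truncation}. Splitting $M^m-u^m=(M^m-\mollifytime{u}{h}^m)+(\mollifytime{u}{h}^m-u^m)$ and using $\partial_t\mollifytime{u}{h}=\tfrac1h(u-\mollifytime{u}{h})$ together with the monotonicity of $s\mapsto s^m$, the $(\mollifytime{u}{h}^m-u^m)$-error in the parabolic term has a favourable sign and is dropped, whereupon the parabolic term equals $-\iint\eta^2\zeta'\,\Psi(\mollifytime{u}{h})$ after integrating by parts in time. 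Letting $h\to0$ — using $\mollifytime{u}{h}\to u$ and $\mollifytime{u}{h}\le M$ on $\spt(\eta)\times\spt(\zeta)$, and $\mollifytime{\nabla u^m}{h}\to\nabla u^m$ in $L^2$ of that set — leaves
\[
\iint \zeta\,\eta^2|\nabla u^m|^2\,\d x\d t
\le -\iint \eta^2\zeta'\,\Psi(u)\,\d x\d t
+2\iint \zeta\,\eta\,(M^m-u^m)\,\nabla u^m\cdot\nabla\eta\,\d x\d t .
\]

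Next, Young's inequality applied to the last term, with $(M^m-u^m)^2\le M^{2m}$, absorbs half of $\iint\zeta\eta^2|\nabla u^m|^2$ into the left-hand side and leaves $4M^{2m}\iint\zeta|\nabla\eta|^2\le 4M^{2m}(t_2-t_1)\int_\Omega|\nabla\eta|^2$. Since $\zeta'\ge0$ near $t_1$ and $\zeta'\le0$ near $t_2$ and $\Psi(u)\ge0$, we bound $-2\iint\eta^2\zeta'\Psi(u)\le\tfrac2\sigma\int_b^{b+\sigma}\!\!\int_\Omega\eta^2\Psi(u)\,\d x\d t$. Letting $a\downarrow t_1$ and $b\uparrow t_2$, monotone convergence turns $\iint\zeta\eta^2|\nabla u^m|^2$ into $\int_{t_1}^{t_2}\int_\Omega\eta^2|\nabla u^m|^2$. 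In the general case one uses $\Psi(u)\le M^{m+1}$ to bound the terminal term by $2M^{m+1}\int_\Omega\eta^2$, giving the second inequality; when $u(\cdot,t_2)\equiv0$ one instead keeps $\Psi(u)\le M^m u$, estimates $\tfrac2\sigma\int_b^{b+\sigma}\int_\Omega\eta^2\Psi(u)\le 2M^m\|\eta\|_\infty^2\sup_{s\in(b,b+\sigma)}\int_{\spt(\eta)}u(\cdot,s)\,\d x$, and picks $\sigma=\sigma(b)$ with $b+\sigma\uparrow t_2$, so that Lemma~\ref{l.endpoint_vanish} (with $K=\spt(\eta)$) forces the terminal term to $0$ and yields the first inequality.

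The step I expect to be most delicate is the time-regularity bookkeeping: making the integration by parts in time rigorous through $\mollifytime{\cdot}{h}$ while keeping track of the correct direction of every inequality — in particular checking that the $(\mollifytime{u}{h}^m-u^m)$-error in the parabolic term has the right sign — and coping with the fact that $\nabla u^m$ is a priori only in $L^2_{\loc}(t_1,t_2;H^1_{\loc}(\Omega))$, which is why $\zeta$ is taken with compact support in $(t_1,t_2)$ and the estimate on all of $(t_1,t_2)$ is obtained by exhaustion and monotone convergence rather than directly.
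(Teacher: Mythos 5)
Your proposal is correct and follows essentially the same route as the paper's proof: the test function $\eta^2(M^m-u^m)$ with a temporal cutoff in the time-mollified formulation, the monotonicity of $s\mapsto s^m$ to drop the mollification error, the sign of $\Psi(u)=M^mu-\tfrac1{m+1}u^{m+1}$ to discard the initial-time contribution, Young's inequality with $(M^m-u^m)\le M^m$, and Lemma~\ref{l.endpoint_vanish} (respectively $\Psi\le M^{m+1}$) for the terminal term. The only difference is cosmetic: you use a trapezoidal cutoff $\zeta$ and average over its ramps, whereas the paper uses a piecewise affine approximation of $\chi_{(\tau_1,\tau_2)}$ and then lets $\tau_1\to t_1$, $\tau_2\to t_2$; the constants come out identically.
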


\begin{proof}

We start with a mollified weak formulation for $u$, which can be written as
\begin{align*}
\int_{\tau_1}^{\tau_2} \int_\Omega \partial_t \mollifytime{u}{h}
  \varphi + \mollifytime{\nabla u^m}{h} \cdot \nabla \varphi \, \d x
  \d t &\geq \frac{1}{h}\int_\Omega u(x,\tau_1) \int_{\tau_1}^{\tau_2}
         e^\frac{\tau_1-s}{h} \varphi(x,s) \, \d s \d x
         \ge0
\end{align*}
for a.e. $\tau_2 \in (t_1,t_2)$ and a.e. $\tau_1 \in (t_1,\tau_2)$. The time mollification $\mollifytime{\cdot}{h}$ is defined as in~\eqref{e.time-mollif}. Up next, we use a test function $\varphi = (M^m - u^m) \alpha_{\eps} \eta^2$, where $\eta \in C_0^\infty(\Omega,\R_{\geq 0})$ and $\alpha_{\eps}$ is a piecewise affine approximation of $\chi_{\tau_1,\tau_2}(t)$. For the parabolic part we have 
\begin{align*}
\int_{\tau_1}^{\tau_2} &\int_\Omega \partial_t \mollifytime{u}{h} \varphi \, \d x \d t \\
&= \int_{\tau_1}^{\tau_2} \int_\Omega \alpha_{\eps} \eta^2 M^m \partial_t \mollifytime{u}{h} \, \d x \d t - \int_{\tau_1}^{\tau_2} \int_\Omega \alpha_{\eps} \eta^2 u^m \partial_t \mollifytime{u}{h} \, \d x \d t \\
&\leq \int_{\tau_1}^{\tau_2} \int_\Omega \alpha_{\eps} \eta^2 M^m \partial_t \mollifytime{u}{h} \, \d x \d t - \int_{\tau_1}^{\tau_2} \int_\Omega \alpha_{\eps} \eta^2 \mollifytime{u}{h}^m \partial_t \mollifytime{u}{h} \, \d x \d t \\
&= - \int_{\tau_1}^{\tau_2} \int_\Omega \alpha_{\eps}' \eta^2 M^m \mollifytime{u}{h} \, \d x \d t + \frac{1}{m+1} \int_{\tau_1}^{\tau_2} \int_\Omega \alpha_{\eps}' \eta^2 \mollifytime{u}{h}^{m+1} \, \d x \d t \\
&\xrightarrow{h\to0} - \int_{\tau_1}^{\tau_2} \int_\Omega \alpha_{\eps}' \eta^2 M^m u \, \d x \d t + \frac{1}{m+1} \int_{\tau_1}^{\tau_2} \int_\Omega \alpha_{\eps}' \eta^2 u^{m+1} \, \d x \d t \\
&\xrightarrow{\eps \to 0} - \int_\Omega \eta^2 M^m u(\tau_1) \, \d x + \frac{1}{m+1} \int_\Omega \eta^2 u^{m+1} (\tau_1) \, \d x  \\
&\phantom{\xrightarrow{h\to0}}\; + \int_\Omega \eta^2 M^m u(\tau_2) \, \d x - \frac{1}{m+1} \int_\Omega \eta^2 u^{m+1} (\tau_2) \, \d x
\end{align*}
for a.e. $\tau_2 \in (t_1,t_2)$ and a.e. $\tau_1 \in (t_1,\tau_2)$. Since $\frac{1}{m+1} u^{m+1} \leq M^m u$, the sum of the first two terms on the right-hand side is nonpositive, and we can discard it. After passing to the limit $h \to 0$, for the integrand of the divergence part we have
\begin{align*}
\nabla u^m \cdot \nabla \varphi = - \alpha_{\eps} \eta^2 |\nabla u^m|^2 + 2 \alpha_{\eps} \eta (M^m- u^m) \nabla \eta \cdot \nabla u^m.
\end{align*}
For the latter term we use Young's inequality and obtain
$$
2 \alpha_{\eps} \eta (M^m- u^m) \nabla \eta \cdot \nabla u^m \leq 2\alpha_{\eps} \eta M^m |\nabla \eta | |\nabla u^m| \leq \frac{1}{2} \alpha_{\eps} \eta^2 |\nabla u^m|^2 + 2 \alpha_{\eps}M^{2m} |\nabla \eta|^2.
$$
By passing to the limit $\eps \to 0$ and combining the estimates we have
\begin{align*}
\frac{1}{2}\int_{\tau_1}^{\tau_2} \int_\Omega \eta^2 |\nabla u^m|^2 \, \d x \d t &\leq 2 M^{2m} (\tau_2 - \tau_1)\int_\Omega |\nabla \eta|^2 \, \d x \\
&\phantom{+} + \int_\Omega \eta^2 M^m u(\tau_2) \, \d x - \frac{1}{m+1} \int_\Omega \eta^2 u^{m+1} (\tau_2) \, \d x.
\end{align*}
By multiplying this inequality by 2 and letting $\tau_2 \to t_2$ and
$\tau_1 \to t_1$, the first claim follows by using
Lemma~\ref{l.endpoint_vanish}, while the second one follows by
using $0\le u(\tau_2)\le M$.

\end{proof}

\section{Bounded supercaloric functions} \label{s.connection}

First we state a result concerning the obstacle problem that will have
significant importance in further results of this paper. The existence
and regularity results stated in the following theorem can be extracted from~\cite{BLS,Schaetzler2,Cho_Scheven,MSb}
(see also~\cite{MS}).
The proof of properties (i) and (iv) can be found in \cite{MSo}.
\begin{theo} \label{t.obstacle-supercal}
Let $0<m<1$ and $\Omega \subset \R^n$ be a bounded Lipschitz domain. Let $\psi$ satisfy $\psi^m \in C^{1}(\overline{\Omega_T})$. Then, there exists a function $u \in C(\overline{\Omega_T})$ with the following properties:
\begin{enumerate}
\item[(i)] $u$ is a weak supersolution in $\Omega_T$,
\item[(ii)] $u\geq \psi$ everywhere in $\Omega_T$,
\item[(iii)] $u = \psi$ on $\partial_p \Omega_T$,
\item[(iv)] $u$ is a weak solution in the set $\{ u > \psi \}$.
\end{enumerate}
\end{theo}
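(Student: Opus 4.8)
The plan is to obtain $u$ as a \emph{variational solution} to the obstacle problem for the PME with obstacle $\psi$ and with lateral and initial datum $\psi$, and then to read off properties (i)--(iv). Passing to the variable $v=u^m$, the natural class of comparison maps consists of nonnegative $w$ with $w\ge\psi$ a.e.\ in $\Omega_T$, $w^m-\psi^m\in L^2(0,T;H^1_0(\Omega))$, with enough time regularity for $\partial_t(w^m)$, and $w(\cdot,0)=\psi(\cdot,0)$; following \cite{BLS,MS} one seeks $u$ in the associated solution class satisfying, for every such $w$, the variational inequality obtained by formally testing the equation with $w-u$ (respectively $w^m-u^m$ in the elliptic part) -- a formulation which involves only $\partial_tw$ and therefore circumvents the a priori nonexistent time derivative of $u$. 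The starting point is the existence theorem of \cite{BLS} (together with the refinements in \cite{MS}), which yields such a $u$ with $u\ge\psi$ a.e., with $u^m-\psi^m\in L^2(0,T;H^1_0(\Omega))$, with the initial datum attained in $L^{m+1}(\Omega)$, and with $u\in L^\infty(\Omega_T)$ because $\psi$ is bounded. No uniqueness is needed for the statement.

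Next I would derive (i) and, granting the continuity established below, (iv) by perturbing the variational inequality. For $\varphi\in C^\infty_0(\Omega_T)$ with $\varphi\ge0$ and $\lambda>0$, a comparison map built from $\mollifytime{u^m}{h}+\lambda\varphi$ (the time-mollification \eqref{e.time-mollif} is used precisely because it supplies the missing time regularity) is admissible for $h$ small, since it exceeds $\psi^m$ in the limit; inserting it, dividing by $\lambda$, and letting first $h\to0$ and then $\lambda\downarrow0$ gives
$$
\iint_{\Omega_T}\big(-u\,\partial_t\varphi+\nabla u^m\cdot\nabla\varphi\big)\,\d x\,\d t\ge0,
$$
so that $u$ is a weak supersolution in the sense of Definition~\ref{d.weak_sol}; the time term is treated exactly as in the proofs of Lemmas~\ref{l.supersol_truncation} and~\ref{l.bdd_caccioppoli_positivityset}. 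For (iv), if $\varphi$ is supported in the open set $\{u>\psi\}$, then with $\delta:=\min_{\spt\varphi}(u^m-\psi^m)>0$ the maps $u^m\pm\lambda\varphi$ are both $\ge\psi^m$ for small $\lambda$, so both signs are admissible and the variational inequality becomes an equality; hence $u$ is a weak solution in $\{u>\psi\}$. Property (iii) then follows on the lateral boundary from $u^m-\psi^m\in L^2(0,T;H^1_0(\Omega))$ together with continuity, and on the initial boundary from the $L^{m+1}$-attainment together with continuity.

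The remaining and, to my mind, genuinely technical point is \textbf{continuity of $u$ up to $\overline{\Omega_T}$}, which is also what legitimizes the reading of (iv) (it makes $\{u>\psi\}$ an open set). Interior local H\"older continuity of variational solutions to the PME obstacle problem with H\"older -- here even $C^1$ -- obstacle is the content of \cite{Schaetzler2}; alternatively, on $\{u>\psi\}$ it follows from the interior regularity theory for the PME (\hspace{1sp}\cite[Ch.~6]{DGV}) and on $\{u=\psi\}$ it is inherited from $\psi$, the two being glued along the free boundary. Continuity up to $\partial_p\Omega_T$, together with $u=\psi$ there, is supplied by the boundary regularity results of \cite{Cho_Scheven,MSb}, which apply precisely because the obstacle and the boundary datum coincide and satisfy $\psi^m\in C^1(\overline{\Omega_T})$. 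Assembling these inputs -- existence from \cite{BLS,MS}, the perturbation argument for (i) and (iv) as carried out in \cite{MSo}, and the regularity from \cite{Schaetzler2,Cho_Scheven,MSb} -- yields all of (i)--(iv). I expect the main obstacle to be the careful reconciliation of the variational-solution notion with Definition~\ref{d.weak_sol}, in particular making the one-sided and two-sided perturbations rigorous through the time-mollification, and invoking the boundary regularity in exactly the form needed here.
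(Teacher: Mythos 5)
Your proposal follows essentially the same route as the paper: the paper gives no self-contained proof but attributes existence and continuity up to $\overline{\Omega_T}$ to \cite{BLS,Schaetzler2,Cho_Scheven,MSb} (see also \cite{MS}) and properties (i) and (iv) to \cite{MSo}, which is exactly the combination of references and perturbation/regularity arguments you assemble. Your sketch of the one-sided and two-sided variations via time-mollification is the standard mechanism behind those cited results, so there is no substantive divergence.
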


We start by proving that supercaloric functions are weak
supersolutions on their positivity set. 

\begin{lem} \label{l.bdd-positive-supercal-is-supersol}
Let $0 < m <1$. Let $u > 0$ be a locally bounded supercaloric function
in $\Omega_T$, where $\Omega \subset \R^n$ is an open set. Then $u$ is a weak supersolution in $\Omega_T$.
\end{lem}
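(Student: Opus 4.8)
The plan is to argue locally. Since the weak supersolution property is local --- any nonnegative $\varphi\in C^\infty_0(\Omega_T)$ can be written via a partition of unity as a finite sum of nonnegative test functions each supported in a small subcylinder, and the required local integrability is also local --- it suffices to show that $u$ is a weak supersolution in an arbitrary cylinder $Q_{t_1,t_2}=Q\times(t_1,t_2)\Subset\Omega_T$ with $Q$ a ball. Fix such a cylinder. First I would record that $u$ is bounded away from $0$ and from $\infty$ there: by local boundedness and lower semicontinuity of $u$ on the compact set $\overline{Q_{t_1,t_2}}\subset\Omega_T$, together with the hypothesis $u>0$, there are constants $0<\mu\le M<\infty$ with $\mu\le u\le M$ on $\overline{Q_{t_1,t_2}}$. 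This uniform positivity is exactly where the hypothesis $u>0$ enters; it keeps all auxiliary functions below in the nondegenerate range of the equation.

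Next I would approximate $u$ from below by obstacle solutions. Using that $u^m$ is lower semicontinuous and bounded on the compact set $\overline{Q_{t_1,t_2}}$, I would pick functions $\psi_j$, $j\in\N$, with $\psi_j^m\in C^1(\overline{Q_{t_1,t_2}})$, satisfying $\tfrac12\mu\le\psi_j<u$ \emph{strictly} everywhere on $\overline{Q_{t_1,t_2}}$ (the strict inequality is available because there $\mu\le u\le M<\infty$) and $\psi_j\to u$ pointwise on $\overline{Q_{t_1,t_2}}$; this is a routine mollification construction. Theorem~\ref{t.obstacle-supercal}, applied on $Q_{t_1,t_2}$ to the obstacle $\psi_j$, then yields $u_j\in C(\overline{Q_{t_1,t_2}})$ which is a weak supersolution in $Q_{t_1,t_2}$, satisfies $u_j\ge\psi_j\ge\tfrac12\mu>0$, equals $\psi_j$ on $\partial_pQ_{t_1,t_2}$, and is a weak solution in the open set $D_j:=\{u_j>\psi_j\}$.

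The key step, which I expect to be the main obstacle, is the comparison $u_j\le u$ in $Q_{t_1,t_2}$. Restricted to $D_j$, the continuous function $u_j$ is a weak solution, hence subcaloric in $D_j$. For a boundary point $(x,t)\in\partial D_j$ with $t<t_2$, continuity of $u_j$ gives $\limsup_{D_j\ni(y,s)\to(x,t)}u_j(y,s)=u_j(x,t)=\psi_j(x,t)$, because such a point either lies in $Q_{t_1,t_2}$ and hence (the complement of the open set $D_j$) in $\{u_j=\psi_j\}$, or lies on $\partial_pQ_{t_1,t_2}$ where $u_j=\psi_j$; meanwhile lower semicontinuity of $u$ gives $\liminf_{D_j\ni(y,s)\to(x,t)}u(y,s)\ge u(x,t)>\psi_j(x,t)$. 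Thus the \emph{strict} inequality required by the noncylindrical comparison principle, Lemma~\ref{l.comparison_t<T}, holds at every $(x,t)\in\partial D_j$ with $t<t_2$; applying that lemma with $U=D_j$ and $T=t_2$ (note $D_j\subset\{t<t_2\}$, so $\{(x,t)\in D_j\colon t<t_2\}=D_j$) yields $u_j\le u$ in $D_j$, and outside $D_j$ we trivially have $u_j=\psi_j<u$. Securing this strict boundary inequality is precisely what the strict approximation $\psi_j<u$, made possible by $u>0$, is for.

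Finally I would pass to the limit $j\to\infty$. From $\psi_j\le u_j\le u$ and $\psi_j\to u$ pointwise we obtain $u_j\to u$ pointwise on $Q_{t_1,t_2}$, and since $0<u_j\le M$ there, dominated convergence gives $u_j\to u$ in $L^1_{\loc}\cap L^{1/m}_{\loc}(Q_{t_1,t_2})$. The bounded Caccioppoli inequality, Lemma~\ref{l.bounded_caccioppoli}, applied to each weak supersolution $u_j\le M$, furnishes a bound on $\iint\xi^2|\nabla u_j^m|^2$ uniform in $j$, so $(\nabla u_j^m)$ is bounded in $L^2_{\loc}(Q_{t_1,t_2})$; since $u_j^m\to u^m$ in $L^2_{\loc}$, the weak $L^2_{\loc}$-limit of $\nabla u_j^m$ is necessarily $\nabla u^m$, whence $u^m\in L^2_{\loc}(t_1,t_2;H^1_{\loc}(Q))$. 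Passing to the limit in
\[
\iint_{Q_{t_1,t_2}}\bigl(-u_j\,\partial_t\varphi+\nabla u_j^m\cdot\nabla\varphi\bigr)\,\d x\,\d t\ \ge\ 0,\qquad 0\le\varphi\in C^\infty_0(Q_{t_1,t_2}),
\]
--- using strong $L^1$-convergence of $u_j$ in the first term and weak $L^2_{\loc}$-convergence of $\nabla u_j^m$ tested against $\nabla\varphi$ in the second --- shows that $u$ is a weak supersolution in $Q_{t_1,t_2}$. Since $Q_{t_1,t_2}\Subset\Omega_T$ was arbitrary, $u$ is a weak supersolution in $\Omega_T$.
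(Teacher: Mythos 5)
Your proposal is correct and follows essentially the same route as the paper's proof: approximate $u$ from below by smooth obstacles lying strictly below $u$ (using the positive lower bound coming from $u>0$ and lower semicontinuity), solve the obstacle problem of Theorem~\ref{t.obstacle-supercal}, compare the obstacle solution with $u$ on the set where it solves the equation via the noncylindrical comparison principle of Lemma~\ref{l.comparison_t<T}, and then pass to the limit using the Caccioppoli bound of Lemma~\ref{l.bounded_caccioppoli} and weak compactness of the gradients. The only differences are cosmetic (working on $\overline{Q_{t_1,t_2}}$ rather than an intermediate cylinder, and spelling out the verification of the hypotheses of Lemma~\ref{l.comparison_t<T} and the locality argument in more detail).
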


\begin{proof} \
Consider a compactly contained cylinder
$\mathcal{Q} = Q_{t_1,t_2} := B(x_o,r) \times (t_1,t_2) \Subset \Omega_T$
and choose a larger cylinder $\widetilde{\mathcal{Q}}$ with
$\mathcal{Q} \Subset \widetilde{\mathcal{Q}} \Subset \Omega_T$. Observe that by lower
semicontinuity of $u$ and $u>0$ in $\Omega_T$ we have that $u \geq
\delta > 0$ in $\widetilde{\mathcal{Q}}$, for some $\delta>0$. Furthermore, there
exists a sequence $(\psi_k)$ with the properties $\psi_k \in C^\infty(\Omega_T)$ for each $k=1,2,...$, 
$$
0 < \psi_1 < \psi_2 < \cdots < u\ \text{ and }\ \lim_{k\to \infty} \psi_k = u \ \text{ in } \widetilde{\mathcal{Q}}.
$$

Next we consider the obstacle problem in Theorem~\ref{t.obstacle-supercal}, with obstacle $\psi_k$. By Theorem~\ref{t.obstacle-supercal} there exists a solution $v_k \in C(\overline{Q_{t_1,t_2}})$ to the obstacle problem, with $v_k = \psi_k$ on $\partial_p Q_{t_1,t_2}$. In the set 
$$
U_k := \{ (x,t)\in Q_{t_1,t_2} : v_k(x,t) > \psi_k (x,t)   \},
$$
$v_k$ is a weak solution. Since $v_k = \psi_k$ on $\partial_p Q_{t_1,t_2}$, it follows that $v_k = \psi_k$ on $\partial U_k$, except possibly when $t=t_2$. That is,
$$
v_k = \psi_k < u\ \text{ on } \ \partial U_k \cap \{t < t_2 \}.
$$
We want to use now Lemma~\ref{l.comparison_t<T} to conclude that  
\begin{align} \label{e.vlequ}
v_k \leq u\, \text{ in } \ U_k \cap \{ t < t_2 \}.	
\end{align}
Since $v_k$ is continuous in $\overline{Q_{t_1,t_2}}$, it follows that $v_k$ is continuous in $\overline{U_k} \cap \{ t<t_2 \}$. From here it follows that 
$$
\limsup_{U_k \ni (y,s)\to (x,t)} v_k(y,s) = \psi_k(x,t) < u (x,t) \leq \liminf_{U_k \ni (y,s) \to (x,t)} u(y,s)
$$
for each $(x,t) \in \{  (x,t) \in \partial U_k : t < t_2  \}$ by using also lower semicontinuity of $u$. Now we can use Lemma~\ref{l.comparison_t<T} to conclude~\eqref{e.vlequ}.

Consequently, we have that 
$$
\psi_k \leq v_k \leq u \ \text{ in } \ Q_{t_1,t_2},
$$
which implies that $v_k \to u$ as $k\to \infty$ pointwise in
$Q_{t_1,t_2}$. By Lemma~\ref{l.bounded_caccioppoli}, $|\nabla v_k^m|$ is
uniformly bounded in
$L^2(V\times(t_1,t_2))$ for every subdomain $V\Subset B(x_o,r)$. This together with pointwise convergence implies that $\nabla v_k^m$ converges weakly to $\nabla u^m$ in $L^2(V\times(t_1,t_2),\R^n)$. This implies that $u$ is a weak supersolution in any $Q_{t_1,t_2} \Subset \Omega_T$. Since being a weak supersolution is a local property, it follows that $u$ is a weak supersolution in $\Omega_T$. That is, 
$$
\iint_{\Omega_T} \left( - u \partial_t \ph + \nabla u^m \cdot \nabla \ph  \right)  \, \d x \d t \geq 0
$$
for any nonnegative $\ph \in C^\infty_0(\Omega_T)$.

\end{proof}

Next, we generalize the preceding result to nonnegative supercaloric
functions.

\begin{theo} \label{t.bdd_super}
Suppose $0 < m <1$. Let $u \geq 0$ be a locally bounded supercaloric function in $\Omega_T$. Then $u$ is a weak supersolution in $\Omega_T$.
\end{theo}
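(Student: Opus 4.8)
The plan is to reduce the general nonnegative case to the strictly positive case already handled in Lemma~\ref{l.bdd-positive-supercal-is-supersol}, using the geometric description of the positivity set from Section~\ref{sec:positivity}. Since being a weak supersolution is a local property, it suffices to work in an arbitrary subcylinder $Q' = B' \times (t_1',t_2') \Subset \Omega_T$ with $B'$ contained in a single connected component of $\Omega$; thus I may assume $\Omega$ is connected. By Lemma~\ref{lem:alternatives}, for each $t\in(0,T)$ the function $u$ is either strictly positive on the whole slice $\Omega\times\{t\}$ or vanishes identically on it, and by Corollary~\ref{c.positivity-intervals} the positivity set has the form $\Omega\times\Lambda_+$ with $\Lambda_+=\bigcup_i\Lambda_i$ a countable disjoint union of open intervals $\Lambda_i=(a_i,b_i)\subset(0,T)$. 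On the complement $\Omega\times((0,T)\setminus\Lambda_+)$ the function $u$ vanishes a.e., hence equals the (trivially supersolution) zero function there.

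First I would establish the integral inequality on each positivity interval. Fix $i$ and an interval $[\tau_1,\tau_2]\Subset\Lambda_i$. On $\Omega\times(\tau_1,\tau_2)$ the function $u$ is a \emph{strictly positive}, locally bounded supercaloric function, so Lemma~\ref{l.bdd-positive-supercal-is-supersol} applies and gives that $u$ is a weak supersolution there; in particular $u^m\in L^2_{\loc}(\tau_1,\tau_2;H^1_{\loc}(\Omega))$ with the gradient estimates from Lemma~\ref{l.bounded_caccioppoli} and, near the right endpoint $b_i$, the sharpened Caccioppoli estimate of Lemma~\ref{l.bdd_caccioppoli_positivityset} (using that $u(\cdot,b_i)\equiv 0$ when $b_i<T$). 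The key point is that these Caccioppoli bounds are uniform up to the endpoints of $\Lambda_i$: for a nonnegative $\eta\in C_0^\infty(B)$ with $B\Subset\Omega$ and $M=\|u\|_{L^\infty(\spt\eta\times(a_i,b_i))}$, Lemma~\ref{l.bdd_caccioppoli_positivityset} gives
$$
\int_{a_i}^{b_i}\int_\Omega \eta^2|\nabla u^m|^2\,\d x\d t \le 4M^{2m}(b_i-a_i)\int_\Omega|\nabla\eta|^2\,\d x + 2M^{m+1}\int_\Omega\eta^2\,\d x,
$$
so summing over $i$ (the intervals are disjoint in $(0,T)$) shows $\eta u^m\in L^2(0,T;H^1_0)$, hence $u^m\in L^2_{\loc}(0,T;H^1_{\loc}(\Omega))$ globally; the time-integrability $u^{1/m}\in L^{1/m}_{\loc}$ is immediate from local boundedness. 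This handles the regularity requirement in Definition~\ref{d.weak_sol}.

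It remains to verify the weak supersolution inequality $\iint_{\Omega_T}(-u\,\partial_t\ph+\nabla u^m\cdot\nabla\ph)\,\d x\d t\ge 0$ for every nonnegative $\ph\in C_0^\infty(\Omega_T)$. Since $\nabla u^m=0$ a.e. on the set where $u$ vanishes, the spatial term only sees the intervals $\Lambda_i$. The subtle point—and I expect this to be the main obstacle—is the temporal term at the endpoints $a_i,b_i$: one cannot simply integrate by parts interval-by-interval without controlling boundary contributions $\int_\Omega u(\cdot,a_i)\ph(\cdot,a_i)$ and $\int_\Omega u(\cdot,b_i)\ph(\cdot,b_i)$. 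At a right endpoint $b_i<T$, Lemma~\ref{l.endpoint_vanish} gives $\int_K u(x,t)\,\d x\to 0$ as $t\uparrow b_i$ for $K\Subset\Omega$, so that contribution vanishes; the same reasoning applies at a left endpoint $a_i>0$ using lower semicontinuity together with $u(\cdot,a_i)\equiv 0$ and an analogous $L^1$-vanishing (or by noting $u$ extended by $0$ into the past is supercaloric by Lemma~\ref{l.zero-past-extension}, reducing $a_i$ to the role of an interior vanishing slice). Thus one writes $\iint_{\Omega_T}=\sum_i\iint_{\Omega\times\Lambda_i}$, uses the weak supersolution property on each $\Lambda_i$ after a mollification-in-time argument (as in the proof of Lemma~\ref{l.bdd_caccioppoli_positivityset}) to absorb the endpoint terms with the correct favourable sign, and concludes nonnegativity of the total. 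Since $\ph$ was an arbitrary nonnegative test function, $u$ is a weak supersolution in $\Omega_T$, completing the proof.
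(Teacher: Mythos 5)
Your overall strategy coincides with the paper's: reduce to a connected component, use Lemma~\ref{lem:alternatives} and Corollary~\ref{c.positivity-intervals} to write the positivity set as $\Omega\times\bigcup_i\Lambda_i$, invoke Lemma~\ref{l.bdd-positive-supercal-is-supersol} on the positivity set, obtain $u^m\in L^2_{\loc}(0,T;H^1_{\loc}(\Omega))$ by summing the Caccioppoli estimate of Lemma~\ref{l.bdd_caccioppoli_positivityset} over the intervals, and then verify the integral inequality interval by interval, with Lemma~\ref{l.endpoint_vanish} handling right endpoints. However, your treatment of the \emph{left} endpoints $a_i$ contains a genuine error: you claim an ``analogous $L^1$-vanishing'' as $t\downarrow a_i$, but Lemma~\ref{l.endpoint_vanish} is strictly one-sided (it concerns the limit $t\uparrow t_o$), and no such vanishing holds from the right. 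For instance, $u=\chi_{\{t>a_i\}}$ is a bounded supercaloric function in $\Omega_T$ (by the same comparison argument as in Lemma~\ref{l.zero-past-extension}) with $u(\cdot,a_i)\equiv 0$, yet $\int_K u(x,t)\,\d x=|K|$ for every $t>a_i$; likewise the zero extension of a truncated Barenblatt solution shows that mass can appear instantaneously at a left endpoint. The parenthetical alternative (extending by zero into the past) does not remove the boundary term either. The step is in fact unnecessary: testing the supersolution inequality on $\Omega\times(\tau_1,\tau_2)$ with $[\tau_1,\tau_2]\subset\Lambda_i$ produces the left boundary contribution $+\int_\Omega u\varphi(\cdot,\tau_1)\,\d x\ge 0$, which has the favorable sign and is simply discarded; only the right endpoint needs Lemma~\ref{l.endpoint_vanish}, and when $t_{i,2}=T$ nothing is needed since $\varphi$ has compact support in $\Omega_T$. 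This is exactly how the paper argues; your closing phrase about ``the correct favourable sign'' gestures at it, but as written your proof rests on a false vanishing statement.

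A second, smaller slip: you sum the Caccioppoli bound \emph{including} the term $2M^{m+1}\int_\Omega\eta^2\,\d x$ over the intervals $\Lambda_i$. If there are infinitely many positivity intervals, this sum diverges, since the $\eta^2$-term is repeated once per interval. The fix is already implicit in your own remark that $u(\cdot,b_i)\equiv 0$ whenever $b_i<T$: on all such intervals one must use the first estimate of Lemma~\ref{l.bdd_caccioppoli_positivityset}, which contains only the factor $(b_i-a_i)\int_\Omega|\nabla\eta|^2\,\d x$ and hence sums to a finite quantity because the intervals are disjoint; the term $2M^{m+1}\int_\Omega\eta^2\,\d x$ appears at most once, namely for the single interval whose right endpoint reaches the top of the time slab under consideration. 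With these two corrections your argument becomes the paper's proof.
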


\begin{proof}

Write $\Omega$ as a union of its connected components, i.e., $\Omega = \bigcup_{j \in \N} \Omega^j$, in which each $\Omega^j$ is open and connected. By Corollary~\ref{c.positivity-intervals} we may decompose the positivity set
$$
  \Lambda_+^j:=\big\{t\in(0,T)\colon u\mbox{\ is positive on }\Omega^j\times\{t\}\big\}
$$
into at most countably many disjoint open intervals $\Lambda_+^j =
\bigcup_{i \in I_j} \Lambda_i^j$, where
$\Lambda_i^j=(t_{i,1}^j,t_{i,2}^j)$.

  On each of the sets $\Omega^j\times\Lambda_i^j$, Lemma~\ref{l.bdd-positive-supercal-is-supersol} implies that $u$ is
  a weak supersolution to~\eqref{evo_eqn}, i.e., $u^m \in L^2_{\loc}(\Lambda_i^j; H^{1}_{\loc}(\Omega^j))$ and
  \begin{equation} \label{e.supersolution-Lambda-i-orig}
    \int_{\Lambda_i^j}\int_{\Omega^j} (-u\partial_t\varphi+\nabla
    u^m\cdot\nabla \varphi)\, \d x\d t\geq0
  \end{equation}
  for all non-negative test functions $\varphi\in
  C^\infty_0(\Omega^j\times\Lambda_i^j)$.

  First we show that $u^m \in L^2_{\loc}(0,T; H^1_{\loc}(\Omega))$. To
  this end, let $K \subset \Omega$ be compact and $(s_1,s_2) \Subset
  (0,T)$. Choose an open set $K'$ such that $K \subset K' \Subset \Omega$ and a cutoff function $\eta \in C^\infty_0(K')$ such that $\eta \equiv 1$ in $K$ and $|\nabla \eta| \leq c \text{ dist} (K,\partial K')^{-1}$ with a numerical constant $c > 0$. Denote $K^j := K \cap \Omega^j$, which is compact since $K_j=K\setminus(\bigcup_{i\neq j}\Omega^i)$ is closed.
  
   For each $\Lambda_i^j \Subset (0,T)$, Lemma~\ref{l.bdd_caccioppoli_positivityset} implies that $u^m \in L^2(\Lambda_i^j;H^1_{\loc}(\Omega^j))$. Denote
$$
I'_j := \{i\in I_j : \Lambda_i^j \cap (s_1,s_2)\neq\varnothing\}.
$$
Observe that for every $t \in (0,T)\setminus \Lambda_+^j$ we have
$u(\cdot,t)\equiv0 $ and $\nabla u^m(\cdot,t) \equiv 0$ on
$\Omega^j$.
By applying Lemma~\ref{l.bdd_caccioppoli_positivityset} on the sets 
$\Omega^j\times(\Lambda_i^j \cap (s_1,s_2))$, we obtain
  \begin{align*}
  \int_{s_1}^{s_2} &\int_{K^j} |\nabla u^m|^2 \, \d x \d t \\
  &\leq
  \int_{s_1}^{s_2} \int_{\Omega^j} \eta^2 |\nabla u^m|^2 \, \d x \d t \\
  &=
  \sum_{i\in I'_j} \int_{\Lambda_i^j \cap (s_1,s_2)} \int_{\Omega^j} \eta^2 |\nabla u^m|^2 \, \d x \d t \\
   &\leq 
   4 M^{2m}  \int_{\Omega^j} |\nabla \eta|^2 \, \d x
   \sum_{i \in I'_j} (t_{i,2}^j-t_{i,1}^j) + 2 M^{m+1} \int_{\Omega^j}
     \eta^2 \, \d x\\
   &\leq 4T M^{2m}
     \int_{\Omega^j} |\nabla \eta|^2 \, \d x
     + 2 M^{m+1} \int_{\Omega^j}  \eta^2 \, \d x
     <\infty
  \end{align*}
  for $M = \|u\|_{L^\infty(K' \times (s_1,s_2))}$, 
  where the last integral can be omitted in the case $s_2\not\in\Lambda_+^j$.
Since $\Omega^j$ and $K^j$ are disjoint and $\Omega = \bigcup_{j\in \N} \Omega^j$, $K = \bigcup_{j\in \N} K^j$, we can sum over $j \in \N$ and obtain
\begin{align*}
  \int_{s_1}^{s_2} &\int_{K} |\nabla u^m|^2 \, \d x \d t \leq 4T M^{2m} \int_{\Omega} |\nabla \eta|^2 \, \d x + 2 M^{m+1} \int_{\Omega}  \eta^2 \, \d x < \infty.
\end{align*}
Since $K$, $s_1$ and $s_2$ were arbitrary, this finally implies that $u^m \in L^2_{\loc}(0,T;H^1_{\loc}(\Omega))$. 

Then we show that the integral inequality~\eqref{e.supersolution-Lambda-i-orig} holds in $\Omega_T$ for all test functions $\varphi \in C^\infty_0(\Omega_T,\R_{\geq0})$. Observe that this implies $\varphi \in C^\infty_0(\Omega^j \times (0,T), \R_{\geq0})$ for every $j\in \N$. Fix $i \in I_j$. For such a test function a standard cut-off argument yields 
  \begin{align*}\label{supersolution-Lambda-i}
    \int_{\tau_1}^{\tau_2}\int_{\Omega^j} (-u\partial_t\varphi+\nabla
    u^m\cdot\nabla \varphi)\, \d x\d t 
    &\geq
        -
    \int_{\Omega^j\times\{\tau_2\}}u\varphi\, \d x
  \end{align*}
  for every $\tau_1 \in \Lambda_i^j$ and a.e. $\tau_2 \in \Lambda_i^j$ with $\tau_2 >
  \tau_1$.
  In the case $t_{i,2}^j<T$, the last term vanishes in the limit
   $\tau_2\uparrow t_{i,2}^j$ due to Lemma~\ref{l.endpoint_vanish}.
   If $t_{i,2}=T$, we only consider test functions that
   vanish in a neighborhood of $\Omega^j\times\{T\}$, so that we can
   omit the last integral also in this case. Since $\varphi$ vanishes also in a neighborhood of $\Omega^j\times\{0\}$, we may pass to the limit $\tau_1 \to t_{i,1}^j$ as well. Thus we get
   \begin{equation*}\label{supersolution-Lambda-i-2}
 \int_{t_{i,1}^j}^{t_{i,2}^j}\int_{\Omega^j} (-u\partial_t\varphi+\nabla
    u^m\cdot\nabla \varphi)\,\d x\d t
    \ge  0.
\end{equation*}
By recalling that $u(\cdot,t)\equiv0 $ and $\nabla u^m(\cdot,t) \equiv 0$ for every $t \in (0,T)\setminus \Lambda_+^j$, we obtain
  \begin{align*}
    \iint_{\Omega^j \times (0,T)}(-u\partial_t\varphi+\nabla u^m\cdot\nabla
    \varphi)\,\d x\d t
    &=
    \sum_{i\in I_j}\int_{\Lambda_i^j}\int_{\Omega^j} (-u\partial_t\varphi+\nabla u^m\cdot\nabla
    \varphi)\,\d x\d t \ge 0.
  \end{align*}
  By summing up over $j \in \N$ and using the fact that $\Omega^j$ are
  disjoint, we conclude the proof.

\end{proof}

We show that a supercaloric function is a weak supersolution also if it belongs to the appropriate energy space.

\begin{lem} \label{l.supercal-in-energyspace}
Let $0<m<1$. Let $u: \Omega_T \to [0,\infty]$ be a supercaloric function in $\Omega_T$ such that $u^m \in L^2_{\loc}(0,T; H^1_{\loc}(\Omega)) \cap L^\frac{1}{m}_{\loc} (\Omega_T)$. Then $u$ is a weak supersolution.
\end{lem}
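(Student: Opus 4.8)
The plan is to reduce to the already-treated bounded case by truncation and then pass to a monotone limit in the weak formulation, with the hypothesis $u^m\in L^2_{\loc}(0,T;H^1_{\loc}(\Omega))\cap L^\frac1m_{\loc}(\Omega_T)$ used precisely to justify the limit. First I would fix $k\in\N$ and set $u_k:=\min\{u,k\}$, and check that $u_k$ is a bounded supercaloric function. Lower semicontinuity and finiteness are immediate. For the comparison principle (iii) of Definition~\ref{d.supercal}, let $Q_{t_1,t_2}\Subset\Omega_T$ and let $h\in C(\overline{Q_{t_1,t_2}})$ be a weak solution with $h\le u_k$ on $\partial_p Q_{t_1,t_2}$; applying the comparison principle for the supercaloric function $u$ gives $h\le u$ in $Q_{t_1,t_2}$, while comparing $h$ with the constant weak solution $k$ via Lemma~\ref{l.supersubcal-cylinder-comparison} (here $h$ is subcaloric, being a continuous weak solution, and $k$ is supercaloric) gives $h\le k$ in $Q_{t_1,t_2}$; together these yield $h\le u_k$ in $Q_{t_1,t_2}$.

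Since $u_k$ is a locally bounded supercaloric function, Theorem~\ref{t.bdd_super} shows that $u_k$ is a weak supersolution, i.e.
\begin{equation}\label{e.energyspace-trunc}
\iint_{\Omega_T}\bigl(-u_k\,\partial_t\varphi+\nabla u_k^m\cdot\nabla\varphi\bigr)\,\d x\d t\ge0
\end{equation}
for all nonnegative $\varphi\in C^\infty_0(\Omega_T)$. I would then fix such a $\varphi$, write $K:=\spt\varphi$, and let $k\to\infty$ in \eqref{e.energyspace-trunc}. For the parabolic term I would note that $u^m\in L^\frac1m_{\loc}(\Omega_T)$ is equivalent to $u\in L^1_{\loc}(\Omega_T)$ and forces $u<\infty$ a.e.; since $0\le u_k\uparrow u$ pointwise and $|u_k\,\partial_t\varphi|\le u\,|\partial_t\varphi|\in L^1(K)$, dominated convergence gives $\iint u_k\,\partial_t\varphi\to\iint u\,\partial_t\varphi$. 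For the divergence term I would use $u_k^m=\min\{u^m,k^m\}$ together with the truncation rule for Sobolev functions, so that $\nabla u_k^m=\chi_{\{u^m<k^m\}}\nabla u^m$ a.e.; since $\nabla u^m\in L^2(K,\R^n)$, $|\nabla u_k^m|\le|\nabla u^m|$, and $\chi_{\{u^m<k^m\}}\to1$ a.e.\ (again using $u^m<\infty$ a.e.), dominated convergence gives $\nabla u_k^m\to\nabla u^m$ in $L^2(K,\R^n)$, hence $\iint\nabla u_k^m\cdot\nabla\varphi\to\iint\nabla u^m\cdot\nabla\varphi$.

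Passing to the limit in \eqref{e.energyspace-trunc} then yields $\iint_{\Omega_T}(-u\,\partial_t\varphi+\nabla u^m\cdot\nabla\varphi)\,\d x\d t\ge0$ for all nonnegative $\varphi\in C^\infty_0(\Omega_T)$, which together with the assumed regularity $u^m\in L^2_{\loc}(0,T;H^1_{\loc}(\Omega))\cap L^\frac1m_{\loc}(\Omega_T)$ is exactly Definition~\ref{d.weak_sol}, so $u$ is a weak supersolution. I expect no real obstacle: this is a routine truncation-plus-limit argument, and the only genuinely essential use of the hypothesis is in the two limit passages — without $u\in L^1_{\loc}$ and $\nabla u^m\in L^2_{\loc}$ there is a priori no way to make sense of the limit, and such unbounded supercaloric functions need not be weak supersolutions. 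The one slightly delicate point worth writing out carefully is the verification that the truncations $u_k$ are themselves supercaloric, so that Theorem~\ref{t.bdd_super} is indeed applicable.
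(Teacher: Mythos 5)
Your proposal is correct and follows essentially the same route as the paper: truncate $u_k=\min\{u,k\}$, invoke Theorem~\ref{t.bdd_super} for the bounded truncations, and pass to the limit in both the parabolic and the divergence terms by dominated convergence, using $u\in L^1_{\loc}(\Omega_T)$ and $|\nabla u^m|\in L^2_{\loc}(\Omega_T)$. The only difference is that you spell out the (standard) verification that the truncations are themselves supercaloric, which the paper uses without comment.
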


\begin{proof}
By Theorem~\ref{t.bdd_super}, the truncation $u_k = \min \{u,k\}$ is a weak supersolution for every $k = 1,2,...$, $u_k(x,t) \leq u_{k+1}(x,t)$ and $\lim_{k\to \infty} u_k (x,t) = u(x,t)$ for every $(x,t) \in \Omega_T$. This implies that 
$$
\lim_{k\to \infty} - \iint_{\Omega_T} \partial_t \varphi\, u_k \, \d x \d t = - \iint_{\Omega_T} \partial_t \varphi \,u \, \d x \d t
$$
for every $\varphi \in C^{\infty}_0(\Omega_T,\R_{\geq0})$ by the dominated convergence theorem and the fact that $u \in L^1_{\loc}(\Omega_T)$.

There also holds $\lim_{k\to \infty} \nabla u_k^m(x,t) = \nabla u^m(x,t)$ for a.e. $(x,t) \in \Omega_T$, $|\nabla u_k^m| \leq |\nabla u^m|$ for every $k = 1,2,...$ and $|\nabla u^m | \in L^2_{\loc}(\Omega_T)$. Again, by dominated convergence theorem we can conclude that 
$$
\lim_{k\to\infty} \iint_{\Omega_T} \nabla u_k^m \cdot \nabla \varphi \, \d x \d t = \iint_{\Omega_T} \nabla u^m \cdot \nabla \varphi \, \d x \d t
$$
for every $\varphi \in C_0^\infty(\Omega_T,\R_{\geq0})$, which concludes the proof.
\end{proof}

\section{Barenblatt solutions} \label{sec:barenblatt}

In the case $\frac{n-2}{n} < m < 1$, the Barenblatt solution can be written as
\begin{equation*} 
\mathcal{B}(x,t) = \left( Ct \right)^\frac{1}{1-m} \left(  A t^\frac{2}{\lambda} + |x|^2  \right)^{- \frac{1}{1-m}} \quad \text{ for } (x,t)\in \R^n \times (0,\infty),
\end{equation*}
in which $\lambda = n(m-1)  +2$, $C = 2m\lambda/ (1-m)$ and $A >
0$. The Barenblatt solution is a continuous weak solution in $\R^n \times
(0,\infty)$. However, we may define a function $u$ in the whole space as 
\begin{equation} \label{e.barenblatt2}
u(x,t) =
\begin{cases} 
      \mathcal{B}(x,t), & t > 0, \\
      0, & t\leq 0,
   \end{cases}
\end{equation}
which is not even a weak supersolution in $\R^n\times \R$. That is because the integrability assumption for the gradient fails in any neighbourhood of the origin, i.e. $| \nabla u^m | \notin L^2_{\loc}(\R^n\times \R)$.
However, $u$ is a supercaloric function in the whole space $\R^n
\times \R$. This is due to Lemma~\ref{l.zero-past-extension}, since
$\mathcal{B}$ is a supercaloric function as a continuous weak solution in the upper half-space by Lemma~\ref{l.weasuper-is-supercal}.

The Barenblatt solution is the leading example of a supercaloric function in Barenblatt class that on the other hand is not a weak supersolution.

The Barenblatt solution defined in~\eqref{e.barenblatt2} satisfies 
$$
\partial_t u  - \Delta u^m = M \delta\quad \text{ in } \R^n \times \R
$$
in the weak sense, where $\delta$ is Dirac's delta at the origin and
$M> 0$ represents the mass at the origin ($A$ is a decreasing function of $M$). Furthermore,
$$
\int_{t_1}^{t_2}\int_{B(0,r)} u^{m+ \frac{2}{n}} \, \d x \d t = \infty,
$$
and
$$
\int_{t_1}^{t_2}\int_{B(0,r)} |\nabla u^m|^{1+ \frac{1}{1+mn}} \, \d x \d t = \infty,
$$
for every $t_1 \leq 0$, $t_2 > 0$ and $r > 0$.
Later on, this will show that the integrability exponents obtained in Lemmas~\ref{l.integrability_fun} and~\ref{l.integrability_grad} are sharp.

We interpret
\begin{equation} \label{e.veryweak-gradient}
\nabla u^m  = \lim_{k\to \infty} \nabla \min\{u,k\}^m
\end{equation}
for a supercaloric function $u$. The weak gradient of the truncation is well defined for each $k \in \N$, since $\min \{u,k\}^m \in L^2_{\loc}(0,T;H^1_{\loc}(\Omega))$ by Theorem~\ref{t.bdd_super}. If the gradient defined in~\eqref{e.veryweak-gradient} is a locally integrable function (together with $u^m$), then it is the weak gradient of $u^m$ in the standard sense. Observe that $\nabla u^m = 0$ a.e. in $\{u = \infty\}$, since $\nabla \min\{u,k\}^m = 0$ a.e. in $\{u = \infty\}$ for every $k\in \N$.

We will make use of the following Caccioppoli inequality. For the case $m>1$ see also~\cite[Lemma 2.4]{Pekka_harnack}.

\begin{lem} \label{l.caccioppoli}
Let $0<m<1$. Suppose that $u \geq 0$ is a supercaloric function in $\Omega_T$ and let $\ph \in C_0^\infty(\Omega_T, \R_{\geq 0})$. Then there exist numerical constants $c_1,c_2 > 0$ such that
\begin{align*}
\iint_{\Omega_T}& u^{-m-\eps} |\nabla u^m|^2 \ph^2  \, \d x \d t + \esssup_{t\in (0,T)} \int_\Omega u^{1-\eps} \ph^2 \, \d x \\
&\leq \frac{c_1}{\eps^2} \iint_{\Omega_T} u^{m-\eps} |\nabla \ph|^2 \, \d x \d t + \frac{c_2}{\eps(1-\eps)} \iint_{\Omega_T} u^{1-\eps} |\partial_t (\ph^2)| \, \d x \d t
\end{align*}
holds for every $\eps \in (0,m)$.

\end{lem}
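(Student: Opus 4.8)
The plan is to reduce to bounded supersolutions by truncation and then prove the estimate by testing the weak formulation with a regularised form of the formal test function $\ph^2u^{-\eps}$. For the reduction, recall that $|\nabla u^m|$ denotes $\lim_k\nabla\min\{u,k\}^m$ (cf.~\eqref{e.veryweak-gradient}), that $u_k:=\min\{u,k\}$ is a bounded supercaloric function, hence a weak supersolution by Theorem~\ref{t.bdd_super}, and that $\nabla u_k^m=\chi_{\{u<k\}}\nabla u^m$ a.e., so that $\ph^2u_k^{-m-\eps}|\nabla u_k^m|^2=\chi_{\{u<k\}}\ph^2u^{-m-\eps}|\nabla u^m|^2$ increases pointwise to $\ph^2u^{-m-\eps}|\nabla u^m|^2$ (this integrand being interpreted as $0$ where $u\in\{0,\infty\}$, using \eqref{e.veryweak-gradient} and Lemma~\ref{lem:alternatives}). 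Since $1-\eps>0$ and $m-\eps>0$, every term of the asserted inequality written for $u_k$ converges monotonically to the corresponding term for $u$ as $k\to\infty$. Hence it suffices to prove the inequality for a fixed bounded weak supersolution $v$.

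For $v$, fix $\ph\in C_0^\infty(\Omega_T,\R_{\ge0})$, $\sigma\in(0,1)$ and $\tau\in(0,T)$, let $\alpha_\ell$ be a piecewise affine approximation of $\chi_{(0,\tau)}$ (this is what produces the essential supremum), and insert $\psi=\ph^2\alpha_\ell\max\{v,\sigma\}^{-\eps}$, with $v$ replaced by $\mollifytime{v}{h}$, into the mollified weak formulation of $v$, exactly as in the proof of Lemma~\ref{l.bdd_caccioppoli_positivityset}; the extra nonnegative mollification term is discarded. With $\Psi_\sigma(s):=\int_0^s\max\{r,\sigma\}^{-\eps}\,\d r$, so that $0\le\Psi_\sigma(s)\le\tfrac{s^{1-\eps}}{1-\eps}$ and $\Psi_\sigma(s)\uparrow\tfrac{s^{1-\eps}}{1-\eps}$ as $\sigma\downarrow0$, the chain rule $\max\{\mollifytime{v}{h},\sigma\}^{-\eps}\partial_t\mollifytime{v}{h}=\partial_t\Psi_\sigma(\mollifytime{v}{h})$ together with an integration by parts in time, followed by $h\to0$ and $\ell\to\infty$, turns the parabolic part into $\int_\Omega\ph^2\Psi_\sigma(v)(\cdot,\tau)\,\d x$ plus an error bounded by $\tfrac1{1-\eps}\iint_{\Omega_T}|\partial_t(\ph^2)|v^{1-\eps}$. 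Using $\nabla v^m\cdot\nabla v=\tfrac1m v^{1-m}|\nabla v^m|^2$ a.e. (valid since $s\mapsto s^{1/m}$ is Lipschitz on $[0,\|v\|_\infty]$) and $\nabla\max\{v,\sigma\}^{-\eps}=-\eps\chi_{\{v>\sigma\}}v^{-\eps-1}\nabla v$, the divergence part contributes the good term $\tfrac\eps m\iint\chi_{\{v>\sigma\}}\ph^2v^{-m-\eps}|\nabla v^m|^2$, which we keep on the left, and a cross term $2\iint\ph\max\{v,\sigma\}^{-\eps}\nabla\ph\cdot\nabla v^m$.

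The cross term I split into the regions $\{v>\sigma\}$ and $\{0<v\le\sigma\}$ (on $\{v=0\}$ one has $\nabla v^m=0$). On $\{v>\sigma\}$, where $\max\{v,\sigma\}^{-\eps}=v^{-\eps}$, Young's inequality dominates it by $\tfrac\eps{2m}\iint\ph^2v^{-m-\eps}|\nabla v^m|^2+\tfrac{2m}\eps\iint|\nabla\ph|^2v^{m-\eps}$, and the first term is absorbed on the left. On $\{0<v\le\sigma\}$, where $\max\{v,\sigma\}^{-\eps}=\sigma^{-\eps}$ and $\chi_{\{0<v\le\sigma\}}\nabla v^m=\nabla\min\{v,\sigma\}^m$ a.e., Cauchy--Schwarz and the bounded Caccioppoli inequality Lemma~\ref{l.bounded_caccioppoli}, applied to the supersolution $\min\{v,\sigma\}$ (weak supersolution by Lemma~\ref{l.supersol_truncation}, of sup-norm $\le\sigma$) with a spatial cutoff equal to $1$ on the spatial projection of $\spt\ph$, give $\iint_{\spt\ph}\ph^2|\nabla\min\{v,\sigma\}^m|^2\le C(\ph,T)\sigma^{2m}$ for $\sigma\le1$ (here $\sigma^{2m}$ dominates $\sigma^{m+1}$ because $m<1$); hence this contribution is at most $C(\ph,T)\sigma^{m-\eps}$, which tends to $0$ as $\sigma\downarrow0$ because $\eps<m$. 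Collecting the estimates and using nonnegativity of the right‑hand integrands, we arrive, for a.e.\ $\tau\in(0,T)$, at
\begin{align*}
\int_\Omega\ph^2\Psi_\sigma(v)(\cdot,\tau)\,\d x &+\tfrac\eps{2m}\int_0^\tau\!\!\int_\Omega\chi_{\{v>\sigma\}}\ph^2v^{-m-\eps}|\nabla v^m|^2\,\d x\d t\\
&\le\tfrac1{1-\eps}\iint_{\Omega_T}|\partial_t(\ph^2)|v^{1-\eps}\,\d x\d t+\tfrac{2m}\eps\iint_{\Omega_T}|\nabla\ph|^2v^{m-\eps}\,\d x\d t+C(\ph,T)\sigma^{m-\eps}.
\end{align*}
Discarding in turn the second term on the left (letting $\tau\uparrow T$) and the first term on the left (taking $\esssup_\tau$), adding the two resulting inequalities, and then letting $\sigma\downarrow0$ by monotone convergence ($\chi_{\{v>\sigma\}}\uparrow\chi_{\{v>0\}}$, $\Psi_\sigma(v)\uparrow\tfrac{v^{1-\eps}}{1-\eps}$, $\sigma^{m-\eps}\to0$), then multiplying by $\tfrac{2m}\eps$ and noting $\tfrac{2m}{\eps(1-\eps)}\ge1$ for $\eps<m$, yields the claim for $v$ with $c_1=8m^2$ and $c_2=4m$ (and thus, since $m<1$, with the numerical choice $c_1=8$, $c_2=4$); the reduction of the first paragraph then completes the proof.

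The main obstacle is precisely the fast‑diffusion phenomenon that the weight $u^{-m-\eps}$ is genuinely singular on the (possibly positive‑measure) set $\{u=0\}$, which forces the regularisation of the test function and then the control of the error it produces on $\{0<v\le\sigma\}$. That error carries a factor $\sigma^{-2\eps}$ from the regularisation, but, after rewriting it through $\nabla\min\{v,\sigma\}^m$ and applying the bounded Caccioppoli inequality with the small sup‑bound $M=\sigma$, it also carries a factor $\sigma^{2m}$, and $\sigma^{2m-2\eps}\to0$ exactly because $\eps<m$ — which explains the hypothesis $\eps\in(0,m)$.
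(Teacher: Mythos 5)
Your overall strategy (truncate to reduce to bounded weak supersolutions; regularise the singular weight by $\max\{v,\sigma\}^{-\eps}$ on all of $\Omega_T$ instead of restricting to positivity intervals; control the error on $\{0<v\le\sigma\}$ through Lemma~\ref{l.bounded_caccioppoli} applied to $\min\{v,\sigma\}$ with $M=\sigma$, giving the factor $\sigma^{m-\eps}\to0$) is viable and genuinely different from the paper's proof, which works on the intervals from Corollary~\ref{c.positivity-intervals}, where $u\ge\delta>0$ on $\spt\ph$, and disposes of a final-time boundary term via Lemma~\ref{l.endpoint_vanish}. However, there is a genuine error in your derivation of the $\esssup$ term: with the time cutoff $\alpha_\ell\approx\chi_{(0,\tau)}$, integrating the parabolic part by parts produces the slice term at $\tau$ with a \emph{plus} sign on the same side as the supersolution inequality, so that (with $\Psi_\sigma(s)=\int_0^s\max\{r,\sigma\}^{-\eps}\,\d r$) what actually follows is
\begin{align*}
\frac{\eps}{m}\int_0^{\tau}\!\!\int_\Omega\chi_{\{v>\sigma\}}\ph^2v^{-m-\eps}|\nabla v^m|^2\,\d x\d t
&\le \int_\Omega\ph^2\Psi_\sigma(v)(\cdot,\tau)\,\d x
+\frac{1}{1-\eps}\iint_{\Omega_T}|\partial_t(\ph^2)|\,v^{1-\eps}\,\d x\d t\\
&\quad+2\int_0^{\tau}\!\!\int_\Omega\ph\,\max\{v,\sigma\}^{-\eps}\,\nabla\ph\cdot\nabla v^m\,\d x\d t.
\end{align*}
The term $\int_\Omega\ph^2\Psi_\sigma(v)(\cdot,\tau)\,\d x$ therefore lands on the right-hand side and cannot be moved to the left; your displayed intermediate inequality does not follow from the argument given. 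For a supersolution it is the slice at the \emph{initial} time of the time slab that can be kept on the good side — this is precisely why, in the paper's proof, the supremum is taken over $\tau_1$ while the term at $\tau_2$ has to be made to vanish by letting $\tau_2\uparrow t_{i,2}$ and invoking Lemma~\ref{l.endpoint_vanish}.

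The gap is repairable inside your scheme: take $\alpha_\ell\approx\chi_{(\tau,T)}$ instead, switching \emph{on} at $\tau$. Then the integration by parts yields $-\int_\Omega\ph^2\Psi_\sigma(\cdot,\tau)\,\d x$ in the parabolic part, there is no boundary term at the other end because $\ph$ vanishes near $t=T$, and rearranging puts $\int_\Omega\ph^2\Psi_\sigma(v)(\cdot,\tau)\,\d x$ together with the gradient term over $(\tau,T)$ on the left, bounded by your two right-hand side integrals plus the $C(\ph,T)\sigma^{m-\eps}$ error. Taking the essential supremum over $\tau$, adding the analogous estimate with $\alpha\equiv1$ (or $\tau\downarrow0$) for the gradient term over all of $(0,T)$, and letting $\sigma\downarrow0$ then gives the lemma for $v$, and your truncation reduction is fine. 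With this correction your route avoids the positivity-set decomposition and Lemma~\ref{l.endpoint_vanish} altogether, which is an attractive alternative to the paper's argument; but as written, the $\esssup$ half of the statement is not established.
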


\begin{rem} \label{rem:caccioppoli-grad-interpretatation}
  In points with $u=0$, we interpret the first integrand on the
  left-hand side as zero. This is reasonable since formally for $\eps\in(0,m)$, we have
  \begin{equation*}
    u^{-m-\eps} |\nabla u^m|^2
    =
    \tfrac{4m^2}{(m-\eps)^2}\big|\nabla u^{\frac{m-\eps}{2}}\big|^2,
  \end{equation*}
  and $\frac{m-\eps}{2}>0$. 
\end{rem}

\begin{rem}
The result in Lemma~\ref{l.caccioppoli} holds also if $u$ is a weak supersolution by Theorem~\ref{t.super_lsc} and Lemma~\ref{l.weasuper-is-supercal}.
\end{rem}

\begin{proof}

We again notice that $\Omega = \bigcup_{j\in \N} \Omega^j$, where each $\Omega^j$ is open and connected. First, we consider an arbitrary connected component $\Omega^j$, but denote it by $\Omega$ for simplicity. 
By Corollary~\ref{c.positivity-intervals} we may decompose the positivity set
$$
  \Lambda_+:=\big\{t\in(0,T)\colon u\mbox{\ is positive on }\Omega\times\{t\}\big\}
$$
into at most countably many disjoint open intervals $\Lambda_+ = \bigcup_{i \in I} \Lambda_i$. 

Let $\tau_1, \tau_2 \in \Lambda_i=:(t_{i,1},t_{i,2})$ for some $i\in I$. We consider truncations $u_k = \min \{u,k\}$, $k = 1,2,...$, which
are supercaloric functions with the same positivity set as $u$. For simplicity we denote $u_k$ by $u$. By Lemma~\ref{l.bdd-positive-supercal-is-supersol}, $u$ satisfies the mollified weak formulation
$$
\int_{\tau_1}^{\tau_2} \int_\Omega \varphi \partial_t [u]_h + [\nabla u^m]_h \cdot \nabla \varphi  \, \d x \d t \geq 0
$$
for any nonnegative $\varphi \in
C_0^\infty(\Omega\times(\tau_1,\tau_2))$.
By a standard approximation argument, the same holds more generally for test
functions $\varphi\in C^\infty_0(\Omega_T)$. 
Here
$[\cdot]_h$ denotes the standard mollification in time,
and we consider $h < \frac12 \text{dist}(\partial \Lambda_i,
(\tau_1,\tau_2))$. Observe that in $(\Omega \times
(\tau_1-h,\tau_2+h) ) \cap \spt(\varphi)$
we have $0< \delta \leq u
\leq k < \infty$ for some $\delta>0$.
We test the mollified
formulation with $[u]_h^{-\eps} \varphi^2\in L^2(\tau_1,\tau_2;H^1_0(\Omega))$. From the parabolic part we obtain
\begin{align*}
 \int_{\tau_1}^{\tau_2}& \int_\Omega \varphi^2 [u]_h^{-\eps} \partial_t [u]_h \, \d x \d t \\
&= - \frac{1}{1-\eps} \int_{\tau_1}^{\tau_2} \int_\Omega [u]_h^{1-\eps}\partial_t( \varphi^2) \, \d x \d t +  \frac{1}{1-\eps} \int_\Omega ([u]_h^{1-\eps} \varphi^2) (\cdot,\tau_2) \, \d x \\
&\qquad -  \frac{1}{1-\eps} \int_\Omega ([u]_h^{1-\eps} \varphi^2) (\cdot,\tau_1) \, \d x \\
&\to- \frac{1}{1-\eps} \int_{\tau_1}^{\tau_2} \int_\Omega u^{1-\eps} \partial_t (\varphi^2) \, \d x \d t +  \frac{1}{1-\eps} \int_\Omega (u^{1-\eps} \varphi^2) (\cdot,\tau_2) \, \d x \\
&\qquad -  \frac{1}{1-\eps} \int_\Omega (u^{1-\eps} \varphi^2) (\cdot,\tau_1) \, \d x
\end{align*}
as $h \to 0$, for a.e. $\tau_1<\tau_2$ in $\Lambda_i$. Observe also
that the second term on the right hand side converges to $0$ when
$\tau_2 \to t_{i,2}$.

For the gradient we have
$$
\nabla ([u]_h^{-\eps} \varphi^2) = 2 \varphi [u]_h^{-\eps} \nabla \varphi - \frac{\eps}{m} \varphi^2 [u]_h^{-\eps - 1} [u^{1-m} \nabla u^m]_h.
$$
Observe that since $0<\delta \leq u \leq k$, we also have $\delta \leq
[u]_h \leq k$. Now each mollified term above converges
pointwise a.e.~when $h \to 0$. In particular, the last term is
majorized by
$$
\varphi^2 [u]_h^{-\eps - 1} |[u^{1-m} \nabla u^m]_h| |[\nabla u^m]_h| \leq \delta^{-\eps-1} k^{1-m} \chi_{\spt(\varphi)} \| \varphi \|_\infty^2 \big[|\nabla u^m|\big]_h^2,
$$
and for the integral of the majorant, we have the convergence
\begin{align*}
\lim_{h\to 0} &\int_{\tau_1}^{\tau_2} \int_\Omega \delta^{-\eps-1} k^{1-m} \chi_{\spt(\varphi)}\| \varphi \|_\infty^2 \big[|\nabla u^m|\big]_h^2 \, \d x \d t \\
&= \int_{\tau_1}^{\tau_2} \int_\Omega \delta^{-\eps-1} k^{1-m} \chi_{\spt(\varphi)}\| \varphi \|_\infty^2 |\nabla u^m|^2 \, \d x \d t<\infty,
\end{align*}
since $[\nabla u^m]_h \to \nabla u^m$ in $L^2_{\loc}(\Omega_T)$ when $h \to 0$.
Thus, we can use a variant of the dominated convergence theorem~\cite[Theorem 4, Chapter 1.3]{EG} to conclude
$$
\lim_{h\to 0} \int_{\tau_1}^{\tau_2} \int_\Omega \varphi^2 [u]_h^{-\eps - 1} [\nabla u^m]_h \cdot [u^{1-m} \nabla u^m]_h  \, \d x \d t =  \int_{\tau_1}^{\tau_2} \int_\Omega \varphi^2 u^{-m-\eps} |\nabla u^m|^2 \, \d x \d t.
$$
We can argue similarly with the other term in the divergence part, which implies 
\begin{align*}
\int_{\tau_1}^{\tau_2} \int_\Omega [\nabla u^m]_h \cdot \nabla([u]_h^{-\eps} \varphi^2)  \, \d x \d t &\to 2 \int_{\tau_1}^{\tau_2} \int_\Omega \varphi u^{-\eps} \nabla u^m \cdot \nabla \varphi \, \d x \d t \\
&\phantom{+}- \frac{\eps}{m} \int_{\tau_1}^{\tau_2} \int_\Omega \varphi^2 u^{-m-\eps} |\nabla u^m|^2 \, \d x \d t
\end{align*}
when $h \to 0$. By Young's inequality we have 
\begin{align*}
2 \varphi u^{-\eps}\nabla u^m \cdot \nabla \varphi \leq \frac{\eps}{2m} \varphi^2 u^{-m-\eps} |\nabla u^m|^2 + \frac{2m}{\eps} u^{m- \eps} |\nabla \varphi|^2.
\end{align*}
By combining the results we obtain
\begin{align*}
\frac{\eps}{2m} &\int_{\tau_1}^{\tau_2} \int_\Omega \varphi^2 u^{-m-\eps} |\nabla u^m|^2  \, \d x \d t + \frac{1}{1-\eps} \int_\Omega (u^{1-\eps} \varphi^2) (\cdot,\tau_1) \, \d x \\
&\leq \frac{2m}{\eps} \int_{\tau_1}^{\tau_2} \int_\Omega u^{m-\eps} |\nabla \varphi|^2 \, \d x \d t + \frac{1}{1-\eps} \int_{\tau_1}^{\tau_2} \int_\Omega u^{1-\eps} |\partial_t (\varphi^2)| \, \d x \d t \\
&\phantom{+} +  \frac{1}{1-\eps} \int_\Omega (u^{1-\eps} \varphi^2) (\cdot,\tau_2) \, \d x.
\end{align*}
Now we can pass to the limit $\tau_2 \to t_{i,2}$ so that the last
term vanishes due to Lemma~\ref{l.endpoint_vanish} if $t_{i,2} < T$ and also in the case $t= T$ since $\varphi$ vanishes in a neighborhood of $\Omega \times \{T\}$. On the right hand side we may integrate
over $\Omega \times \Lambda_i$. At this point,
we also pass to the limit $k \to \infty$ in the truncations. Using
Fatou's lemma for the first term on the left-hand side and the monotone convergence
theorem for the remaining terms, we obtain the inequality above for the original
function $u$. Observe that if the right hand side tends to infinity, the estimate clearly holds. Thus we may assume that the right hand side is finite. By considering separately the terms on the left-hand
side, in the first term we can pass to the limit $\tau_1 \to
t_{i,1}$. In the second term on the left-hand side, we take the
supremum over $\tau_1 \in \Lambda_i$. In this way, we arrive at the bound
\begin{align*}
\iint_{\Omega \times \Lambda_i} &\varphi^2 u^{-m-\eps} |\nabla u^m|^2  \, \d x \d t + \esssup_{t \in \Lambda_i} \int_\Omega (u^{1-\eps} \varphi^2) (\cdot,t) \, \d x \\
&\leq \frac{4m^2+2m\eps(1-\eps)}{\eps^2} \iint_{\Omega \times \Lambda_i} u^{m-\eps} |\nabla \varphi|^2 \, \d x \d t \\
&\phantom{+} + \frac{2m+\eps-\eps^2}{\eps (1-\eps)} \iint_{\Omega \times \Lambda_i} u^{1-\eps} |\partial_t (\varphi^2)| \, \d x \d t.
\end{align*}

Observe that in $\Omega_T \setminus \left( \bigcup_{i \in I} \Omega
  \times \Lambda_i \right)$ both sides are zero since in this set $u
\equiv 0$ and $\nabla u^m \equiv 0$, see also Remark~\ref{rem:caccioppoli-grad-interpretatation}.
By summing up over $i\in I$ we have
\begin{align*}
\iint_{\Omega_T} &u^{-m-\eps} |\nabla u^m|^2 \varphi^2 \, \d x \d t + \esssup_{t \in (0,T)} \int_\Omega u(x,t)^{1-\eps}\varphi(x,t)^2 \, \d x \\
&\leq \frac{c_1}{\eps^2} \iint_{\Omega_T} u^{m-\eps} |\nabla \varphi|^2 \, \d x \d t + \frac{c_2}{\eps(1-\eps)} \iint_{\Omega_T} u^{1-\eps} |\partial_t (\varphi^2)| \, \d x \d t,
\end{align*}
for numerical constants $c_1,c_2 > 0$. In the end, we may sum up over all connected components of $\Omega$, which concludes the proof. 

\end{proof}

We recall Sobolev's inequality, see~\cite{KuLiPa, DGV}.

\begin{lem}[Sobolev]\label{l.sobolev}
	Assume that $w\in L^p_{\loc}(0,T;W_{\loc}^{1,p}(\Omega))$ and
        $\varphi\in C^\infty_0(\Omega_T)$, and $r>0$. There exists a constant $c=c(n,p,r)$ such that the inequality
        \begin{equation}
	\iint_{\Omega_T}|\varphi w|^q \d x\d t\leq
        c^q\iint_{\Omega_T}|\nabla(\varphi w)|^p\d x \d
        t\left(\esssup_{0<t<T}\int_\Omega|\varphi w|^r \d
          x\right)^{\frac{p}{n}},\label{eq:sobolev}
      \end{equation}
is valid for $q=p+\frac{pr}{n}$.  
\end{lem}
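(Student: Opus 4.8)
The plan is to deduce \eqref{eq:sobolev} from the classical Sobolev inequality applied on time slices, followed by an interpolation of $L^s$-norms and Fubini's theorem. Write $v:=\varphi w$. Since $\varphi(\cdot,t)\in C^\infty_0(\Omega)$ and $w(\cdot,t)\in W^{1,p}_{\loc}(\Omega)$ for a.e.\ $t\in(0,T)$, the product rule shows that $v(\cdot,t)$ lies in $W^{1,p}(\R^n)$ and has compact support for a.e.\ $t$, hence $v(\cdot,t)\in W^{1,p}_0(\R^n)$; the maps $t\mapsto\|v(\cdot,t)\|_{L^s(\Omega)}$ and $t\mapsto\|\nabla v(\cdot,t)\|_{L^p(\Omega)}$ are measurable, so all the integrals below are meaningful.

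First I would record the spatial estimate. Assume $1\le p<n$ and set $p^\ast:=\tfrac{np}{n-p}$. For a.e.\ $t$, the Sobolev inequality on $\R^n$ gives
\[
  \Big(\int_\Omega|v(\cdot,t)|^{p^\ast}\,\d x\Big)^{\frac1{p^\ast}}
  \le c_0\Big(\int_\Omega|\nabla v(\cdot,t)|^{p}\,\d x\Big)^{\frac1p},
  \qquad c_0=c_0(n,p).
\]
Next I would fix $\theta:=\tfrac{n}{n+r}\in(0,1)$ and interpolate between $L^{p^\ast}(\Omega)$ and $L^{r}(\Omega)$ on each slice. The interpolation constraint $\tfrac1q=\tfrac{\theta}{p^\ast}+\tfrac{1-\theta}{r}$ is, after a short computation, equivalent to $q=p+\tfrac{pr}{n}$ --- exactly the exponent in the statement --- and moreover this $\theta$ satisfies $\theta q=p$ and $(1-\theta)\tfrac{q}{r}=\tfrac pn$. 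Hence, on each slice,
\[
  \int_\Omega|v(\cdot,t)|^{q}\,\d x
  \le\Big(\int_\Omega|v(\cdot,t)|^{p^\ast}\,\d x\Big)^{\frac{\theta q}{p^\ast}}
      \Big(\int_\Omega|v(\cdot,t)|^{r}\,\d x\Big)^{\frac pn}.
\]

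Finally, inserting the spatial estimate and using $\theta q=p$ bounds the first factor by $c_0^{\,p}\int_\Omega|\nabla v(\cdot,t)|^p\,\d x$, so that
\[
  \int_\Omega|v(\cdot,t)|^{q}\,\d x
  \le c_0^{\,p}\Big(\int_\Omega|\nabla v(\cdot,t)|^p\,\d x\Big)
      \Big(\esssup_{0<t<T}\int_\Omega|v|^{r}\,\d x\Big)^{\frac pn}.
\]
Integrating in $t$ over $(0,T)$ and absorbing $c_0^{\,p}$ into a constant $c=c(n,p,r)$ (recall that $q$ depends on $n,p,r$) yields \eqref{eq:sobolev}. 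The only delicate points are the exponent bookkeeping in the interpolation step --- verifying that the single choice $\theta=\tfrac n{n+r}$ simultaneously produces the correct $q$, together with $\theta q=p$ and $(1-\theta)q=\tfrac{pr}{n}$ --- and the borderline and supercritical ranges $p\ge n$, where $p^\ast$ is unavailable. In those ranges I would instead invoke the Gagliardo--Nirenberg inequality on each slice with parameter $a=\tfrac n{n+r}<1$ (using $W^{1,p}_0\hookrightarrow L^s$ for every $s<\infty$ when $p=n$, and $W^{1,p}_0\hookrightarrow L^\infty$ when $p>n$), obtaining the slicewise bound $\big(\int_\Omega|v(\cdot,t)|^q\,\d x\big)^{1/q}\le c\,\|\nabla v(\cdot,t)\|_{L^p(\Omega)}^{a}\,\|v(\cdot,t)\|_{L^r(\Omega)}^{1-a}$ directly, after which the time integration proceeds as above; this is the route taken in \cite{KuLiPa,DGV}.
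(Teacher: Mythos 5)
Your proof is correct and follows essentially the same route as the source the paper relies on: Lemma~\ref{l.sobolev} is not proved in the paper but recalled from \cite{KuLiPa,DGV}, where the argument is precisely this slicewise Sobolev/Gagliardo--Nirenberg bound combined with H\"older interpolation (your choice $\theta=\tfrac{n}{n+r}$ indeed gives $q=p+\tfrac{pr}{n}$, $\theta q=p$ and $(1-\theta)\tfrac qr=\tfrac pn$, and the interpolation step is pure H\"older, so it also covers $0<r<1$) followed by integration in time. One small caveat that does not affect correctness: for $p\ge n$ the parenthetical embeddings $W^{1,n}_0\hookrightarrow L^s$ and $W^{1,p}_0\hookrightarrow L^\infty$ with only $\|\nabla v\|_{L^p}$ on the right-hand side are not scale-invariant and fail on $\R^n$ as stated, but what you actually invoke there is the multiplicative Gagliardo--Nirenberg inequality with $a=\tfrac{n}{n+r}<1$, which is valid and is all that is needed (and the paper's application only uses $p=2\le n$, $r>2$).
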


Up next we prove a local integrability result for supercaloric functions by exploiting a Moser type iteration.

\begin{lem} \label{l.integrability_fun}
Let $\frac{n-2}{n} < m < 1$ and $\Omega$ be an open set in $\R^n$. Suppose that $u$ is a nonnegative supercaloric function in $\Omega_T$. If $u \in L^s_{\loc}(\Omega_T)$ for some $s > \frac{n}{2}(1-m)$, then $u \in L^q_{\loc}(\Omega_T)$ whenever $q < m + \frac{2}{n}$.
\end{lem}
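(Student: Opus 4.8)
The plan is to run a Moser-type iteration built from the Caccioppoli inequality of Lemma~\ref{l.caccioppoli} and the parabolic Sobolev inequality of Lemma~\ref{l.sobolev}, each round of which raises the local integrability exponent of $u$ by a fixed amount. Since the fractional-power chain rule and the set $\{u=0\}$ require some care, I would carry out all estimates for the truncations $u_k=\min\{u,k\}$, which by Theorem~\ref{t.bdd_super} are bounded weak supersolutions with $u_k^m\in L^2_{\loc}(0,T;H^1_{\loc}(\Omega))$, obtain bounds independent of $k$, and then pass to the limit $k\to\infty$ by Fatou's lemma and monotone convergence; the $\eps$-dependent constants in Lemma~\ref{l.caccioppoli} are harmless because only finiteness of the right-hand sides is ever used.

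\textbf{The single step.} Fix $\eps\in(0,m)$ and assume $u\in L^{1-\eps}_{\loc}(\Omega_T)$; then $u^{m-\eps},u^{1-\eps}\in L^1_{\loc}$, so for any $\ph\in C^\infty_0(\Omega_T,\R_{\ge0})$ the right-hand side of Lemma~\ref{l.caccioppoli} is finite. Setting $w:=u^{\frac{m-\eps}{2}}$ and using $u^{-m-\eps}|\nabla u^m|^2=\frac{4m^2}{(m-\eps)^2}|\nabla w|^2$ (cf.\ Remark~\ref{rem:caccioppoli-grad-interpretatation}), the Caccioppoli estimate bounds $\iint_{\Omega_T}|\nabla(\ph w)|^2\,\d x\d t$ and $\esssup_t\int_\Omega u^{1-\eps}\ph^2\,\d x$. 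I would then apply Lemma~\ref{l.sobolev} with $p=2$ and $r:=\frac{2(1-\eps)}{m-\eps}$, which satisfies $r>2$ since $m<1$ and is chosen so that $(\ph w)^r=\ph^r u^{1-\eps}$; the supremum factor in Sobolev's inequality is then controlled by $\|\ph\|_\infty^{r-2}\esssup_t\int_\Omega u^{1-\eps}\ph^2\,\d x$. With $q:=2+\frac{2r}{n}$ one computes $\frac{(m-\eps)q}{2}=(m-\eps)+\frac{2(1-\eps)}{n}$, so Sobolev's inequality yields $\iint_{\Omega_T}\ph^q u^{(m-\eps)+\frac{2(1-\eps)}{n}}\,\d x\d t<\infty$, i.e.
\begin{equation*}
  u\in L^{1-\eps}_{\loc}(\Omega_T)\ \Longrightarrow\ u\in L^{(m-\eps)+\frac{2(1-\eps)}{n}}_{\loc}(\Omega_T)\qquad\text{for every }\eps\in(0,m).
\end{equation*}

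\textbf{The iteration.} Writing $\gamma=1-\eps$, the step sends $\gamma$ to $\Phi(\gamma):=\frac{n+2}{n}\gamma-(1-m)$, an affine map with slope $\frac{n+2}{n}>1$ whose unique (hence repelling) fixed point is $\gamma_*=\frac{n}{2}(1-m)$. By hypothesis $u\in L^s_{\loc}$ with $s>\gamma_*$, and since $\frac{n-2}{n}<m<1$ we have $\gamma_*<1$, while $\gamma_*\ge1-m$ because $n\ge2$. If $s\ge1$ then already $u\in L^\beta_{\loc}$ for every $\beta<1$; otherwise choose $\gamma_0\in(\gamma_*,s)$, note $\gamma_0\in(1-m,1)$, and apply the step with $\eps_k=1-\gamma_k$ as long as $\gamma_k<1$. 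Since $\gamma_{k+1}-\gamma_k=\frac2n(\gamma_k-\gamma_*)\ge\frac2n(\gamma_0-\gamma_*)>0$, after finitely many steps one reaches the first index $K$ with $\gamma_K\ge1$, and the step producing $\gamma_K$ was legitimate because its input $\gamma_{K-1}$ lies in $(1-m,1)$; thus $u\in L^{\gamma_K}_{\loc}$ with $\gamma_K\ge1$, hence $u\in L^\beta_{\loc}(\Omega_T)$ for every $\beta<1$. Finally, given $q<m+\frac2n$, pick $\eps\in(0,m)$ small enough that $(m-\eps)+\frac{2(1-\eps)}{n}\ge q$ (possible since this quantity tends to $m+\frac2n$ as $\eps\downarrow0$); since $1-\eps<1$ we know $u\in L^{1-\eps}_{\loc}$, and one further application of the step gives $u\in L^{(m-\eps)+\frac{2(1-\eps)}{n}}_{\loc}(\Omega_T)\subseteq L^q_{\loc}(\Omega_T)$.

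\textbf{Main obstacle.} The crux is the set-up of the Sobolev step: the Caccioppoli inequality naturally produces an $L^2$ gradient bound for the power $u^{(m-\eps)/2}$ but a supremum bound only for $u^{(1-\eps)/2}$, and these exponents are incompatible with the usual parabolic embedding; reconciling them forces the non-standard exponent $r=\frac{2(1-\eps)}{m-\eps}$ in Lemma~\ref{l.sobolev}, and this is precisely what makes the threshold of the recursion equal to $\frac{n}{2}(1-m)$, matching the hypothesis on $s$. The remaining ingredients --- the fractional chain rule, the behaviour on $\{u=0\}$, and the passage $k\to\infty$ in the truncations --- are routine and fit the pattern already used in the proof of Lemma~\ref{l.caccioppoli}.
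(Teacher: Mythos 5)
Your proposal is correct and follows essentially the same route as the paper: truncate to $u_k=\min\{u,k\}$, combine the Caccioppoli inequality of Lemma~\ref{l.caccioppoli} with the Sobolev inequality of Lemma~\ref{l.sobolev} using exactly the same choices $w=u_k^{\frac{m-\eps}{2}}$, $p=2$, $r=\frac{2(1-\eps)}{m-\eps}$, run the Moser iteration governed by $s\mapsto s\big(1+\tfrac2n\big)-(1-m)$ up to exponent $1$, and finish with one more step to reach any $q<m+\tfrac2n$. The only (harmless) cosmetic deviation is that you control the $\esssup$ factor in Sobolev's inequality via $\|\ph\|_\infty^{r-2}$ rather than testing the Caccioppoli inequality with $\ph^{r/2}$ as the paper does.
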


\begin{proof}

By Theorem~\ref{t.bdd_super}, the truncations $u_k:=\min\{u,k\}$
are weak supersolutions for any  $k>0$ and satisfy the Caccioppoli
estimate in Lemma~\ref{l.caccioppoli}. Up next, we combine
Sobolev inequality, Lemma~\ref{l.sobolev} and Caccioppoli
inequality, Lemma~\ref{l.caccioppoli}. 

Let $\ph \in
C_0^\infty(\Omega_T)$, $0\leq \ph \leq 1$ and $\ph =1 $ in a compact
subset of $\Omega$.
Since $m > \frac{n-2}{n}$, it follows that $\frac{n}{2}(1-m) <
1$. Therefore, there exists $\eps \in (0,m)$ with $s =
1-\eps>\frac{n}{2}(1-m)$. We choose
$$
w = u_{k}^\frac{s-(1-m)}{2}=u_k^{\frac{m-\eps}{2}},\quad p=2\quad\mbox{ and }\quad
r= \frac{2s}{s-(1-m)}>2
$$
in Sobolev inequality, and start to estimate the right hand side.  For the first term we have
\begin{align*}
\iint_{\Omega_T} &\left| \nabla \left( \ph u_{k}^\frac{m-1 +s}{2}  \right)  \right|^2  \, \d x \d t \\
&\leq 2 \iint_{\Omega_T} u_{k}^{m-1 + s} |\nabla \ph|^2 \, \d x \d t +
  c \iint_{\Omega_T}  u_{k}^{-m-1+s} |\nabla(u_k^m)|^2\ph^2 \, \d x \d t \\
&\leq c \iint_{\Omega_T} u_{k}^{m-1 + s} |\nabla \ph|^2 \, \d x \d t + c \iint_{\Omega_T} u_{k}^{s} |\partial_t ( \ph^2)| \, \d x \d t,
\end{align*}
in which $c = c(m,\eps)>0$. In the last step we applied the Caccioppoli inequality from
Lemma~\ref{l.caccioppoli} with $\eps=1-s$.
With the aforementioned lemma we can also estimate the second term from Sobolev
inequality~\eqref{eq:sobolev}. Since 
$r > 2$, the function $\ph^\frac{r}{2} \in C^1_0(\Omega_T)$
is an admissible test function in the Caccioppoli inequality, which gives
\begin{align*}
&\esssup_{t\in (0,T)} \int_\Omega u_{k}^s (\ph^\frac{r}{2})^2 \, \d x \\
&\leq c \iint_{\Omega_T} u_{k}^{m-1 + s} \ph^{r-2} |\nabla \ph|^2 \, \d x \d t + c \iint_{\Omega_T} u_{k}^{s} \ph^{r-2} |\partial_t ( \ph^2)| \, \d x \d t \\
&\leq c \iint_{\Omega_T} u_{k}^{m-1 + s} |\nabla \ph|^2 \, \d x \d t + c \iint_{\Omega_T} u_{k}^{s} |\partial_t ( \ph^2)| \, \d x \d t,
\end{align*} 
where $c= c(m,\eps)>0$.
By using the Sobolev inequality, Lemma~\ref{l.sobolev}, and the two inequalities above, we obtain
\begin{align*}
\iint_{\Omega_T} &\ph^q u_{k}^{s\left( 1+ \frac{2}{n} \right)- (1-m)} \, \d x \d t \\
&\leq \left( c \iint_{\Omega_T} u_{k}^{m-1 + s} |\nabla \ph|^2 \, \d x \d t + c \iint_{\Omega_T} u_{k}^{s} |\partial_t ( \ph^2)| \, \d x \d t  \right)^{1+\frac{2}{n}},
\end{align*}
with a constant $c=c(n,m,\eps)>0$. We can estimate
\begin{align*}
\iint_{\Omega_T} &u_{k}^{m-1 + s} |\nabla \ph|^2 \, \d x \d t \\
&= \iint_{\Omega_T} \chi_{\{u_k>1 \}} u_{k}^{m-1 + s} |\nabla \ph|^2 \, \d x \d t + \iint_{\Omega_T} \chi_{\{u_k\leq1 \}} u_{k}^{m-1 + s} |\nabla \ph|^2 \, \d x \d t \\
&\leq \iint_{\Omega_T}  u_{k}^{s} |\nabla \ph|^2 \, \d x \d t + \iint_{\Omega_T}  |\nabla \ph|^2 \, \d x \d t, 
\end{align*}
and further
\begin{align*}
\iint_{\Omega_T} &\ph^q u_{k}^{s\left( 1+ \frac{2}{n} \right)- (1-m)} \, \d x \d t \\
&\leq c(n,m,\eps)\left( \iint_{\Omega_T} u_{k}^{s} \left( |\nabla \ph|^2 + |\partial_t ( \ph^2)|\right) \, \d x \d t + \iint_{\Omega_T} |\nabla \ph|^2 \, \d x \d t  \right)^{1+\frac{2}{n}} \\
&\leq c(n,m,\eps)\left( \iint_{\Omega_T} u^{s} \left( |\nabla \ph|^2 + |\partial_t ( \ph^2)|\right) \, \d x \d t + \iint_{\Omega_T} |\nabla \ph|^2 \, \d x \d t  \right)^{1+\frac{2}{n}}.
\end{align*}
Now we can pass to the limit $k \to \infty$ and use monotone convergence theorem on the left-hand side, which implies
$$
u \in L^{s\left( 1+ \frac{2}{n} \right)- (1-m)}_{\loc}(\Omega_T).
$$
We can repeat this procedure as long as $\eps > 0$, i.e., the integrability exponent is
strictly less than~$1$.
By iteration we obtain a sequence of integrability exponents
$$
s_i = s_{i-1} \left( 1 + \frac{2}{n} \right)- (1-m),
$$
provided $s_{i-1}<1$. The exponents can be written in terms of the integrability exponent
$s_0=1-\eps>\frac n2(1-m)$ as
$$
s_i =  \left( 1 + \frac{2}{n} \right)^i \left( s_0 - \frac{n}{2}(1-m)
\right) + \frac{n}{2} (1-m).
$$
In a finite number of iteration steps we obtain the integrability $u
\in L^1_{\loc}(\Omega_T)$. Then, we let $\sigma\in(0,m)$ and $s =1-\frac{\sigma}{1+\frac{2}{n}}$. Combining
Sobolev and Caccioppoli inequalities once more we obtain
$$
 u \in L^{m+ \frac{2}{n} - \sigma}_{\loc}(\Omega_T).
$$
Since $\sigma\in(0,m)$ is arbitrary, the claim follows.
\end{proof}

Next we prove a local integrability result for the gradient $\nabla u^m$.

\begin{lem} \label{l.integrability_grad}
Let $\frac{n-2}{n}< m <1$ and let $\Omega \subset \R^n$ be an open set. Suppose that $u$ is a nonnegative supercaloric function with $u \in L^s_{\loc}(\Omega_T)$ for some $s > \frac{n}{2}(1-m)$. Then, the weak gradient $\nabla u^m$ exists and  $\left| \nabla u^m \right| \in L^q_{\loc}(\Omega_T)$ for any $q < 1 + \frac{1}{1+mn}$.
\end{lem}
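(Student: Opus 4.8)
The plan is to combine the Caccioppoli inequality of Lemma~\ref{l.caccioppoli} with the integrability already gained in Lemma~\ref{l.integrability_fun} through a single application of H\"older's inequality. As in the proof of Lemma~\ref{l.integrability_fun}, I would work with the truncations $u_k=\min\{u,k\}$, which by Theorem~\ref{t.bdd_super} are weak supersolutions, hence satisfy Lemma~\ref{l.caccioppoli}; it then suffices to produce a bound on $\iint_K|\nabla u_k^m|^q\,\d x\d t$ that is uniform in $k$ for every $K\Subset\Omega_T$, and afterwards to pass to the limit $k\to\infty$. Note first that $u\in L^1_{\loc}(\Omega_T)$ by Lemma~\ref{l.integrability_fun}, since $1<m+\tfrac2n$ follows from $m>\tfrac{n-2}n$.

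Fix $q<1+\tfrac1{1+mn}$ (so in particular $q<2$) and a cutoff $\ph\in C_0^\infty(\Omega_T)$ with $0\le\ph\le1$, $\ph\equiv1$ on $K$, and support $S:=\spt\ph$. On the set $\{u_k>0\}$ I would write, for a parameter $\eps\in(0,m)$ still to be chosen,
\[
|\nabla u_k^m|^q\,\ph^q=\big(u_k^{-m-\eps}|\nabla u_k^m|^2\ph^2\big)^{q/2}\,u_k^{(m+\eps)q/2},
\]
and apply H\"older's inequality with exponents $\tfrac2q$ and $\tfrac2{2-q}$, obtaining
\[
\iint_{\Omega_T}|\nabla u_k^m|^q\ph^q\,\d x\d t
\le\Big(\iint_{\Omega_T}u_k^{-m-\eps}|\nabla u_k^m|^2\ph^2\,\d x\d t\Big)^{q/2}
\Big(\iint_S u_k^{\frac{(m+\eps)q}{2-q}}\,\d x\d t\Big)^{\frac{2-q}2}.
\]
The first factor is controlled by Lemma~\ref{l.caccioppoli}; since $0\le u_k\le u$ and the exponents $m-\eps$, $1-\eps$ lie in $(0,1)$, its right-hand side is bounded by $C(\eps,\ph)\iint_S(1+u)\,\d x\d t<\infty$, which is finite and independent of $k$ by $u\in L^1_{\loc}$. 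For the second factor I need $u\in L^{(m+\eps)q/(2-q)}_{\loc}(\Omega_T)$, i.e. by Lemma~\ref{l.integrability_fun} it is enough to have $\tfrac{(m+\eps)q}{2-q}<m+\tfrac2n$. A direct computation shows that the borderline case $\eps=0$ of this inequality is precisely $q<1+\tfrac1{1+mn}$; hence, by continuity in $\eps$, there is some $\eps\in(0,m)$ for which the strict inequality holds. With this choice of $\eps$, both factors are bounded independently of $k$, so $\sup_k\iint_K|\nabla u_k^m|^q\,\d x\d t<\infty$.

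It remains to pass to the limit. Since $u\in L^1_{\loc}$, monotone convergence gives $u_k^m\to u^m$ in $L^1_{\loc}(\Omega_T)$, while the uniform $L^q_{\loc}$-bound (with $q>1$) yields, along a subsequence, $\nabla u_k^m\rightharpoonup G$ weakly in $L^q_{\loc}(\Omega_T,\R^n)$; passing to the limit in $\iint u_k^m\,\div\Phi\,\d x\d t=-\iint\nabla u_k^m\cdot\Phi\,\d x\d t$ for $\Phi\in C_0^\infty(\Omega_T,\R^n)$ identifies $G$ as the weak gradient of $u^m$, which by the convention~\eqref{e.veryweak-gradient} and the remarks following it is $\nabla u^m$. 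Weak lower semicontinuity of the $L^q$-norm then gives $\|\nabla u^m\|_{L^q(K)}\le\liminf_k\|\nabla u_k^m\|_{L^q(K)}<\infty$ for every $K\Subset\Omega_T$, which is the claim. The only delicate point is the exponent bookkeeping that matches $\tfrac{(m+\eps)q}{2-q}$ against $m+\tfrac2n$, together with the observation that $\eps$ must be kept strictly positive, so that the threshold $1+\tfrac1{1+mn}$ is approached but not reached; everything else is routine.
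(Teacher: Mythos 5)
Your proposal is correct and follows essentially the same route as the paper: truncate, split $|\nabla u_k^m|^q$ with the weight $u_k^{-(m+\eps)q/2}$, apply H\"older with exponents $\tfrac2q$ and $\tfrac2{2-q}$, control the first factor by Lemma~\ref{l.caccioppoli} and the second by Lemma~\ref{l.integrability_fun}, with exactly the same exponent bookkeeping $\tfrac{q}{2-q}(m+\eps)<m+\tfrac2n$ and $\eps\downarrow0$. The only difference is that you spell out the passage to the limit (weak $L^q$ compactness and identification of the weak gradient), which the paper leaves implicit through the convention~\eqref{e.veryweak-gradient}; this is a harmless and welcome addition.
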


\begin{proof} \
By Lemma~\ref{l.integrability_fun}, it already follows that $u \in
L^r_{\loc}(\Omega_T)$ whenever $r < m + \frac{2}{n}$. In particular,
$u \in L^1_{\loc}(\Omega_T)$. First we start with truncations
$u_k=\min\{u,k\}$. Let $\Omega' \Subset \Omega$, $0<t_1<t_2<T$ and
$\eps \in (0,m)$. By Theorem~\ref{t.bdd_super}, the truncation $u_{k}$
is a weak supersolution for every $k\in\N$. Now for $q<1+\frac{1}{1+mn}$
and $\varphi \in C^{\infty}_0(\Omega_T)$ with $\varphi = 1$ in $\Omega' \times (t_1,t_2)$ and $\varphi \geq 0$ we have
\begin{align*}
&\int_{t_1}^{t_2} \int_{\Omega'} \left| \nabla u_{k}^m \right|^q  \, \d x \d t \\
&= \int_{\Lambda_+\cap (t_1,t_2)} \int_{\Omega'} \left| \nabla u_{k}^m \right|^q  \, \d x \d t
\\
&= \int_{\Lambda_+\cap (t_1,t_2)} \int_{\Omega'}\left( u_{k}^{- \frac{m+\eps}{2}} \left| \nabla u_{k}^m \right| \right)^q u_{k}^{q \frac{m+\eps}{2}} \, \d x \d t \\
&\leq \left( \int_{t_1}^{t_2} \int_{\Omega'} u_{k}^{-m-\eps} \left| \nabla u_{k}^m \right|^2 \, \d x \d t \right)^\frac{q}{2} \left( \int_{t_1}^{t_2} \int_{\Omega'} u_{k}^{\frac{q}{2-q} (m+\eps )}  \, \d x \d t \right)^{1-\frac{q}{2}} \\
&\leq  \left(c \iint_{\Omega_T} \left( u_{k}^{m-\eps} |\nabla \varphi|^2 + u_{k}^{1-\eps} |\partial_t (\varphi^2)| \right) \, \d x \d t \right)^\frac{q}{2} \left( \int_{t_1}^{t_2} \int_{\Omega'} u_{k}^{\frac{q}{2-q} (m+\eps )}  \, \d x \d t \right)^{1-\frac{q}{2}} \\
&\leq \left(c \iint_{\Omega_T} \left( u^{m-\eps} |\nabla \varphi|^2 + u^{1-\eps} |\partial_t (\varphi^2)| \right) \, \d x \d t \right)^\frac{q}{2} \left( \int_{t_1}^{t_2} \int_{\Omega'} u^{\frac{q}{2-q} (m+\eps )}  \, \d x \d t \right)^{1-\frac{q}{2}} 
\end{align*}
for $c= c(\eps) >0$ by using H\"older's inequality and the Caccioppoli inequality, Lemma~\ref{l.caccioppoli}. The first integral on the right hand side is clearly bounded since
$u\in L^1_{\mathrm{loc}}(\Omega_T)$, and the second is as well
whenever $\frac{q}{2-q} (m+\eps ) < m + \frac{2}{n}$ by
Lemma~\ref{l.integrability_fun}. Since $\eps >0$ can be chosen
arbitrarily small, the second integral is finite whenever $ q < 1 + \frac{1}{1+mn}$, which completes the proof.
\end{proof}

\begin{rem}
Observe that in the case $0<m \leq \frac{n-2}{n}$ (and in particular when $m =\frac{n-2}{n}$) the proof of the preceding lemma also implies that if $u \in L^1_{\loc}(\Omega_T)$ is a supercaloric function in $\Omega_T$, then $|\nabla u^m|\in L^q_{\loc}(\Omega_T)$ for every $q < \frac{2}{m+1}$. Indeed, in that case $\frac{2}{m+1} > 1$ and $\nabla u^m$ is a weak gradient of $u^m$.
\end{rem}

Finally, we state characterizations for Barenblatt type supercaloric functions.

\begin{theo} \label{t.barenblatt}
Let $\frac{n-2}{n} < m < 1$ and $\Omega$ be an open set in $\R^n$. Suppose that $u$ is a nonnegative supercaloric function in $\Omega_T$. Then the following statements are equivalent:

\begin{itemize}
\item[(i)] $u\in L^q_{\loc}(\Omega_T)$ for some $q > \frac{n}{2}(1-m)$,
\item[(ii)] $u \in L^{\frac{n}{2}(1-m)}_{\loc}(\Omega_T)$,
\item[(iii')] there exists $\alpha \in \left(\frac{n}{2}(1-m),1\right)$ such that
$$
\sup_{\delta< t< T-\delta} \int_{\Omega'} u(x,t)^\alpha \, \d x < \infty,
$$
whenever $\Omega' \Subset \Omega$ and $\delta \in (0,\frac{T}{2})$,
\item[(iii)] 
$$
\sup_{\delta< t< T-\delta} \int_{\Omega'} u(x,t) \, \d x < \infty,
$$
whenever $\Omega' \Subset \Omega$ and $\delta \in (0,\frac{T}{2})$.
\end{itemize}
\end{theo}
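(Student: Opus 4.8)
The plan is to prove the cycle (iii) $\Rightarrow$ (iii') $\Rightarrow$ (i) $\Rightarrow$ (iii), the trivial implication (i) $\Rightarrow$ (ii), and then to close the loop with (ii) $\Rightarrow$ (i). A useful preliminary remark is that for $\frac{n-2}{n}<m<1$ one has $\frac n2(1-m)<1<m+\frac 2n$, so that (i) is in fact equivalent to $u\in L^1_{\loc}(\Omega_T)$ (the nontrivial direction being Lemma~\ref{l.integrability_fun}), and then Lemma~\ref{l.integrability_grad} ensures that the weak gradient $\nabla u^m$ from~\eqref{e.veryweak-gradient} is locally integrable. Three of the implications are elementary: (iii) $\Rightarrow$ (iii') follows, for any fixed $\alpha\in(\frac n2(1-m),1)$, from Hölder's inequality on each time slice since $\alpha<1$; (iii') $\Rightarrow$ (ii) follows from Hölder on each slice (with exponent $\frac{n(1-m)}{2\alpha}<1$) followed by integration in $t$; and (i) $\Rightarrow$ (ii) holds because $q>\frac n2(1-m)$ and the relevant sets are relatively compact.

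For (iii') $\Rightarrow$ (i) I would combine the Caccioppoli inequality of Lemma~\ref{l.caccioppoli} with Sobolev's inequality, Lemma~\ref{l.sobolev}, working first with the truncations $u_k$ of Theorem~\ref{t.bdd_super} and passing to the limit by monotone convergence, as in the proof of Lemma~\ref{l.integrability_fun}. Fix $\alpha\in(\frac n2(1-m),1)$ as in (iii') and set $\eps:=1-\alpha$; since $n\ge2$ we have $\alpha>\frac n2(1-m)\ge1-m$, so $\eps\in(0,m)$. For $\ph\in C_0^\infty(\Omega_T)$ supported in $\Omega'\times(\delta,T-\delta)$, the right-hand side of Lemma~\ref{l.caccioppoli} is finite, because $u^{m-\eps}\le u^\alpha+1$ and $u^{1-\eps}=u^\alpha$ are controlled after integrating the uniform slice bound of (iii') in time. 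Hence $v:=u^{\frac{m-\eps}2}$ satisfies $\ph v\in L^2(0,T;W^{1,2}_0(\Omega))$, $v\in L^2_{\loc}(0,T;W^{1,2}_{\loc}(\Omega))$, and $\esssup_t\int_\Omega(\ph v)^r\,\d x<\infty$ with $r=\frac{2\alpha}{m-\eps}$. Plugging this into Lemma~\ref{l.sobolev} with $p=2$ gives $v\in L^q_{\loc}$ for $q=2+\frac{2r}{n}$, that is, $u\in L^{q_1}_{\loc}(\Omega_T)$ with $q_1=m-1+\alpha\bigl(1+\tfrac2n\bigr)$. Since $\alpha\mapsto q_1$ is strictly increasing and equals $\frac n2(1-m)$ precisely at $\alpha=\frac n2(1-m)$, we conclude $q_1>\frac n2(1-m)$, which is (i).

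For (i) $\Rightarrow$ (iii) I would argue through the Riesz measures of the truncations. Since $u_k=\min\{u,k\}$ are weak supersolutions (Theorem~\ref{t.bdd_super}), the distributions $\mu_k:=\partial_t u_k-\Delta u_k^m$ are nonnegative Radon measures; testing with $\eta(x)^2\zeta(t)$ (a spatial cutoff $\eta$ with $\eta\equiv1$ on $\Omega'$ and a temporal cutoff $\zeta$ with $\zeta\equiv1$ on $(\delta,T-\delta)$), and using $u\in L^1_{\loc}$ together with $|\nabla u_k^m|\le|\nabla u^m|\in L^1_{\loc}$, gives $\int_{\Omega_T}\eta^2\zeta\,\d\mu_k\le C(\eta,\delta)$ uniformly in $k$. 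Then, inserting into the mollified weak formulation the test function $\eta(x)^2$ times a temporal mollification of $\chi_{(t_0,t_1)}$ — as in the proof of Lemma~\ref{l.bdd_caccioppoli_positivityset} — one obtains, for a fixed $t_0$ with $u(\cdot,t_0)\in L^1_{\loc}(\Omega)$ and a.e. $t_1>t_0$,
\begin{align*}
\int_\Omega u_k(\cdot,t_1)\,\eta^2\,\d x
&=\int_\Omega u_k(\cdot,t_0)\,\eta^2\,\d x+\int_{\Omega_T}\eta^2\chi_{(t_0,t_1)}\,\d\mu_k\\
&\qquad-\int_{t_0}^{t_1}\!\!\int_\Omega\nabla u_k^m\cdot\nabla(\eta^2)\,\d x\,\d t\ \le\ C,
\end{align*}
uniformly in $k$ and $t_1$. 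Letting $k\to\infty$ by monotone convergence and using that $t\mapsto\int_{\Omega'}u(\cdot,t)\,\d x$ is lower semicontinuous (Fatou), we obtain (iii).

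Finally, (ii) $\Rightarrow$ (i) is the main obstacle. The difficulty is that (ii) is the scaling-critical integrability, at which the Caccioppoli--Sobolev scheme above is self-reproducing: taking $\eps=1-\frac n2(1-m)$ in Lemma~\ref{l.caccioppoli} (admissible only for $n\ge3$) merely recovers the bound $\esssup_t\int_{\Omega'}u(\cdot,t)^{\frac n2(1-m)}\,\d x<\infty$ and never improves the exponent. My plan is to argue by contraposition: if $u\notin L^1_{\loc}(\Omega_T)$, then after localizing to the positivity set via Corollary~\ref{c.positivity-intervals}, the divergence of $\iint u$ over some compact set should force, through the weak Harnack inequality of Lemma~\ref{l.wharnack_supercal} applied to approximating Cauchy--Dirichlet solutions together with a careful iteration that keeps track of scales, a lower bound for $u$ near its singular set by a rescaled infinite point-source solution~\eqref{e.IPSS}; since the latter is not in $L^{\frac n2(1-m)}_{\loc}$, this contradicts (ii). Making this comparison with the model singular solution rigorous is the crux of the proof, and it is where the structural results of Section~\ref{sec:positivity} and the analysis of the complementary class come into play.
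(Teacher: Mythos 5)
Your proposal is incomplete at exactly one point, but it is a decisive one: the implication (ii) $\implies$ (i) is only sketched, and without it you have merely shown that (ii) follows from the other statements, not that it implies them, so the equivalence is not proved. Your outline (``weak Harnack applied to approximating Cauchy--Dirichlet solutions together with a careful iteration that keeps track of scales, forcing a lower bound by a rescaled infinite point-source solution'') names the right tools but supplies neither of the two steps that make the contrapositive $\neg$(i) $\implies$ $\neg$(ii) work. In the paper this is precisely the chain inside Theorem~\ref{t.complementary}: first one must convert failure of space--time integrability into slice-wise blow-up concentrated at a single point, i.e.\ produce $x_o\in\Omega$ and times $t_j\to t_o\in(0,T)$ with $\int_{B(x_o,r)}u(x,t_j)\,\d x\to\infty$ for every small $r>0$ (the covering/compactness argument in the proof of (iii) $\implies$ (iv) there); second -- and this is the crux you defer -- one needs Lemma~\ref{l.blowup}: from this blow-up one deduces $\liminf_{(x,s)\to(x_o,t)}u(x,s)|x-x_o|^{2/(1-m)}>0$ for all $t>t_o$. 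Its proof is not a comparison with the explicit solution \eqref{e.IPSS} and involves no iteration over scales; rather, it compares $u$ from above with continuous solutions $h_{k_j,i}$ of approximating Cauchy--Dirichlet problems, uses the $L^1$-Harnack estimate of Lemma~\ref{l.l1harnack} to transfer the mass at time $t_j$ forward to all $\tau\in(t_o,t)$, chooses the truncation levels $k_j$ so that the averages of $u_{k_j}$ have a prescribed size, and then applies the weak Harnack inequality of Lemma~\ref{l.wharnack_supercal} at every small radius $r_j$ to get the pointwise bound $u\gtrsim |x-x_o|^{-2/(1-m)}$ at later times; only then does $\int u^{\frac n2(1-m)}\gtrsim\int |x-x_o|^{-n}\,\d x=\infty$ contradict (ii). As it stands, your proof has a hole exactly at the implication for which the complementary-class machinery of Sections~\ref{sec:positivity} and~\ref{sec:ipss} was built.

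The remaining implications are correct, and two of them deviate from the paper in ways worth noting. For (iii') $\implies$ (i) your Caccioppoli--Sobolev step is valid (it is one step of the Moser iteration of Lemma~\ref{l.integrability_fun}) but unnecessary: since $\alpha>\frac n2(1-m)$, integrating the uniform slice bound in time already gives $u\in L^\alpha_{\loc}(\Omega_T)$, which is (i); this is what the paper means by ``elementary.'' For (i) $\implies$ (iii) you give a genuinely different, direct argument: under (i), Lemmas~\ref{l.integrability_fun} and~\ref{l.integrability_grad} give $u\in L^1_{\loc}$ and $|\nabla u^m|\in L^1_{\loc}$, the Riesz measures of the truncations $u_k$ are then locally uniformly bounded, and testing with a spatial cutoff times a mollified characteristic function of $(t_0,t_1)$ yields a bound on $\int u_k(\cdot,t_1)\eta^2\,\d x$ uniform in $k$ and $t_1$, upgraded to all times by lower semicontinuity of $t\mapsto\int u(\cdot,t)\,\d x$. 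This is sound and arguably more transparent than the paper's route, which obtains (i) $\implies$ (iii) by contraposition through Theorem~\ref{t.complementary} and the same blow-up Lemma~\ref{l.blowup}; your version buys a direct proof of that implication, but it does not help with (ii) $\implies$ (i), where the Harnack-based analysis remains unavoidable.
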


\begin{proof}
(i) $\implies$ (ii): H\"older inequality. \\
(iii') $\implies$ (i): Elementary. \\
(i) $\implies$ (iii'): This is a direct consequence of the Caccioppoli inequality, Lemma~\ref{l.caccioppoli}.
(iii) $\implies$ (iii'): H\"older inequality. \\
(ii) $\implies$ (i): Follows from proving contraposition $\neg$(i) $\implies$ $\neg$(ii) in Theorem~\ref{t.complementary}. \\
(i) $\implies$ (iii): Follows from proving contraposition $\neg$(iii) $\implies$ $\neg$(i) in Theorem~\ref{t.complementary}. 
\end{proof}

Observe that every supercaloric function $u$ in the Barenblatt class satisfies
\begin{align*}
0 &\leq \lim_{k\to \infty} \iint_{\Omega_T}- u_k \partial_t \varphi  + \nabla u_k^m \cdot \nabla \varphi \, \d x \d t = \iint_{\Omega_T}- u \partial_t \varphi + \nabla u^m \cdot \nabla \varphi \, \d x \d t,
\end{align*}
for every nonnegative $\varphi \in C_0^\infty(\Omega_T)$ by Theorem~\ref{t.bdd_super} and Lemmas~\ref{l.integrability_fun},~\ref{l.integrability_grad}. Together with Riesz' representation theorem this implies that for every supercaloric function $u$ in the Barenblatt class there exists a nonnegative Radon measure $\mu$ in $\Omega_T$ such that 
$$
\iint_{\Omega_T}- u \partial_t \varphi + \nabla u^m \cdot \nabla \varphi \, \d x \d t = \int_{\Omega_T} \varphi\, \d \mu.
$$
for every $\varphi \in C_0^\infty(\Omega_T)$.

\section{Infinite point-source solutions} \label{sec:ipss}

In this section, we consider supercaloric functions that do not fall into the class described by Theorem~\ref{t.barenblatt}. As a starting point, we recall that a function
\begin{equation}\label{eq:elliptic-f}
u(x,t) = |x|^{- \frac{n-2}{m}},\quad \text{ for } n\geq 3,\quad 0<m<1,
\end{equation}
based on the fundamental solution to the elliptic (Laplace) equation is a supercaloric function to the porous medium equation in the whole space $\R^{n+1}$. In the supercritical case the singularity of the function in~\eqref{eq:elliptic-f} is mild enough to guarantee that it belongs to the Barenblatt class. However, $|\nabla u^m| \notin L^2_{\loc}(\R^{n+1})$, which implies that $u$ is not a weak supersolution.

For the rest of this section, we focus only on the supercritical range
$\frac{n-2}{n}<m<1$. In the complementary class, the leading example
is the infinite point-source solution, which possesses a slightly similar behavior as~\eqref{eq:elliptic-f}. The infinite point-source solution (see~\cite{CV}) can be written as 
\begin{equation} \label{e.IPSS}
 \mathcal{U}(x,t) = \left( \frac{C t}{|x|^2} \right)^\frac{1}{1-m}, \qquad C = \frac{2m}{1-m}\left( 2 - n(1-m) \right) > 0,
\end{equation}
for $(x,t)\in \R^n\times (0,\infty)$.
This function is a continuous weak solution to~\eqref{evo_eqn} in
$\left( \R^n\setminus \{ 0 \} \right) \times (0,\infty)$. However, $u \notin L_{\loc}^{\frac{n}{2}(1-m)}(\R^n\times (0,\infty))$ which implies that $u$ is not even an integrable function in $\R^n\times (0,\infty)$. However $\mathcal{U}$ is a supercaloric function in $\R^n \times (0,\infty)$ which we show in the next lemma.

\begin{lem}
The infinite point-source solution $\mathcal{U}$ defined in~\eqref{e.IPSS} is a supercaloric function in $\R^n \times (0,\infty)$.
\end{lem}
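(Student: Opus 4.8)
The plan is to verify the three conditions of Definition~\ref{d.supercal} for $\mathcal{U}$, where we set $\mathcal{U}(0,t):=+\infty$ for $t>0$. Conditions (i) and (ii) follow by inspection of~\eqref{e.IPSS}: the function $\mathcal{U}$ is continuous and strictly positive on $\left(\R^n\setminus\{0\}\right)\times(0,\infty)$, and since the exponent $\tfrac{2}{1-m}$ is positive, $\mathcal{U}(x,t)\to+\infty$ as $x\to0$ for each fixed $t>0$; hence $\mathcal{U}$ is lower semicontinuous on all of $\R^n\times(0,\infty)$, and it is finite outside the Lebesgue-null set $\{x=0\}$, so in particular finite on a dense subset.

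The substance of the proof is the comparison principle (iii). The key observation is that $\mathcal{U}$ is a continuous weak solution of~\eqref{evo_eqn} in $\left(\R^n\setminus\{0\}\right)\times(0,\infty)$; hence, by Lemma~\ref{l.weasuper-is-supercal} applied on $\left(\R^n\setminus\{0\}\right)\times(0,T)$ for every finite $T$, the function $\mathcal{U}$ is supercaloric on $\left(\R^n\setminus\{0\}\right)\times(0,\infty)$ (the comparison principle in Definition~\ref{d.supercal} only concerns subcylinders compactly contained in the domain, each of which lies in some such slab). Now fix a subcylinder $Q_{t_1,t_2}=Q\times(t_1,t_2)\Subset\R^n\times(0,\infty)$ and a weak solution $h\in C(\overline{Q_{t_1,t_2}})$ with $h\le\mathcal{U}$ on $\partial_pQ_{t_1,t_2}$. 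If $0\notin\overline{Q}$, then $Q_{t_1,t_2}\Subset\left(\R^n\setminus\{0\}\right)\times(0,\infty)$, and the comparison principle built into the definition of the supercaloric function $\mathcal{U}$ immediately gives $h\le\mathcal{U}$ in $Q_{t_1,t_2}$.

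It remains to treat the case $0\in\overline{Q}$, which we handle by excising a thin cylinder around the singular axis and passing to a limit. Put $M:=\max_{\overline{Q_{t_1,t_2}}}h<\infty$. Since $\mathcal{U}(x,t)\ge(Ct_1)^{1/(1-m)}|x|^{-2/(1-m)}$ for all $t\in[t_1,t_2]$, there is $\rho_0>0$ with $\mathcal{U}\ge M+1$ on $\{|x|\le\rho_0\}\times[t_1,t_2]$. For $0<\rho<\rho_0$ set $Q^{(\rho)}:=Q\cap\{|x|>\rho\}$, an open set with $\overline{Q^{(\rho)}_{t_1,t_2}}\subset\big(\overline{Q}\cap\{|x|\ge\rho\}\big)\times[t_1,t_2]\Subset\left(\R^n\setminus\{0\}\right)\times(0,\infty)$. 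On the portion of $\partial_pQ^{(\rho)}_{t_1,t_2}$ contained in $\partial_pQ_{t_1,t_2}$ we have $h\le\mathcal{U}$ by hypothesis, while the remaining portion is contained in $\{|x|=\rho\}\times[t_1,t_2)$, where $h\le M<M+1\le\mathcal{U}$. Thus $h\le\mathcal{U}$ on $\partial_pQ^{(\rho)}_{t_1,t_2}$, and the comparison principle for the supercaloric function $\mathcal{U}$ (one may equally invoke Lemma~\ref{l.supersubcal-cylinder-comparison}) yields $h\le\mathcal{U}$ in $Q^{(\rho)}_{t_1,t_2}$. Letting $\rho\downarrow0$ gives $h\le\mathcal{U}$ at every point $(x,t)\in Q_{t_1,t_2}$ with $x\ne0$, and at points with $x=0$ the inequality is trivial since $\mathcal{U}=+\infty$ there. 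Hence $h\le\mathcal{U}$ in $Q_{t_1,t_2}$, which is condition (iii).

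The argument is essentially routine; the one point requiring a little care is the treatment of the artificial lateral boundary $\{|x|=\rho\}\times[t_1,t_2)$ created by removing the singular axis. This causes no trouble precisely because $h$ is continuous, hence bounded, on the compact set $\overline{Q_{t_1,t_2}}$, whereas $\mathcal{U}$ blows up near the axis, so for small $\rho$ the comparison inequality on that artificial boundary holds automatically; this is the mechanism by which the singularity of $\mathcal{U}$ along $\{x=0\}$ becomes \emph{removable} from the point of view of the comparison principle.
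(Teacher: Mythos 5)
Your proof is correct, but it takes a genuinely different route from the paper. The paper truncates: it considers $\mathcal{U}_k=\min\{\mathcal{U},k\}$, shows each truncation is supercaloric by exploiting that $\mathcal{U}_k$ is a truncation of a continuous weak solution away from the axis and is identically equal to $k$ on a neighbourhood of $\{x=0\}$ for $t\geq t_1$ (so that any admissible comparison function $h$, which satisfies $h\leq k$, is automatically dominated there), runs the comparison of Lemma~\ref{l.supersubcal-cylinder-comparison} on the punctured cylinder $(Q\setminus\{0\})\times(t_1,t_2)$, treats the cases $0\in Q$ and $0\in\partial Q$ separately, and finally passes to the increasing limit $k\to\infty$ via Lemma~\ref{l.superc_increasing_lim}. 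You instead work with $\mathcal{U}$ itself: it is supercaloric in $\left(\R^n\setminus\{0\}\right)\times(0,\infty)$ by Lemma~\ref{l.weasuper-is-supercal}, and for a fixed comparison function $h$ with $M=\max h$ you excise the tube $\{|x|\leq\rho\}$, on whose lateral part $\mathcal{U}\geq M+1$ by the explicit blow-up of~\eqref{e.IPSS}, apply the comparison principle from Definition~\ref{d.supercal} in the excised cylinder $\bigl(Q\cap\{|x|>\rho\}\bigr)\times(t_1,t_2)$, and let $\rho\downarrow0$. Both proofs hinge on the same mechanism (boundedness of $h$ against the behaviour of $\mathcal{U}$, respectively $\mathcal{U}_k$, near the axis) and on the background fact that $\mathcal{U}$ is a continuous weak solution off the axis; what your version buys is the avoidance of the truncation step and of the increasing-limit result Lemma~\ref{l.superc_increasing_lim}, a uniform treatment of $0\in Q$ and $0\in\partial Q$, and, since you verify condition (iii) for arbitrary subcylinders directly, no detour through quasi-supercaloric functions. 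The paper's truncation route, conversely, keeps every comparison at the level of bounded functions and reuses the general machinery already developed. One small presentational point: it is worth stating explicitly, as you implicitly use, that for the continuous function $\mathcal{U}$ the regularization $u_*$ in Lemma~\ref{l.weasuper-is-supercal} coincides with $\mathcal{U}$ itself.
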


\begin{proof}

Denote $\mathcal{U}_k = \min\{\mathcal{U},k\}$. Now $\mathcal{U}_k$ is clearly continuous in $\R^n \times (0,\infty)$ and a supercaloric function in $(\R^n \setminus \{0\}) \times (0,\infty)$ as a truncation of continuous weak solution. Let $Q_{t_1,t_2}=Q\times(t_1,t_2) \Subset \R^n \times (0,\infty)$ be a $C^{2,\alpha}$-cylinder such that $0\in Q$ and $h \in C(\overline{Q_{t_1,t_2}})$ be a weak solution in $Q_{t_1,t_2}$ with $h \leq \mathcal{U}_k$ on $\partial_p Q_{t_1,t_2}$. This immediately implies that $h \leq k$ in $Q_{t_1,t_2}$ and in particular $h \leq \mathcal{U}_k = k$ on $\{0\} \times [t_1,t_2)$. Since $h$ is subcaloric we can use Lemma~\ref{l.supersubcal-cylinder-comparison} to conclude that also $h \leq \mathcal{U}_k$ in $(Q\setminus \{0\}) \times (t_1,t_2)$. 

If $0 \in \partial Q$ we can use the fact that $\mathcal{U}_k = k$ in
$\overline{B_r(0)} \times (t_1,t_2)$ with $r = \left( \frac{C
    t_1}{k^{1-m}} \right)^\frac{1}{2}$. Since $h \leq k$ in
$Q_{t_1,t_2}$, it follows that $h \leq \mathcal{U}_k$ in $(\overline{B_r(0)}
\cap Q) \times (t_1,t_2)$ with the previously defined $r$. In the set
$(Q \setminus \overline{B_r(0)} ) \times (t_1,t_2)$ we can use
Lemma~\ref{l.supersubcal-cylinder-comparison} to conclude that $h
\leq \mathcal{U}_k$ in $(Q\setminus\overline{B_r(0)}) \times (t_1,t_2)$, and
therefore in the whole cylinder $Q_{t_1,t_2}$. Thus $\mathcal{U}_k$ is supercaloric in $\R^n \times (0,\infty)$.

By Lemma~\ref{l.superc_increasing_lim}, also the pointwise limit $\lim_{k\to \infty} \mathcal{U}_k  = \mathcal{U}$ is supercaloric in $\R^n \times (0,\infty)$. 

\end{proof}

Again, zero extension of $\mathcal U$ to nonpositive times $t \leq 0$, say $u$, is supercaloric in $\R^n \times \R$ by Lemma~\ref{l.zero-past-extension}. However, $u \notin L_{\loc}^{\frac{n}{2}(1-m)}(\R^n\times \R)$.

We can modify the example above to obtain supercaloric functions with as bad singularity as we please. We can define
$$
\mathcal U(x,t) = \left( \frac{C t}{|x|^q} \right)^\frac{1}{1-m},
$$
in which $q \geq 2$ can be as large as we wish and 
$$
C = qm\left( 2+\frac{qm}{1-m} - n \right).
$$
This is still a supercaloric function in $B(0,1) \times (0,\infty)$. However, for given $\eps >0$, $\mathcal U \notin L^\eps_{\loc}$ if $q \geq \frac{n}{\eps}(1-m)$.

Before stating characterizations in the complementary class, we state and prove an auxiliary result, which is analogous to~\cite[Lemma 4.5]{GKM_supercal}.

\begin{lem} \label{l.blowup}
Let $\frac{n-2}{n}<m<1$. Let $u$ be a supercaloric function in $\Omega_T$. Suppose that there exists a point $x_o \in \Omega$ and a sequence $(t_j)$ in $(0,T)$ with $t_j \to t_o \in (0,T)$ as $j\to \infty$, such that 
$$
\lim_{j\to\infty} \int_{B(x_o,r)} u(x,t_j) \, \d x = \infty
$$
whenever $ r > 0$ and $B(x_o,r) \Subset \Omega$. Then, 
$$
\liminf_{(x,s)\to (x_o,t)} u(x,s)\left| x-x_o \right|^\frac{2}{1-m} > 0
$$
for every $t > t_o$.
\end{lem}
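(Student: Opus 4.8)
The plan is to read off the lower bound from the weak Harnack inequality for supercaloric functions, Lemma~\ref{l.wharnack_supercal}. I may assume $x_o=0$ and I fix $t\in(t_o,T)$; the aim is to produce $\varepsilon>0$ and $\delta>0$ with $u(x,s)\,|x|^{\frac{2}{1-m}}\ge\varepsilon$ whenever $0<|x|<\delta$ and $|s-t|<\delta$. Since the weak Harnack inequality requires a finite mass at the starting time, each time I apply it with starting time $t_j$ I apply it in fact to a bounded truncation $\min\{u,K\}$ (which is again supercaloric and $\le u$), with $K=K(j)$ chosen, via monotone convergence and the hypothesis, so large that $A_j(r):=\int_{B(0,r)}\min\{u,K(j)\}(x,t_j)\,\d x$ still satisfies $A_j(r)\to\infty$ as $j\to\infty$ for every $r$ with $B(0,r)\Subset\Omega$. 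Fix $R_0>0$ with $\overline{B(0,64R_0)}\Subset\Omega$, fix a small $\eta_0>0$ with $\alpha(1+\eta_0)<1$ and $t(1+\eta_0)<T$, and discard finitely many indices so that $t_j\to t_o$ with $t-t_j$ bounded away from $0$ and $\infty$. For $0<r\le R_0$ put
\[
g_j(r):=c_2\Big(\tfrac{A_j(2r)}{|B_1|\,(2r)^n}\Big)^{1-m}r^2=c_3\,r^{\lambda}A_j(2r)^{1-m},\qquad c_3:=c_2\,(|B_1|\,2^n)^{-(1-m)}.
\]
Because $\lambda=n(m-1)+2>0$ in the supercritical range and $r\mapsto A_j(2r)$ is continuous, nondecreasing and vanishes at $0^+$, each $g_j$ is continuous and nondecreasing on $(0,R_0]$ with $g_j(0^+)=0$, and $g_j(r)\to\infty$ as $j\to\infty$ for every fixed $r$. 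Whenever $g_j(r)=v$ with $v\in[\,t-t_j,\,(t-t_j)/\alpha\,]$ and $B(0,64r)\times(t_j,t_j+v)\Subset\Omega_T$, Lemma~\ref{l.wharnack_supercal} gives, for all $s\in[\,t_j+\alpha v,\,t_j+v\,]$,
\[
\inf_{B(0,2r)}u(\cdot,s)\ \ge\ c_1\,\frac{A_j(2r)}{|B_1|\,(2r)^n}\ =\ c\,v^{\frac{1}{1-m}}\,r^{-\frac{2}{1-m}}\ \ge\ \varepsilon_0\,r^{-\frac{2}{1-m}},
\]
where the middle equality rests on the identity $n+\tfrac{\lambda}{1-m}=\tfrac{2}{1-m}$ — exactly what produces the exponent $\tfrac{2}{1-m}$ of the infinite point-source solution — and $\varepsilon_0>0$ depends only on $n,m$ and $\inf_j(t-t_j)$.

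Next, choosing the value $v=(t-t_j)(1+\eta_0)$ places $t$ in the interior of $[\,t_j+\alpha v,\,t_j+v\,]$ at distance bounded below by a fixed multiple of $\inf_j(t-t_j)>0$ from both endpoints (this uses $\alpha(1+\eta_0)<1$), so the displayed bound holds for all $s$ in a fixed neighbourhood $(t-\delta',t+\delta')$ of $t$, uniformly for large $j$; and one checks $t_j+v<t(1+\eta_0)<T$, so the required compact containment holds. Everything therefore reduces to the following: for each sufficiently small $\rho>0$, find an index $j$ and a radius $r$ \emph{comparable to $\rho$} — so that $B(0,\rho)\subset B(0,2r)$ while $r^{-2/(1-m)}$ is comparable to $\rho^{-2/(1-m)}$ — with $g_j(r)$ in the range $[\,(t-t_j)+\delta',\,((t-t_j)-\delta')/\alpha\,]$ of values $v$ for which the Harnack interval still contains $(t-\delta',t+\delta')$. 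Since each $g_j$ is continuous, nondecreasing, vanishes at $0^+$, and exceeds this range at $R_0$ once $j$ is large, an intermediate value argument in the radius produces, for each such $j$, an interval $[\,r_j^-,r_j^+\,]$ of admissible radii of bounded multiplicative width $r_j^+/r_j^-\le\alpha^{-1/\lambda}$, with $r_j^\pm\to0$ as $j\to\infty$.

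The point I expect to be the main obstacle is this ``comparability'', i.e.\ upgrading the admissible scales from the sequence $\{r_j^\pm\}$ to \emph{all} small $\rho$, ruling out multiplicative gaps. For a given small $\rho$ one takes the largest index $j_0$ with $r_{j_0}^+\ge\rho$ (finite, since $r_j^+\to0$): if $r_{j_0}^-\le\rho$ the radius $r=\rho$ works directly, and if $r_{j_0+1}^+\in[\rho/2,\rho)$ then $r=r_{j_0+1}^+$ works; the only remaining case is a sharp increase of the mass between $t_{j_0}$ and $t_{j_0+1}$, which through $r_{j_0+1}^+<\rho/2$ forces $\int_{B(0,2\rho)}\min\{u,K(j_0+1)\}(\cdot,t_{j_0+1})\,\d x\gtrsim\rho^{-\lambda/(1-m)}$, hence $\fint_{B(0,\rho)}u(\cdot,t_{j_0+1})\,\d x\gtrsim\rho^{-2/(1-m)}$, and one applies the weak Harnack again at a radius $\le\rho/2$ with starting time $t_{j_0+1}$ and propagates the resulting concentration back up to $B(0,\rho)$ for $s$ in a one‑sided neighbourhood of $t$; repeating the whole construction with $t$ replaced by a slightly later time then gives a two‑sided neighbourhood. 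This bookkeeping, which I expect to be the delicate part, yields the required $\varepsilon$ and $\delta$. Finally, since the argument shows in particular that $u$ is positive near $0$ on every time slice $\Omega\times\{s\}$ with $|s-t|<\delta$, Lemma~\ref{lem:alternatives} ensures that none of these slices vanishes identically, so no ``zero slice'' spoils the limit inferior, and the claim follows.
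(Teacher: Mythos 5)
Your strategy (apply the weak Harnack inequality, Lemma~\ref{l.wharnack_supercal}, directly from the blow-up times $t_j$ with truncations fixed per index $j$) is genuinely different from the paper's, and you correctly locate where it becomes delicate: the admissible radii for each $j$ form an interval of bounded multiplicative width $\le\alpha^{-1/\lambda}$, so as $j$ varies there may be multiplicative gaps in the set of scales you can reach, while the conclusion requires a bound $\inf_{B(x_o,2r)}u(\cdot,s)\gtrsim r^{-2/(1-m)}$ at \emph{every} small scale $r$ comparable to $|x-x_o|$, on a fixed time neighbourhood of $t$. The genuine gap is in your treatment of the remaining case $r_{j_0}^->\rho$, $r_{j_0+1}^+<\rho/2$. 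There you correctly deduce $\fint_{B(x_o,\rho)}u_{K(j_0+1)}(\cdot,t_{j_0+1})\,\d x\gtrsim\rho^{-2/(1-m)}$, but your fix --- apply the weak Harnack at an admissible radius $r\le\rho/2$ and ``propagate the resulting concentration back up to $B(x_o,\rho)$'' --- is unsupported. The Harnack application at radius $r$ only gives positivity on $B(x_o,2r)\subsetneq B(x_o,\rho)$, which misses exactly the points with $|x-x_o|\approx\rho$ that this case must handle; and the only propagation tool available, the expansion of positivity (Theorem~\ref{t.expansionofpositivity}), loses a fixed factor $\eta$ of the level per doubling of the radius and shifts forward in time, so with $\rho/r$ unbounded there is no reason the surviving level on $\{|x-x_o|\approx\rho\}$ is still of order $\rho^{-2/(1-m)}$, nor that it lands in the required time window. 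The missing idea is precisely the normalization-by-truncation trick: in that case the mass at time $t_{j_0+1}$ at scale $\rho$ is \emph{too large}, so one should lower the truncation level $k\le K(j_0+1)$ (intermediate value theorem in $k$) until $c_2\bigl(\fint_{B(x_o,\rho)}u_k(\cdot,t_{j_0+1})\,\d x\bigr)^{1-m}(\rho/2)^2$ equals the desired window length, and then apply Lemma~\ref{l.wharnack_supercal} at scale $\rho/2$ directly; you never adjust truncation levels to the scale, which is what your argument needs.

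For comparison, the paper avoids the covering problem altogether by inserting an intermediate step: using the $L^1$ Harnack inequality (Lemma~\ref{l.l1harnack}) applied to continuous Dirichlet approximations $h_{k_j,i}\le u_{k_j}$ built from below-approximations of the initial data at time $t_j$ (with the truncation level $k_j$ chosen so that $\fint_{B(x_o,r)}u_{k_j}(\cdot,t_j)\,\d x$ equals a prescribed multiple of $((t-t_j)/r^2)^{1/(1-m)}$), it first proves the persistence estimate $\fint_{B(x_o,2r)}u(\cdot,\tau)\,\d x\ge\bigl((t-t_o)/r^2\bigr)^{1/(1-m)}$ for \emph{all} small $r$ and \emph{all} $\tau\in(t_o,t)$. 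With this in hand it applies the weak Harnack from one fixed later time $\tau$, at every scale $r_j\to0$, again choosing the truncation level so that the intrinsic waiting time $\theta r_j^2$ equals a fixed $\eps$; the time window $[\tau+\alpha\eps,\tau+\eps]$ is then the same for all scales and the bound $c\,\eps^{1/(1-m)}r_j^{-2/(1-m)}$ follows uniformly, with no gaps to fill. Your proof would become correct if you either imported this persistence step, or at least replaced the ``propagate back up'' step by the re-truncation argument sketched above; as written, it does not close.
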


\begin{proof}\
Fix $r>0$ with $B(x_o,64r)\Subset\Omega$ and let $t\in (t_o,T)$. Then, for large enough $j$ we have that 
$$
\bint_{B(x_o,r)} u(x,t_j) \, \d x \geq 4 c \left( \frac{t-t_j}{r^2} \right)^\frac{1}{1-m}, 
$$
where $c = c(n,m)$ is the constant from Lemma~\ref{l.l1harnack} with integral averages. There exist truncations $u_{k_j}:=\min\{u,k_j\}$ such that
\begin{align} \label{e.trunc_eq}
\bint_{B(x_o,r)} u_{k_j}(x,t_j) \, \d x = 2 c \left( \frac{t-t_j}{r^2} \right)^\frac{1}{1-m}.
\end{align}
By lower semicontinuity of $u_{k_j}$, there exists a sequence of
Lipschitz functions $(\psi_{k_j,i})_{i\in\N}$, such that $0 \leq
\psi_{k_j,i} \leq \psi_{k_j,i+1} \leq u_{k_j}^m$ and $\psi_{k_j,i} \to
u_{k_j}^m$ pointwise in $\Omega_T$ as $i \to \infty$. By
Theorem~\ref{t.existence_continuous_weaksol}, there exists a unique continuous solution
$h_{k_j,i} \in C(\overline{B(x_o,2r) \times (t_j,T)})$, such that
$h_{k_j,i} = \psi_{k_j,i}^\frac{1}{m}$ on the parabolic boundary of
$B(x_o,2r) \times (t_j,T)$.
By the comparison principle from the
definition  of supercaloric functions, it then follows that $h_{k_j,i} \leq u_{k_j}$ for every $i\in\N$. By taking $s = t_j$ and $t<T$ in Lemma~\ref{l.l1harnack}, we have that  
\begin{align*}
\sup_{t_j<\tau<t} \bint_{B(x_o,r)} h_{k_j,i}(x,\tau)\, \d x &\leq c \inf_{t_j<\tau<t} \bint_{B(x_o,2r)} h_{k_j,i}(x,\tau)\, \d x + c\left( \frac{t-t_j}{r^2} \right)^\frac{1}{1-m} \\
&\leq c \inf_{t_j<\tau<t} \bint_{B(x_o,2r)} u_{k_j}(x,\tau)\, \d x + c\left( \frac{t-t_j}{r^2} \right)^\frac{1}{1-m},
\end{align*}
where comparison was used in the second inequality. The left-hand side we can further bound from below as
$$
\sup_{t_j<\tau<t} \bint_{B(x_o,r)} h_{k_j,i}(x,\tau)\, \d x \geq \bint_{B(x_o,r)} h_{k_j,i}(x,t_j)\, \d x = \bint_{B(x_o,r)} \psi_{k_j,i}^\frac{1}{m}(x,t_j)\, \d x.
$$
By combining the inequalities above and passing to the limit $i \to \infty$ and using~\eqref{e.trunc_eq}, we obtain
\begin{align*}
2 c \left( \frac{t-t_j}{r^2} \right)^\frac{1}{1-m} &= \bint_{B(x_o,r)} u_{k_j}(x,t_j) \, \d x = \lim_{i\to \infty} \bint_{B(x_o,r)} \psi_{k_j,i}^\frac{1}{m}(x,t_j)\, \d x \\
&\leq c\,\bint_{B(x_o,2r)} u(x,\tau)\, \d x + c\left( \frac{t-t_j}{r^2} \right)^\frac{1}{1-m}
\end{align*}
for any $\tau \in (t_j,t)$ and large enough $j$. From here it follows that 
$$
\left( \frac{t-t_j}{r^2} \right)^\frac{1}{1-m} \leq \bint_{B(x_o,2r)} u(x,\tau)\, \d x.
$$
By passing to the limit $j \to \infty$, this implies 
$$
r^2 \left( \bint_{B(x_o,2r)} u(x,\tau)\, \d x \right)^{1-m} \geq t-t_o, 
$$
for any $\tau \in (t_o,t)$. Observe that $r>0$ was arbitrary. By taking any sequence $(r_j)$ with $0<r_j \to 0$ as $j \to \infty$, we have 
$$
\liminf_{j\to \infty} r_j^2 \left( \bint_{B(x_o,2r_j)} u(x,\tau)\, \d x \right)^{1-m} \geq t -t_o > 0
$$
for any $\tau \in (t_o,t)$. For the constant $c_2=c_2(n,m)$ from
Lemma~\ref{l.wharnack_supercal}, we fix $\eps \in (0,\min\{c_2 (t-t_o), T-t\} )$, $\tau \in (t_o,t)$ and choose truncation levels $k_j$ such that 
$$
c_2 r_j^2 \left( \bint_{B(x_o,2r_j)} u_{k_j}(x,\tau)\, \d x \right)^{1-m} = \eps
$$
holds for all large enough $j$. Now we can apply Lemma~\ref{l.wharnack_supercal} and obtain 
\begin{align*}
\inf_{B(x_o,2r_j)} u(\cdot,s) \geq \inf_{B(x_o,2r_j)} u_{k_j}(\cdot,s) &\geq c(n,m) \bint_{B(x_o,2r_j)} u_{k_j}(x,\tau) \, \d x \\
&= c(n,m) \eps^\frac{1}{1-m} r_j^{- \frac{2}{1-m}}
\end{align*}
for any $s \in [\tau + \alpha \eps, \tau + \eps]$, where
$\alpha=\alpha(n,m) \in (0,1)$ is the constant from Lemma~\ref{l.wharnack_supercal}. Since the sequence $(r_j)$ and numbers $\tau \in (t_o,t)$ and $\eps \in (0,\min\{c_2 (t-t_o), T-t\} )$ could be chosen freely, the claim follows.
\end{proof}

Next we state characterizations for the complementary class.

\begin{theo} \label{t.complementary}
Let $\frac{n-2}{n} < m < 1$ and $\Omega$ be an open set in $\R^n$. Assume that $u$ is a nonnegative supercaloric function in $\Omega_T$. Then the following statements are equivalent:

\begin{itemize}
\item[(i)] $u\notin L^q_{\loc}(\Omega_T)$ for any $q > \frac{n}{2}(1-m)$,
\item[(ii)] $u \notin L^{\frac{n}{2}(1-m)}_{\loc}(\Omega_T)$,
\item[(iii')] for every $\alpha \in (\frac{n}{2}(1-m),1)$ there exist $\Omega' \Subset \Omega$ and $\delta \in (0,\frac{T}{2})$ such that
$$
\sup_{\delta< t< T-\delta} \int_{\Omega'} u(x,t)^\alpha \, \d x = \infty,
$$
\item[(iii)] there exist $\Omega' \Subset \Omega$ and $\delta \in (0,\frac{T}{2})$ such that
$$
\sup_{\delta< t< T-\delta} \int_{\Omega'} u(x,t) \, \d x = \infty,
	$$
\item[(iv)] There exists $(x_o,t_o) \in \Omega_T$ such that
$$
\liminf_{(x,s) \to (x_o,t)} u(x,s)| x-x_o |^\frac{2}{1-m} > 0
$$
for every $t > t_o$.
\end{itemize}
\end{theo}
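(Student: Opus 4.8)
The plan is to prove the cycle of implications
\[
\mathrm{(i)}\implies\mathrm{(iii')}\implies\mathrm{(iii)}\implies\mathrm{(iv)}\implies\mathrm{(ii)}\implies\mathrm{(i)},
\]
from which the equivalence of all five conditions follows. Three of these arrows come for free from Theorem~\ref{t.barenblatt}: writing $A,B,C',C$ for conditions (i), (ii), (iii'), (iii) of that theorem, the implication $\mathrm{(i)}\Rightarrow\mathrm{(iii')}$ is the contrapositive of $C'\Rightarrow A$, the implication $\mathrm{(iii')}\Rightarrow\mathrm{(iii)}$ is the contrapositive of $C\Rightarrow C'$, and $\mathrm{(ii)}\Rightarrow\mathrm{(i)}$ is the contrapositive of $A\Rightarrow B$. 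Since these three implications of Theorem~\ref{t.barenblatt} were proved there directly (Hölder's inequality and an elementary integration), without appealing to the present theorem, there is no circularity. Thus the real content lies in the two remaining arrows $\mathrm{(iv)}\Rightarrow\mathrm{(ii)}$ and $\mathrm{(iii)}\Rightarrow\mathrm{(iv)}$.

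For $\mathrm{(iv)}\Rightarrow\mathrm{(ii)}$ I would argue directly. Given $(x_o,t_o)$ as in (iv) and any $t\in(t_o,T)$, the definition of the $\liminf$ provides $c>0$ and $\rho,\eta>0$ with $B(x_o,\rho)\times(t-\eta,t+\eta)\Subset\Omega_T$ such that $u(x,s)\ge c\,|x-x_o|^{-2/(1-m)}$ for all $x\in B(x_o,\rho)\setminus\{x_o\}$ and $s\in(t-\eta,t+\eta)$. Since $\tfrac n2(1-m)\cdot\tfrac 2{1-m}=n$, integrating first in $x$ and then in $s$ gives
\[
\iint_{B(x_o,\rho)\times(t-\eta,t+\eta)}u^{\frac n2(1-m)}\,\d x\,\d s\ \ge\ c^{\frac n2(1-m)}\,2\eta\int_{B(x_o,\rho)}|x-x_o|^{-n}\,\d x\ =\ \infty,
\]
so that $u\notin L^{\frac n2(1-m)}_{\loc}(\Omega_T)$, which is (ii).

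For $\mathrm{(iii)}\Rightarrow\mathrm{(iv)}$, the crux, I would start from $\Omega'\Subset\Omega$ and $\delta\in(0,\tfrac T2)$ as in (iii) and choose times $t_j\in(\delta,T-\delta)$ with $\int_{\Omega'}u(x,t_j)\,\d x\to\infty$; after passing to a subsequence, $t_j\to t_o\in[\delta,T-\delta]\subset(0,T)$. Then I would introduce the finite Borel measures $\mu_j$ on the compact set $\overline{\Omega'}$ defined by $\mu_j(E)=\int_E u(x,t_j)\,\d x$ (so $\mu_j(\overline{\Omega'})\to\infty$), normalise them to probability measures $\nu_j=\mu_j/\mu_j(\overline{\Omega'})$, extract a weakly-$\ast$ convergent subsequence $\nu_j\to\nu$ with $\nu$ a probability measure on $\overline{\Omega'}\subset\Omega$, and fix $x_o\in\spt\nu$. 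For every $r>0$ one has $\nu(B(x_o,r))>0$, whence $\liminf_j\nu_j(B(x_o,r))\ge\nu(B(x_o,r))>0$ by weak-$\ast$ lower semicontinuity on open sets; combined with $\mu_j(\overline{\Omega'})\to\infty$ this forces $\int_{B(x_o,r)}u(x,t_j)\,\d x\ge\mu_j(B(x_o,r))\to\infty$. As this integral is nondecreasing in $r$, the divergence holds for every $r>0$ with $B(x_o,r)\Subset\Omega$, so Lemma~\ref{l.blowup} applies at $x_o$ along $(t_j)$ and yields precisely condition (iv).

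The hard part will be the localisation in $\mathrm{(iii)}\Rightarrow\mathrm{(iv)}$: the blow-up of $\int_{\Omega'}u(\cdot,t_j)\,\d x$ is a priori spread over all of $\Omega'$, and in order to feed Lemma~\ref{l.blowup} it must be concentrated at a single spatial point with blow-up on every surrounding ball; the weak-$\ast$ compactness argument above is the device that achieves this. One further point to check is the degenerate case in which $\int_{\Omega'}u(x,t)\,\d x$ is already infinite for some admissible $t$; there I would take $t_j\equiv t$ and run a nested-ball argument (balls of infinite integral shrinking to a point), which again reduces to Lemma~\ref{l.blowup}.
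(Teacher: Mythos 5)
Your proposal is correct, and its global architecture coincides with the paper's: the cycle runs through the same easy arrows (H\"older's inequality and an elementary estimate, which you repackage as contrapositives of Theorem~\ref{t.barenblatt} --- your remark that only the directly-proved implications of that theorem are used, so no circularity arises, is accurate, since the paper's proofs of the remaining Barenblatt implications do cite the present theorem), the implication (iv)$\Rightarrow$(ii) is the same pointwise lower bound $u\gtrsim |x-x_o|^{-2/(1-m)}$ integrated against $|x-x_o|^{-n}$, and (iii)$\Rightarrow$(iv) is reduced, exactly as in the paper, to Lemma~\ref{l.blowup} after locating a spatial point $x_o$ at which the $L^1$-blow-up concentrates on every small ball. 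The one genuine difference is the localisation device: the paper argues by contradiction, assuming every $y\in\overline{\Omega'}$ has a ball $B(y,r_y)$ with $\limsup_j\int_{B(y,r_y)}u(\cdot,t_j)\,\d x<\infty$ and extracting a finite subcover of $\overline{\Omega'}$, which bounds $\int_{\Omega'}u(\cdot,t_j)\,\d x$ and contradicts the blow-up; you instead normalise the slice measures, use weak-$\ast$ compactness on the compact set $\overline{\Omega'}$, and take a point in the support of the limit measure, invoking the portmanteau lower bound on open balls. Both work, but the paper's covering argument is more elementary and, notably, applies verbatim whether or not the individual slice integrals $\int_{\Omega'}u(\cdot,t_j)\,\d x$ are finite, whereas your normalisation forces the separate treatment of the degenerate case of an infinite slice integral; your sketched remedy there (constant time sequence plus a nested-ball or covering compactness argument) is fine and again feeds Lemma~\ref{l.blowup}, but it is an extra case the paper never has to open. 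The paper additionally records (iv)$\Rightarrow$(iii'), which is redundant once the cycle is closed, so its omission costs you nothing.
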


\begin{proof}
(ii) $\implies$ (i): H\"older inequality. \\
(iii') $\implies$ (iii): H\"older inequality. \\
(i) $\implies$ (iii'): Elementary. \\
(iv) $\implies$ (iii'): Fix $t > t_o$. Then, for some $r>0$ there exists $\eps > 0$ such that
$$
u(x,s) \left| x-x_o \right|^\frac{2}{1-m} \geq \eps
$$
whenever $(x,s) \in \left( B(x_o,r)\setminus \{x_o\} \right) \times\left(  (t-r,t+r) \setminus \{t_o\} \right) $.

This implies that 
$$
\int_{B(x_o,r)} u(x,t)^\alpha \, \d x = \infty
$$
for every $\alpha \geq \frac{n}{2}(1-m)$ and $t \in \left(  (t-r,t+r) \setminus \{t_o\} \right)$. This implies (iii'). \\
(iv) $\implies$ (ii): Same argument as above. \\
(iii) $\implies$ (iv): By (iii), there exists an instant of time $t_o
\in (0,T)$ and a sequence $(t_j)$ in $(0,T)$ with $t_j \to t_o$, such that 
$$
\lim_{j\to \infty} \int_{\Omega'} u(x,t_j) \, \d x = \infty
$$
for some $\Omega' \Subset \Omega$.

Let us fix a small $r_o >0$. We claim that there exists a point $x_o \in \overline{\Omega'}$ such that 
$$
\lim_{j\to\infty} \int_{B(x_o,r)} u(x,t_j) \, \d x = \infty
$$
for every $r \in (0,r_o)$. This can be shown by contradiction. Assume that for any $y \in \overline{\Omega'}$ there exists a radius $r_y \in (0,r_o)$ such that 
$$
\limsup_{j\to \infty} \int_{B(y,r_y)} u(x,t_j) \, \d x < \infty.
$$
Take an open cover $\{ B(y,r_y) : y \in \overline{\Omega'} \}$ of
$\overline{\Omega'}$. By compactness of $\overline{\Omega'}$, this has a finite subcover, say $\{ B(y_k,r_k) : k = 1,2,...,M \}$, which implies
$$
\int_{\Omega'} u(x,t_j) \, \d x \leq \sum_{k=1}^M \int_{B(y_k,r_k)} u(x,t_j) \, \d x,
$$
for any $j\in\N$. When $j \to \infty$, the left hand side tends to infinity while the right hand side stays bounded implying the desired contradiction. Thus we established that there exists a point $x_o \in \Omega$ such that 
$$
\lim_{j\to \infty} \int_{B(x_o,r)} u(x,t_j) \, \d x = \infty
$$
for arbitrarily small $r > 0$. Now we can use Lemma~\ref{l.blowup} to conclude the proof.
\end{proof}

\section{Pointwise behavior of supercaloric functions} \label{sec:pointwise}

In this section we show that every supercaloric function coincides
with its $\essliminf$-regularization, cf. Theorem~\ref{t.super_lsc} for weak supersolutions. Proofs are partly based
on~\cite{KinnunenLindqvist2006,KinnunenLindqvist_crelle}.

\begin{theo} \label{t.supercal-essliminf}
Let $0 < m< 1$, $\Omega \subset \R^n$ be an open set and $u : \Omega_T \to [0,\infty]$ a supercaloric function in $\Omega_T$. Then,
$$
u(x,t) = \essliminf_{\substack{(y,s) \to (x,t) \\ s<t}} u(y,s)\quad \text{for every } (x,t) \in \Omega_T.
$$
\end{theo}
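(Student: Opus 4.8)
The plan is to prove the identity by establishing the two inequalities separately; write $u_\bullet(x,t):=\essliminf_{(y,s)\to(x,t),\,s<t}u(y,s)$. The inequality $u\le u_\bullet$ needs nothing but semicontinuity: $u(x,t)\le\liminf_{(y,s)\to(x,t)}u(y,s)$ because $u$ is lower semicontinuous, and restricting the approach region to $\{s<t\}$ and then passing from $\liminf$ to $\essliminf$ only increases the right-hand side. For the converse I would first reduce to bounded functions. Since $\essliminf$ commutes with $a\mapsto\min\{a,k\}$ and with nondecreasing pointwise limits, the truncations $u_k:=\min\{u,k\}$ (which are again supercaloric, by the comparison principle together with the maximum principle) satisfy $(u_k)_\bullet=\min\{u_\bullet,k\}$, and it then suffices to prove the identity $v=v_\bullet$ for \emph{bounded} supercaloric $v$: applying it to $u_k$ with $k>u(x,t)$ gives $u(x,t)=u_k(x,t)=(u_k)_\bullet(x,t)=\min\{u_\bullet(x,t),k\}$, which forces $u_\bullet(x,t)=u(x,t)$ since $k>u(x,t)$; the point $u(x,t)=\infty$ is already settled by $u\le u_\bullet$.

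So let $v$ be bounded and supercaloric; by Theorem~\ref{t.bdd_super} it is a weak supersolution, so Theorem~\ref{t.super_lsc} provides its lower semicontinuous representative, which is exactly $v_\bullet$, coincides with $v$ a.e., and equals its own $\essliminf$-regularization. Since $v\le v_\bullet$ everywhere, the whole theorem reduces to showing $v_\bullet(x_0,t_0)\le v(x_0,t_0)$ at an arbitrary point $(x_0,t_0)$. If $v_\bullet(x_0,t_0)=0$ there is nothing to prove, so assume $v_\bullet(x_0,t_0)>0$. I claim this forces $v(\cdot,t_0)>0$ on the whole component $\Omega^j\ni x_0$: otherwise $v(\cdot,t_0)\equiv0$ on $\Omega^j$ by Lemma~\ref{lem:alternatives}, and then Lemma~\ref{l.endpoint_vanish} (together with the structure of $\Lambda_+^j$ from Corollary~\ref{c.positivity-intervals}) gives $\int_K v(\cdot,s)\,\d x\to0$ as $s\uparrow t_0$, so $v\to0$ in measure along backward slices and $v_\bullet(x_0,t_0)=0$, a contradiction. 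Hence $(x_0,t_0)$ lies in the positivity set, where $v$ is a positive bounded weak supersolution and the porous medium equation becomes, after the substitution $w=v^m$, uniformly parabolic on compact subsets.

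On this positivity set I would run a Poisson-modification/comparison argument. Using lower semicontinuity of $v_\bullet$ one sees that $m_r:=\essinf_{B(x_0,2r)\times(t_0-2r^2,t_0)}v$ increases to $v_\bullet(x_0,t_0)$ as $r\downarrow0$. Pick $\tau_r\uparrow t_0$ with $t_0-\tau_r=o(r^2)$ such that $v(\cdot,\tau_r)\ge m_r$ a.e.\ on $B(x_0,2r)$ (possible for a.e.\ $\tau_r$, by Fubini), and let $h_r\in C(\overline{\widetilde Q_r})$ be the continuous weak solution on the thin cylinder $\widetilde Q_r:=B(x_0,r)\times(\tau_r,\tau_r+2(t_0-\tau_r))\ni(x_0,t_0)$ produced by Theorem~\ref{t.existence_continuous_weaksol} and Theorem~\ref{t.stability_existence} as the increasing limit of solutions whose continuous boundary data rise to the trace of $v$ on $\partial_p\widetilde Q_r$. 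Since $v$ is supercaloric, comparison yields $h_r\le v$, hence $h_r(x_0,t_0)\le v(x_0,t_0)$; on the other hand, comparing $h_r$ from below with the solution having initial datum $m_r$ on $B(x_0,r)$ and zero lateral datum, and using that this latter solution is still $\ge(1-\varepsilon_r)m_r$ at the centre after the very short time $t_0-\tau_r$ (a quantitative interior estimate, available because of the uniform parabolicity), gives $\liminf_{r\to0}h_r(x_0,t_0)\ge\lim_r m_r=v_\bullet(x_0,t_0)$. Combining, $v(x_0,t_0)\ge v_\bullet(x_0,t_0)$, as required.

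The main obstacle is this last step: one must show that the value at the centre of the Poisson modification in the shrinking thin cylinders genuinely converges to the \emph{essential} (rather than merely topological) lower limit of $v$ from the past, even though the boundary data are only lower semicontinuous and may be ill-behaved on the lower-dimensional lateral boundary. This is why $h_r$ should be obtained from the $L^2$/trace solution theory, so that null sets on $\partial_p\widetilde Q_r$ are invisible and only the a.e.\ lower bound $v(\cdot,\tau_r)\ge m_r$ on the initial slice is used, and why it is essential that on the positivity set the equation is uniformly parabolic, making the required interior and continuity estimates available; off the positivity set the statement has already been reduced to Lemma~\ref{l.endpoint_vanish}.
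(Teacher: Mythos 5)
Your overall architecture is sound in its easy parts and they mirror the paper: the inequality $u\le\essliminf$ from lower semicontinuity, the reduction to bounded functions via truncation, and the degenerate case (if $v(\cdot,t_0)\equiv0$ on the component, then Lemma~\ref{lem:alternatives} plus Lemma~\ref{l.endpoint_vanish} force the $\essliminf$ to vanish) are all correct and are essentially the paper's second case. The gap is in the only hard case, $v_\bullet(x_0,t_0)>0$ at a point of the positivity set. Your argument hinges on the claim that the solution with initial datum $m_r$ on $B(x_0,r)$ and zero lateral datum is still $\ge(1-\eps_r)m_r$ at the centre after time $t_0-\tau_r=o(r^2)$, with $\eps_r\to0$. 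This sharp, constant-tending-to-one estimate is nowhere proved and is not available from the tools in the paper: the expansion of positivity (Theorem~\ref{t.expansionofpositivity}) and the Harnack-type Lemmas~\ref{l.l1linfty_harnack}, \ref{l.l1harnack}, \ref{l.wharnack_supercal} only give lower bounds with fixed structural constants $\eta(n,m)<1$, which in your scheme would only yield $v(x_0,t_0)\ge\eta\,v_\bullet(x_0,t_0)$ and does not close the argument. Moreover the justification you offer ("uniform parabolicity after $w=v^m$") does not apply to this barrier, which has zero lateral data and hence is itself degenerate near the lateral boundary; a correct proof would need something like intrinsic rescaling to an $r$-independent model problem with continuous compatible data and continuity up to the initial time, which you do not supply. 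A second, related soft spot: the comparison $h_r\ge$ barrier is asserted using only the a.e.\ bound $v(\cdot,\tau_r)\ge m_r$, but your approximating solutions $h_{r,i}$ have continuous data $\psi_i\uparrow v$ that need \emph{not} dominate the barrier's initial datum, so the comparison must be performed on the limit $h_r$ via Lemma~\ref{l.comparison-weaksupersub}; this requires global energy/trace information on $h_r$ up to $\partial_p\widetilde Q_r$ (Proposition~\ref{p.harnack-convergence} only gives $h_r^m\in L^2 H^1_{\loc}$) and a verification of the $L^1$ initial condition, none of which is carried out.

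For contrast, the paper sidesteps the sharp-constant issue entirely: for $\gamma<\lambda$ it truncates to $v=\min\{u,\gamma\}$, so that $v\equiv\gamma$ a.e.\ in a small backward cylinder inside the positivity set, and then Lemma~\ref{l.supercal_ae_constant} (proved via the Poisson modification of Proposition~\ref{p.poisson-mod}, weak convergence of gradients, and an Oleinik-type test function giving an exact energy identity) shows $v\equiv\gamma$ \emph{everywhere} in $Q'\times(t',t_0]$, hence $u(x_0,t_0)\ge\gamma$, with no constant loss. Your proposal would become a genuinely different proof if you supplied the missing quantitative short-time centre estimate (e.g.\ by the scaling argument indicated above, using that $m_r$ is bounded away from zero) and the rigorous comparison of $h_r$ with the barrier; as written, these two steps are a real gap rather than routine details.
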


First we prove existence and properties of a Poisson modification we will use in the proof.

\begin{prop} \label{p.harnack-convergence}
Let $0<m<1$. Let $(h_k)$ be a nondecreasing sequence of continuous weak solutions in $\Omega_T$, i.e. $h_k^m \in L^2_{\loc}(0,T;H^1_{\loc}(\Omega))$ for each $k\in \N$, and suppose that the pointwise limit $\lim_{k\to \infty} h_k = h$ is bounded in $\Omega_T$. Then, $h$ is a locally H\"older continuous weak solution in $\Omega_T$ with $h^m \in L^2(0,T;H^1_{\loc}(\Omega))$, and $\nabla h_k^m \rightharpoonup \nabla h^m$ weakly in $L^2_{\loc}(\Omega_T)$. 
\end{prop}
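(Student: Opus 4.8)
The plan is to combine the interior regularity theory for the PME in the fast diffusion range with the Caccioppoli inequality for bounded supersolutions to obtain estimates that are \emph{uniform} along the sequence, and then to pass to the limit in the weak formulation.

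First I would record the uniform bound: since $(h_k)$ is nondecreasing with pointwise limit $h$, one has $0\le h_k\le h\le M$ in $\Omega_T$ with $M:=\|h\|_{L^\infty(\Omega_T)}<\infty$. Each $h_k$ is a continuous weak solution, hence a bounded weak supersolution, so by the interior H\"older estimate \cite[Theorem 18.1, Chapter 6]{DGV} the family $(h_k)$ is locally equi-H\"older-continuous: on every $K\Subset\Omega_T$ there are constants $C$ and $\alpha\in(0,1)$ depending only on $n$, $m$, $M$ and $\operatorname{dist}(K,\partial_p\Omega_T)$, but not on $k$, such that $|h_k(z_1)-h_k(z_2)|\le C|z_1-z_2|^\alpha$ for $z_1,z_2\in K$. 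Passing to the pointwise limit, $h$ inherits this estimate and is therefore locally H\"older continuous (and, by Dini's theorem, $h_k\to h$ locally uniformly, although this is not needed).

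Next I would exploit the Caccioppoli inequality, Lemma~\ref{l.bounded_caccioppoli}, applied to $h_k\le M$: for every $\xi\in C^\infty_0(\Omega)$ with $\xi\ge0$ and any $0<t_1<t_2<T$,
\[
\int_{t_1}^{t_2}\!\!\int_\Omega\xi^2|\nabla h_k^m|^2\,\d x\d t\le C M^{2m}T\int_\Omega|\nabla\xi|^2\,\d x+C M^{m+1}\int_\Omega\xi^2\,\d x,
\]
a bound uniform in $k$ and in $t_1,t_2$; letting $t_1\downarrow0$, $t_2\uparrow T$ shows $(\nabla h_k^m)$ is bounded in $L^2((0,T)\times K,\R^n)$ for every $K\Subset\Omega$. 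By weak compactness some subsequence satisfies $\nabla h_{k_j}^m\rightharpoonup G$ weakly in $L^2_{\loc}(\Omega_T,\R^n)$; since $s\mapsto s^m$ is continuous, $h_k^m\to h^m$ pointwise, and $0\le h_k^m\le M^m$, so $h_k^m\to h^m$ in $L^2_{\loc}(\Omega_T)$ by dominated convergence. Integrating by parts against $\phi\in C^\infty_0(\Omega_T,\R^n)$ in $\iint_{\Omega_T}h_{k_j}^m\,\Div\phi\,\d x\d t=-\iint_{\Omega_T}\nabla h_{k_j}^m\cdot\phi\,\d x\d t$ and passing to the limit identifies $G=\nabla h^m$ in the distributional sense; hence $h^m\in L^2(0,T;H^1_{\loc}(\Omega))$ by weak lower semicontinuity of the norm together with the boundedness of $h^m$. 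Since this limit is independent of the subsequence, the full sequence satisfies $\nabla h_k^m\rightharpoonup\nabla h^m$ in $L^2_{\loc}(\Omega_T,\R^n)$.

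Finally I would pass to the limit in $\iint_{\Omega_T}(-h_k\partial_t\varphi+\nabla h_k^m\cdot\nabla\varphi)\,\d x\d t=0$ for $\varphi\in C^\infty_0(\Omega_T)$: the first term converges to $\iint_{\Omega_T}h\,\partial_t\varphi\,\d x\d t$ by dominated convergence ($h_k\to h$ in $L^1_{\loc}$), and the second to $\iint_{\Omega_T}\nabla h^m\cdot\nabla\varphi\,\d x\d t$ by the weak convergence just established. Together with $h^m\in L^2(0,T;H^1_{\loc}(\Omega))\cap L^{1/m}_{\loc}(\Omega_T)$ (the last inclusion being immediate from boundedness) this shows $h$ is a weak solution, and it is locally H\"older continuous by the first step. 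The one genuinely delicate point is the equicontinuity in the first step: it relies on the H\"older estimate of \cite{DGV} having a constant that depends on $h_k$ only through the uniform bound $M$, so that it is uniform in $k$ and survives the passage to the limit; everything else is routine weak-compactness and dominated-convergence bookkeeping.
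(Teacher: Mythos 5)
Your proposal is correct and takes essentially the same route as the paper: uniform interior H\"older estimates from \cite[Theorem 18.1, Chapter 6]{DGV} (depending only on the bound $M$) give the equicontinuity and the continuity of $h$, while Lemma~\ref{l.bounded_caccioppoli} supplies the $k$-independent energy bound yielding $\nabla h_k^m \rightharpoonup \nabla h^m$ in $L^2_{\loc}$ and the passage to the limit in the weak formulation. The only cosmetic difference is that the paper invokes Arzel\`a--Ascoli and identifies the uniform limit with $h$, whereas you pass the H\"older estimate directly to the pointwise limit and spell out the weak-compactness and identification steps in more detail.
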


\begin{proof}
First observe that the sequence $(h_k)$ is bounded since $h_k \leq h$
for every $k\in \N$. By~\cite[Theorem 18.1, Chapter 6]{DGV} it follows
that the family $(h_k)$ is locally equicontinuous. Arzel\'a-Ascoli
theorem implies that there exists a subsequence $h_{k_i}$ that
converges uniformly to some function $g$, which is locally continuous
in $\Omega_T$ by the uniform limit theorem. Furthermore, since $\lim_{k\to \infty} h_k = h$ pointwise, it follows that $g = h$. Lemma~\ref{l.bounded_caccioppoli} implies that $\nabla h_k^m \rightharpoonup \nabla h^m$ weakly in $L^2_{\loc}(\Omega_T)$, which further implies that $h$ is a weak solution in $\Omega_T$ and $h^m \in L^2(0,T;H^1_{\loc}(\Omega))$. As a bounded weak solution $h$ is locally H\"older continuous by~\cite[Theorem 18.1, Chapter 6]{DGV}. 
\end{proof}

\begin{prop} \label{p.poisson-mod}
Let $0<m<1$ and $Q_{t_1,t_2} \Subset \Omega_T$ be a $C^{2,\alpha}$-cylinder. Let $(v_k)$ be a nondecreasing sequence of continuous weak supersolutions in $\Omega_T$ such that $\lim_{k\to \infty} v_k = v$, in which $v$ is a bounded supercaloric function in $\Omega_T$. Then, there exists a Poisson modification defined as
\[ u_P^k =
\begin{cases}
h_k &\text{ in } Q \times (t_1,t_2], \\
v_k &\text{ otherwise},
\end{cases}
\]
where $h_k \in C(\overline{Q_{t_1,t_2}})$ is a weak solution in $Q_{t_1,t_2}$ with $h_k^m \in L^2(t_1,t_2;H^1(Q))$ such that $h_k = v_k$ on $\partial_p Q_{t_1,t_2}$ and $h_k^m - v_k^m \in L^2(t_1,t_2;H^1_0(\Omega))$. Furthermore, $u_P^k$ is nondecreasing and the limit $u_P = \lim_{k\to \infty} u_P^k$ can be written as
\[ u_P =
\begin{cases}
h &\text{ in } Q \times (t_1,t_2], \\
v &\text{ otherwise},
\end{cases}
\]
in which $h \in C(Q_{t_1,t_2})$ is a weak solution in $Q_{t_1,t_2}$ with $h^m \in L^2(t_1,t_2;H^1(Q))$. Moreover, $u_P$ is a bounded supercaloric function in $\Omega_T$ and $\nabla (u_P^k)^m \rightharpoonup \nabla u_P^m$ weakly in $L^2_{\loc}(\Omega_T)$. In particular,
$$
\nabla h_k^m \rightharpoonup \nabla h^m \quad \text{ weakly in } L^2(Q_{t_1,t_2}).
$$
\end{prop}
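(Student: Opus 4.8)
The plan is to construct the continuous weak solutions $h_k$ by solving Cauchy--Dirichlet problems in $Q_{t_1,t_2}$ with boundary datum $v_k$, to check that each Poisson modification $u_P^k$ is a bounded supercaloric function and that the sequence $(u_P^k)_k$ is nondecreasing, and then to pass to the limit $k\to\infty$. Put $M:=\|v\|_{L^\infty(\Omega_T)}$, so that $0\le v_k\le v\le M$; note that each $v_k$, being a continuous weak supersolution, is supercaloric (it is lower semicontinuous and equals its $\essliminf$-regularization), and that $v_k^m\in L^2(t_1,t_2;H^1(Q))$ because $\overline Q\times[t_1,t_2]\subset\Omega_T$. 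Fix $k$. Approximating the continuous datum $v_k$ on $\partial_pQ_{t_1,t_2}$ by more regular functions so that Theorem~\ref{t.existence_continuous_weaksol} applies, and combining Theorem~\ref{t.stability_existence} with uniform energy estimates for the approximating solutions, one produces a weak solution $h_k$ in $Q_{t_1,t_2}$ that is continuous on $\overline{Q_{t_1,t_2}}$ (locally H\"older continuous inside), satisfies $h_k=v_k$ on $\partial_pQ_{t_1,t_2}$, $h_k^m\in L^2(t_1,t_2;H^1(Q))$ and $h_k^m-v_k^m\in L^2(t_1,t_2;H^1_0(Q))$. Since $v_k$ is supercaloric and $h_k=v_k$ on $\partial_pQ_{t_1,t_2}$, the comparison principle in Definition~\ref{d.supercal} forces $h_k\le v_k\le M$; and since $v_k\le v_{k+1}$ on $\partial_pQ_{t_1,t_2}$, Lemma~\ref{l.supersubcal-cylinder-comparison} gives $h_k\le h_{k+1}$ in $Q_{t_1,t_2}$.

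Define $u_P^k$ by the first displayed formula. It is bounded by $M$, and it is lower semicontinuous: on $\partial Q\times[t_1,t_2]$ and on $\overline Q\times\{t_1\}$ the two branches of the definition agree because $h_k=v_k$ there, while at a point $(x,t_2)$ with $x\in Q$ the function takes the value $h_k(x,t_2)\le v_k(x,t_2)$, so the upward jump in time across $Q\times\{t_2\}$ is compatible with lower semicontinuity. To verify the comparison principle, let $Q''_{r_1,r_2}\Subset\Omega_T$ be a subcylinder and $w\in C(\overline{Q''_{r_1,r_2}})$ a weak solution with $w\le u_P^k$ on $\partial_pQ''_{r_1,r_2}$; then $w$ is both super- and subcaloric. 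From $u_P^k\le v_k$ and the supercaloricity of $v_k$ one gets $w\le v_k$ in $Q''_{r_1,r_2}$, hence $w\le u_P^k$ wherever $u_P^k=v_k$. On the set $V:=Q''_{r_1,r_2}\cap(Q\times(t_1,t_2))$, which is again a space-time cylinder (the intersection of two such), $h_k$ is supercaloric and $w$ subcaloric, and $w\le h_k$ holds on $\partial_pV$: each boundary point of $V$ lies either where $h_k=v_k$ — namely on $\partial_pQ_{t_1,t_2}$ or on $\overline Q\times\{t_1\}$ — so that $w\le v_k=h_k$ by the previous step, or on the part of $\partial_pQ''_{r_1,r_2}$ inside $Q\times(t_1,t_2]$, where $w\le u_P^k=h_k$. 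Lemma~\ref{l.supersubcal-cylinder-comparison} then yields $w\le h_k$ in $V$ and, by continuity, on $Q\times\{t_2\}$ as well; altogether $w\le u_P^k$ in $Q''_{r_1,r_2}$, so $u_P^k$ is supercaloric. Finally $u_P^k\le u_P^{k+1}$, since $v_k\le v_{k+1}$ off the modification region and $h_k\le h_{k+1}$ on it.

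By monotonicity $u_P:=\lim_ku_P^k$ exists, is bounded by $M$, and equals $h:=\lim_kh_k$ on $Q\times(t_1,t_2]$ and $v=\lim_kv_k$ elsewhere, which is the asserted form. The $(h_k)$ form a nondecreasing sequence of continuous weak solutions in $Q_{t_1,t_2}$ with bounded limit, so by Proposition~\ref{p.harnack-convergence} (applied with $Q_{t_1,t_2}$ in place of $\Omega_T$) $h$ is a weak solution in $Q_{t_1,t_2}$, continuous there; and the uniform-in-$k$ energy estimate — using that $\|\nabla v_k^m\|_{L^2(Q_{t_1,t_2})}$ is bounded by the Caccioppoli inequality of Lemma~\ref{l.bounded_caccioppoli} for the bounded supersolutions $v_k$ — upgrades this to $h^m\in L^2(t_1,t_2;H^1(Q))$. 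As an increasing limit of supercaloric functions whose limit is bounded (hence finite on a dense set), $u_P$ is a bounded supercaloric function by Lemma~\ref{l.superc_increasing_lim}. For the weak convergence of the gradients, Theorem~\ref{t.bdd_super} makes $u_P^k$ and $u_P$ weak supersolutions, so the Caccioppoli inequality of Lemma~\ref{l.bounded_caccioppoli} bounds $\nabla(u_P^k)^m$ uniformly in $L^2_{\loc}(\Omega_T)$; since $(u_P^k)^m\to u_P^m$ in $L^1_{\loc}(\Omega_T)$ by dominated convergence, every weak limit point of $\nabla(u_P^k)^m$ coincides with $\nabla u_P^m$ in the sense of~\eqref{e.veryweak-gradient}, so the whole sequence converges weakly in $L^2_{\loc}(\Omega_T)$ to $\nabla u_P^m$, and restriction to $Q_{t_1,t_2}$ gives $\nabla h_k^m\rightharpoonup\nabla h^m$ weakly in $L^2(Q_{t_1,t_2})$.

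The main obstacle is the verification that $u_P^k$ is supercaloric: one cannot glue the weak formulations of the solution $h_k$ and the supersolution $v_k$ across $\partial_pQ_{t_1,t_2}$, in part because $u_P^k$ jumps upward in time across $Q\times\{t_2\}$, so the argument must instead run the parabolic comparison principle twice — once against $v_k$, once against $h_k$ — the second time on the overlap region $Q''_{r_1,r_2}\cap(Q\times(t_1,t_2))$. The point that makes this feasible is that this overlap is again a cylinder, so the non-strict cylinder comparison principle of Lemma~\ref{l.supersubcal-cylinder-comparison} is applicable; the case analysis on its parabolic boundary is the delicate part. A secondary, more routine, difficulty is securing in the first step the global Sobolev regularity $h_k^m\in L^2(t_1,t_2;H^1(Q))$ and $h_k^m-v_k^m\in L^2(t_1,t_2;H^1_0(Q))$, which needs uniform energy estimates up to the lateral boundary for the approximating solutions rather than merely the interior bound provided by Proposition~\ref{p.harnack-convergence}.
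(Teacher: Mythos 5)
Most of your argument coincides with the paper's proof: you approximate the boundary datum $v_k$, use Theorem~\ref{t.existence_continuous_weaksol} and Theorem~\ref{t.stability_existence} to produce $h_k\in C(\overline{Q_{t_1,t_2}})$, compare against $v_k$ and against $h_{k+1}$, verify the comparison property of $u_P^k$ by running the comparison twice (once against $v_k$, once against $h_k$ on the overlap cylinder, then pass to the top slice by continuity), and pass to the limit via Proposition~\ref{p.harnack-convergence}, Lemma~\ref{l.superc_increasing_lim} and Lemma~\ref{l.bounded_caccioppoli}. Your boundary case analysis for the overlap cylinder is correct, and checking (iii) directly for general cylinders instead of (iii') is harmless.

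The genuine gap is the global Sobolev regularity $h_k^m\in L^2(t_1,t_2;H^1(Q))$, $h_k^m-v_k^m\in L^2(t_1,t_2;H^1_0(Q))$ and, later, $h^m\in L^2(t_1,t_2;H^1(Q))$. Your proposed mechanism --- ``uniform energy estimates up to the lateral boundary for the approximating solutions'', justified by the Caccioppoli bound of Lemma~\ref{l.bounded_caccioppoli} for the supersolutions $v_k$ --- does not deliver this. Lemma~\ref{l.bounded_caccioppoli} is an interior estimate (spatial cutoff $\xi\in C^\infty_0$), so a bound on $\|\nabla v_k^m\|_{L^2(Q_{t_1,t_2})}$ gives no control of $\nabla h_k^m$ near the lateral boundary $\partial Q\times(t_1,t_2)$; and the natural global energy estimate for the approximating Cauchy--Dirichlet problems involves $\partial_t$ of the regularized boundary data, whose norms blow up as the Lipschitz approximations converge to the merely continuous $v_k^m$, so no uniform-in-$i$ bound is available. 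This step is not routine; the paper avoids it altogether by reversing the order of the argument: it first proves that $u_P^k$ is a \emph{bounded supercaloric function} using only the local regularity of $h_k$ from Proposition~\ref{p.harnack-convergence}, then invokes Theorem~\ref{t.bdd_super} to conclude that $u_P^k$ is a weak supersolution in all of $\Omega_T$, hence $(u_P^k)^m\in L^2_{\loc}(0,T;H^1_{\loc}(\Omega))$; since $\overline{Q}\times[t_1,t_2]$ is compactly contained in $\Omega_T$, restriction gives $h_k^m\in L^2(t_1,t_2;H^1(Q))$ (and the $H^1_0$ statement then follows from $h_k=v_k$ on the lateral boundary together with continuity up to $\overline{Q_{t_1,t_2}}$), and the same device applied to the limit $u_P$ yields $h^m\in L^2(t_1,t_2;H^1(Q))$. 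In your write-up the up-to-boundary regularity is asserted already in the construction of $h_k$, before any tool capable of producing it is in place, so as written the first step claims more than you can justify.
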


\begin{proof}
Since $v_k$ is continuous, there exists functions $\psi_k^i \in C^{0,1}(\Omega_T)$ such that $0 \leq \psi_k^i \leq \psi_k^{i+1} \leq v_k^m$ everywhere in $\Omega_T$ for every $i \in \N$, 
$$
\lim_{i\to \infty} \psi_k^i = v_k^m\quad \text{ everywhere in } \Omega_T
$$
and
$$
\sup_{\partial_p Q_{t_1,t_2}}|(\psi_k^i)^\frac{1}{m} - v_k| \xrightarrow{i\to \infty} 0.
$$
Let $h_k^i$ be a weak solution in $Q_{t_1,t_2}$ taking the boundary values $(\psi_k^i)^\frac{1}{m}$ on $\overline{\partial_p Q_{t_1,t_2}}$ both continuously and in Sobolev sense (Theorem~\ref{t.existence_continuous_weaksol}). Denote 
\[ u_P^{k,i} =
\begin{cases}
h_k^i &\text{ in } Q\times(t_1,t_2], \\
v_k &\text{ otherwise}.
\end{cases}
\]
The sequence $h_k^i$ is increasing w.r.t $i$ in $Q_{t_1,t_2}$ by the
aforementioned theorem and $h_k^i \in C(\overline{Q_{t_1,t_2}})$ for
each $i\in\N$. By Theorem~\ref{t.stability_existence}, $h_k^i \xrightarrow{i\to \infty} h_k$ pointwise everywhere in $Q_{t_1,t_2}$, where $h_k \in C(\overline{Q_{t_1,t_2}})$ is a (unique) very weak solution in $Q_{t_1,t_2}$ such that $h_k = v_k$ on $\overline{\partial_p Q_{t_1,t_2}}$. Since the sequence $(h_k^i)$ w.r.t $i$ satisfies the assumptions in
Proposition~\ref{p.harnack-convergence} in $Q_{t_1,t_2}$, we have that
$h_k$ is a locally H\"older continuous weak solution with $h_k^m \in
L^2(t_1,t_2; H^1_{\loc}(Q))$. Then
Theorem~\ref{t.stability_existence} implies $u_P^k \in C
(\Omega_T\setminus(Q\times\{t_2\}))$,
$u_P^{k,i}$ is increasing w.r.t. $i$ and $\lim_{i\to \infty} u_P^{k,i} = u_P^k$ pointwise in $\Omega_T$. Since $v_k$ is supercaloric in $\Omega_T$ by Theorem~\ref{l.weasuper-is-supercal}, we have $u_P^{k,i} \leq v_k \leq v$ everywhere in $\Omega_T$. This implies that also $u_P^k \leq v_k \leq v$.

Next we show that $u_P^k$ is a (bounded) supercaloric function. Since
$u_P^k$ is lower semicontinuous and bounded, properties (i) and (ii)
in Definition~\ref{d.supercal} are clear. For (iii'), let $V_{s_1,s_2}
\Subset \Omega_T$ be a $C^{2,\alpha}$-cylinder and $g \in
C(\overline{V_{s_1,s_2}})$ be a weak solution in $V_{s_1,s_2}$ with $g
\leq u_P^k$ on $\partial_p V_{s_1,s_2}$. Suppose that $V_{s_1,s_2}$
intersects both $Q_{t_1,t_2}$ and its complement since otherwise the
claim is clear. Now since $v_k$ is supercaloric, we immediately have
$g \leq v_k$ in $\Omega_T$, which implies $g \leq u_P^k$ in
$V_{s_1,s_2} \setminus (Q \times (t_1,t_2])$. Since $h_k$ is
supercaloric and $g$ is subcaloric in $Q_{t_1,t_2}$, we can use
Theorem~\ref{l.supersubcal-cylinder-comparison} to conclude that $g
\leq h_k$ in $V_{s_1,s_2} \cap Q_{t_1,t_2}$. Finally, we have $g\leq h_k$ on the slice $(V\cap Q) \times \{t_2\}$ by continuity of $g$ and $h_k$. This implies that $g \leq u_P^k$ in $V_{s_1,s_2}$, which shows that $u_P^k$ is supercaloric.

Since $u_P^k$ is a bounded supercaloric function,
Theorem~\ref{t.bdd_super} implies that $u_P^k \in
L^2_{\loc}(0,T;H^1_{\loc}(\Omega))$. Further $h_k^m \in
L^2(t_1,t_2;H^1(Q))$ since $u_P^k = h_k$ in
$Q_{t_1,t_2}$. Lemma~\ref{l.comparison-weaksupersub} implies that $h_k
\leq h_{k+1} \leq \sup_{\partial_p Q_{t_1,t_2}} v$ for every $k \in
\N$ since the sequence $(v_k)$ is increasing. Since the sequence
$(h_k)$ satisfies the assumptions in Proposition~\ref{p.harnack-convergence} in $Q_{t_1,t_2}$, we have that $h = \lim_{k \to \infty} h_k$ is a locally H\"older continuous weak solution in $Q_{t_1,t_2}$ with $h^m\in L^2(t_1,t_2;H^1_{\loc}(Q))$.

As $(u_P^k)$ is an increasing and uniformly bounded sequence, Lemma~\ref{l.superc_increasing_lim} implies that the limit $u_P$ is a bounded supercaloric function. Furthermore, Theorem~\ref{t.bdd_super} implies that $u_P^m \in L^2_{\loc}(0,T;H^1_{\loc}(\Omega))$. This further implies $h^m \in L^2(t_1,t_2;H^1(Q))$ since $u_P = h$ in $Q_{t_1,t_2}$.

Since $(u_P^k)$ is an increasing, uniformly bounded sequence of weak supersolutions in $\Omega_T$ converging to $u_P$, Lemma~\ref{l.bounded_caccioppoli} implies that $\nabla (u_P^k)^m \rightharpoonup \nabla u_P^m$ weakly in $L^2_{\loc}(\Omega_T)$. This implies that $\nabla h_k^m \rightharpoonup \nabla h^m$ weakly in $L^2(Q_{t_1,t_2})$ since $u_P^k = h_k$ and $u_P = h$ in $Q_{t_1,t_2}$.

\end{proof}

Before a proof of Theorem~\ref{t.supercal-essliminf}, we state and prove another auxiliary result.

\begin{lem} \label{l.supercal_ae_constant}
Let $0 < m< 1$ and $\Omega \subset \R^n$ be a connected open
set. Suppose that $v : \Omega_T \to [0,\infty]$ is a supercaloric
function in $\Omega_T$ and let $Q_{t_1,t_2} \Subset \Omega_T$ such
that $[t_1,t_2]$ is contained in the positivity set
$\Lambda_+$ defined in~\eqref{def:Lambda+}. Assume that
$$
v = \gamma \quad \text{a.e. in } Q_{t_1,t_2}
$$
for some $\gamma \in (0,\infty)$. Then,
$$
v(x,t) = \gamma\quad \text{for every } (x,t) \in Q \times (t_1,t_2].
$$
\end{lem}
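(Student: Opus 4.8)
The plan is to establish the two pointwise inequalities $v\le\gamma$ and $v\ge\gamma$ on $Q\times(t_1,t_2]$ separately; only the second requires real work. For the first, since $v=\gamma$ a.e.\ in $Q_{t_1,t_2}$, every point $(x,t)\in Q\times(t_1,t_2]$ is the limit of points of $Q_{t_1,t_2}$ where $v=\gamma$, so lower semicontinuity of $v$ forces $v(x,t)\le\gamma$; in particular $v$ is bounded on $Q\times(t_1,t_2]$. I also record a consequence of the hypothesis $[t_1,t_2]\subset\Lambda_+$: since $\Lambda_+$ is open (Corollary~\ref{c.positivity-intervals}) and $Q_{t_1,t_2}\Subset\Omega_T$, there is $\varepsilon>0$ with $\overline Q\times[t_1-\varepsilon,t_2+\varepsilon]$ a compact subset of $\Omega_T$ on each time slice of which $v>0$; by lower semicontinuity on this compact set, $v\ge\mu>0$ there. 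If $\mu=\gamma$ there is nothing to prove, so assume $\mu<\gamma$.

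For the lower bound I first treat an interior point $(x_0,t_0)\in Q\times(t_1,t_2)$. The idea is to squeeze a continuous weak solution $h\equiv\gamma$ underneath $v$: since $\{v\ne\gamma\}$ is Lebesgue-null in $Q_{t_1,t_2}$, a Fubini argument in the radial and time variables lets me pick $\rho,\tau>0$ so that $\mathcal C:=B(x_0,\rho)\times(t_0-\tau,t_0+\tau)$ satisfies $\overline{\mathcal C}\subset Q\times(t_1,t_2)$ and $v=\gamma$ $\mathcal H^n$-a.e.\ on $\partial_p\mathcal C$. As $v$ is lower semicontinuous and bounded below by $\mu$ on $\overline{\partial_p\mathcal C}$, there is a nondecreasing sequence of continuous functions $\theta_j$ with $\mu/2\le\theta_j\nearrow v$ pointwise on $\overline{\partial_p\mathcal C}$; put $g_j:=\min\{\theta_j,\gamma\}$, which is continuous, bounded away from $0$ (so $g_j^m$ is admissible in Theorem~\ref{t.existence_continuous_weaksol}), satisfies $0\le g_j\le v$ and $g_j\le\gamma$, and obeys $g_j\nearrow\gamma$ $\mathcal H^n$-a.e.\ on $\partial_p\mathcal C$, hence $g_j\to\gamma$ in $L^1(\partial_p\mathcal C)$. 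Let $h_j\in C(\overline{\mathcal C})$ be the continuous weak solution in $\mathcal C$ with boundary values $g_j$ (Theorem~\ref{t.existence_continuous_weaksol}). From $g_j\le v$ on $\partial_p\mathcal C$ and Definition~\ref{d.supercal}(iii), $h_j\le v$ in $\mathcal C$; from $g_j\le\gamma$ and the comparison part of Theorem~\ref{t.existence_continuous_weaksol} (the constant $\gamma$ being a weak solution), $h_j\le\gamma$; and $g_j\le g_{j+1}$ gives $h_j\le h_{j+1}$. Thus $h:=\lim_jh_j$ is, by Proposition~\ref{p.harnack-convergence}, a continuous weak solution in $\mathcal C$ with $h\le v$ and $h\le\gamma$. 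Finally, since $g_j\to\gamma$ in $L^1(\partial_p\mathcal C)$ and solutions of the PME Cauchy--Dirichlet problem depend continuously on their boundary data in $L^1$ (equivalently, a null-set change of the data does not change the solution; alternatively one reproduces the energy estimate from the proof of Lemma~\ref{l.supersubcal-cylinder-comparison} to get $\iint_{\mathcal C}|h_j^m-\gamma^m|^{\frac{m+1}{m}}\to0$), we conclude $h\equiv\gamma$ in $\mathcal C$. Hence $\gamma=h(x_0,t_0)\le v(x_0,t_0)\le\gamma$, i.e.\ $v(x_0,t_0)=\gamma$; as $(x_0,t_0)$ was arbitrary, $v\equiv\gamma$ in $Q\times(t_1,t_2)$.

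It remains to handle a point $(x_0,t_2)$ with $x_0\in Q$. I run the same construction on $\mathcal C_{\tau'}:=B(x_0,\rho)\times(a,t_2+\tau')$ with $t_1<a<t_2$, $\overline{B(x_0,\rho)}\subset Q$ and $0<\tau'<\varepsilon$, so that $(x_0,t_2)$ lies in the interior and $\overline{\mathcal C_{\tau'}}\Subset\Omega_T$. On $\partial_p\mathcal C_{\tau'}$ at times $<t_2$ we already know $v\equiv\gamma$, while on the thin remaining piece $\partial B(x_0,\rho)\times[t_2,t_2+\tau']$ we only have $\mu\le v$; building $g_j$ as before (now $g_j\nearrow\min\{v,\gamma\}$, which equals $\gamma$ except on a set of $\mathcal H^n$-measure $O(\tau')$) yields a continuous weak solution $h^{(\tau')}\le v$ in $\mathcal C_{\tau'}$ whose boundary data differs from $\gamma$ by $O(\tau')$ in $L^1$, hence $\|(h^{(\tau')})^m-\gamma^m\|_{L^{(m+1)/m}(\mathcal C_{\tau'})}\to0$ as $\tau'\to0$. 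Since the $h^{(\tau')}$ are uniformly bounded weak solutions and therefore locally equicontinuous, they converge locally uniformly to $\gamma$ on a fixed neighbourhood of $(x_0,t_2)$; evaluating at $(x_0,t_2)$ and using $h^{(\tau')}\le v$ gives $v(x_0,t_2)\ge\gamma$, and with the upper bound $v(x_0,t_2)=\gamma$. This finishes the proof.

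The main obstacle is precisely the lower bound $v\ge\gamma$: lower semicontinuity only forbids upward jumps, so $v\ge\gamma$ must be pulled out of the comparison principle, which is a pointwise statement on parabolic boundaries, whereas the hypothesis is only a.e. Bridging this gap is what forces the Fubini choice of cylinder (so $\partial_p\mathcal C$ meets the null set $\{v\ne\gamma\}$ only in a null set) together with the $L^1$-continuous dependence of the Dirichlet problem on its data (so a null-set modification of the data is invisible); the closed top slice $t=t_2$ is an extra wrinkle, handled by cylinders protruding slightly above $t_2$, which is possible only because $[t_1,t_2]\subset\Lambda_+$ keeps $v$ positive there.
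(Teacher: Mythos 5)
There is a genuine gap at the heart of your lower bound: the step in which you pass from ``$g_j\to\gamma$ a.e.\ (or in $L^1$) on $\partial_p\mathcal C$'' to ``$h=\lim_j h_j\equiv\gamma$ in $\mathcal C$''. The only stability result available in the paper, Theorem~\ref{t.stability_existence}, requires \emph{uniform} convergence of the boundary data, and uniform convergence is impossible here: since you must keep $g_j\le v$ in order to invoke the comparison principle of Definition~\ref{d.supercal}, the $g_j$ are forced to stay below $\min\{v,\gamma\}$, and a priori $v$ may dip strictly below $\gamma$ on a (closed, nowhere dense, surface-null but possibly capacity-positive) subset of the lateral boundary of \emph{every} admissible cylinder --- ruling out such dips is exactly what the lemma is supposed to prove, so you cannot assume it. Your first justification, that solutions depend continuously on boundary data in $L^1$ (equivalently that a surface-null modification of the data is invisible), is an unproven assertion: it is essentially a statement about the parabolic (caloric-type) measure of the PME being absolutely continuous with respect to surface measure, which is nowhere established in the paper and is not a soft fact. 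Your second justification, ``reproduce the energy estimate from Lemma~\ref{l.supersubcal-cylinder-comparison}'', does not transfer: that estimate rests on Lemma~\ref{l.u_eps-u-estimate}, whose Oleinik test function is admissible only because the two solutions' lateral data differ by the \emph{constant} $\eps^m$ in the $m$-th powers, so that $\int_t^T(u_\eps^m-u^m-\eps^m)\,\d s$ vanishes on the lateral boundary. In your situation $\gamma^m-h_j^m$ equals $\gamma^m-g_j^m$ on the lateral boundary, a nonconstant, nonvanishing function (it only tends to zero a.e.\ in $j$), so the analogous test function is not admissible and the computation produces uncontrolled lateral flux terms. The same objection applies verbatim to your treatment of the top slice $t=t_2$, where the discrepancy on the protruding piece is again only small in measure, not uniformly.

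This is precisely the difficulty the paper's proof is designed to avoid: there one approximates $v$ from below by \emph{continuous weak supersolutions} $v_k$ (via the obstacle problem, Theorem~\ref{t.obstacle-supercal}), forms the Poisson modification of Proposition~\ref{p.poisson-mod} whose lateral data coincide with $v_k$ in the Sobolev sense, so that the Oleinik test function $\int_t^{t_2}(v_k^m-h_k^m)\,\d s$ genuinely vanishes on the lateral boundary, and then uses only the initial slice $t=t'$ (where $v=\gamma$ a.e.) together with the weak convergence $\nabla v_k^m\rightharpoonup\nabla v^m=0$ to make the gradient term nonpositive in the limit. Unless you can supply an actual proof of the $L^1$-boundary stability you invoke (for instance via barrier or parabolic-measure arguments exploiting the sandwich $\mu/2\le h_j\le\gamma$, which would be a substantial addition), your argument does not close; I would recommend replacing the boundary-data approximation by the obstacle-problem/Poisson-modification construction, which sidesteps the issue entirely.
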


\begin{proof}

By lower semicontinuity of $v$ it follows that $v \leq \gamma$ everywhere in $\overline{Q_{t_1,t_2}}$. Thus without loss of generality we may assume that $v$ is bounded in $\Omega_T$. Since $Q_{t_1,t_2} \Subset \Omega \times \Lambda_+$, it follows that there exists $\delta >0$ such that $v > 0$ everywhere in $Q_{t_1,t_2+\delta}$. Let $\psi_k \in C^\infty(Q_{t_1,t_2+\delta})$ such that 
$$
\psi_1 < \psi_2 < ... < v \quad \text{ and } \quad \lim_{k\to \infty} \psi_k = v \quad \text{everywhere in } Q_{t_1,t_2+\delta}.
$$
Now by applying Theorem~\ref{t.obstacle-supercal} in a similar fashion as in Lemma~\ref{l.bdd-positive-supercal-is-supersol}, we can find a sequence of continuous weak supersolutions $v_k$ in $Q_{t_1,t_2+\delta}$ such that $v_1\leq v_2 \leq ... \leq v$ with $\psi_k \leq v_k \leq v$ everywhere in $Q_{t_1,t_2+\delta}$, which implies $v_k(x,t) \to v(x,t)$ for every $(x,t) \in Q_{t_1,t_2+\delta}$. Observe that we further have $\nabla v_k^m \rightharpoonup \nabla v^m$ weakly in $L^2_{\loc}(Q_{t_1,t_2+\delta})$ by Lemma~\ref{l.bounded_caccioppoli}.

Fix $t' \in (t_1,t_2)$ such that $v(x,t') = \gamma$ for a.e. $x \in
Q$. Observe that this holds for a.e. $t' \in (t_1,t_2)$. Furthermore,
fix a $C^{2,\alpha}$-cylinder $Q' \Subset Q$ and define Poisson
modifications of $v_k$ and $v$ in $Q'_{t',t_2+\delta}$ as in Proposition~\ref{p.poisson-mod}.

Since $h_k$ is a weak solution in $Q'_{t',t_2+\delta}$, it follows that
$$
\iint_{Q'_{t',t_2}}-h_k\partial_t\varphi + \nabla h_k^m \cdot \nabla \varphi \, \d x \d t = \int_{Q'} v_k(x,t') \varphi(x,t') \, \d x
$$
for all $\varphi \in C^\infty(Q'_{t',t_2})$ vanishing on the boundary of $Q'_{t',t_2}$ except possibly on $Q' \times \{t'\}$. By Proposition~\ref{p.poisson-mod} we have that $\nabla h_k^m \rightharpoonup \nabla h^m$ weakly in $L^2(Q'_{t',t_2+\delta})$ when $k \to \infty$. Also $v_k(x,t') \xrightarrow{k\to \infty} v(x,t')$ for every $x \in Q'$. Thus by passing to the limit $k \to \infty$ we obtain 
\begin{equation} \label{e.h-wsol}
\iint_{Q'_{t',t_2}}-h\partial_t\varphi + \nabla h^m \cdot \nabla \varphi \, \d x \d t = \int_{Q'} \gamma \varphi(x,t') \, \d x
\end{equation}
since $v(x,t') = \gamma$ for a.e. $x \in Q'$. Since $\gamma > 0$ is a weak solution as a constant, we also have
\begin{equation} \label{e.gamma-wsol}
\iint_{Q'_{t',t_2}}-\gamma \partial_t\varphi + \nabla \gamma^m \cdot \nabla \varphi \, \d x \d t = \int_{Q'} \gamma \varphi(x,t') \, \d x.
\end{equation}
By approximation we may use test functions satisfying $\varphi \in
L^2(t',t_2; H^1_0(Q'))$ with $\partial_t\varphi\in L^2(Q'_{t',t_2})$ and $\varphi(t_2) = 0$. Observe that the Oleinik type test function
\[
\varphi (x,t) := 
\begin{cases}
\int_{t}^{t_2} (v_k^m(x,s) - h_k^m(x,s)) \, \d s, & \mbox{for }t'<t<t_2,\\
0, & \mbox{for }t \geq t_2,
\end{cases}
\]
is admissible. By using this test function and subtracting~\eqref{e.h-wsol} from~\eqref{e.gamma-wsol} we obtain
\begin{align*}
\mathrm I_k &:= \iint_{Q'_{t',t_2}} (\gamma - h) (v_k^m - h_k^m) \, \d x  \d t \\
&= -\iint_{Q'_{t',t_2}} \nabla (\gamma^m - h^m(x,t)) \cdot \int_t^{t_2} \nabla (v_k^m(x,s) - h_k^m(x,s)) \, \d s \, \d x \d t =:  \mathrm{II}_k.
\end{align*}
Observe that since $\nabla v_k^m \rightharpoonup \nabla v^m$ and
$\nabla h_k^m \rightharpoonup \nabla h^m$ weakly in $L^2(Q'_{t',t_2})$
when $k \to \infty$, and $v = \gamma$ a.e. in $Q'_{t',t_2}$,
we obtain
$$
\mathrm{II}_k \xrightarrow{k\to \infty} - \frac12 \int_{Q'} \left| \int_{t'}^{t_2} \nabla (\gamma^m - h^m(x,t))\, \d t \right|^2 \, \d x \leq 0.
$$
Thus, by~\cite[Corollary 3.11]{BLS} and using the facts above we conclude
\begin{align*}
\iint_{Q'_{t',t_2}} |\gamma^m - h^m|^\frac{m+1}{m} \, \d x \d t &\leq \iint_{Q'_{t',t_2}} (\gamma - h) (\gamma^m - h^m) \, \d x \d t \\
&= \lim_{k\to\infty} \mathrm{I}_k = \lim_{k\to\infty} \mathrm{II}_k \leq 0,
\end{align*}
which implies that $h = \gamma$ a.e. in $Q'_{t',t_2}$. Since $h \in C(Q'_{t',t_2+\delta})$, it follows that $h = \gamma$ everywhere in $Q' \times (t',t_2]$. 

Since $h \leq v$ everywhere in $Q'_{t',t_2+\delta}$ and $v \leq
\gamma$ everywhere in $\overline{Q_{t_1,t_2}}$, it follows that
$\gamma = h \leq v\le\gamma$ everywhere in $Q' \times (t',t_2]$, i.e.,
$v = \gamma$ everywhere in $Q' \times (t',t_2]$. Since this holds for
arbitrary $Q' \Subset Q$ and a.e. $t' \in (t_1,t_2)$, the claim follows.

\end{proof}

\textit{Proof of Theorem~\ref{t.supercal-essliminf}. }
Fix $(x_o,t_o) \in \Omega_T$ and denote 
$$
\lambda = \essliminf_{\substack{(y,s) \to (x_o,t_o) \\ s<t_o}} u(y,s).
$$ 
Without loss of generality we may assume that $\Omega$ is connected. By lower semicontinuity of $u$ we have that $\lambda \geq u(x_o,t_o)$. Thus, if $\lambda = 0$ there is nothing to prove. Let us suppose that $\lambda > 0$.

Suppose that also $u(x_o,t_o) > 0$. Then, it follows that $t_o \in
\Lambda_i$ for some $i\in I$, which further implies that there exists $r_o >0$ such that $B_r(x_o) \times (t_o - r^2,t_o) \Subset \Omega \times \Lambda_i$ for every $r < r_o$. This implies that $\lambda > 0$. Furthermore, for any $\gamma \in (0,\lambda)$ there exists $r < r_o$ such that $u \geq \gamma$ a.e. in $B_r(x_o) \times (t_o - r^2,t_o)$. Now $v= \min\{u,\gamma\}$ is a supercaloric function satisfying $v = \gamma$ a.e. in $B_r(x_o) \times (t_o - r^2,t_o)$. By Lemma~\ref{l.supercal_ae_constant}, it follows that $v = \gamma$ everywhere in $B_r(x_o) \times (t_o - r^2,t_o]$, i.e. $u \geq \gamma$ everywhere in $B_r(x_o) \times (t_o - r^2,t_o]$. In particular $u(x_o,t_o) \geq \gamma$. Since $\gamma \leq u(x_o,t_o) \leq \lambda$ and $\gamma \in (0,\lambda)$ was arbitrary, we have $\lambda = u(x_o,t_o)$.

Then suppose that $u(x_o,t_o) = 0$ and $\lambda > 0$. From the latter it follows that there exists $\eps > 0$ and $r>0$ such that 
$$
\essinf_{B_r(x_o) \times (t_o-r^2,t_o)} u \geq \eps.
$$
Thus, 
$$
\essinf_{B_r(x_o)} u(\cdot,t) \geq \eps
$$
for a.e. $t \in (t_o-r^2,t_o)$. Let $(t_i)$ be a sequence in $(t_o-r^2,t_o)$ for which above holds for every $i \in \N$, and $t_i \to t_o$ as $i \to \infty$. Since $u(x_o,t_o) = 0$ implies that $u(x,t_o) = 0$ for all $x \in \Omega$, we may use Lemma~\ref{l.endpoint_vanish} to conclude
$$
0<\eps \leq \essinf_{B_r(x_o)} u(\cdot,t_i) \leq \bint_{B_r(x_o)} u(x,t_i) \, \d x \xrightarrow{i \to \infty} 0,
$$
which is a contradiction. Thus $\lambda = 0$, which completes the proof.
\hfill \qed

\bigskip

In order to summarize our results on the connections between
supercaloric functions and weak supersolutions, we consider the classes 
\begin{align*}
\mathcal{W} &= \{u_* : u \text{ is a weak supersolution in } \Omega_T\}, \\
\mathcal{S} &= \left\{u : u \text{ is a supercaloric function in } \Omega_T \right\}, \\
\mathcal{S}_E &= \left\{u : u \in \mathcal S,\ u^m \in L^2_{\loc}(0,T;H^1_{\loc}(\Omega))\cap L^\frac{1}{m}_{\loc}(\Omega_T) \right\}, \\
\mathcal{W}_b &= \{u_* : u \in \mathcal W, u \text{ is locally essentially bounded in } \Omega_T\},\\
\mathcal{S}_b &= \left\{u : u \in \mathcal S, u \text{ is locally bounded in } \Omega_T  \right\},
\end{align*}
where $(\cdot)_*$ denotes the $\essliminf$-regularization defined in Theorem~\ref{t.super_lsc}.

As a direct consequence of
Lemmas~\ref{l.weasuper-is-supercal},~\ref{l.supercal-in-energyspace}
(or~\ref{t.bdd_super}) and Theorem~\ref{t.supercal-essliminf} together
with the examples presented in Sections~\ref{sec:barenblatt} and~\ref{sec:ipss} we can conclude the following connections of nonnegative supercaloric functions and weak supersolutions.  

\begin{cor}\label{c.connections}
Let $0<m<1$. Then $\mathcal W \subsetneq \mathcal S$, $\mathcal{W} = \mathcal{S}_E$ and $\mathcal{W}_b = \mathcal{S}_b$.
\end{cor}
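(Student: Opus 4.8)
The plan is to assemble the corollary from the building blocks already established. For the equality $\mathcal{W}=\mathcal{S}_E$, the inclusion $\mathcal{W}\subseteq\mathcal{S}_E$ is immediate: if $u$ is a weak supersolution, then by Definition~\ref{d.weak_sol} we have $u^m\in L^2_{\loc}(0,T;H^1_{\loc}(\Omega))\cap L^{1/m}_{\loc}(\Omega_T)$, and by Lemma~\ref{l.weasuper-is-supercal} the representative $u_*$ is a supercaloric function; since the energy conditions are invariant under modification on a null set, $u_*\in\mathcal{S}_E$. Conversely, if $u\in\mathcal{S}_E$, then Lemma~\ref{l.supercal-in-energyspace} shows $u$ is a weak supersolution, and by Theorem~\ref{t.supercal-essliminf} we have $u=u_*$ everywhere, so $u\in\mathcal{W}$. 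For the equality $\mathcal{W}_b=\mathcal{S}_b$, the inclusion $\mathcal{W}_b\subseteq\mathcal{S}_b$ again follows from Lemma~\ref{l.weasuper-is-supercal} (noting that the lower semicontinuous representative of a locally essentially bounded function is locally bounded, since $u_*=\essliminf u\le\esssup u$ locally), and $\mathcal{S}_b\subseteq\mathcal{W}_b$ follows from Theorem~\ref{t.bdd_super} together with $u=u_*$ from Theorem~\ref{t.supercal-essliminf}.

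For the strict inclusion $\mathcal{W}\subsetneq\mathcal{S}$, the inclusion $\mathcal{W}\subseteq\mathcal{S}$ is contained in the previous paragraph (it also follows directly from Lemma~\ref{l.weasuper-is-supercal}). To see that it is strict, I would invoke the explicit example from Section~\ref{sec:barenblatt}: the zero-extension of the Barenblatt solution defined in~\eqref{e.barenblatt2} is a supercaloric function in $\R^n\times\R$ by Lemma~\ref{l.zero-past-extension}, but it is not a weak supersolution because $|\nabla u^m|\notin L^2_{\loc}(\R^n\times\R)$ in any neighbourhood of the origin, as noted right after~\eqref{e.barenblatt2}. Hence $\mathcal{S}\setminus\mathcal{W}\neq\varnothing$. (Alternatively, in the supercritical range one could use the infinite point-source solution from Section~\ref{sec:ipss}, which fails even to be locally integrable.)

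The argument is essentially a bookkeeping exercise, so there is no serious obstacle; the only point requiring a little care is the compatibility of the $\essliminf$-regularization with the various function-space and boundedness conditions. Specifically, one should check that when $u$ is locally essentially bounded, its regularization $u_*$ is genuinely locally bounded (not merely essentially bounded), which is exactly the content of the pointwise formula in Theorem~\ref{t.super_lsc}, and that the energy conditions defining $\mathcal{S}_E$ and the Definition~\ref{d.weak_sol} conditions refer to the same object once Theorem~\ref{t.supercal-essliminf} identifies $u$ with $u_*$. With these remarks the three assertions follow directly.
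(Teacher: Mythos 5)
Your proposal is correct and follows exactly the paper's route: the paper derives the corollary as a direct consequence of Lemma~\ref{l.weasuper-is-supercal}, Lemma~\ref{l.supercal-in-energyspace} (or Theorem~\ref{t.bdd_super}) and Theorem~\ref{t.supercal-essliminf}, together with the zero-extended Barenblatt solution of Section~\ref{sec:barenblatt} (or the infinite point-source solution of Section~\ref{sec:ipss}) for the strictness of $\mathcal W\subsetneq\mathcal S$. Your write-up simply makes explicit the same bookkeeping, including the correct observation that Theorem~\ref{t.supercal-essliminf} identifies a supercaloric $u$ with its $\essliminf$-regularization so that it indeed lies in $\mathcal W$ (resp.\ $\mathcal W_b$).
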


\end{document}